\documentclass[11pt]{article}
\usepackage[a4paper,margin=27mm]{geometry}
\usepackage{amsmath,amssymb,amsthm,mathtools}
\usepackage{bbm}
\usepackage{xcolor}
\usepackage{needspace}
\usepackage{tikz}
\usetikzlibrary{arrows.meta,patterns}
\usepackage{placeins}
\usepackage[utf8]{inputenc}
\usepackage[T1]{fontenc}
\usepackage[colorlinks=true,linkcolor=blue,citecolor=blue,urlcolor=blue]{hyperref}
\newtheorem{theorem}{Theorem}[section]
\newtheorem{lemma}[theorem]{Lemma}
\newtheorem{problem}[theorem]{Problem}
\newtheorem{proposition}[theorem]{Proposition}

\newtheorem{claim}[theorem]{Claim}
\newtheorem*{claim*}{Claim}
\newtheorem{fact}[theorem]{Fact}
\theoremstyle{definition}\newtheorem{definition}[theorem]{Definition}
\theoremstyle{remark}\newtheorem{remark}[theorem]{Remark}
\newtheorem*{remark*}{Remark}
\newcommand{\E}{\mathbb E}
\newcommand{\PP}{\mathbb P}
\newcommand{\Var}{\operatorname{Var}}
\newcommand{\Cov}{\operatorname{Cov}}
\newcommand{\cD}{\mathcal D}
\newcommand{\cF}{\mathcal F}
\newcommand{\cG}{\mathcal G}
\newcommand{\cH}{\mathcal H}
\newcommand{\cK}{\mathcal K}
\newcommand{\cA}{\mathcal A}
\newcommand{\cB}{\mathcal B}
\newcommand{\eps}{\varepsilon}
\numberwithin{equation}{section}
\allowdisplaybreaks[2]
\title{Proving the optimality of the product construction for the intersection--union problem}
\author{Fan Chang$^1$, Peter Frankl$^2$, Jian Wang$^3$\\[10pt]
$^{1}$School of Statistics and Data Science, Nankai University, Tianjin, China\\[6pt]
$^{2}$R\'{e}nyi Institute, Budapest, Hungary\\[6pt]
$^{3}$School of Mathematics, Sichuan University, Chengdu, 610065, China.\\[6pt]
E-mail: $^1$1120230060@mail.nankai.edu.cn,  $^2$frankl.peter@renyi.hu, \\ $^3$wangjianmath01@scu.edu.cn
}
\date{}
\hypersetup{pdftitle={The Intersection--Union Product Conjecture},pdfauthor={Fan Chang, Peter Frankl, Jian Wang}}
\begin{document}
\maketitle

\begin{abstract}
Let $m(n,p,q)$ be the largest size of a family of subsets of $[n]$
whose members pairwise intersect in at least $p$ vertices and omit
at least $q$ common vertices. We prove that a product on disjoint sets attains this maximum whenever $p,q\ge2$ and
$n\ge2^{15}(p+q-2)^3$.
Together with the second author's result for $p=1$ or $q=1$,
this establishes Frankl's Intersection--Union Product Conjecture in a wide range.
\end{abstract}

\smallskip
\noindent\textbf{Keywords:} intersecting families;
 correlation inequalities; shadows; stability.
\par\noindent\textbf{Mathematics Subject Classification (2020):}
05D05, 05D40.

\section{Introduction}\label{sec:introduction}

Let $[n]=\{1,2,\ldots,n\}$, and let $2^{[n]}$ denote its power set.
A family $\cF\subseteq2^{[n]}$ is called \emph{$p$-intersecting} if
$|F\cap F'|\ge p$ for every $F,F'\in\cF$.
It is called \emph{$q$-union} if $|F\cup F'|\le n-q$ for every $F,F'\in\cF$.
A family with both properties is called a \emph{$(p,q)$-family}.

Let $\cF^c=\{[n]\setminus F\colon F\in \cF\}$ be the family of complements. It is easy to see that $\cF$ is $p$-intersecting if and only if $\cF^c$ is $p$-union.

For $U\subset [n]$ with $|U|\geq t$ and $|U|\equiv t\pmod2$, define
\[
\cK(n,U,t)=\left\{K\subset [n]\colon \left|K\cap U\right|\geq \frac{|U|+t}{2}\right\}.
\]
Clearly $\cK(n,U,t)$ is $t$-intersecting.

Since for the proofs the Boolean function analysis method is used heavily, it is convenient to use the uniform measure (the normalized size) instead of the counting measure of the families. For  $\cF\subset 2^{[n]}$, define its {\it uniform measure}  as
\[
\mu(\cF) = \frac{|\cF|}{2^n}.
\]
For $n\geq t$, define
\[
w(n,t):=\max\left\{\mu(\cF)\colon \cF\subset 2^{[n]}\mbox{ is $t$-intersecting} \right\}.
\]

Let us recall the classical Katona Theorem.

\begin{theorem}[\cite{K1}]\label{thm-katona}
Let $n\geq t>0$ be integers.
\begin{itemize}
  \item[(i)] If $n-t$  is even then
  \begin{align*}
  w(n,t)=\mu(\cK(n,[n],t))=2^{-n} \sum_{i\geq \frac{n+t}{2}} \binom{n}{i}.
  \end{align*}
  \item[(ii)] If $n-t$ is odd then
   \begin{align*}
  w(n,t)=\mu(\cK(n,[n-1],t))= 2^{-n+1}\sum_{i\geq \frac{n-1+t}{2}} \binom{n-1}{i}.
  \end{align*}
\end{itemize}
\end{theorem}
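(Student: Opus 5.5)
Since $\cK(n,[n],t)$ (for $n-t$ even) and $\cK(n,[n-1],t)$ (for $n-t$ odd) are $t$-intersecting and have the displayed measures, only the upper bound $w(n,t)\le$ the stated value needs proof. I would use shifting to reduce to a shifted family, and then Katona's cyclic/averaging argument in its shadow form. First I reduce to an up-set: if $\cF$ is $t$-intersecting then so is $\{G\colon G\supseteq F\in\cF\}$, so I may assume $\cF$ is monotone increasing and work with its layers $\cF_i=\cF\cap\binom{[n]}{i}$. For $n-t$ even I would show $|\cF_i|+|\cF_{j}|\le\binom{n}{i}$ whenever $i+j=n+t-1$, $i<j$, together with $\cF_i=\emptyset$ for $i<t$. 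Summing over such pairs, plus the layers with $i\ge (n+t)/2$, bounded trivially by $\binom ni$, then gives $|\cF|\le\sum_{i\ge (n+t)/2}\binom ni$.

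The key step is the pairing inequality. Take $i<j$ with $i+j=n+t-1$ and let $\cG=\{[n]\setminus G\colon G\in\cF_j\}$, a family of $(n-j)$-sets with $n-j=i-t+1$. No $A\in\cF_i$ contains a complement of some $B\in\cF_j$ as a subset, since otherwise $|A\cap B|\le i-(n-j)=t-1$. Thus the upper shadow of $\cG$ on level $i$, namely $\nabla_i\cG=\{A\in\binom{[n]}{i}\colon A\supseteq C\text{ for some }C\in\cG\}$, is disjoint from $\cF_i$. Hence $|\cF_i|+|\nabla_i\cG|\le\binom ni$, and it remains to show $|\nabla_i\cG|\ge|\cF_j|=|\cG|$. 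Here I use Katona's intersecting shadow theorem: for a $t$-intersecting family $\cA$ of $k$-sets, the $(k-t)$-shadow satisfies $|\partial_{k-t}\cA|\ge|\cA|$. Applied to $\cF_j$, which is $t$-intersecting, this gives $|\partial_{j-t}\cF_j|\ge|\cF_j|$. Since $\cF$ is an up-set and $t$-intersecting, the sets in $\partial_{j-t}\cF_j$ cannot lie in $\cF_i$ either. After passing to complements this yields the required disjoint family of size at least $|\cF_j|$ on level $i$; I would check the index bookkeeping $j-t=i-1$ versus $i$ carefully, possibly by pairing $i$ with $n+t-i$ instead.

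I expect the main obstacle to be getting the pairing and the parity exactly right. For odd $n-t$, I would instead use the observation that $\cF$ restricted to $[n-1]$, i.e.\ the families $\{F\setminus\{n\}\}$ and $\{F\colon n\notin F\}$, gives two $(t-1)$- and $t$-intersecting pieces. Alternatively, and more cleanly, I would apply the even case on $[n+1]$ or on $[n-1]$ via the bound $\mu$ of a $t$-intersecting family on $[n]$ being at most that on $[n-1]$ with parity fixed, which holds because $\cF(\bar n)\cup\cF(n)$ splits and each trace is handled by the even case. The intersecting shadow theorem itself I would prove by induction on $k$ via shifting, or simply cite it as part of \cite{K1}.
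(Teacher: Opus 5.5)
Your pairing inequality points the wrong way, and as stated it cannot prove the theorem for $t\ge2$.

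Both routes you describe produce sets missing from $\cF$ on the \emph{lower} level $i$. These are the upper shadow of $\{[n]\setminus B: B\in\cF_j\}$, and a shadow of $\cF_j$ followed by complementation. So they yield $|\cF_i|+|\cF_j|\le\binom ni$. That inequality is true, but it is too weak. For $i<j$, $i+j=n+t-1$ and $t\ge2$ one has $\binom ni>\binom nj$. Summing therefore gives only $|\cF|\le1+\sum_{t\le i<(n+t)/2}\binom ni$, which exceeds $\sum_{i\ge(n+t)/2}\binom ni$. Your sentence about adding the layers with index $\ge(n+t)/2$ also counts them twice, since they are already the $j$'s of the pairs. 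Concretely, for $n=4$, $t=2$ your constraints allow $|\cF_2|+|\cF_3|=6$ and $|\cF_4|=1$, a bound of $7$, while $2^4w(4,2)=5$. Only for $t=1$ do the two binomial coefficients coincide.

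The missing idea is to use the $t$-intersection of the lower layer itself and aim at the upper level, i.e.\ to prove $|\cF_i|+|\cF_j|\le\binom nj$. For $G\in\partial^{t-1}\cF_i$, choose $H\in\cF_i$ with $G\subseteq H$. Then $|H\cap([n]\setminus G)|=t-1$, so the $j$-set $[n]\setminus G$ is not in $\cF$. Theorem~\ref{thm:classical-shadow}, applied to the $t$-intersecting family $\cF_i$, gives $|\partial^{t-1}\cF_i|\ge\frac{i}{i-t+1}|\cF_i|\ge|\cF_i|$.

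Two details of your sketch would fail here:
\begin{itemize}
\item The shadow must go down $t-1$ levels, not the $t$ levels of your $\partial_{k-t}$. Complements of $(k-t)$-subsets of a member meet it in exactly $t$ elements, which is allowed.
\item Sperner's bound \eqref{eq:normalized-shadow} cannot replace Katona's theorem, because this shadow moves away from the middle level.
\end{itemize}
The paper only cites \cite{K1} for the theorem, but this is exactly the argument inside the proof of Lemma~\ref{lem:stability}(i).

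Your odd case is not yet an argument. The claimed bound by the measure on $[n-1]$ is the statement to be proved. Splitting into $\cF(n)$ and $\cF(\bar n)$ does not reduce it to the even case. The piece $\cF(n)$ is only $(t-1)$-intersecting on $[n-1]$, with $(n-1)-(t-1)$ odd. Even the sharp bounds $w(n-2,t-1)$ and $w(n-1,t)$ for the two pieces average to $w(n-1,t)+\theta(n-1,t)/4$ by \eqref{eq:threshold-one-projection}. So the cross-intersection between the pieces must be exploited. Passing to $[n+1]$ gives only $w(n+1,t)>w(n-1,t)$.

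Write $n=t+2r+1$ and $k=t+r$. The corrected pairing handles every level except the middle level $k$, which needs the separate bound $|\cF_k|\le\binom{n-1}{k}$. There are two ways to get it:
\begin{itemize}
\item Since $n=2k-t+1$, two $k$-sets are $t$-intersecting exactly when their complements, of size $k-t+1\le n/2$, meet. The Erd\H{o}s--Ko--Rado theorem then gives $|\cF_k|\le\binom{n-1}{k-t}=\binom{n-1}{k}$.
\item As in Lemma~\ref{lem:stability}(ii), shift first (your shifting step is otherwise never used). Then $\{H\setminus\{n\}: H\in\cF_k,\ n\in H\}$ is $t$-intersecting, and the same shadow argument applies inside $[n-1]$.
\end{itemize}
Summing gives $\sum_{j\ge k+1}\binom nj+\binom{n-1}{k}=2\sum_{j\ge k}\binom{n-1}j$, which is the claimed value.
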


Note that for $n-t$ odd, $w(n,t)=w(n-1,t)$.

Katona subsequently asked the following question.

\begin{problem}[Katona \cite{K2}]
Let $n,p,q$ be positive integers, $n\geq p+q$. Determine or estimate
$$
w(n,p,q):=\max\left\{\mu(\cF)\colon \cF\subset 2^{[n]}\mbox{ is both $p$-intersecting and $q$-union} \right\}.
$$
\end{problem}

Taking complements shows that $w(n,p,q)=w(n,q,p)$.

The equality $w(n,1,1)=1/4$ is a well-known fact that easily follows from the Harris--Kleitman correlation inequality~\cite{Ha,Kl}. It was stated almost simultaneously by Daykin--Lov\'{a}sz~\cite{DL} and Seymour~\cite{Se} with different proofs.

Confirming the conjecture of Katona~\cite{K2}, the second author~\cite{F1} established the value of $w(n,1,q)$:
\begin{align}\label{ineq-1.4}
w(n,1,q) =\frac{1}{2}w(n-1,q).
\end{align}
Later shorter proofs were given by D.~L.~Wang~\cite{W} and by the second author~\cite{FT}.

For the general case, there is a natural product construction. Partition $[n]$ into $X$ and $Y$, take a $p$-intersecting family $\cA\subseteq2^X$ and a
$q$-union family $\cB\subseteq2^Y$, and define
$$
\cA\times\cB=\{A\cup B:A\in\cA,\ B\in\cB\}.
$$
Then $\cA\times\cB$ is both $p$-intersecting and $q$-union.

Frankl's Intersection-Union Product Conjecture, first appeared in~\cite{F1} in the $p=q=2$ case, asserts that such a construction always attains the maximum. In~\cite{FW}, the conjecture is verified for roughly  $n\leq p+q+\frac{1}{2}(p+q)^{1/3}$.

In the present paper, we prove the conjecture for $n\ge N(p,q)$,
with the following explicit bound.

\begin{theorem}\label{thm:main}
Let $n,p,q$ be positive integers with $n\ge2^{15}(p+q-2)^3$ and $p,q\geq 2$.  Then
\begin{equation}\label{eq:answer}
w(n,p,q)=\max_{n_1+n_2=n}w(n_1,p)w(n_2,q).
\end{equation}
\end{theorem}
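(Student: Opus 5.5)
The lower bound in \eqref{eq:answer} is the product construction: for $X\sqcup Y=[n]$ with $|X|=n_1$, $|Y|=n_2$, take an optimal Katona family $\cA\subseteq 2^X$ and the complement of an optimal Katona family on $Y$. Then $\mu(\cA\times\cB)=w(n_1,p)w(n_2,q)$. So the content of Theorem~\ref{thm:main} is the upper bound. We may assume $\cF$ is a maximal $(p,q)$-family. Applying the standard $(i,j)$-shifts, which preserve both the $p$-intersecting and the $q$-union properties, we may also assume $\cF$ is shifted. Maximality makes $\cF$ an up-set within the $q$-union constraint and a down-set within the $p$-intersecting constraint.

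The plan has three parts. \emph{First}, I would estimate the target quantity. By Katona's theorem and the CLT, $w(m,t)=\PP(\mathrm{Bin}(m,1/2)\ge (m+t)/2)\approx 1/2-ct/\sqrt m$. Optimizing the product over $n_1+n_2=n$ gives $\max\approx \tfrac14(1-c'(p+q)/\sqrt n)$, with the optimum near $n_1/n_2\approx p/q$ up to parity corrections. We need this asymptotic with explicit error terms, valid for $n\ge 2^{15}(p+q-2)^3$.

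\emph{Second}, I would prove a stability/structure step: a $(p,q)$-family of near-maximal measure must essentially split coordinates into an ``intersecting part'' $X$ and a ``union part'' $Y$. The tool would be a Harris--Kleitman type correlation argument. Write $\cF=\cG\cap\cH$, where $\cG$ is the up-set generated by $\cF$ (which is $p$-intersecting) and $\cH$ is the down-set generated by $\cF$ (which is $q$-union). Harris--Kleitman gives $\mu(\cF)\le\mu(\cG)\mu(\cH)\le w(n,p)w(n,q)$, and this bound is too weak by a factor of order $\sqrt{n_1n_2}/n$ in the deficiency. To close the gap, I would use the total influences instead. For each coordinate $i$, let $I_i(\cG)$ and $I_i(\cH)$ be its influences. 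Shiftedness and the $p$-intersecting property force, via Kruskal--Katona/shadow bounds (the Katona intersecting-shadow theorem), a lower bound on how much $\mu(\cG)$ drops below $1/2$ in terms of $\sum_i I_i(\cG)$, and similarly for $\cH$. The key intermediate inequality I would aim for is a weighted correlation bound of the form
\[
\mu(\cF)\le \mu(\cG)\mu(\cH)-\Omega\Big(\textstyle\sum_i I_i(\cG)I_i(\cH)\Big),
\]
which says that coordinates influential for both $\cG$ and $\cH$ are costly. This leads to a partition $X=\{i: I_i(\cG)\ge I_i(\cH)\}$, $Y=[n]\setminus X$.

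\emph{Third}, I would conclude exactly. Given the partition, compare $\cF$ with $\cF_X\times\cF_Y$, where $\cF_X$ is the projection-type family on $X$ (which is $p$-intersecting after accounting for the $Y$-part) and $\cF_Y$ is the analogous family on $Y$. I would then bound $\mu(\cF)\le w(|X|,p)w(|Y|,q)$ plus error terms, where the error terms are controlled by the cross-influences. The cubic bound $n\gtrsim (p+q)^3$ should enter exactly here: the error from the cross terms is of order $(p+q)^{3/2}/n$, while the gain from a genuine split is of order $(p+q)/\sqrt n$ times a deficiency. Making these explicit with the constant $2^{15}$ requires careful binomial tail estimates. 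Cases where $|X|<p$ or $|Y|<q$ degenerate and are handled by the known result \eqref{ineq-1.4}.

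The main obstacle is the second step: turning the correlation inequality into a quantitative stability statement that gives a clean coordinate split with an exact, not merely asymptotic, inequality. Harris--Kleitman alone loses too much. I would first try a Fourier/level-1 argument: for shifted monotone families, the influences $I_i$ are monotone in $i$, and the level-1 weight of $\cG$ is tied to $1/2-\mu(\cG)$ via a KKL/Talagrand-type inequality. This reduces the problem to an inequality for two decreasing influence profiles, which I would then optimize directly.
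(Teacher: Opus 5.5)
Your outline follows the paper up to localization: the closure $\cF\subseteq\cA\cap\cB^c$ with $\cA,\cB$ increasing and oppositely shifted, the identity $\mu(\cA\cap\cB^c)=\mu(\cA)\mu(\cB)-\Cov(f,g^*)$, the cross-influence $\mathcal I[f,g]$, and the split $P=\{i:I_i[f]\ge I_i[g]\}$. The problems start with the inequality you aim for in the second step, $\mu(\cF)\le\mu(\cG)\mu(\cH)-\Omega\bigl(\sum_i I_i(\cG)I_i(\cH)\bigr)$.

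\textbf{The second-step inequality is false without a logarithm.} As a correlation inequality for monotone families it fails: Talagrand's bound holds only with an extra factor $1/\log(e/\mathcal I)$, and that factor is necessary. For example, take the up-set $\{|x|\ge n/2+a\sqrt n\}$ and the down-set $\{|x|\le n/2-a\sqrt n\}$, both of measure $\alpha$. They are disjoint, yet their cross-influence is of order $\alpha^2\log(1/\alpha)$, so your inequality would read $0\le\alpha^2-\Omega(\alpha^2\log(1/\alpha))$. A log-free version would in fact make the theorem immediate: together with $\Cov(f,g^*)<C(p+q)n^{-1/2}\mathcal I[f,g]$ it forces $\mathcal I[f,g]=0$, so $f$ and $g$ depend on disjoint coordinates. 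With the true bound (Lemma~\ref{lem:correlation}) you only get that $\mathcal I[f,g]$ is exponentially small (Lemma~\ref{lem:small-overlap}). Even that covariance upper bound does not follow from a relation between $I[\cG]$ and $\mu(\cG)$. Because $\mathcal I$ measures the overlap of two influence profiles, the paper needs a lower bound on $I_{P^c}[f]$ for \emph{every} coordinate set $P$ (Lemma~\ref{lem:tails}), summed by parts against the ordered profile of $g$.

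\textbf{The decisive gap is your third step.} An estimate of the form ``$\mu(\cF)\le w(|X|,p)w(|Y|,q)$ plus errors controlled by influences'' stalls. The excess satisfies only $\mu(\cA)-w(n_1,p)\le\frac{p-1}{2(n_1+1)}I_Y[f]$. Influence-based correlation bounds give only $\Cov(f,g^*)\gtrsim n_2^{-1/2}I_Y[f]/\sqrt{1+\ln(n/I_Y[f])}$. The logarithm defeats the comparison once $I_Y[f]$ is super-exponentially small, so such bounds bootstrap smallness (Steps~3--4 of Proposition~\ref{prop:localization}) but never give an exact inequality.

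The paper uses a different mechanism for the excess side.
\begin{itemize}
\item After simplicial compression on $X$ (Lemma~\ref{lem:simplicial-compression}), the $t$-intersection of $A\cup y'$ with $([n_1]\setminus B)\cup y^c$ gives shadow inclusions. Sets that the projection at $y'$ adds below the Katona level force missing Katona sets in the projection at $y^c$, for every $y\subseteq y'$ with $|y'\setminus y|\le i\le p-1$, at correspondingly lower levels.
\item The set of $y$ carrying excess has measure at most $e^{-L}$, so Harper's theorem expands it by $(1+\sqrt L/(2\sqrt{n_2+1}))^i$ (Lemma~\ref{lem:rare}).
\item A log-concave coupling (Lemma~\ref{lem:interval-coupling}) then yields $(\mu(\cA)-w(n_1,p))_+\le\frac{16(p-1)\sqrt{n_2+1}}{n_1\sqrt L}\E\phi_\cA$ (Theorem~\ref{thm:exterior}).
\end{itemize}
On the covariance side, edge isoperimetry on the rare level sets of $\phi_\cA$ gives $\Cov(f,g^*)\ge\frac1{12}\sqrt{(L-\ln2)/n_2}\,\E\phi_\cA$ (Lemma~\ref{lem:excess-covariance}). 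Rarity supplies matching $\sqrt L$ factors on both sides. The comparison \eqref{eq:final-comparison} then requires $L=n/(96(p+q-2)^2)>256(p+q-2)$; this, not a $(p+q)^{3/2}/n$ cross-term, is where $n\gtrsim(p+q)^3$ enters.

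Finally, small $|X|$ is not a case for \eqref{ineq-1.4}, which concerns $p=1$ or $q=1$. It is excluded because both measures are within $O((p+q)/\sqrt n)$ of $1/2$.
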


Define
\[
B_n=\max_{n_1+n_2=n}w(n_1,p)w(n_2,q).
\]
 We call that $m$ is {\it admissible} for $t$ if $m-t$ is even. For admissible $m$ for $t$,
\[
w(m,t)-w(m-2,t) =\frac{t-1}{m-1}2^{-m}\binom{m-1}{(m-t)/2}>0.
\]
Thus $w(m,t)$ is non-decreasing in $m$.
Let $\eps\in \{0,1\}$ be defined by $\eps\equiv n-p-q\pmod2$. Since  $w(n_1,p)=w(n_1-1,p)$  for $n_1-p$ odd, by non-decreasing we infer that
\begin{equation}\label{eq:answer2}
B_n=\max_{\substack{n_1\ge p,\ n_2\ge q\\n_1\equiv p,\ n_2\equiv q\pmod 2\\n_1+n_2=n-\eps}}w(n_1,p)w(n_2,q).
\end{equation}
It implies that $B_{n+1}=B_n$ when  $n-p-q$ is even.

\begin{fact}\label{fact-reduction}
If $w(n,p,q)=B_n$ holds for all $n$ with $n-p-q$ odd and $n\geq N(p,q)$, then $w(n,p,q)=B_n$ holds as well for  all $n$ with  $n-p-q$ even and $n\geq N(p,q)$.
\end{fact}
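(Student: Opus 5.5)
Let $n$ be such that $n-p-q$ is even and $n\ge N(p,q)$. The plan is to sandwich $w(n,p,q)$ between $B_n$ and $w(n+1,p,q)$, where $n+1$ has the parity covered by the hypothesis.

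\emph{Lower bound.} The product construction on disjoint sets gives $w(n,p,q)\ge w(n_1,p)\,w(n_2,q)$ for every split $n_1+n_2=n$. Taking the maximum over splits yields $w(n,p,q)\ge B_n$.

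\emph{Upper bound by extension.} Let $\cF\subseteq2^{[n]}$ be a $(p,q)$-family, and set
\[
\cF'=\{F\subseteq[n+1]\colon F\cap[n]\in\cF\}.
\]
Thus each member of $\cF$ appears twice, once with and once without the new element $n+1$.
\begin{itemize}
\item For $F,G\in\cF'$ we have $|F\cap G|\ge|(F\cap[n])\cap(G\cap[n])|\ge p$, so $\cF'$ is $p$-intersecting.
\item For the union, $[n]\setminus\bigl((F\cap[n])\cup(G\cap[n])\bigr)$ has at least $q$ elements. These elements lie in $[n+1]$ and are missed by $F\cup G$, so $|F\cup G|\le n+1-q$. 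Hence $\cF'$ is $q$-union.
\item The measure is preserved: $\mu(\cF')=2|\cF|/2^{n+1}=\mu(\cF)$.
\end{itemize}
Therefore $w(n,p,q)\le w(n+1,p,q)$.

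\emph{Applying the hypothesis.} The number $n+1$ satisfies $n+1-p-q$ odd and $n+1\ge N(p,q)$, so by assumption $w(n+1,p,q)=B_{n+1}$. The remark after \eqref{eq:answer2} gives $B_{n+1}=B_n$ when $n-p-q$ is even. Combining the three steps,
\[
B_n\le w(n,p,q)\le w(n+1,p,q)=B_{n+1}=B_n,
\]
so $w(n,p,q)=B_n$.

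There is no real obstacle. The only point needing care is that $B_{n+1}=B_n$, which follows from \eqref{eq:answer2}: both $n$ and $n+1$ produce the same constraint $n_1+n_2=n-\eps=n+1-\eps'$ on admissible splits, where $\eps$ and $\eps'$ are the parity corrections for $n$ and $n+1$ respectively. This rests on the facts that $w(m,t)=w(m-1,t)$ for $m-t$ odd and that $w(m,t)$ is non-decreasing in $m$.
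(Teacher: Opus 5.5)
Your proof is correct and follows the same route as the paper: the product construction gives the lower bound, a one-coordinate extension gives $w(n,p,q)\le w(n+1,p,q)$, and \eqref{eq:answer2} gives $B_{n+1}=B_n$. Your extension $\cF\times 2^{\{n+1\}}$ is the right one, because the paper's $\cF\times\{\{n+1\}\}$, read literally, has uniform measure $\mu(\cF)/2$ in $2^{[n+1]}$ and so does not by itself yield the monotonicity.
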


\begin{proof}
Note that if $\cF\subset 2^{[n]}$ is a $(p,q)$-family then $\cF\times \{\{n+1\}\}$ is also  a $(p,q)$-family. It follows that
\[
w(n+1,p,q) \geq w(n,p,q).
\]
For $n\geq N(p,q)$,  we have $n+1\geq N(p,q)$ and thereby
\[
w(n,p,q) \leq w(n+1,p,q)=B_{n+1} =B_n.
\]
On the other hand, the product construction implies $w(n,p,q)\geq B_n$. Thus the fact holds.
\end{proof}

By Fact \ref{fact-reduction}, to prove Theorem \ref{thm:main} it suffices to consider the $n-p-q$ odd case.

The rest of the proof is organized as follows:
Section~\ref{sec:preliminaries} introduces the notation and preliminary
tools and gives an overview of the proof.
Section~\ref{sec:stability} establishes a stability version of Katona's
theorem and the influence estimates needed later.
In Section~\ref{sec:overlap}, we show that the indicator functions
associated with a hypothetical counterexample have small
cross-total-influence.
Section~\ref{sec:localization} uses these estimates to approximate
the corresponding families by Katona families on disjoint subsets of $[n]$.
Section~\ref{sec:completion} completes the proof of
Theorem~\ref{thm:main} by comparing the possible gain in product measure
with the covariance loss, using shadow inequalities and
Hamming neighborhood expansion.
The appendices contain the deferred proofs, numerical estimates,
and an auxiliary inequality for log-concave sequences.

\section{Preliminaries and Proof overview}\label{sec:preliminaries}

\subsection{Notation and definitions}\label{subsec:notation}

We identify a subset of $[n]$ with its characteristic vector in
$\{0,1\}^n$. Thus we identify $2^{S}$ with $\{0,1\}^S$, which is also called {\it cubes on $S$}.
For $\cF\subseteq2^{[n]}$, let
$\mathbbm{1}_{\cF}:\{0,1\}^n\to\{0,1\}$ denote its indicator function. That is, $\mathbbm{1}_{\cF}(F)=1$ if  $F\in \cF$ and  $\mathbbm{1}_{\cF}(F)=0$ otherwise. Since $\cF$ is identified with the function $\mathbbm{1}_{\cF}$, a subset $S\subset [n]$ is also called a set of {\it coordinates}.

For a specified ground set $S$ of size $m$ and $\cF\subseteq2^S$,
write $\mu^m(\cF)=2^{-m}|\cF|$ for its \emph{uniform measure}.
Thus $\mu^n$ is the uniform measure on $2^{[n]}$.
We usually omit the superscript when the ground set is clear, but
retain it when comparing families on different ground sets. In particular, for $J\subseteq[n]$ and $\cF\subseteq2^J$,
$\mu^n(\cF\times2^{[n]\setminus J})=\mu^{|J|}(\cF)$.

We call $\cF\subset 2^{[n]}$ an {\it increasing family} (or a {\it up-set}) if $A\subset B\subset [n]$ and $A\in \cF$ imply $B\in \cF$, and  call  $\cF\subset 2^{[n]}$ an {\it decreasing  family} (or a {\it down-set})  if $B\subset A$ and $A\in \cF$ imply $B\in \cF$.

A function $f$ is \emph{increasing} if $f(x)\leq f(y)$ whenever
$x_i\leq y_i$ for every $i$, and \emph{decreasing} if the reverse
inequalities hold. Clearly $\cF\subset 2^{[n]}$ is increasing if and only if $\mathbbm{1}_{\cF}$ is increasing, and  $\cF\subset 2^{[n]}$ is decreasing if and only if $\mathbbm{1}_{\cF}$ is decreasing.

For a specified ground set $S$, let $A^c=S\setminus A$ for
$A\subseteq S$, and let $\cF^c=\{A^c:A\in\cF\}$.
For each integer $i$, write $\cF^{(i)}=\{F\in\cF:|F|=i\}$;
this family is empty when $i<0$ or $i>|S|$.
Thus $\mu^{|S|}(\cF^{(i)})=2^{-|S|}|\cF^{(i)}|$; in particular,
a complete level has measure $2^{-|S|}\binom{|S|}i$.
Families $\cF,\cG\subseteq2^S$ are \emph{cross $t$-intersecting}
if $|F\cap G|\ge t$ for every $F\in\cF$ and $G\in\cG$.

For $J\subseteq[n]$, the \emph{trace} of $\cF$ on $J$ is
\begin{equation}\label{eq:family-trace}
 \cF_{\mid J}=\{F\cap J:F\in\cF\}\subseteq2^J.
\end{equation}
If membership depends only on $J$, then
$\cF=\cF_{\mid J}\times2^{[n]\setminus J}$; we identify $\cF$
with this trace when working on $2^J$.
In particular, $\cF_{\mid[n-d+1,n]}$ is the trace on the last $d$ vertices.

For $S\subseteq J\subseteq[n]$, define the \emph{restricted projection}
\begin{equation}\label{eq:restricted-projection}
 \cF(S,J)=\{F\setminus J:F\in\cF,\ F\cap J=S\}\subseteq2^{[n]\setminus J}.
\end{equation}
It fixes the coordinates in $J$ to the characteristic vector of $S$
and retains the other coordinates.
We also use the standard abbreviations
$\cF(i)=\cF(\{i\},\{i\})$ and
$\cF(\bar i)=\cF(\varnothing,\{i\})$.
In particular,
$$
\mu^n(\cF)=2^{-|J|}\sum_{S\subseteq J}\mu^{n-|J|}(\cF(S,J)).
$$

\subsection{Boolean function analysis}\label{subsec:boolean}

Let $x^{\oplus i}$ be obtained from $x$ by flipping coordinate $i$, and let $X$ be a random vector uniform chosen from $\{0,1\}^n$. If $f$ is a function on $2^{[n]}$ taking values only in $\{0,1\}$, then $f$ is called {\it Boolean}. For a Boolean function $f:\{0,1\}^n\to\{0,1\}$, its \emph{influence} in coordinate $i$ is
$$
I_i[f]=\PP\bigl(f(X)\ne f(X^{\oplus i})\bigr)=\E\bigl[(f(X)-f(X^{\oplus i}))^2\bigr].
$$
Define the {\it influence on $P$} by $I_P[f]=\sum_{i\in P}I_i[f]$ and  the {\it total influence} by $I[f]=\sum_{i=1}^nI_i[f]$.
For Boolean functions $f,g:\{0,1\}^n\to\{0,1\}$, define the
\emph{cross-total-influence of $f$ and $g$} by
\begin{equation}\label{eq:cross-total-influence}
 \mathcal{I}[f,g]:=\sum_{i=1}^n I_i[f]I_i[g].
\end{equation}

For families, set $I_i[\cF]=I_i[\mathbbm{1}_{\cF}]$, and similarly for $I_P[\cF]$ and $I[\cF]$. If $\cF\subset 2^{[n]}$ is increasing, then
\begin{equation}\label{ineq-2.1}
 I_i[\cF]=\mu^{n-1}(\cF(i))-\mu^{n-1}(\cF(\bar i))
 =\frac{|\cF(i)|-|\cF(\bar i)|}{2^{n-1}}.
\end{equation}
Here $\cF(i)$ and $\cF(\bar i)$ are the restricted projections
from \eqref{eq:restricted-projection}, with coordinate $i$ fixed to
one and zero, respectively.

For a real function $f$ on $\{0,1\}^n$ and $i\in[n]$,
the \emph{discrete derivative} in coordinate $i$ is the function
$D_i f:\{0,1\}^{[n]\setminus\{i\}}\to\mathbb R$ defined by
$$
D_i f(x)
=f(x_1,\ldots,x_{i-1},1,x_{i+1},\ldots,x_n)
-f(x_1,\ldots,x_{i-1},0,x_{i+1},\ldots,x_n).
$$
If $f$ is increasing and Boolean, then $D_i f$ is Boolean and
$\E[D_i f]=I_i[f]$.

For $U\subset [n]$ with $|U|=m$, since $\cK(n,U,t)$ is increasing, by \eqref{ineq-2.1},
\begin{equation}\label{eq:threshold-influences}
I_i[\cK(n,U,t)] =\left\{
                \begin{array}{ll}
                  2^{-(m-1)}\binom{m-1}{(m-t)/2}, & i\in U; \\[5pt]
                  0, &  i\notin U.
                \end{array}
              \right.
\end{equation}
Let $\theta(m,t):=2^{-(m-1)}\binom{m-1}{(m-t)/2}$. Then
$$
I[\cK(n,U,t)] =\sum_{i\in [n]} I_i[\cK(n,U,t)] =m\,\theta(m,t)=:J(m,t).
$$
For $t\ge2$, we use the convention
$w(t-2,t)=\theta(t-2,t)=J(t-2,t)=0$.
 Note that
 \[
\cK(n,U\setminus\{i\},t-1)(i) =\cK(n,U,t)(i),\  \cK(n,U,t)(\bar{i})\subset \cK(n,U\setminus\{i\},t-1)(\bar{i})
 \]
 and
 \[
 \cK(n,U\setminus\{i\},t-1)(\bar{i})\setminus \cK(n,U,t)(i) = \left\{F\subset [n]\colon i\notin F,\ |F\cap (U\setminus \{i\})|=\frac{m+t}{2}-1\right\}.
 \]
Hence, by \eqref{eq:threshold-influences} one can check that for $t\geq 2$
\begin{equation}\label{eq:threshold-one-projection}
 \mu(\cK(n,U\setminus\{i\},t-1))
 =w(m-1,t-1)=w(m,t)+\frac{\theta(m,t)}2.
\end{equation}

For a Boolean function $f$, its {\it dual} is defined by $f^*(x)=1-f(1-x)$. The dual of an increasing Boolean function is increasing, and $I_i[f^*]=I_i[f]$.
In particular, $\mathcal{I}[f,g^*]=\mathcal{I}[f,g]$.

Define the {\it covariance} of $f,g$ by  $\Cov(f,g)=\E[fg]-\E f\,\E g$ and the {\it variance} of $f$ by $\Var(f)=\Cov(f,f)$.
For $r>0$, the {\it $L^r$ norm }  of $f$ is defined by $\|f\|_r=(\E|f|^r)^{1/r}$.

For $S\subseteq[n]$, the \emph{Fourier--Walsh character} is defined as $\chi_S(x)=(-1)^{\sum_{i\in S}x_i}$.
For any real-valued functions $f,g\colon \{0,1\}^n\to \mathbb{R}$, define the {\it inner
product} by
\[
\langle f,g\rangle=\E[fg].
\]
The characters $\{\chi_S(x)\colon S\subset [n]\}$ form an orthonormal basis with respect to the inner
product $\langle f,g\rangle=\E[fg]$. The \emph{Fourier--Walsh coefficient}
of $f$ at $S$ and the corresponding expansion are
\[
 \hat{f}(S)=\langle f,\chi_S\rangle
 =2^{-n}\sum_{x\in\{0,1\}^n}f(x)\chi_S(x).
\]
In particular, $\widehat f(\varnothing)=\E f$.

Define the {\it noise operator} by $\widehat{T_\theta f}(R)=\theta^{|R|}\hat{f}(R)$ for
$0\le\theta\le1$. For $0\le\rho,\sigma\le1$, the noise operators
preserve non-negativity, and satisfy $T_\rho T_\sigma=T_{\rho\sigma}$.

\begin{lemma}[{\cite[Lemma~3.4]{CC}}]
For any real functions $f,g$,
\begin{equation}\label{eq:covariance-integral}
\Cov(f,g)=\frac14\sum_{i=1}^n\int_0^1\langle D_i f,\,T_\rho D_i g\rangle \,d\rho.
\end{equation}
\end{lemma}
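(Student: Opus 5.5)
We prove the covariance formula \eqref{eq:covariance-integral} by expanding both sides in the Fourier--Walsh basis and matching them coefficient by coefficient. Since both sides are bilinear in $(f,g)$, it suffices to compare Fourier expansions. By Parseval,
\[
\Cov(f,g)=\sum_{S\neq\varnothing}\hat f(S)\hat g(S).
\]
The task is therefore to show that the right-hand side equals the same sum.

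\emph{Step 1: Fourier expansion of the derivative.} Since $\chi_S(x)=(-1)^{x_i}\chi_{S\setminus\{i\}}(x)$ for $i\in S$, setting $x_i=1$ and $x_i=0$ gives
\[
D_i\chi_S=-2\chi_{S\setminus\{i\}}\quad (i\in S),\qquad D_i\chi_S=0\quad (i\notin S).
\]
Hence, as a function on $\{0,1\}^{[n]\setminus\{i\}}$,
\[
D_if=-2\sum_{S\ni i}\hat f(S)\chi_{S\setminus\{i\}}.
\]
We interpret $T_\rho$ and $\langle\cdot,\cdot\rangle$ on the cube $\{0,1\}^{[n]\setminus\{i\}}$. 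The value is the same if we regard $D_if$ as a function on $\{0,1\}^n$ that ignores coordinate $i$, because its spectrum then avoids $i$.

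\emph{Step 2: Apply the noise operator and Parseval.} The noise operator acts diagonally:
\[
T_\rho D_ig=-2\sum_{S\ni i}\rho^{|S|-1}\hat g(S)\chi_{S\setminus\{i\}}.
\]
Orthonormality then gives
\[
\langle D_if,T_\rho D_ig\rangle=4\sum_{S\ni i}\rho^{|S|-1}\hat f(S)\hat g(S).
\]
Summing over $i$, each nonempty $S$ is counted $|S|$ times, so
\[
\frac14\sum_i\langle D_if,T_\rho D_ig\rangle=\sum_{S\ne\varnothing}|S|\rho^{|S|-1}\hat f(S)\hat g(S).
\]

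\emph{Step 3: Integrate in $\rho$.} For $|S|\geq 1$ we have $\int_0^1|S|\rho^{|S|-1}\,d\rho=1$. Since the sums are finite, integration and summation can be exchanged. Integrating the identity of Step 2 over $\rho\in[0,1]$ therefore yields exactly $\sum_{S\ne\varnothing}\hat f(S)\hat g(S)=\Cov(f,g)$, which is \eqref{eq:covariance-integral}.

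The only delicate point is bookkeeping. One must get the factor $-2$ in $D_i\chi_S$ right, since it produces the $4$ that cancels the $\frac14$. One must also check that the noise operator on the $(n-1)$-cube contributes $\rho^{|S|-1}$ rather than $\rho^{|S|}$: the exponent $|S|-1$ is precisely what makes each integral equal $1$. Everything else is routine.
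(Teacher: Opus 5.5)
Your proof is correct. The key steps all check out:
\begin{itemize}
\item $D_i\chi_S=-2\chi_{S\setminus\{i\}}$ for $i\in S$ and $D_i\chi_S=0$ otherwise;
\item the noise operator on the $(n-1)$-cube contributes the factor $\rho^{|S|-1}$;
\item each nonempty $S$ is counted $|S|$ times when you sum over $i$;
\item $\int_0^1|S|\rho^{|S|-1}\,d\rho=1$, which recovers $\sum_{S\neq\varnothing}\hat f(S)\hat g(S)=\Cov(f,g)$.
\end{itemize}
The paper gives no proof of this identity and only cites it from the literature. Your Fourier--Walsh coefficient matching is the standard proof, so nothing is missing.
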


For $J\subseteq[n]$, the {\it conditional expectation}
$\E[f\mid J]$ is defined as:
\begin{equation}\label{eq:conditional-expectation}
 \E[f\mid J](x_J)
 =2^{-|J^c|}\sum_{y\in\{0,1\}^{J^c}}f(x_J,y).
\end{equation}
For a family $\cF$ and $S\subseteq J$,
$\E[\mathbbm{1}_{\cF}\mid J](S) =\mu^{n-|J|}(\cF(S,J))$. The
function $\E[\mathbbm{1}_{\cF}\mid J](x_J)$ is increasing  whenever $\cF$ is increasing.
The law of total covariance is
\begin{equation}\label{eq:total-covariance}
 \Cov(f,g)=\E[\Cov(f,g\mid X_J)]
       +\Cov(\E[f\mid J],\E[g\mid J]).
\end{equation}

\subsection{Classical theorems used in the proofs}\label{subsec:classical}

For $\cF\subseteq\binom{[n]}k$ and $0\le j\le k$, its
$j$th lower shadow is
$$
 \partial^j\cF
 =\{G\in\tbinom{[n]}{k-j}:G\subseteq F
\text{ for some }F\in\cF\}.
$$
In particular, $\partial^0\cF=\cF$ and $\partial\cF=\partial^1\cF$.
The same definition applies on any specified ground set. A useful elementary bound is the Sperner's bound~\cite{Sperner}:
\begin{equation}\label{eq:normalized-shadow}
 \frac{|\partial^j\cF|}{\binom n{k-j}}
 \ge\frac{|\cF|}{\binom nk}.
\end{equation}

\begin{theorem}[Katona's intersecting shadow theorem~\cite{K1}]
\label{thm:classical-shadow}
Let $\cF\subseteq\binom{[n]}k$ be $t$-intersecting,
where $1\le t\le k\le n$. For every integer $0\le j\le t$,
\begin{equation}\label{eq:classical-shadow}
 |\partial^j\cF|
 \ge\frac{\binom{2k-t}{k-j}}{\binom{2k-t}k}|\cF|.
\end{equation}
\end{theorem}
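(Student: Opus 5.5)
Katona's intersecting shadow theorem is a classical result, so I would give the standard proof from \cite{K1}. It proceeds by induction on $j$ and rests on the stronger inequality $|\partial^{t}\cF| \ge |\cF|$ for a $t$-intersecting $k$-uniform family. First I would reduce to $j=t$, or more precisely to a chain of one-step estimates. Set $a_j=\binom{2k-t}{k-j}/\binom{2k-t}{k}$. The ratio $a_j/a_{j-1}=\binom{2k-t}{k-j}/\binom{2k-t}{k-j+1}=(k-j+1)/(k-t+j)$ is exactly the local LYM ratio for passing from level $k-j+1$ to level $k-j$ inside a ground set of size $2k-t$.

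\textbf{Step 1: shifting.} Apply the standard $(i,j)$-shifts $S_{ij}$. A shift preserves $|\cF|$ and the $t$-intersecting property, and it does not increase $|\partial^j\cF|$, because $\partial^j S_{ij}(\cF)\subseteq S_{ij}(\partial^j\cF)$. So I may assume $\cF$ is shifted. The key structural fact I would then use is that, for a shifted $t$-intersecting $k$-uniform family, every $F\in\cF$ satisfies $|F\cap[2k-t]|\ge k$ would be too strong. The correct property is that any two members meet in at least $t$ elements within an initial segment.

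\textbf{Step 2: induction on $n$.} This is the cleanest route, and it follows Katona/Frankl's proof. If $n\le 2k-t$ the bound is immediate from \eqref{eq:normalized-shadow}. Indeed, $|\partial^j\cF|/\binom{n}{k-j}\ge |\cF|/\binom nk$, and $\binom{n}{k-j}/\binom nk\ge \binom{2k-t}{k-j}/\binom{2k-t}{k}$ because this ratio is decreasing in $n$ for $j\ge0$. If $n>2k-t$, split the shifted family as $\cF(\bar n)\subseteq\binom{[n-1]}{k}$ and $\cF(n)\subseteq\binom{[n-1]}{k-1}$. Shiftedness together with $n>2k-t$ gives two things. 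First, $\cF(\bar n)$ is $t$-intersecting. Second, $\cF(n)$ is $(t-1)$-intersecting: if $F,G\ni n$ had $|F\cap G|=t$, then $|F\cup G|=2k-t<n$, so some element $x\notin F\cup G$ exists, and shifting $n\to x$ in $G$ yields a member meeting $F$ in only $t-1$ elements. Also $\partial^j\cF\supseteq \partial^j\cF(\bar n)\cup\{H\cup\{n\}:H\in\partial^j\cF(n)\}$, and this union is disjoint. By induction, with parameters $(n-1,k,t)$ and $(n-1,k-1,t-1)$, both of which have the same $2k-t$ and satisfy $j\le t-1$ when $j<t$, each part gains the factor $a_j$. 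For $j=t$, the second part needs $j\le t-1$. Here I would instead use $\partial^t\cF\supseteq\partial^{t-1}(\partial\cF)$ combined with the trivial case, or treat $t$ as a separate base case via the classical $|\partial^t\cF|\ge|\cF|$ shown by Katona's cyclic/induction argument.

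I expect the main obstacle to be this endpoint $j=t$, where the $(t-1)$-intersecting part $\cF(n)$ only supports $j\le t-1$. To handle it, I would prove the inequality for $j=t$ separately, by a second induction on $t$: the base case $t=1$ is the Erd\H{o}s--Ko--Rado shadow inequality $|\partial\cF|\ge|\cF|$ for intersecting families with $n\ge 2k$. The other delicate point is checking that the constant $a_j$ is unchanged under $(k,t)\to(k-1,t-1)$, which holds since $2k-t$ is invariant and $\binom{2k-t}{k-1-j}/\binom{2k-t}{k-1}$ must be compared with $a_j$. I expect this comparison to go in the favorable direction by log-concavity of binomial coefficients. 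If it does not, the reduction has to be rearranged so that both parts use exactly the same ratio.
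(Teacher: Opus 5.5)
The paper does not prove this statement; it is quoted from Katona~\cite{K1}. Your eventual route is a sound framework: shift, then induct on $n$, with the base case $n\le 2k-t$ handled by \eqref{eq:normalized-shadow}. The shifting step, the base case and the disjoint decomposition of $\partial^j\cF$ are all correct. The gap is in how you treat $\cF(n)$. Your own shifting argument shows that two members of $\cF$ containing $n$ cannot meet in exactly $t$ elements, so they meet in at least $t+1$. That is, $\cF(n)\subseteq\binom{[n-1]}{k-1}$ is $t$-intersecting. You record only the trivial fact that it is $(t-1)$-intersecting and induct with $(n-1,k-1,t-1)$, and this does not close. Contrary to your claim, $2k-t$ is not invariant, since $2(k-1)-(t-1)=2k-t-1$. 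Write
\[
 c(k,t,j)=\frac{\binom{2k-t}{k-j}}{\binom{2k-t}{k}}=\prod_{i=1}^{j}\frac{k-j+i}{k-t+i}.
\]
The product telescopes to $c(k-1,t-1,j)=\frac{k-j}{k}\,c(k,t,j)$, which is strictly smaller than required for every $j\ge1$. So the comparison you hoped log-concavity would give goes the wrong way. The induction therefore fails for all $j\ge1$, not only at the endpoint $j=t$. Your suggested patches for $j=t$ are not carried out: a second induction on $t$ inherits the same loss, and invoking $|\partial^t\cF|\ge|\cF|$ assumes a case of the statement being proved.

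The missing idea is to use the stronger property you already proved. Apply the induction hypothesis to $\cF(n)$ with parameters $(n-1,k-1,t)$; if $k=t$, then $\cF(n)=\varnothing$ by the same shifting argument. The range $0\le j\le t$ is unchanged. Since $j\le t$, each factor $\frac{k-j+i}{k-t+i}$ is at least $1$, and replacing $k$ by $k-1$ can only increase it, because $\frac{a-1}{b-1}\ge\frac ab$ for $a\ge b\ge2$. Hence $c(k-1,t,j)\ge c(k,t,j)$. The case $(n-1,k,t)$ gives $|\partial^j\cF(\bar n)|\ge c(k,t,j)|\cF(\bar n)|$. Your disjoint decomposition then yields $|\partial^j\cF|\ge c(k,t,j)\bigl(|\cF(\bar n)|+|\cF(n)|\bigr)=c(k,t,j)|\cF|$ for all $0\le j\le t$ at once, with no special treatment of $j=t$.

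Also drop the ``chain of one-step estimates'' from your first paragraph; those one-step ratios are false in general. Take $t=2$, $k\ge3$ and $\cF=\{F\in\binom{[n]}{k}:\{1,2\}\subseteq F\}$. Then $|\partial^2\cF|/|\partial^1\cF|\to\frac12$ as $n\to\infty$, which is less than $\frac{k-1}{k}=a_2/a_1$.
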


For a more general result cf. \cite{FW26}.

For any family $\cF\subseteq2^{[n]}$, let
$N_j(\cF)=\{G\subseteq[n]:|G\triangle F|\le j\text{ for some }F\in\cF\}$
denote its closed Hamming neighborhood of radius $j$; in particular, $N_j(\varnothing)=\varnothing$.

For distinct $A,B\in 2^{[n]}$, define the total ordering
$$
 A<_{\mathrm{simp}}B
 \mbox{ if }
 |A|<|B|
 \ \text{or}\
  |A|=|B|\ \text{and}\ \min(A\triangle B)\in A.
$$
This total order on $2^{[n]}$ is called the \emph{simplicial order}.
A family $\cF\subseteq2^{[n]}$ is called  an \emph{initial segment} of the simplical order if
$B\in\cF$ and $A<_{\mathrm{simp}}B$ imply $A\in\cF$.

\begin{theorem}[Harper's vertex-isoperimetric theorem~\cite{Harper}, cf. \cite{FF} for a short proof]
\label{thm:classical-harper}
Let $\cF\subseteq2^{[n]}$, and let $\cF_0$ be the initial segment
of the simplicial order on $2^{[n]}$ with $|\cF_0|=|\cF|$.
For every integer $j\ge0$,
\begin{equation}\label{eq:classical-harper}
 |N_j(\cF)|\ge|N_j(\cF_0)|.
\end{equation}
\end{theorem}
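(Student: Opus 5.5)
We reduce to radius one, compress along codimension-one subcubes, and finish with a direct analysis of families compressed in every direction.

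\emph{Reduction to $j=1$.} Since $N_j(\cF)=N_1(N_{j-1}(\cF))$, it suffices to prove the case $j=1$ together with one auxiliary fact. The fact is that if $\cF_0$ is an initial segment of the simplicial order, then so is $N_1(\cF_0)$. I would check this directly from the definition of $<_{\mathrm{simp}}$. Write $\cF_0=\binom{[n]}{\le r-1}\cup\cG$, where $\cG\subseteq\binom{[n]}r$ is an initial segment of the lexicographic-type order on level $r$. Then $N_1(\cF_0)=\binom{[n]}{\le r}\cup\partial^{-1}\cG$, and the upper shadow of an initial segment on a level is again an initial segment on the next level.

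With both facts, induction on $j$ works. If $|N_{j-1}(\cF)|\ge|N_{j-1}(\cF_0)|$, let $\cH_0$ be the initial segment of size $|N_{j-1}(\cF)|$. Then $\cH_0\supseteq N_{j-1}(\cF_0)$ because both are initial segments. Applying the case $j=1$ and monotonicity of $N_1$ gives $|N_j(\cF)|\ge|N_1(\cH_0)|\ge|N_1(N_{j-1}(\cF_0))|=|N_j(\cF_0)|$.

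\emph{Compression for $j=1$.} Fix a coordinate $i$. Split $\cF$ into the sections $\cF(i)$ and $\cF(\bar i)$ in $2^{[n]\setminus\{i\}}$. Replace each section by the initial segment of the same size in the simplicial order on $2^{[n]\setminus\{i\}}$; call the result $C_i(\cF)$. Then
\[
N_1(\cF)(\bar i)=N_1(\cF(\bar i))\cup\cF(i),
\]
and symmetrically for the section at $i$. By induction on $n$ (the case $j=1$ in dimension $n-1$), each $|N_1(\cdot)|$ term does not increase under compression. Moreover, after compression the union of two initial segments is simply the larger of them, so $|N_1(C_i\cF)|\le|N_1(\cF)|$. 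Each compression that changes $\cF$ strictly decreases $\sum_{F\in\cF}\mathrm{rank}_{\mathrm{simp}}(F)$. Hence after finitely many steps we may assume $\cF$ is $i$-compressed for every $i$.

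\emph{Main obstacle: families compressed in every direction.} Being compressed in all directions does not force $\cF$ to be an initial segment. I expect this step to be the real difficulty. I would first show that such an $\cF$ is very close to an initial segment: for $n\ge3$, an $i$-compressed-for-all-$i$ family that is not initial is obtained from an initial segment $\binom{[n]}{\le r-1}\cup\cG$ by a very specific swap. Concretely, the family contains all sets of size $\le r-1$ and the level-$r$ part consists of sets containing $1$, except that the last set containing $1$ (in simplicial order) is replaced by the first set avoiding $1$. This follows the Bollobás--Leader treatment.

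For these exceptional families I would compute $|N_1|$ explicitly and compare with the initial segment of the same size, using Pascal-type identities; this comparison is the only genuinely computational part. As a fallback, I would follow the Frankl--Füredi short proof cited in the statement. Their argument uses ordinary $(i,j)$-shifting to make $\cF$ shifted, and then does induction on $n$ via the two sections at the element $n$. For shifted families the inclusion $\cF(n)\subseteq\cF(\bar n)$ lets the neighbourhood be written as $N_1(\cF(\bar n))$ together with $N_1(\cF(n))$ on the other side, which avoids the exceptional cases.
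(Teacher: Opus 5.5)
The paper gives no proof of this statement; it quotes Harper and refers to Frankl--F\"uredi. Your outline is the Bollob\'as--Leader codimension-one compression proof, and most of its scaffolding is sound:
\begin{itemize}
\item the reduction to $j=1$, using that $N_1$ of a simplicial initial segment is again initial (the same fact the paper uses in its remark on simplicial compression);
\item the section identity $N_1(\cF)(\bar i)=N_1(\cF(\bar i))\cup\cF(i)$;
\item the inequality $|N_1(C_i\cF)|\le|N_1(\cF)|$, by induction on $n$;
\item termination via the total simplicial rank.
\end{itemize}

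\textbf{The gap.} It lies in the step you yourself call the crux: your description of the families that are $i$-compressed for every $i$ but not initial is wrong. Your family consists of all sets of size at most $r-1$ together with the $r$-sets containing $1$, with the last of these swapped for the first $r$-set avoiding $1$. This is the exceptional family only in the even case $n=2r$. For odd $n=2k+1$ the exceptional family is $\binom{[n]}{\le k}\setminus\{\{k+2,\ldots,2k+1\}\}\cup\{\{1,\ldots,k+1\}\}$. For example, when $n=3$ the family $\{\varnothing,\{1\},\{2\},\{1,2\}\}$ is compressed in all three directions, is not initial, and is not of your form, since it omits $\{3\}$. So your final comparison, as planned, misses a case.

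\textbf{How to close it.} The classification has a short proof that you should give rather than cite.
\begin{itemize}
\item Suppose $x\in\cF$, $y\notin\cF$ and $y<_{\mathrm{simp}}x$. Then $x\triangle y=[n]$. Otherwise $x$ and $y$ lie in a common $i$-section, on which the induced order is the simplicial order, and this contradicts that the section is initial. Hence $y=x^c$.
\item Take $x$ to be the last member of $\cF$. This gives $\cF=\{y:y\le_{\mathrm{simp}}x\}\setminus\{x^c\}$ with $x^c<_{\mathrm{simp}}x$.
\item Suppose some $y$ lies strictly between $x^c$ and $x$. Then $y$ either omits some $i\in x$ or contains some $i\in x^c$. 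In both cases $x^c\notin\cF$ precedes $y\in\cF$ in a common $i$-section, again a contradiction.
\item Thus $x$ is the immediate successor of $x^c$. This forces one of two cases:
 \begin{itemize}
 \item $n=2k$, $x^c=\{1,k+2,\ldots,2k\}$, $x=\{2,\ldots,k+1\}$, which is your family;
 \item $n=2k+1$, $x^c=\{k+2,\ldots,2k+1\}$, $x=\{1,\ldots,k+1\}$.
 \end{itemize}
\end{itemize}
The comparison is then immediate and needs no Pascal identities. Let $\cF_0$ be the initial segment of the same size. Then $|N_1(\cF)|-|N_1(\cF_0)|$ equals $k-1$ in the even case (the extra sets $\{2,\ldots,k+1,a\}$) and $k$ in the odd case (the sets $\{1,\ldots,k+1,a\}$).

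\textbf{The fallback.} As written, it does not work either. Shiftedness alone does not give $\cF(n)\subseteq\cF(\bar n)$; that needs $\cF$ to be a down-set. You would also have to apply the down-compressions $F\mapsto F\setminus\{i\}$, which likewise do not increase $|N_1|$.
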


\begin{theorem}[Edge-isoperimetric inequality; {\cite[Proposition~1.5]{EKLbiased}}]
\label{thm:edge-isoperimetry}
If $f$ is a Boolean function with $\E f=\alpha$, then
\begin{equation}\label{eq:edge-isoperimetry}
 I[f]\ge\frac{2\alpha}{\ln2}\ln\frac1\alpha,
\end{equation}
where the right-hand side is zero when $\alpha=0$.
\end{theorem}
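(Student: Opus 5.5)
The plan is to prove the inequality by induction on the number of coordinates $n$. The induction uses the fact that total influence splits exactly when we restrict the last coordinate. Write $\varphi(a)=\frac{2a}{\ln 2}\ln\frac1a=2a\log_2\frac1a$ for $a\in[0,1]$, with $\varphi(0)=0$. The claim is then $I[f]\ge\varphi(\E f)$ for every Boolean $f$ on $\{0,1\}^n$.

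\textbf{Base case and splitting.} For $n=0$, $f$ is constant, so $\alpha\in\{0,1\}$ and $\varphi(\alpha)=0=I[f]$.

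For $n\ge1$, let $f_0,f_1$ be the restrictions of $f$ with $x_n=0$ and $x_n=1$. These are Boolean functions on $n-1$ coordinates with means $\alpha_0,\alpha_1$, and $\alpha=(\alpha_0+\alpha_1)/2$. For $i<n$ we have $I_i[f]=\frac12(I_i[f_0]+I_i[f_1])$. Also $I_n[f]=\E|f_0-f_1|\ge|\alpha_0-\alpha_1|$. Hence
\[
I[f]\ge\tfrac12\bigl(I[f_0]+I[f_1]\bigr)+|\alpha_0-\alpha_1|.
\]
By the induction hypothesis, $I[f]\ge\frac12(\varphi(\alpha_0)+\varphi(\alpha_1))+|\alpha_0-\alpha_1|$. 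So it suffices to prove the two-point inequality
\[
\varphi\Bigl(\tfrac{a+b}{2}\Bigr)\le\tfrac12\bigl(\varphi(a)+\varphi(b)\bigr)+|a-b|,\qquad a,b\in[0,1].
\]

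\textbf{Two-point inequality (the only real step).} Parametrize $a=m+d$ and $b=m-d$ with $0\le d\le m$; then $2m=a+b\le 2$ automatically, and since $a\le 1$ we also have $m+d\le1$. Set
\[
g(d)=\tfrac12\bigl(\varphi(m+d)+\varphi(m-d)\bigr)+2d-\varphi(m).
\]
We need $g\ge 0$ on the range of allowed $d$.

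Using $\varphi'(a)=2\log_2\frac1a-\frac{2}{\ln2}$, we get
\[
g'(d)=\log_2\frac{m-d}{m+d}+2,
\]
which is decreasing in $d$. Hence $g$ is concave in $d$, and it is enough to check $g\ge0$ at the two endpoints of the allowed interval.

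At $d=0$ we have $g(0)=0$.

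The right endpoint depends on $m$:
\begin{itemize}
\item If $m\le\frac12$, the right endpoint is $d=m$, i.e.\ $b=0$. A direct computation gives $g(m)=\frac12\varphi(2m)+2m-\varphi(m)=0$: the inequality is an equality there.
\item If $m>\frac12$, the right endpoint is $d=1-m$, i.e.\ $a=1$, and $g(1-m)=\frac12\varphi(2m-1)+2(1-m)-\varphi(m)$. I would verify this is nonnegative by setting $h(m)=g(1-m)$. Then $h(1)=0$ and $h(\tfrac12)=0$ (matching the previous case). Moreover $h''(m)=2\varphi''(2m-1)-\varphi''(m)=\frac{2}{\ln 2}\bigl(\frac1m-\frac{2}{2m-1}\bigr)<0$ on $(\frac12,1)$, because $2m-1<2m$. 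So $h$ is concave on $[\frac12,1]$ with zeros at both ends, hence $h\ge0$.
\end{itemize}
Concavity of $g$ together with nonnegativity at both endpoints gives $g\ge0$ throughout.

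\textbf{Main obstacle.} The only delicate point is this endpoint bookkeeping, specifically the constraint $a\le1$ when $m>\frac12$. Everything else is the standard restriction argument. Once the two-point inequality holds, the induction closes and yields $I[f]\ge\frac{2\alpha}{\ln2}\ln\frac1\alpha$.
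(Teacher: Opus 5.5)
Your proof is correct. It cannot be compared with the paper's argument, because the paper gives none: the statement is quoted from Ellis--Keller--Lifshitz, where it is the uniform case of a $p$-biased edge-isoperimetric inequality. The paper only uses it for families of very small measure on $\{0,1\}^Y$, in \eqref{eq:rare-section-influence}.

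What you give is the classical self-contained induction on the dimension, and the details check out:
\begin{itemize}
\item The splitting $I[f]\ge\frac12(I[f_0]+I[f_1])+|\alpha_0-\alpha_1|$ holds under the paper's normalization $I_i[f]=\PP(f(X)\ne f(X^{\oplus i}))$. The only loss is the single inequality $\E|f_0-f_1|\ge|\alpha_0-\alpha_1|$.
\item In the two-point inequality, $g'(d)=\log_2\frac{m-d}{m+d}+2$ is decreasing, so $g$ is concave in $d$. You also have $g(0)=0$, and $g(m)=0$ when $m\le\frac12$.
\item For $m>\frac12$, the endpoint function $h$ is concave on $[\frac12,1]$ with $h(\frac12)=h(1)=0$, so $h\ge0$.
\item Concavity extends to the closed intervals by continuity of $\varphi$ at $0$, even though $g'$ blows up as $d\to m$.
\end{itemize}

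Your route is elementary and self-contained, and it gives the bound for every $\alpha\in[0,1]$ at once. The cited result additionally covers biased measures, which this paper never needs.
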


\begin{theorem}[Parseval's identity and the cube Poincar\'e inequality~\cite{OD}]
\label{thm:classical-poincare}
For every $f:\{0,1\}^n\to\mathbb{R}$,
\begin{align}
\E f^2&=\sum_{S\subseteq[n]}\hat{f}(S)^2,\label{eq:classical-parseval}\\
\Var(f)&\le\frac14\sum_{i=1}^n\E(D_i f)^2.\label{eq:classical-poincare}
\end{align}
In particular, if $f$ is Boolean, then $\Var(f)\le\frac14I[f]$.
\end{theorem}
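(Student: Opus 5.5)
The plan is to diagonalize everything in the Fourier--Walsh basis $\{\chi_S\}$ and reduce both statements to bookkeeping with coefficients.

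\emph{Parseval.} The characters are orthonormal: $\langle\chi_S,\chi_T\rangle=\E[\chi_{S\triangle T}]$, which is $1$ if $S=T$ and $0$ otherwise, since a nonempty product of independent $\pm1$ variables has mean zero. There are $2^n$ characters, which matches the dimension of the space of real functions on $\{0,1\}^n$, so they form an orthonormal basis. Hence $f=\sum_S\hat f(S)\chi_S$, and expanding $\E f^2=\langle f,f\rangle$ bilinearly gives \eqref{eq:classical-parseval}.

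\emph{Poincar\'e.} First compute the derivative of a single character. If $i\notin S$, then $\chi_S$ does not depend on $x_i$, so $D_i\chi_S=0$. If $i\in S$, then
\[
D_i\chi_S=(-1-1)\,\chi_{S\setminus\{i\}}=-2\chi_{S\setminus\{i\}},
\]
viewed as a function on $\{0,1\}^{[n]\setminus\{i\}}$. By linearity,
\[
D_if=-2\sum_{S\ni i}\hat f(S)\chi_{S\setminus\{i\}}.
\]
The characters $\chi_{S\setminus\{i\}}$ for $S\ni i$ are distinct characters on the smaller cube, so Parseval there gives
\[
\E(D_if)^2=4\sum_{S\ni i}\hat f(S)^2.
\]
Summing over $i$ yields
\[
\sum_i\E(D_if)^2=4\sum_S|S|\,\hat f(S)^2\ \ge\ 4\sum_{S\ne\varnothing}\hat f(S)^2 .
\]
By Parseval and $\hat f(\varnothing)=\E f$, the right-hand side equals $4(\E f^2-(\E f)^2)=4\Var(f)$. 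This proves \eqref{eq:classical-poincare}.

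\emph{Boolean case.} If $f$ is Boolean, then $D_if\in\{-1,0,1\}$, so $(D_if)^2$ is the indicator that $f$ changes when coordinate $i$ is flipped. Taking a uniform $X$ and pairing each $x$ with $x^{\oplus i}$, we get $\E(D_if)^2=\PP(f(X)\ne f(X^{\oplus i}))=I_i[f]$. Summing over $i$ gives $\Var(f)\le\frac14I[f]$.

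No step is a genuine obstacle. The only point needing care is the normalization: $D_if$ lives on $\{0,1\}^{[n]\setminus\{i\}}$ with its own uniform measure. With that convention the factor $4$ comes out exactly and matches the definition of $I_i$.
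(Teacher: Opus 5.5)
Your proof is correct and is the standard Fourier--Walsh argument from O'Donnell's book; the paper cites this classical result there and gives no proof of its own. The only difference from the textbook version is normalization: $D_i$ here is not divided by $2$, so $\E(D_if)^2=4\sum_{S\ni i}\hat f(S)^2$, which is exactly where the factor $\frac14$ comes from, and you handled this correctly.
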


\begin{theorem}[Bonami--Beckner hypercontractivity~\cite{OD}]
\label{thm:classical-hypercontractivity}
If $1<r\le u<\infty$ and
$0\le\theta\le\sqrt{(r-1)/(u-1)}$, then every $f:\{0,1\}^n\to\mathbb{R}$ satisfies
\begin{equation}\label{eq:classical-hypercontractivity}
 \|T_\theta f\|_u\le\|f\|_r.
\end{equation}
\end{theorem}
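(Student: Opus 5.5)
The statement just above is Bonami--Beckner hypercontractivity, Theorem~\ref{thm:classical-hypercontractivity}. I would prove it in the standard way: first settle the case $n=1$ (the two-point inequality), then pass to general $n$ by tensorization. Monotonicity of $L^u$ norms in $\theta$ reduces everything to the extremal value $\theta=\sqrt{(r-1)/(u-1)}$. Indeed, for smaller $\theta'$ we can write $T_{\theta'}=T_{\theta'/\theta}T_\theta$, and $T_{\theta'/\theta}$ is an averaging operator, so it is an $L^u$ contraction by Jensen. Hence $\|T_{\theta'} f\|_u\le\|T_\theta f\|_u\le\|f\|_r$.

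\textbf{One coordinate.} Write $f(x)=a+b\chi_{\{1\}}(x)$, so that $T_\theta f=a+\theta b\chi_{\{1\}}$. By homogeneity and the triangle inequality we may assume $a=1$ and $|b|\le 1$. The target is then
\[
\Bigl(\tfrac{(1+\theta b)^u+(1-\theta b)^u}{2}\Bigr)^{1/u}\le\Bigl(\tfrac{|1+b|^r+|1-b|^r}{2}\Bigr)^{1/r}.
\]
I would first handle the case $r\le 2\le u$ (or, more conveniently, $u=2$ or $r=2$) by expanding both sides in power series in $b$ and comparing coefficients. The general case then follows by composing through $L^2$, using duality to handle the case $u<2$. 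Concretely, the operator $T_\theta$ is self-adjoint, so
\[
\|T_\theta\|_{r\to u}=\|T_\theta\|_{u'\to r'},
\]
and semigroup factorization through $L^2$ matches the constraint on $\theta$, since $(r-1)/(u-1)=\bigl((r-1)/1\bigr)\cdot\bigl(1/(u-1)\bigr)$. The coefficient comparison for the $2\to u$ bound is the classical Bonami computation. It is elementary, and it is the step where the inequality $\theta^2(u-1)\le 1$ is exactly what is needed, as generalized binomial coefficients of $(1\pm\theta b)^u$ are dominated term by term.

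\textbf{Tensorization.} Induct on $n$. Write $T_\theta=T_\theta^{(n)}\otimes T_\theta^{(\le n-1)}$, where each factor acts on the indicated coordinates. Fix the last coordinate and apply the induction hypothesis in the first $n-1$ coordinates. Then use Minkowski's integral inequality to swap the $L^u$ and $L^r$ norms; this requires $u\ge r$. Finally, apply the one-coordinate bound to the resulting function of $x_n$.

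I expect the main obstacle to be the one-dimensional inequality for general $r\le u$, specifically getting the sign conditions right for noninteger exponents. If the power series approach becomes messy, I would fall back on the two-step route $r\to2\to u$, which needs only the two cases $u=2$ and $r=2$, each a clean series comparison. The paper itself treats this as a standard result citing \cite{OD}, so invoking it is also acceptable.
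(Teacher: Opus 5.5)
The parts you set up are correct: the reduction to the extremal $\theta$, the reduction to $f\ge0$, the duality identity $\|T_\theta\|_{r\to u}=\|T_\theta\|_{u'\to r'}$, and the tensorization step (induction in the first $n-1$ coordinates, Minkowski using $u\ge r$, then the one-point bound in $x_n$).

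\textbf{The gap: the full range $1<r\le u<\infty$ is not covered.} You claim the general one-point inequality follows from the cases $u=2$ and $r=2$ by composing through $L^2$ and dualizing. This is not so.
\begin{itemize}
\item Factoring $T_\theta=T_{\theta_2}T_{\theta_1}$ through $L^2$ requires $r\le2\le u$.
\item Duality sends a pair with $1<r\le u<2$ to $(u',r')$ with $2<u'\le r'$. Both exponents are again on one side of $2$.
\item So these two operations only ever produce the range $r\le2\le u$. The ranges $1<r\le u<2$ and $2<r\le u$ are never reached.
\item Riesz--Thorin interpolation at fixed $\theta$ does not rescue this. The boundary $u-1=\theta^{-2}(r-1)$ is strictly convex in the coordinates $(1/r,1/u)$, so interpolating between boundary points lands strictly inside the admissible region.
\end{itemize}

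\textbf{A secondary problem: the $2\to u$ case is not a term-by-term comparison.} For non-integer $u>2$ the coefficients do not line up. For example, when $3<u<4$, every coefficient $\binom u{2k}$ of $\E(1+\theta b\chi_{\{1\}})^u$ is positive. Meanwhile $(1+b^2)^{u/2}$ has coefficients $\binom{u/2}{k}$ of both signs.

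\textbf{The standard repair} (the route in \cite{OD}) is to run the series argument directly for $1<r<u\le2$.
\begin{itemize}
\item Take $f=1+b\chi_{\{1\}}$ with $|b|\le1$ and $\theta^2=(r-1)/(u-1)$.
\item Because $u\le2$, every $\binom u{2k}\ge0$: the factors $u-2,\dots,u-2k+1$ are an even number of nonpositive terms.
\item Hence $\|T_\theta f\|_u^r=(1+S)^{r/u}\le1+\frac ru S$, where $S=\sum_{k\ge1}\binom u{2k}\theta^{2k}b^{2k}\ge0$. On the other side, $\|f\|_r^r=\sum_{k\ge0}\binom r{2k}b^{2k}$.
\item It remains to show $\frac ru\binom u{2k}\theta^{2k}\le\binom r{2k}$ for each $k\ge1$. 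The left side equals $\frac{r(r-1)}{(2k)!}\bigl(\frac{r-1}{u-1}\bigr)^{k-1}\prod_{j=2}^{2k-1}(j-u)$. The right side equals $\frac{r(r-1)}{(2k)!}\prod_{j=2}^{2k-1}(j-r)$. The inequality follows from $0\le j-u\le j-r$ for $j\ge2$ and $\frac{r-1}{u-1}\le1$, with equality at $k=1$.
\item Duality then gives $2\le r<u$, and composition through $L^2$ gives $r<2<u$. In particular, the $2\to u$ case should be obtained as the dual of $u'\to2$, not from its own coefficient comparison.
\end{itemize}

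For comparison, the paper does not prove this theorem; it only cites \cite{OD}. Its two applications, both in Lemma~\ref{lem:smoothed-threshold}, use $(r,u)=(5/4,2)$ and $(r,u)=(2,\,2+2\ln(1/I_i[f]))$. Both lie in the range $r\le2\le u$, which your argument does cover once $2\to u$ is handled by duality.
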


\begin{theorem}[Reverse hypercontractivity~\cite{MOS}]
\label{thm:classical-reverse}
For nonnegative functions $f,g:\{0,1\}^n\to\mathbb{R}_{\ge 0}$ and $0<\theta<1$,
\begin{equation}\label{eq:classical-reverse}
 \langle f,\, T_\theta g\rangle \ge\|f\|_{1-\theta}\|g\|_{1-\theta}.
\end{equation}
\end{theorem}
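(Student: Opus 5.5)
The statement is the Mossel--O'Donnell--Regev--Steif--Sudakov reverse hypercontractivity inequality, and I would derive it from two ingredients: Borell's reverse hypercontractive inequality and the reverse H\"older inequality. By homogeneity and continuity (replace $f,g$ by $f+\delta,g+\delta$ and let $\delta\to0$) we may assume $f,g>0$. Set $p=1-\theta\in(0,1)$ and let $p'=p/(p-1)=1-1/\theta<0$ be its conjugate exponent.

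The first ingredient is the reverse H\"older inequality: for positive $f,h$ and $0<p<1$,
\[
\E[fh]\ge\|f\|_p\|h\|_{p'} .
\]
It follows from the ordinary H\"older inequality applied to $f^p=(fh)^p\cdot h^{-p}$ with exponents $1/p$ and $1/(1-p)$. Applying it with $h=T_\theta g$ gives
\[
\langle f,T_\theta g\rangle\ge\|f\|_{1-\theta}\,\|T_\theta g\|_{1-1/\theta}.
\]
It therefore suffices to prove Borell's reverse hypercontractivity in the form
\[
\|T_\theta g\|_{q}\ge\|g\|_{r}\quad\text{for } q<r<1,\ \theta\le\sqrt{(1-r)/(1-q)} ,
\]
with $r=1-\theta$ and $q=1-1/\theta$. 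The condition then reads $\theta\le\sqrt{\theta\cdot\theta}=\theta$, so it holds with equality. This is exactly why $1-\theta$ appears in the statement.

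Next I would prove Borell's inequality by tensorization. The base case is $n=1$. On $\{0,1\}$ write $g=(1+a, 1-a)$ up to scaling, with $|a|<1$, so that $T_\theta g=(1+\theta a,1-\theta a)$. The two-point inequality
\[
\Bigl(\tfrac{(1+\theta a)^q+(1-\theta a)^q}{2}\Bigr)^{1/q}\ge\Bigl(\tfrac{(1+a)^r+(1-a)^r}{2}\Bigr)^{1/r}
\]
is proved by expanding both sides in power series in $a$ (or $\theta a$) and comparing coefficients, mirroring the Bonami--Gross two-point argument. The second-order comparison is exactly the condition $\theta^2(1-q)\le 1-r$. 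The higher-order terms need sign bookkeeping, because for $q<0$ the exponent $1/q$ reverses monotonicity. One could alternatively use the standard reduction to $\theta^2=(1-r)/(1-q)$ with the substitution relating conjugate exponents.

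For the induction step, write $x=(x',x_n)$ and $T_\theta=T^{(n)}_\theta T^{(<n)}_\theta$. First apply the one-variable inequality in the coordinate $x_n$ for each fixed $x'$. Then apply the inductive hypothesis to the $(n-1)$-variable function. The two norms are swapped using the reverse Minkowski inequality: for exponent ratio $r/q$ and exponents below $1$, the ordering $\|\,\|G\|_{L^r(x_n)}\|_{L^q(x')}\ge\|\,\|G\|_{L^q(x')}\|_{L^r(x_n)}$ holds in the needed direction. Positivity of $T_\theta$ keeps all functions positive throughout.

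I expect the main obstacle to be the two-point inequality in the reverse regime, especially for negative $q$, where convexity arguments flip. I would first try the power series approach, checking that every coefficient difference has the correct sign when $\theta^2=(1-r)/(1-q)$. If that becomes messy, I would fall back on duality: reverse H\"older shows the $q<0$ case is dual to a case with positive exponents, so it reduces to verifying the inequality for $0<q<r<1$.
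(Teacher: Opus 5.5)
Your top-level reduction is correct, and it is exactly how Mossel, O'Donnell, Regev, Steif and Sudakov obtain this two-function form. The paper gives no proof of its own; it only cites them. The reduction runs as follows: reverse H\"older with $p=1-\theta$ and $p'=1-1/\theta$, then Borell's inequality $\|T_\theta g\|_{1-1/\theta}\ge\|g\|_{1-\theta}$. For this pair $(1-r)/(1-q)=\theta^2$, so the noise rate is exactly critical. If you may quote Borell's theorem, you are done. Your tensorization is also sound: the one-coordinate step, reverse Minkowski with exponent ratio $q/r<0$, and induction.

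There is a gap if you want the argument self-contained: the base case of Borell's inequality, which carries all the analytic content, is not proved. You check only the $a^2$ coefficient and assert that the higher ones compare correctly after ``sign bookkeeping.'' For the mixed-sign pair $(r,q)=(1-\theta,1-1/\theta)$ you give no termwise argument.

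The fallback does not help. Conjugation $s\mapsto s/(s-1)$ sends a pair with $q<0<r<1$ to another mixed-sign pair. Your pair is even self-dual ($1-\theta\leftrightarrow1-1/\theta$), so duality never brings you to two positive exponents.

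A short way to close the gap is to pass through the geometric mean $\|F\|_0=\exp\E\ln F$. Since $T_\theta=T_{\sqrt\theta}T_{\sqrt\theta}$ is self-adjoint, and $\E[FG]\ge\|F\|_0\|G\|_0$ by Jensen,
\[
\langle f,T_\theta g\rangle=\langle T_{\sqrt\theta}f,\,T_{\sqrt\theta}g\rangle\ge\|T_{\sqrt\theta}f\|_0\,\|T_{\sqrt\theta}g\|_0 .
\]
So it suffices to show $\|T_{\sqrt\theta}f\|_0\ge\|f\|_{1-\theta}$. This tensorizes exactly as in your induction; the norm swap at exponent $0$ is the weighted H\"older inequality. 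For $f=1+a\chi$ with $|a|<1$ and $r=1-\theta$, the two-point case is
\[
(1-\theta a^2)^{r/2}\ge\tfrac12\bigl((1+a)^r+(1-a)^r\bigr).
\]
Write the two sides as $1-\sum_{k\ge1}\lambda_k a^{2k}$ and $1-\sum_{k\ge1}\kappa_k a^{2k}$, with $\lambda_k,\kappa_k\ge0$. Then
$\lambda_k/\kappa_k=\prod_{j=1}^{k}\frac{(2j-1)(1-r)}{2j-1-r}\le1$, with equality at $k=1$. So in this formulation the termwise comparison you were hoping for goes through immediately.
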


\begin{theorem}[Harris--Kleitman inequality~\cite{Ha,Kl}]\label{thm:harris}
For increasing $f,g:\{0,1\}^n\to\mathbb{R}$, $\Cov(f,g)\ge0$.
\end{theorem}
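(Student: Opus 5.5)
The quickest route uses the covariance representation \eqref{eq:covariance-integral}, which is already available. For increasing $f,g$, every discrete derivative is nonnegative: $D_i f(x)=f(\ldots,1,\ldots)-f(\ldots,0,\ldots)\ge0$ pointwise, and likewise $D_i g\ge0$. The noise operator $T_\rho$ preserves nonnegativity for $0\le\rho\le1$, since it averages over random coordinate flips. Hence $T_\rho D_i g\ge0$ pointwise, so $\langle D_i f,T_\rho D_i g\rangle=\E[D_i f\cdot T_\rho D_i g]\ge0$ for every $i$ and every $\rho\in[0,1]$. Integrating over $\rho$ and summing over $i$ in \eqref{eq:covariance-integral} gives $\Cov(f,g)\ge0$. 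One small point needs care: $D_i f$ is defined on $\{0,1\}^{[n]\setminus\{i\}}$. In the pairing we regard it as a function on $\{0,1\}^n$ that does not depend on $x_i$; this changes neither nonnegativity nor the action of $T_\rho$.

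If one prefers to avoid \eqref{eq:covariance-integral}, I would give the classical induction on $n$. For $n=1$, write $f(0)\le f(1)$ and $g(0)\le g(1)$. Then
\[
\Cov(f,g)=\tfrac14\bigl(f(1)-f(0)\bigr)\bigl(g(1)-g(0)\bigr)\ge0.
\]
For the inductive step, condition on the last coordinate using the law of total covariance \eqref{eq:total-covariance} with $J=\{n\}$:
\[
\Cov(f,g)=\E\bigl[\Cov(f,g\mid X_n)\bigr]+\Cov\bigl(\E[f\mid\{n\}],\E[g\mid\{n\}]\bigr).
\]
For each fixed $x_n$, the restrictions of $f$ and $g$ are increasing functions of $n-1$ variables, so the inner conditional covariance is nonnegative by induction. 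The conditional expectations $\E[f\mid\{n\}]$ and $\E[g\mid\{n\}]$ are increasing functions of the single bit $x_n$, because averaging an increasing function over the other coordinates preserves monotonicity in $x_n$. The second term is therefore nonnegative by the one-variable case.

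Neither route has a real obstacle. The only step deserving attention is positivity preservation of $T_\rho$ in the first argument, or monotonicity of the conditional expectations in the second; both follow from writing the operator as an average with nonnegative weights.
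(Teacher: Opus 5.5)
The paper does not prove this statement; it quotes it as a classical tool with a citation to Harris and Kleitman, so there is no internal proof to match. Both of your arguments are correct.

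Your second route is essentially the classical proof: induction on $n$ through the law of total covariance. It is self-contained and uses no Fourier analysis. It also works unchanged for any product measure on the cube, where the base case becomes $p(1-p)(f(1)-f(0))(g(1)-g(0))$. Its inductive step is exactly the ``conditional Harris--Kleitman'' decomposition the paper itself performs with \eqref{eq:total-covariance} in Lemma~\ref{lem:excess-covariance}.

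Your first route is not circular, because \eqref{eq:covariance-integral} is a pure Fourier identity. Since $D_i\chi_S=-2\chi_{S\setminus\{i\}}$ for $i\in S$ and $D_i\chi_S=0$ otherwise, its right-hand side equals $\sum_{S}\hat f(S)\hat g(S)\sum_{i\in S}\int_0^1\rho^{|S|-1}\,d\rho=\sum_{S\ne\varnothing}\hat f(S)\hat g(S)$. What this route buys is that it is the qualitative version of Lemma~\ref{lem:correlation}. There the paper does not merely observe that each integrand $\langle D_i f,T_\rho D_i g\rangle$ is nonnegative; it bounds the integrand from below by reverse hypercontractivity, giving a quantitative, Talagrand-type inequality. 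The same positivity is what justifies discarding coordinates from the covariance integral in Lemma~\ref{lem:excess-covariance}. The cost is reliance on the cited identity and the uniform-measure Fourier setup, whereas the induction needs nothing beyond the one-variable computation.
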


\subsection{Shifting}\label{subsec:shifting}

We use the standard shifting operators; see~\cite{EKR,F87}. For $\cF\subseteq2^{[n]}$ and $i\neq j$, define the shift
$S_{ij}(\cF)=\{S_{ij}(F):F\in\cF\}$,
where
$$S_{ij}(F)=\left\{
\begin{array}{ll}
(F\setminus\{j\})\cup\{i\}, & j\in F, i\notin F \text{ and } (F\setminus\{j\})\cup\{i\}\notin \cF; \\[5pt]
F, & \hbox{otherwise.}
\end{array}\right.
$$
It is well known (cf. \cite{F87}) that shifting preserves the  $p$-intersecting and $q$-union properties.

We say that $\cF\subset 2^{[n]}$ is {\it left-shifted} if $S_{ij}(\cF)= \cF$ for any $1\leq i<j\leq n$. We say that $\cF\subset 2^{[n]}$ is {\it right-shifted} if $S_{ji}(\cF)= \cF$ for any $1\leq i<j\leq n$.

\begin{lemma}\label{lem:closure}
For every $(p,q)$-family $\cF\subset 2^{[n]}$, there are increasing families
$\cA,\cB\subseteq2^{[n]}$ such that $\cA$ is $p$-intersecting
and left-shifted, $\cB$ is $q$-intersecting and right-shifted, and
$\cF \subset \cA\cap\cB^c$.
\end{lemma}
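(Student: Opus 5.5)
The plan has two stages. The first stage, the containment with monotone closures, is straightforward. The second stage, imposing shiftedness, is where the argument breaks, and I will argue that the statement as worded needs to be weakened.

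\emph{Stage 1: monotone closures.} Let $\cF$ be a $(p,q)$-family. Define $\cA_0=\{G\subseteq[n]:G\supseteq F\text{ for some }F\in\cF\}$, the up-closure of $\cF$. If $G\supseteq F$ and $G'\supseteq F'$ with $F,F'\in\cF$, then $|G\cap G'|\ge|F\cap F'|\ge p$. So $\cA_0$ is increasing, $p$-intersecting, and contains $\cF$. Since $\cF$ is $q$-union, the family $\cF^c$ is $q$-intersecting. Let $\cB_0$ be the up-closure of $\cF^c$; by the same argument it is increasing and $q$-intersecting. Because $\cF^c\subseteq\cB_0$, we get $\cF\subseteq\cB_0^c$. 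Hence $\cF\subseteq\cA_0\cap\cB_0^c$.

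\emph{Stage 2: shiftedness, where the plan fails.} The natural next step is to enlarge $\cA_0$ to a left-shifted family and $\cB_0$ to a right-shifted one while keeping all the properties. The obvious candidates are the following.
\begin{itemize}
\item Take $\cA=\{G:\ \exists F\in\cF$ and an injection $\varphi:F\to G$ with $\varphi(x)\le x\}$. This is the smallest increasing, left-shifted family containing $\cF$.
\item Take $\cB$ to be the analogous family built from $\cF^c$ with injections satisfying $\varphi(x)\ge x$.
\end{itemize}
Both are increasing and shifted by construction, and both contain the required families. The remaining step would be to show that $\cA$ is still $p$-intersecting.

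That step cannot work. Take $\cF=\{\{3,4\}\}$ with $n$ large, which is a $(p,q)$-family for small $p,q$. For $p=1$, any left-shifted family containing $\{3,4\}$ must contain $\{1,2\}$, because $S_{ij}(\cA)=\cA$ forces closure under moving elements leftwards. But $\{1,2\}\cap\{3,4\}=\varnothing$, so no $1$-intersecting left-shifted $\cA$ contains $\cF$. For $p=2$, the same closure gives $\{1,2\}$ and $\{3,4\}$, which share no element either. So the required containment $\cF\subseteq\cA$ is impossible for the original family as literally worded.

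\emph{What I would prove instead.} I expect the lemma is intended, and used, only up to a measure-preserving modification of $\cF$. In that setting I would proceed as follows.
\begin{enumerate}
\item Use Stage 1 to obtain the increasing families $\cA_0$ and $\cB_0$ with $\cF\subseteq\cA_0\cap\cB_0^c$.
\item Apply the shifts $S_{ij}$ ($i<j$) simultaneously to $\cF$ and $\cA_0$, and the complementary shifts $S_{ji}$ to $\cB_0$ (that is, left shifts applied to the complements). Each shift preserves the $p$-intersecting and $q$-union properties, monotonicity, and the size of every family involved.
\item Check, for a single shift, that $\cF'\subseteq\cA'\cap\cB'^c$ still holds for the shifted families, and iterate until the families stabilise.
\end{enumerate}
The step I expect to be the main obstacle is step 3 combined with the need for both $\cA$ and $\cB$ to be shifted at once. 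A left shift on $\cF$ is a right shift on $\cF^c$, so I would aim to establish commutation and containment relations of the form $S_{ij}(\cF)^c=S_{ji}(\cF^c)$ and $S_{ij}(\cF)\subseteq S_{ij}(\cA_0)$ whenever $\cF\subseteq\cA_0$ with $\cA_0$ increasing. This version yields the lemma for a family $\cF'$ with $|\cF'|=|\cF|$, which is all the later measure arguments require. For the original $\cF$, the example above shows that no proof of the literal containment exists.
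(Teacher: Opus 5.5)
Your diagnosis is correct. With $p=2$ and $\cF=\{\{3,4\}\}$, any left-shifted family containing $\{3,4\}$ also contains $\{1,4\}$ and $\{1,2\}$. So no $2$-intersecting left-shifted $\cA$ can contain $\cF$, and the containment $\cF\subset\cA\cap\cB^c$ fails as literally stated. The paper's proof makes the same repair you propose: it opens with ``we may assume that $\cF$ is left-shifted.'' It therefore proves the lemma only for a shifted copy of $\cF$ of the same size. That suffices, because only $\mu(\cF)\le\mu(\cA\cap\cB^c)$ is used afterwards.

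Your route differs from the paper's in the order of operations. The paper shifts first and closes second. For left-shifted $\cF$ it takes $\cA$ to be the up-closure and $\cD$ the down-closure of $\cF$, and sets $\cB=\cD^c$; note that $\cD^c$ is exactly the up-closure of $\cF^c$, i.e.\ your $\cB_0$. Both closures of a left-shifted family are left-shifted by a one-line case check on whether $j$ lies in the generating set $F$. Complementation then turns left-shifted into right-shifted.

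You close first and then shift $(\cF,\cA_0,\cB_0)$ in lockstep, which also works. The two relations you flag as the main obstacle are easy and hold for arbitrary families:
\begin{itemize}
\item $\cF\subseteq\cA$ implies $S_{ij}(\cF)\subseteq S_{ij}(\cA)$. If $F$ is fixed in $\cF$, it is fixed in $\cA$. If $F$ moves to $F'$ in $\cF$, then either $F'\in\cA$ already (and $S_{ij}$ fixes it, since $i\in F'$ and $j\notin F'$), or $F$ moves to $F'$ in $\cA$.
\item $S_{ij}(\cF)^c=S_{ji}(\cF^c)$, by direct inspection.
\end{itemize}
To finish, you still need that $S_{ij}$ preserves up-sets, which is standard. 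You also need a termination argument. For instance, the potential $\sum_{F\in\cF}\sum_{x\in F}x+\sum_{A\in\cA}\sum_{x\in A}x-\sum_{B\in\cB}\sum_{x\in B}x$ strictly decreases whenever one of the three families changes.

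The paper's order is shorter: shifting is applied only to $\cF$, and $\cA,\cB$ inherit shiftedness from the closures. Yours avoids that inheritance step, at the price of the monotonicity and complementation identities and the lockstep iteration.
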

\begin{proof}
Since the shifting operator preserves the  $p$-intersecting and $q$-union properties, we may assume that $\cF$ is left-shifted.

Define
$$
\cF^*=\{A:\text{some }F\in\cF\text{ satisfies }F\subseteq A\},
\quad
\cF_*=\{A:\text{some }F\in\cF\text{ satisfies }A\subseteq F\}.
$$
Let $\cA=\cF^*$,
$\cD=\cF_*$ and $\cB=\cD^c$.
Then
$\cF\subseteq\cA\cap\cD$ and $\cA\cap\cD$
is a $(p,q)$-family.

Since $\cF$ is left-shifted, we infer that $\cA,\cD$ are both left-shifted. Since $\cB=\cD^c$, we infer that $\cB$ is right-shifted. Thus the lemma is proven.
\end{proof}

For an increasing left-shifted family, the influences are non-increasing:
$I_1\ge\cdots\ge I_n$. Indeed, for $i<j$ their difference is
$\E[\mathbbm{1}_{\cF}(X_{\{i,j\}^c},1,0)-\mathbbm{1}_{\cF}(X_{\{i,j\}^c},0,1)]\ge0$.
For an increasing right-shifted family the order is reversed.
Consequently, if $f,g$ are the indicators in Lemma~\ref{lem:closure},
then $\{i:I_i[f]\ge I_i[g]\}$ is an initial interval of $[n]$.

\subsection{Simplicial compression on a set of coordinates}
\label{subsec:simplicial-compression}

For an ordered ground set $X$, write $A_1<_{\mathrm{simp}}\cdots<_{\mathrm{simp}}A_{2^{|X|}}$ for
its subsets in the simplicial order.
For $0\le s\le2^{|X|}$, set
$$
 \mathcal T_X(s)=\{A_{2^{|X|}-s+1},\ldots,A_{2^{|X|}}\},
 \qquad \mathcal T_X(0)=\varnothing.
$$
Thus $\mathcal T_X(s)$ consists of the last $s$ subsets in
the simplicial order. It is increasing, and
$\mathcal T_X(s)\subseteq\mathcal T_X(s')$ whenever $s\le s'$.
Taking the complement of each member reverses the simplicial order;
hence $\mathcal T_X(s)^c$ consists of its first $s$ subsets.
Here and throughout this subsection, complements of subsets of $X$
are taken inside $X$.

\begin{definition}\label{def:simplicial-compression}
For $X\subseteq[n]$ and $\cF\subseteq2^{[n]}$, define
\begin{equation}\label{eq:simplicial-compression}
 \mathcal C_X\cF
 =\bigcup_{y\subseteq[n]\setminus X}
   \{A\cup y:A\in\mathcal T_X(|\cF(y,[n]\setminus X)|)\}.
\end{equation}
We call this the \emph{simplicial compression on $X$}.
It replaces each projection with the coordinates outside $X$ fixed
by the last equally many subsets of $X$ in the simplicial order.
\end{definition}

\begin{remark*}
The classical compression method replaces projections on a fixed
set of coordinates by initial segments of an isoperimetric order;
see Bollob\'as and Leader~\cite{BLcompression} for the general
framework. The operation in \eqref{eq:simplicial-compression} is
its reflected version: complement the coordinates in $X$, compress
to the first subsets in simplicial order, and complement those
coordinates again. We use this orientation to preserve increasingness. Our tool is precisely Harper's
Theorem~\ref{thm:classical-harper}, together with the fact that the Hamming neighborhood of a simplicial initial segment is again an initial segment; see also R\"aty~\cite[Section~1, Theorem~1 and the discussion following it]{Raty} for this formulation.
We prove the required preservation of $t$-intersection below as a
consequence of that theorem, rather than invoke a preservation
claim for arbitrary compressions. Unlike $S_{ij}$, this operation
need not preserve $|\cF\cap\binom{[n]}k|$ for each $k$.
\end{remark*}

\begin{lemma}\label{lem:simplicial-compression}
Let $\cF\subseteq2^{[n]}$ be increasing and $t$-intersecting, and
let $X\subseteq[n]$. Then $\mathcal C_X\cF$ is increasing and
$t$-intersecting. For every $y\subseteq[n]\setminus X$,
\begin{equation}\label{eq:compression-projection-size}
 |(\mathcal C_X\cF)(y,[n]\setminus X)|=|\cF(y,[n]\setminus X)|.
\end{equation}
In particular, its measure and the measure of every restricted
projection with the coordinates outside $X$ fixed are unchanged.
If $|X|$ is admissible for
$t$, then
\begin{equation}\label{eq:compression-distance}
 \mu^n(\mathcal C_X\cF\triangle\cK(n,X,t))
 \le\mu^n(\cF\triangle\cK(n,X,t)).
\end{equation}
\end{lemma}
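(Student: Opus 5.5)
We work fiberwise over the coordinates outside $X$. Put $Y=[n]\setminus X$, $m=|X|$, and for $y\subseteq Y$ write $\cF_y=\cF(y,Y)\subseteq2^X$. By definition $(\mathcal C_X\cF)(y,Y)=\mathcal T_X(|\cF_y|)$. This immediately gives \eqref{eq:compression-projection-size}, and hence the statements about the measure of $\mathcal C_X\cF$ and of its restricted projections.

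\emph{Increasing.} Each $\mathcal T_X(s)$ is an up-set in $2^X$. Since $\cF$ is increasing, $y\subseteq y'$ implies $\cF_y\subseteq\cF_{y'}$, so $|\cF_y|\le|\cF_{y'}|$. The nestedness of the $\mathcal T_X(s)$ then gives $\mathcal T_X(|\cF_y|)\subseteq\mathcal T_X(|\cF_{y'}|)$. Together these show that $\mathcal C_X\cF$ is increasing.

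\emph{$t$-intersection.} For $y,z\subseteq Y$ put $s=t-|y\cap z|$. The family $\cF$ is $t$-intersecting iff, for all $y,z$, the families $\cF_y,\cF_z$ are cross $s$-intersecting on $X$. This is vacuous when $s\le0$. I will reformulate cross $s$-intersection for up-sets $\cA,\cB\subseteq2^X$ as a disjointness statement: they are cross $s$-intersecting iff $\cA\cap N_{s-1}(\cB^c)=\varnothing$.

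To check this, suppose first that $A\in\cA$, $B\in\cB$ and $|A\cap B|\le s-1$. Put $C=B^c$. Then $A\cup C\in\cA$ and $|(A\cup C)\triangle C|=|A\cap B|\le s-1$, so $A\cup C\in\cA\cap N_{s-1}(\cB^c)$.

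Conversely, suppose $D\in\cA$ and $|D\triangle C|\le s-1$ for some $C\in\cB^c$. The family $\cB^c$ is a down-set, so $C'=C\cap D\in\cB^c$. Since $D\supseteq C'$ and $|D\setminus C'|\le s-1$, we get $|D\cap (C')^c|=|D\setminus C'|\le s-1$. This exhibits a pair violating cross $s$-intersection.

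Consequently the condition is equivalent to
\[
|\cA|+|N_{s-1}(\cB^c)|\le 2^m
\]
together with disjointness of the two families.

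Now apply this to the compressed fibres $\mathcal T_X(|\cF_y|)$ and $\mathcal T_X(|\cF_z|)$. The family $\mathcal T_X(|\cF_z|)^c$ is the initial segment of size $|\cF_z|=|\cF_z^c|$. The Hamming neighbourhood of a simplicial initial segment is again an initial segment. By Theorem~\ref{thm:classical-harper},
\[
|N_{s-1}(\mathcal T_X(|\cF_z|)^c)|\le|N_{s-1}(\cF_z^c)|\le 2^m-|\cF_y|.
\]
So this neighbourhood is an initial segment, while $\mathcal T_X(|\cF_y|)$ is a final segment, and their sizes sum to at most $2^m$. Hence they are disjoint, and the equivalence gives cross $s$-intersection of the compressed fibres.

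\emph{Distance.} Since $\cK(n,X,t)$ depends only on $X$, every fibre of it equals $\cK_X=\{A\subseteq X:|A|\ge(m+t)/2\}$. This is a union of top levels, hence a final segment $\mathcal T_X(k)$ of the simplicial order. We have
\[
\mu^n(\cF\triangle\cK(n,X,t))=2^{-|Y|}\sum_y 2^{-m}|\cF_y\triangle\cK_X|.
\]
For any family of size $r$, its symmetric difference with a family of size $k$ has size at least $|r-k|$. Two nested final segments achieve exactly $|r-k|$, so each fibre term does not increase under compression, which proves \eqref{eq:compression-distance}.

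The main obstacle is the $t$-intersection step, namely the correct translation of cross $s$-intersection of up-sets into the neighbourhood-disjointness condition so that Harper's theorem applies. That is where I would be most careful.
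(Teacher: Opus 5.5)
Your proof is correct and follows the paper's argument essentially step by step. You use the fibrewise definition for the projection sizes and for monotonicity, and the criterion that up-sets are cross $s$-intersecting iff a Hamming neighbourhood of one complement family misses the other. You write it as $\cA\cap N_{s-1}(\cB^c)=\varnothing$ where the paper writes $N_{k-1}(\cA^c)\cap\cB=\varnothing$, which is only a swap of roles. Then come Harper's theorem with the initial-segment property of neighbourhoods, and the nested-final-segment comparison with $\cK_X$ for the distance bound. The one blemish is cosmetic: you say the criterion is equivalent to a size bound ``together with'' disjointness, but disjointness alone is the criterion and already implies the size bound.
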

\begin{proof}
Let $\cF_y=\cF(y,[n]\setminus X)$.
The identity \eqref{eq:compression-projection-size} follows from the definition.
If $y\subseteq y'$, then $\cF_y\subseteq\cF_{y'}$, implying
$\mathcal T_X(|\cF_y|)\subseteq\mathcal T_X(|\cF_{y'}|)$.
Together with the increasingness of each $\mathcal T_X(s)$,
this proves that $\mathcal C_X\cF$ is increasing.

For increasing families $\cA,\cB\subseteq2^X$ and $k\ge1$, we first
prove that
\begin{equation}\label{eq:cross-intersection-neighborhood}
 \mbox{ $\cA,\cB$ are cross $k$-intersecting if and only if }N_{k-1}(\cA^c)\cap\cB=\varnothing.
\end{equation}
One implication follows from
$|A^c\triangle B|=|A\cap B|+|X\setminus(A\cup B)|$.
For the converse, given $A\in\cA$ and $B\in\cB$, replace $A$ by
$A\cup B^c\in\cA$. Its union with $B$ is $X$ and its intersection
with $B$ is $A\cap B$, so the assumed distance bound \eqref{eq:cross-intersection-neighborhood} implies
$|A\cap B|\ge k$.

Suppose the conditions in \eqref{eq:cross-intersection-neighborhood}
hold. By Harper's theorem,
 \begin{align}\label{ineq-neq2.2}
 |N_{k-1}(\mathcal T_X(|\cA|)^c)|
 \le |N_{k-1}(\cA^c)|\le 2^{|X|}-|\cB|
 =2^{|X|}-|\mathcal T_X(|\cB|)|.
 \end{align}
The family $N_{k-1}(\mathcal T_X(|\cA|)^c)$ consists of the first subsets in the
simplicial order, whereas $\mathcal T_X(|\cB|)$ consists of the
last subsets. By \eqref{ineq-neq2.2}, these two
families are disjoint. By
\eqref{eq:cross-intersection-neighborhood},
$\mathcal T_X(|\cA|)$ and $\mathcal T_X(|\cB|)$ are cross
$k$-intersecting.

For each pair $y,y'\subseteq[n]\setminus X$, apply \eqref{eq:cross-intersection-neighborhood} with
$\cA=\cF_y$, $\cB=\cF_{y'}$ and $k=t-|y\cap y'|$ when $k>0$.
When $k\le0$, the coordinates in $y\cap y'$ already supply at
least $t$ common elements. This proves $t$-intersection.

Finally, let $\cK_X=\{A\subseteq X:|A|\ge(|X|+t)/2\}$.
Both $\cK_X$ and $\mathcal T_X(|\cF_y|)$ consist of the last
subsets in the same simplicial order. Therefore one contains the
other, and
$$
 \begin{aligned}
 |\mathcal T_X(|\cF_y|)\triangle\cK_X|
 =\bigl||\cF_y|-|\cK_X|\bigr|\le|\cF_y\triangle\cK_X|.
 \end{aligned}
$$
Sum over $y\subseteq[n]\setminus X$ and divide by $2^n$, we obtain
\eqref{eq:compression-distance}.
\end{proof}

\subsection{Majority projection}\label{subsec:majority}

\begin{definition}[Majority projection]\label{def:majority}
For $\cF\subseteq2^{[n]}$ and $P\subseteq[n]$, define
\begin{equation}\label{eq:majority-projection}
\Pi_P\cF=\left\{A\subseteq P:\mu^{n-|P|}(\cF(A,P))>\frac{1}{2}\right\},
\qquad
\Pi_P^*\cF=\Pi_P\cF\times2^{P^c}.
\end{equation}
We call $\Pi_P\cF$ the \emph{majority projection} of $\cF$ onto $P$.
\end{definition}

\begin{remark}\label{rem:majority-boolean}
For each fixed $A\subseteq P$, approximating the indicator of
$\cF(A,P)$ by zero gives error $\mu^{n-|P|}(\cF(A,P))$;
approximating it by one gives error $1-\mu^{n-|P|}(\cF(A,P))$.
Thus
\begin{equation}\label{eq:majority-best-approximation}
 \mu^n(\cF\triangle\Pi_P^*\cF)
 =\E_{A\subseteq P}\min\{
       \mu^{n-|P|}(\cF(A,P)),1-\mu^{n-|P|}(\cF(A,P))\}.
\end{equation}
Either value gives the same error at a tie. Assigning zero preserves
$t$-intersection, as proved below.
\end{remark}

\begin{lemma}\label{lem:projection}
Let $\cF\subseteq2^{[n]}$ be increasing and $t$-intersecting, and
let $P\subseteq[n]$. Then $\Pi_P\cF$ is increasing and
$t$-intersecting. If $\cF$ is left-shifted (respectively,
right-shifted), its majority projection has the same property on
the induced order of $P$. Moreover,
$\mu^n(\cF\triangle\Pi_P^*\cF)\le I_{P^c}[\cF]/2$.
\end{lemma}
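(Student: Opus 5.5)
The plan is to treat the four assertions of Lemma~\ref{lem:projection} one at a time. Throughout, write $g(A)=\mu^{n-|P|}(\cF(A,P))$ for $A\subseteq P$. This is the conditional expectation $\E[\mathbbm{1}_\cF\mid P]$, and it is increasing because $\cF$ is increasing.

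\emph{Increasingness and the shifted property.} If $A\subseteq A'$, then $\cF(A,P)\subseteq\cF(A',P)$, so $g(A)\le g(A')$. Hence $g(A)>1/2$ implies $g(A')>1/2$, and $\Pi_P\cF$ is increasing. For shiftedness, take $i<j$ in $P$ with $j\in A$ and $i\notin A$, and let $A'=(A\setminus\{j\})\cup\{i\}$. If $\cF$ is left-shifted, then for every $y\subseteq P^c$ the set $A\cup y\in\cF$ forces $A'\cup y\in\cF$. Thus $\cF(A,P)\subseteq\cF(A',P)$, so $g(A')\ge g(A)$, and $\Pi_P\cF$ is left-shifted on $P$. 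The right-shifted case is symmetric.

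\emph{$t$-intersection.} This is the only step needing an idea. Take $A,B\in\Pi_P\cF$, so $g(A),g(B)>1/2$. Let $Y$ be uniform on $2^{P^c}$ and set $Y'=P^c\setminus Y$, which is also uniform. Then
\[
\PP\bigl(A\cup Y\in\cF\bigr)+\PP\bigl(B\cup Y'\in\cF\bigr)>1,
\]
so for some $y$ both $A\cup y\in\cF$ and $B\cup(P^c\setminus y)\in\cF$ hold. These two sets intersect exactly in $A\cap B$, because $y$ and $P^c\setminus y$ are disjoint. Since $\cF$ is $t$-intersecting, $|A\cap B|\ge t$. This strict-majority complementation argument is exactly why ties are assigned to zero. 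I expect it to be the main point, though it is short.

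\emph{The distance bound.} By \eqref{eq:majority-best-approximation},
\[
\mu^n(\cF\triangle\Pi_P^*\cF)=\E_A\min\{g(A),1-g(A)\}.
\]
For each fixed $A$, the function $h_A=\mathbbm{1}_{\cF(A,P)}$ on $2^{P^c}$ is Boolean with mean $g=g(A)$, and $\min\{g,1-g\}\le 2g(1-g)=2\Var(h_A)$. By the Poincaré inequality (Theorem~\ref{thm:classical-poincare}), $\Var(h_A)\le\frac14 I[h_A]$, which gives $\min\{g,1-g\}\le\frac12 I[h_A]$. Averaging over $A$ gives $\E_A I_i[h_A]=I_i[\cF]$ for each $i\in P^c$, since flipping a coordinate outside $P$ leaves $A$ fixed. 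Summing over $i\in P^c$ yields $\mu^n(\cF\triangle\Pi_P^*\cF)\le I_{P^c}[\cF]/2$.
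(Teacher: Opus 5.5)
Your proof is correct and follows the paper's argument essentially step for step. The paper uses the same four ingredients: monotonicity of $\mu^{n-|P|}(\cF(A,P))$ in $A$ and under shifts for the first two claims; the strict-majority pigeonhole on $\cF(A,P)$ against the complemented copy of $\cF(C,P)$ in $2^{P^c}$ for $t$-intersection; and $\min\{s,1-s\}\le 2s(1-s)$ with the Poincar\'e inequality on each restricted projection, averaged over $A$, for the bound $\mu^n(\cF\triangle\Pi_P^*\cF)\le I_{P^c}[\cF]/2$.
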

\begin{proof}
If $A\subseteq C\subseteq P$, by increasingness
$\cF(A,P)\subseteq\cF(C,P)$, proving that $\Pi_P\cF$ is increasing.
If $i,j\in P$, $i<j$, $j\in A$ and $i\notin A$, by the left-shiftedness of $\cF$,
$$
 \cF(A,P)\subseteq
 \cF((A\setminus\{j\})\cup\{i\},P).
$$
Thus the majority projection is left-shifted as well; the right-shifted
case is the same with the order reversed.

For $A,C\in\Pi_P\cF$, both $\cF(A,P)$ and $\cF(C,P)^c$ have
measure greater than $1/2$ in $2^{P^c}$.
Their intersection is therefore nonempty, so there is
$B\subseteq P^c$ with $A\cup B\in\cF$ and
$C\cup(P^c\setminus B)\in\cF$.
It follows that
$|A\cap C|=|(A\cup B)\cap(C\cup(P^c\setminus B))|\ge t$.

Finally, $\min\{s,1-s\}\le2s(1-s)$ for $0\le s\le1$.
Use \eqref{eq:majority-best-approximation} and apply the Poincar\'e
inequality \eqref{eq:classical-poincare} to each restricted projection:
\[
\mu^n(\cF\triangle\Pi_P^*\cF)\le2\E_{A\subseteq P}\operatorname{Var}(\mathbbm{1}_{\cF(A,P)})\le\frac12\E_{A\subseteq P}I[\cF(A,P)]
 =\frac12 I_{P^c}[\cF]. \qedhere
\]
\end{proof}

\subsection{Proof overview}

Note that the product construction implies $w(n,p,q)\geq B_n$.
By Fact \ref{fact-reduction}, to prove Theorem \ref{thm:main} it suffices to show $w(n,p,q)\leq B_n$ when $n-p-q$ is odd. Suppose for contradiction that
 there is a $(p,q)$-family $\cF$ with $\mu(\cF)>B_n$.
By Lemma~\ref{lem:closure}, there are  increasing families
$\cA,\cB\subseteq2^{[n]}$ such that $\cA$ is $p$-intersecting and
left-shifted, $\cB$ is $q$-intersecting and right-shifted, and
$\cF\subset \cA\cap\cB^c$.
For $f=\mathbbm{1}_{\cA}$ and $g=\mathbbm{1}_{\cB}$, we have
$$
\mu(\cA\cap\cB^c)=\E[f(1-g^*)]=\mu(\cA)\,\mu(\cB)-\Cov(f,g^*),
 \qquad g^*(x)=1-g(1-x).
 \eqno(\ref{eq:pair})
$$
Then the assumption $\mu(\cF) >B_n$ implies
\[
\Cov(f,g^*)<\mu(\cA)\,\mu(\cB)-\mu(\cA\cap\cB^c) \leq \mu(\cA)\,\mu(\cB)-\mu(\cF) <\mu(\cA)\,\mu(\cB)-B_n.
\]

We first show that the two families $\cA,\cB$ are close to Katona families on disjoint subsets of $[n]$. For every increasing $t$-intersecting family $\cF\subseteq2^{[n]}$, where $t\ge2$, and every $t\le m\le n$ with $m\equiv t\pmod2$, we prove
$$
I[\cF]-I[\cK(n,m,t)]
\ge\frac{2(m+t(t-2))}{t-1}
\bigl(\mu(\cF)-\mu(\cK(n,m,t))\bigr).
\eqno(\ref{eq:profile-supporting})
$$
Lemma~\ref{lem:tails} also bounds $I_{P^c}[\cF]$, the influence
outside an arbitrary coordinate set $P$, in terms of $|P|$ and
$\mu(\cF)$. Applying these bounds to $f$ and $g$ and summing by
parts, one can derive an upper bound on  $\Cov(f,g^*)$. A Talagrand-type correlation inequality (see Lemma~\ref{lem:correlation}) provides a lower bound on $\Cov(f,g^*)$. Combining the two bounds, we arrive at
$$
 \frac{\mathcal{I}[f,g]}{4\bigl(2+\ln(n/\mathcal{I}[f,g])\bigr)}
 \le\Cov(f,g^*)
 <\frac{10(p+q-2)}{11\sqrt n}\,\mathcal{I}[f,g],
 \qquad \mathcal{I}[f,g]=\sum_{i=1}^n I_i[f]I_i[g].
$$
Thus by the Harris--Kleitman inequality we obtain that
$$
 0<\mathcal{I}[f,g]
 <n\exp\left(2-\frac{11\sqrt n}{40(p+q-2)}\right).
 \eqno(\ref{eq:small-overlap})
$$

Set $P=\{i:I_i[f]\ge I_i[g]\}$ and take the majority projection
$$
\Pi_P\cA=\left\{S\subseteq P:\mu^{n-|P|}(\cA(S,P))>\frac{1}{2}\right\};
$$
project $\cB$ onto $P^c$ in the same way.
Lemma~\ref{lem:stability} approximates these projections by
Katona families. Proposition~\ref{prop:localization} yields a partition $[n]=X\sqcup\{z\}\sqcup Y$, with $X=[n_1]$, $z=n_1+1$, $Y=[n_1+2,n]$, $n_1\equiv p\pmod2$ and $n_2=|Y|\equiv q\pmod2$, such that
$$
\mu(\cA\triangle\cK(n,X,p))\le\frac1{64n},\qquad
\mu(\cB\triangle\cK(n,Y,q))\le\frac1{64n}.
$$

For $y\subseteq Y\cup\{z\}$ and $x\subseteq X\cup\{z\}$, write
$\cA_y=\cA(y,Y\cup\{z\})\subseteq2^X$ and $\cB_x=\cB(x,X\cup\{z\})\subseteq2^Y$. Define
$$
 \begin{aligned}
 \phi_\cA(y)&=\bigl(\mu^{n_1}(\cA_y)
       -\mu(\cK(n,X\setminus\{1\},p-1))\bigr)_+,\\
 \phi_\cB(x)&=\bigl(\mu^{n_2}(\cB_x)
       -\mu(\cK(n,Y\setminus\{n\},q-1))\bigr)_+.
 \end{aligned}
$$
The subtracted measures are those of the Katona projections obtained
by fixing coordinate $1$ or $n$, respectively, to one; see
\eqref{eq:threshold-one-projection}.
The stronger approximation in Proposition~\ref{prop:localization}
gives
$$
 \mu^{n_2+1}(\phi_\cA>0),\ \mu^{n_1+1}(\phi_\cB>0)\le e^{-L},
 \qquad L=\frac{n}{96(p+q-2)^2}.
$$
Let $x_+=\max\{x,0\}$ for $x\in\mathbb R$.
In Sections~\ref{sec:exterior} and~\ref{subsec:rare-expansion}, we prove that
$$
 \bigl(\mu(\cA)-w(|X|,p)\bigr)_+
 \le\frac{16(p-1)\sqrt{n_2+1}}{n_1\sqrt L}\,\E\phi_\cA,
 \eqno(\ref{eq:final-density-gain})
$$
and the corresponding bound for  $\bigl(\mu(\cB)-w(|Y|,q)\bigr)_+$.

The main combinatorial step compares added and missing sets in
complementary projections. After the compression of
Lemma~\ref{lem:simplicial-compression}, each projection consists of
the last subsets in simplicial order and has the same measure as
before. Let $y\subseteq y'\subseteq Y\cup\{z\}$ with
$|y'\setminus y|\le i$, where $0\le i\le p-1$.
If $A\in\cA_{y'}$ and $B\subseteq A$ with
$|A\setminus B|=p-1-i$, then
$(X\setminus B)\cup y^c\notin\cA$: its intersection with
$A\cup y'\in\cA$ has size at most $p-1$.
Here $y^c=(Y\cup\{z\})\setminus y$.
We count the excluded sets separately at each level.
For the first level below the Katona threshold, double counting
proves \eqref{eq:first-layer}. For the other levels, the same
intersection calculation gives \eqref{eq:exclusion}.
Harper's theorem supplies the neighborhood expansion
\eqref{eq:radius-expansion}, and
Lemma~\ref{lem:interval-coupling} combines the resulting estimates
without counting a missing set more than once.
The latter lemma is proved in Appendix~\ref{app:shadow-tools}
using \cite[Corollary~2.1]{BGI}.

For the covariance estimate, the approximation errors at most
$1/(64n)$ permit the constant $24$ in
\eqref{eq:smoothed-correlation}. Apply this bound and edge
isoperimetry to the indicators of $\{\phi_\cA>s\}$ and
$\{\phi_\cB>s\}$, and integrate over $s\ge0$.
By conditional Harris--Kleitman, we obtain that
$$
 \Cov(f,g^*)\ge\frac1{24}\left(
       \sqrt{\frac{L-\ln2}{n_2}}\,\E\phi_\cA+
       \sqrt{\frac{L-\ln2}{n_1}}\,\E\phi_\cB\right).
 \eqno(\ref{eq:rare-correlation})
$$
Since  $\mu(\cA),\mu(\cB),w(|X|,p),w(|Y|,q)\leq 1/2$, we have
$$
\begin{aligned}
&\mu(\cA)\,\mu(\cB)-w(|X|,p)\,w(|Y|,q)\le\frac12\bigl(\mu(\cA)-w(|X|,p)\bigr)_++\frac12\bigl(\mu(\cB)-w(|Y|,q)\bigr)_+.
\end{aligned}
$$
The coefficient comparison in \eqref{eq:final-comparison}, using
\eqref{eq:block-ratio} and $L>256(p+q-2)$, bounds this expression
by $\frac23\Cov(f,g^*)$. Substitution into \eqref{eq:pair} yields
$\mu(\cA\cap\cB^c)\le
w(|X|,p)\,w(|Y|,q)\le B_n$, a contradiction.

\subsection{Binomial and numerical estimates}\label{subsec:numerical}

The following bounds will be used to keep all constants explicit. Their proofs are given in Appendix~\ref{app:auxiliary}.

\begin{lemma}\label{lem:binomial}
For $t\ge2$ and any admissible $m$ for $t$,
\begin{equation}\label{eq:explicitbinomial}
 \theta(m,t)\le\frac{\sqrt2}{\sqrt m},\qquad
 \frac1{4\sqrt m}\le\frac12-w(m,t)\le\frac{t-1}{\sqrt m}.
\end{equation}
We also have
\begin{align}
 m\ge8t^2&\mbox{ implies }
 \theta(m,t)\ge\frac1{2\sqrt m},\label{eq:centralw}\\
 m\ge128&\mbox{ implies }
 \theta(m,t)\le\frac4{5\sqrt m},\label{eq:profile-upper-sharp}\\
 m\ge256t^2&\mbox{ implies }
 \theta(m,t)\ge\frac{27}{34\sqrt m}.\label{eq:profile-lower-central}
\end{align}
Finally,
\begin{equation}\label{eq:closer-half-dimension}
 \frac12-w(m,t)<\frac1{96}
\mbox{ implies } m>256t^2+2.
\end{equation}
\end{lemma}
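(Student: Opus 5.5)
The plan is to reduce everything to the single central binomial ratio $\beta_k:=2^{-2k}\binom{2k}{k}$ together with the ratio of consecutive binomial coefficients near the centre. Write $m-1=M$ and $j=(m-t)/2$, so that $\theta(m,t)=2^{-M}\binom{M}{j}$ with $M-2j=t-1\ge1$; thus $j$ lies $(t-1)/2$ below the centre of row $M$. For the upper bounds on $\theta$ I will use the monotonicity $\theta(m,t)\le 2^{-M}\binom{M}{\lfloor M/2\rfloor}$ and the standard estimates $\frac{1}{\sqrt{\pi(k+1/2)}}\le\beta_k\le\frac{1}{\sqrt{\pi(k+1/4)}}$ (obtained from Wallis' product or by induction on $k$ via $\beta_{k+1}/\beta_k=(2k+1)/(2k+2)$). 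This gives $\theta\le\sqrt{2/(\pi M)}$ up to lower-order terms, hence $\theta\le\sqrt2/\sqrt m$ for all $m\ge t\ge2$. A small-$m$ check is needed when $m=t$, where $\theta=2^{-(m-1)}$. For $m\ge128$ the sharper constant $\sqrt{2/\pi}\approx0.798<4/5$ survives the passage from $M$ to $m$, which gives \eqref{eq:profile-upper-sharp}.

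For the lower bounds \eqref{eq:centralw} and \eqref{eq:profile-lower-central}, I write
$\binom{M}{j}/\binom{M}{\lfloor M/2\rfloor}=\prod_{l}\frac{j+l}{M-j-l+1}$ over roughly $(t-1)/2$ factors. Each factor is at least $1-\frac{t}{M/2}$, and the product is bounded below by $\exp(-c\,t^2/M)$ via $\ln(1-x)\ge -x-x^2$. Under $m\ge8t^2$ this loses at most a factor $e^{-1/8}$ or so, and combined with $\beta\ge 1/\sqrt{\pi(k+1/2)}$ it should leave more than $\frac1{2\sqrt m}$, since $\sqrt{2/\pi}\,e^{-1/8}\approx0.70>0.5$. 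Under $m\ge256t^2$ the loss is about $e^{-1/256}$, and $0.798\cdot0.996\approx0.795>27/34\approx0.794$. This last margin is very tight, so it is the main obstacle. I will have to track the $M$ versus $m$ and $k+1/2$ corrections precisely, and also the exact number of factors: it is $(t-1)/2$ when $t$ is odd, and when $t$ is even I compare with the nearer of the two middle coefficients. I would first try $\ln\prod\ge-\sum_l\frac{2l-1}{M/2}\cdot(1+o(1))=-\frac{(t-1)^2}{2M}(1+\cdots)$, which gives loss $\approx e^{-1/512}$ and hence extra room. As a fallback I would handle small $t$ (for example $t=2,3$) separately with exact formulas.

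For the bounds on $\frac12-w(m,t)$: with $m$ admissible, Katona's formula gives $\frac12-w(m,t)=\frac12\PP(|\mathrm{Bin}(m,1/2)-m/2|<t/2)$ plus a half-weight correction. Equivalently, by telescoping the identity $w(m,t)-w(m-2,t)=\frac{t-1}{m-1}2^{-m}\binom{m-1}{(m-t)/2}$, I can express it as a sum of at most $t-1$ central-ish binomial point masses of $\mathrm{Bin}(m,1/2)$ between levels $(m-t)/2$ and $(m+t)/2$ (exactly: the mass strictly between thresholds, halved appropriately). Bounding each mass by $\beta\le\sqrt{2/(\pi m)}$ gives the upper bound $\frac{t-1}{\sqrt m}$. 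For the lower bound I use that at least one central mass is present. When $t=2$ this is half the central probability, $\frac12\binom{m}{m/2}2^{-m}\ge\frac{1}{2\sqrt{2m}}\ge\frac1{4\sqrt m}$; larger $t$ only increases the quantity, since $w(m,t)$ decreases in $t$ for fixed $m$ of matching parity. Finally, \eqref{eq:closer-half-dimension} follows by contraposition from a sharper form of the lower bound. If $m\le256t^2+2$, the $\approx t$ central masses each have size at least about $\frac{0.79}{\sqrt m}$, so $\frac12-w\ge c\,t/\sqrt m\ge c/16$ up to constants, which exceeds $1/96$. Here I would reuse the product estimate above in the weaker regime, accepting a loss factor of about $e^{-1/4}$ at distance $t/2$ when $m\approx256t^2$.
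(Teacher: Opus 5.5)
Your bounds in \eqref{eq:explicitbinomial}, \eqref{eq:centralw}, \eqref{eq:profile-upper-sharp} and \eqref{eq:profile-lower-central} are correct, but they take a different route from the paper. You use the Wallis-type bounds $1/\sqrt{\pi(k+1/2)}\le4^{-k}\binom{2k}{k}\le1/\sqrt{\pi(k+1/4)}$. The paper uses only two elementary facts about $c_j=4^{-j}\binom{2j}{j}$: $c_j\sqrt j$ increases and $c_j\sqrt{2j+1}$ decreases. It adds one numerical check at $j=64$ and the ratio bound $\binom{m-1}{(m-t)/2}/\binom{m-1}{\lfloor(m-1)/2\rfloor}\ge1-t^2/(2m)$, obtained by summing the factor deficits. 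Your tight case does close. The deficit sum is at most $(t-1)^2/(2m)$ or $t(t-2)/(2m)$, hence at most $2^{-9}$ when $m\ge256t^2$. So $\sqrt m\,\theta(m,t)\ge\sqrt{2/\pi}\sqrt{m/(m+1)}\,(1-2^{-9})>0.795>27/34$. One small point: for odd $m$ the least admissible $t$ is $3$, so ``$t=2$ plus monotonicity in $t$'' does not literally cover odd $m$. Your other remark, that the $t-1$ masses include the largest one, does cover it.

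The genuine gap is in \eqref{eq:closer-half-dimension}. The contrapositive requires $\frac12-w(m,t)\ge\frac1{96}$ for \emph{every} admissible $m\le256t^2+2$. Your lower bound of about $0.79/\sqrt m$ for each of the $t-1$ central masses holds only when $m\gtrsim256t^2$. The outermost masses are smaller than the central one by a factor of roughly $e^{-t^2/(2m)}$, which is tiny when $m\ll t^2$; for $m=t$ the outermost mass is $t2^{-t}$. So your argument covers only a window near $m=256t^2$.

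The missing idea is that $w(\cdot,t)$ is monotone in admissible $m$. This follows from $w(m,t)-w(m-2,t)=\frac{t-1}{m-1}2^{-m}\binom{m-1}{(m-t)/2}>0$, and it reduces everything to a single dimension. The paper lets $d_0$ be the least admissible integer $\ge256t^2+2$, so $d_0\le256t^2+3\le18^2t^2$ and $w(m,t)\le w(d_0,t)$ for all admissible $m\le256t^2+2$. Put $r=(d_0-t)/2$. Each of the $t-1$ masses in $1-2w(d_0,t)$ is at least the endpoint mass $\frac{d_0}{2(r+1)}\theta(d_0,t)\ge\theta(d_0,t)$. By \eqref{eq:profile-lower-central}, $\theta(d_0,t)>\frac{3}{4\sqrt{d_0}}$. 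Hence $\frac12-w(m,t)\ge\frac{t-1}{2}\theta(d_0,t)>\frac{t-1}{48t}\ge\frac1{96}$.

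Separately, your budgeted loss factor $e^{-1/4}$ is far too pessimistic; applied uniformly it would fail at $t=2$. Near $m=256t^2$ the true loss is only $1-O(2^{-9})$, which is exactly what \eqref{eq:profile-lower-central} records.
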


\begin{lemma}\label{lem:numerical}
Suppose $p,q\ge2$ and $n\ge2^{15}(p+q-2)^3$.
Then $n\ge2^{16}(p+q-2)^2$ and
\begin{align}
 \frac{11\sqrt n}{80(p+q-2)}
 &>\frac52\ln(n+1)+\ln(4 e),\label{eq:initiallog}\\
 \frac{n}{18^2(p+q-2)^2}
 &\ge16\ln(n+1),\label{eq:weak-bootstrap-number}\\
 \frac{n}{96(p+q-2)^2}&\ge \max \left\{32\ln(n+1), 256(p+q-2),192^2(p+q-2)^2/n\right\}.\label{eq:finalnumbers}
\end{align}
If $n/[2(p+q-2)^{2/3}]\le m\le n$, then $m\ge8192$,
$\frac{n}{96(p+q-2)^2}\le m/64$, and $\frac{nm}{96(p+q-2)^2}\ge1024(p+q-2)^2$.
For $h\in\{p-1,q-1\}$ and $0<u\le n$,
\begin{equation}\label{eq:coarse-error-number}
 \left(\frac{3nu}{h}
       e^{-11\sqrt n/[80(p+q-2)]}\right)^{4/5}
 \le\frac3{16\sqrt u},
\end{equation}
provided $u\le5n/8$ when $p+q-2=2$.
\end{lemma}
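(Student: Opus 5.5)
The plan is to treat Lemma~\ref{lem:numerical} as a sequence of elementary calculus inequalities, all driven by the single hypothesis $n\ge 2^{15}s^3$ with $s:=p+q-2\ge2$. Write $n=2^{15}s^3\lambda$ with $\lambda\ge1$. The first claim, $n\ge2^{16}s^2$, is immediate from $s\ge2$.

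For \eqref{eq:initiallog}, the left side is $\frac{11}{80}\cdot 2^{15/2}s^{1/2}\lambda^{1/2}$, which grows like $\sqrt n/s$. The right side grows like $\ln n$. I will show that the difference
\[
\frac{11\sqrt n}{80 s}-\frac52\ln(n+1)
\]
is increasing in $n$ for $n\ge 2^{15}s^3$: its derivative $\frac{11}{160 s\sqrt n}-\frac{5}{2(n+1)}$ is positive once $\sqrt n\gtrsim 36 s$, which holds. It then suffices to check the inequality at $n=2^{15}s^3$, where it becomes a one-variable inequality in $s\ge2$ that I verify in the same way. The derivative in $s$ of $\frac{11}{80}2^{7.5}\sqrt s-\frac52\ln(2^{15}s^3+1)$ is positive for $s\ge2$, so it suffices to plug in $s=2$. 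There the left side is $\frac{11}{80}\cdot 256=35.2$, while the right side is about $\frac52\cdot 19.4+2.39\approx 50.9$.

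That check fails, so the monotone reduction at $s=2$ does not close for \eqref{eq:initiallog}. This is the main obstacle. I will recheck the constants before committing: if the check genuinely fails, the intended reading must use a larger effective base point (for instance via the case distinction $s=2$ versus $s\ge3$ implicit in the proviso of \eqref{eq:coarse-error-number}), and I would evaluate carefully with exact logarithms. Concretely, I would tabulate the inequality for small $s$ and use monotonicity beyond.

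Inequalities \eqref{eq:weak-bootstrap-number} and \eqref{eq:finalnumbers} are handled by the same scheme. Put $L=n/(96 s^2)\ge 2^{15}s/96\ge 341 s$. Then $L\ge256 s$ is immediate. The bound $192^2s^2/n\le 192^2/(2^{15}s)\le 1.2$ is trivially below $L$. The comparisons $L\ge32\ln(n+1)$ and $n/(324 s^2)\ge16\ln(n+1)$ are of the form $cn/s^2\ge C\ln n$; I reduce these by monotonicity in $\lambda$ and then in $s$ to the base point $n=2^{15}s^3$, $s=2$, and evaluate numerically.

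For $m\ge n/(2s^{2/3})$, we get $m\ge 2^{14}s^{7/3}\ge8192$. Next, $L\le m/64$ is equivalent to $m\ge 2n/(3s^2)$, which follows from $n/(2s^{2/3})\ge 2n/(3s^2)$ since $s^{4/3}\ge 4/3$. Finally, $nm/(96s^2)\ge n^2/(192 s^{8/3})\ge 2^{30}s^{10/3}/192\ge1024s^2$.

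For \eqref{eq:coarse-error-number}, I raise both sides to the power $5/4$ and isolate $u$. The resulting bound is
\[
\frac{3n}{h}\,u^{1+5/8}\,e^{-11\sqrt n/(80s)}\le(3/16)^{5/4}.
\]
The left side is increasing in $u$, so I take $u=n$ (or $u=5n/8$ when $s=2$) and $h\ge1$. This reduces the claim to $e^{11\sqrt n/(80 s)}\ge C n^{21/8}$, which is of the same form as \eqref{eq:initiallog} and is handled identically. The exceptional $u\le5n/8$ for $s=2$ suggests this base case is tight, so the $s=2$ evaluations with exact constants are where I expect the real work to be.
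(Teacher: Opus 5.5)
Your reduction scheme is the same as the paper's: monotonicity in $n$ for fixed $s=p+q-2$, then monotonicity in $s$ along $n=2^{15}s^3$, then evaluation at the base point. Your derivative checks are fine, and so are the elementary bounds $n\ge2^{16}s^2$, $L\ge256s$, $L\ge192^2s^2/n$, $m\ge8192$, $L\le m/64$ and $nm/(96s^2)\ge1024s^2$.

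The ``main obstacle'' you report, however, is an arithmetic slip, not a real failure. At $s=2$ the base point is $n=2^{18}$, so $\ln(n+1)\approx18\ln2\approx12.48$, not $19.4$. The right-hand side of \eqref{eq:initiallog} is then about $31.19+2.39\approx33.58<35.2$; the paper records this as $176/5>47\ln2+1+5\cdot2^{-19}$. Your fallback could not have helped anyway. The choice $p=q=2$, $n=2^{18}$ satisfies the hypotheses, so a genuine failure there would make the lemma false, and no ``larger effective base point'' exists.

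What remains missing is the verification of the base cases. That is the entire content of this lemma, and the cases are tight.
\begin{itemize}
\item For \eqref{eq:weak-bootstrap-number} at $s=2$, you must compare $2^{16}/324\approx202.3$ with $16\ln(2^{18}+1)\approx199.6$.
\item For \eqref{eq:coarse-error-number}, take logarithms in the worst case $h=1$. At $n=2^{15}s^3$ this gives the condition $\frac{88\sqrt{2s}}{5}\ge\frac{21}{8}\ln n+\frac{13}{8}\ln\frac un+\ln3+\frac54\ln\frac{16}{3}$.
\item With $u=n$ this condition is false at $s=2$ ($35.2$ against about $35.94$). So the case $s\ge3$ cannot be reduced to $s=2$; it needs its own base point $s=3$, where the comparison is about $43.1$ against $39.1$.
\item For $s=2$, the proviso $u\le5n/8$ lowers the right side only to about $35.18$.
\end{itemize}
A margin of $0.02$ cannot be certified with rounded decimals; you need rigorous bounds on logarithms. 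The paper rewrites the $s=2$ case as $\frac{176}{5}>\frac{379}{8}\ln2-\frac14\ln3+\frac{13}{8}\ln5$. It closes this with $\ln2<52/75$, $\ln3>12/11$ and $\ln5<29/18$, derived from $\ln a=2\sum_{j\ge0}r^{2j+1}/(2j+1)$ with $r=(a-1)/(a+1)$ and an explicit tail bound.

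To turn your argument into the paper's proof, you need three things: correct the arithmetic, carry out these evaluations rigorously, and make the $s=2$ versus $s\ge3$ split explicit.
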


\begin{lemma}
For $p,q\ge2$ and $n\ge2^{16}(p+q-2)^2$,
\begin{align}\label{ineq-BnlowerB}
 B_n
 \ge\frac14-\frac{p+q-2}{\sqrt n} \geq  \frac14-\frac1{256}.
 \end{align}
\end{lemma}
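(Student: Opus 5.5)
We bound $B_n$ from below by a single product term. Then we control each factor using the upper estimate $\frac12-w(m,t)\le\frac{t-1}{\sqrt m}$ from Lemma~\ref{lem:binomial}.

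\textbf{Choice of the split.} Let $\eps\in\{0,1\}$ with $\eps\equiv n-p-q\pmod 2$, so that $n-\eps-p-q$ is even. We take $n_1,n_2$ as equal as possible, subject to $n_1\equiv p$, $n_2\equiv q\pmod 2$ and $n_1+n_2=n-\eps$. Concretely, we set $n_1=p+2a$ and $n_2=q+2b$ with $a+b=(n-\eps-p-q)/2$ and $|a-b|\le 1$ chosen suitably. Then $n_1,n_2\ge (n-\eps-|p-q|-2)/2$.

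By \eqref{eq:answer2}, or directly since $w(n-\eps-n_1, q)$ is a term in the maximum defining $B_n$ once $n_1+n_2\le n$, we get $B_n\ge w(n_1,p)w(n_2,q)$. The cruder choice $n_1\ge n/2-(p+q)$ would also suffice.

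\textbf{Estimating the product.} Write $w(n_1,p)=\frac12-\delta_1$ and $w(n_2,q)=\frac12-\delta_2$. Lemma~\ref{lem:binomial} gives $0<\delta_1\le\frac{p-1}{\sqrt{n_1}}$ and $0<\delta_2\le\frac{q-1}{\sqrt{n_2}}$. Hence
\[
w(n_1,p)w(n_2,q)\ge\frac14-\frac{\delta_1+\delta_2}{2}\ge\frac14-\frac{p-1}{2\sqrt{n_1}}-\frac{q-1}{2\sqrt{n_2}}.
\]
It therefore suffices to show $\frac{1}{2\sqrt{n_i}}\le\frac1{\sqrt n}$, that is, $n_i\ge n/4$. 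This holds because $n_i\ge n/2-(p+q)/2-2$ and $n\ge 2^{16}(p+q-2)^2$ is much larger than $2(p+q)+8$. The degenerate case $p+q-2=0$ is excluded since $p,q\ge2$ forces $p+q-2\ge 2$. With $n_i\ge n/4$ we obtain $B_n\ge\frac14-\frac{(p-1)+(q-1)}{\sqrt n}=\frac14-\frac{p+q-2}{\sqrt n}$.

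\textbf{The second inequality.} Since $\sqrt n\ge 2^8(p+q-2)$, we have $\frac{p+q-2}{\sqrt n}\le\frac1{256}$, which gives the second inequality.

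\textbf{Main obstacle.} The only delicate point is bookkeeping. We must check that the chosen $n_1,n_2$ are admissible, nonnegative, and satisfy $n_i\ge p$ or $n_i\ge q$ respectively, and that $n_i\ge n/4$ holds uniformly. This requires comparing $|p-q|$ with $n$. Since $(p+q)^2\le 4(p+q-2)^2$ for $p+q\ge4$, we get $n\ge 2^{14}(p+q)^2$, and this comfortably absorbs these additive constants.
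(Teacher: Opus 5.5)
Your proposal is correct and follows essentially the same route as the paper: take a nearly balanced admissible split with $n_1,n_2\ge n/4$, apply $\frac12-w(m,t)\le\frac{t-1}{\sqrt m}$ from Lemma~\ref{lem:binomial}, and drop the nonnegative cross term. The only difference is bookkeeping. The paper picks each $n_i$ admissible in $[n/2-2,\,n/2]$ and bounds each factor by $\frac12-\frac{2(t-1)}{\sqrt n}$ before multiplying, whereas you fix $n_1+n_2=n-\eps$ and expand the product first.
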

\begin{proof}
Choose $n_1,n_2$ admissible for $p,q$, respectively, such that
$$
 \frac n2-2\le n_1,n_2\le\frac n2.
$$
Then $n_1+n_2\le n$ and $n_1,n_2\ge n/4$.
By \eqref{eq:explicitbinomial},
\begin{align*}
 B_n
 \ge w(n_1,p)w(n_2,q)
 &\ge
 \left(\frac12-\frac{2(p-1)}{\sqrt n}\right)
 \left(\frac12-\frac{2(q-1)}{\sqrt n}\right)\ge\frac14-\frac{p+q-2}{\sqrt n}.
 \end{align*}
Since $n\ge2^{16}(p+q-2)^2$, \eqref{ineq-BnlowerB} follows.
 \end{proof}

 Let $\eps\in\{0,1\}$ satisfy $\eps\equiv n-p-q\pmod2$.

\begin{claim}\label{cl:product-increment}
Suppose $p,q\ge2$ and $n\ge2^{16}(p+q-2)^2$.
Let $n_1$ and $n_2$ be admissible for $p$ and $q$, respectively, with
$256p^2\le n_1\le n$ and $256q^2\le n_2\le n$.
If $n_1+n_2\ge n-\eps$, then
\begin{align}\label{ineq-3.1}
 w(n_1,p)w(n_2,q)-B_n
 \le
 \frac{(p+q-2)(n_1+n_2-n+\eps)}
      {5(n-1)\sqrt{2(n+1)}}.
\end{align}
\end{claim}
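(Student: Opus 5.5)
We reduce $(n_1,n_2)$ step by step to an admissible pair with $n_1'+n_2'=n-\eps$, and sum the increments lost along the way. Let $k=(n_1+n_2-n+\eps)/2$. Since $n_1\equiv p$, $n_2\equiv q$ and $n-\eps\equiv p+q\pmod 2$, $k$ is a nonnegative integer. If $k=0$, then $(n_1,n_2)$ is one of the pairs in \eqref{eq:answer2}, so the left side of \eqref{ineq-3.1} is $\le0$. For $k\ge1$ we remove $2$ from one of the two coordinates, $k$ times. At each step we decrease whichever coordinate is currently larger; in case of a tie we pick the one whose coordinate stays at least $256p^2$ (resp. $256q^2$). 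Each step keeps admissibility. The final pair $(n_1',n_2')$ satisfies $n_1'+n_2'=n-\eps$ and both entries stay at least $q$ and $p$ (indeed large), so by \eqref{eq:answer2} $w(n_1',p)w(n_2',q)\le B_n$. Hence
\[
w(n_1,p)w(n_2,q)-B_n\le\sum_{\text{steps}}\bigl(w(a,p)w(b,q)-w(a-2,p)w(b,q)\bigr),
\]
with the analogous term when the second coordinate is reduced.

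For a single step reducing $a$ (admissible for $p$), the identity for $w(m,t)-w(m-2,t)$ gives the increment
\[
w(b,q)\frac{p-1}{a-1}2^{-a}\binom{a-1}{(a-p)/2}=w(b,q)\frac{p-1}{2(a-1)}\theta(a,p).
\]
We bound $w(b,q)\le 1/2$ and $\theta(a,p)\le\frac{4}{5\sqrt a}$ by \eqref{eq:profile-upper-sharp}; this is valid since $a\ge 256p^2-2\ge 128$, and if the step lands below $256p^2$ we can still use $a\ge128$. So each step costs at most $\frac{(p-1)\cdot 2}{5(a-1)\sqrt a}$. Because we always reduce the larger coordinate and the current sum is at least $n-\eps+2\ge n+1$, the reduced coordinate satisfies $a\ge (n+1)/2$. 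That gives $(a-1)\sqrt a\ge \frac{n-1}{2}\sqrt{(n+1)/2}$, so the step cost is at most
\[
\frac{4(p-1)}{5(n-1)\sqrt{(n+1)/2}}=\frac{(p-1)\cdot 4\sqrt2}{5(n-1)\sqrt{2(n+1)}}\cdot\frac{1}{\sqrt2}\cdots
\]
This overshoots the target per step, which is $\frac{2(p+q-2)}{5(n-1)\sqrt{2(n+1)}}$ because each step changes $n_1+n_2-n+\eps$ by $2$. So the constants must be redone: use $w(b,q)\le\frac12$ and $a-1\ge (n-1)/2$ together with the sharper $\theta(a,p)\le\frac{4}{5\sqrt a}$, which gives cost $\le \frac{4(p-1)}{5(n-1)}\sqrt{\frac{2}{n+1}}=\frac{4(p-1)}{5(n-1)\sqrt{2(n+1)}}\cdot 2\cdot\frac12$, i.e. $\frac{4(p-1)}{5(n-1)\sqrt{2(n+1)}}$. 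Compared with the per-step budget $\frac{2(p+q-2)}{5(n-1)\sqrt{2(n+1)}}$, this requires $2(p-1)\le p+q-2$, which fails when $p>q$.

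The main obstacle is therefore the constant bookkeeping. To fix it we alternate steps so that the total cost is $\sum(p-1)$-type and $\sum(q-1)$-type terms in roughly equal numbers, with one coordinate's term pairing against $(p+q-2)$. Alternatively we retain the factor $w(b,q)\le\frac12-\frac1{4\sqrt b}$ and use the fact that the coordinate reduced is the larger, so $a\ge (n+1)/2$ with slack. We expect the intended argument to split the $k$ steps so that the $p$-coordinate and the $q$-coordinate each absorb steps proportional to what keeps the sum near balance, yielding total $\le k\cdot\frac{2(p+q-2)}{5(n-1)\sqrt{2(n+1)}}$. The key estimates remain \eqref{eq:profile-upper-sharp}, $w\le\frac12$, and $a\gtrsim n/2$ for the reduced coordinate.
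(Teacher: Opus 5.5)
Your framework is the right one. You peel off steps of size two until $n_1'+n_2'=n-\eps$, and bound a step lowering $a$ (admissible for $p$) via $w(a,p)-w(a-2,p)=\frac{(p-1)\theta(a,p)}{2(a-1)}$, $w\le\frac12$ and \eqref{eq:profile-upper-sharp}, giving cost at most $\frac{p-1}{5(a-1)\sqrt a}$. Your intermediate constants are garbled, but your final comparison is right.

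However, the proof is incomplete exactly where you flag it. The rule ``reduce the larger coordinate'' is not merely lossy bookkeeping: it can choose a step whose \emph{true} cost exceeds the budget. Take $k=1$, $q=2$, $p$ large, and $n_1$ just above $n_2\approx n/2$. Your rule takes the single $p$-step. Since $\theta(a,p)\approx\sqrt{2/(\pi a)}$ with $a\approx n/2$, its actual cost is about $\frac{p-1}{\sqrt\pi\,n^{3/2}}$. That is roughly twice the allowed $\frac{\sqrt2\,(p+q-2)}{5(n-1)\sqrt{n+1}}$, while the $q$-step would cost only about $\frac{1}{\sqrt\pi\,n^{3/2}}$.

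The slack in $w(b,q)\le\frac12-\frac1{4\sqrt b}$ or in $a\ge(n+1)/2$ is of lower order and cannot absorb a factor near $2$. A fixed alternation also fails when the blocks are unbalanced: for $n_2\approx n/5$ with $q$ large, every $q$-step costs several times the per-step budget.

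The missing idea is to choose each step by cost rather than by size. At each stage reduce the coordinate whose bound is smaller, and control that minimum by the mediant inequality followed by convexity of $x\mapsto(x-1)\sqrt x$:
\[
\min\left\{\frac{p-1}{(n_1-1)\sqrt{n_1}},\frac{q-1}{(n_2-1)\sqrt{n_2}}\right\}
\le\frac{p+q-2}{(n_1-1)\sqrt{n_1}+(n_2-1)\sqrt{n_2}}
\le\frac{\sqrt2\,(p+q-2)}{(n_1+n_2-2)\sqrt{n_1+n_2}}.
\]
Whenever a step remains, $n_1+n_2\ge n+1$, so one fifth of the right side is at most $\frac{2(p+q-2)}{5(n-1)\sqrt{2(n+1)}}$. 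This is exactly the budget per step, and induction on $n_1+n_2$ closes; this is the paper's argument.

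You would also need one check the paper makes: the cheaper choice must respect $n_1-2\ge256p^2$ and $n_2-2\ge256q^2$, which the induction hypothesis uses. If, say, $n_1-2<256p^2$, then $n_1\le1026(p-1)^2<n/2<n_2$. In that case $\frac{(q-1)(n_1-1)\sqrt{n_1}}{(p-1)(n_2-1)\sqrt{n_2}}<1$, so the cheaper step is automatically the $n_2$-step.
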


The proof is given in Appendix~\ref{app:claims}.

\begin{lemma}
\label{cl:overlap-endpoint}
Suppose $p,q\ge2$ and $n\ge2^{16}(p+q-2)^2$.
Let $n_1$ and $n_2$ be admissible for $p$ and $q$, respectively, with
$256p^2\le n_1\le n$ and $256q^2\le n_2\le n$.
Then
\begin{equation}\label{eq:overlap-endpoint}
\frac14\theta(n_1,p)\theta(n_2,q)(n_1+n_2-n)_+
 \ge\frac{11\sqrt n}{10(p+q-2)}
       \bigl(w(n_1,p)w(n_2,q)-B_n\bigr).
\end{equation}
\end{lemma}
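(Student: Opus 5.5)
The idea is to derive \eqref{eq:overlap-endpoint} from Claim~\ref{cl:product-increment} by comparing coefficients, with a separate treatment of the case where the claim gives no useful gain. Set $k=n_1+n_2-n$.

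\emph{Case $k\le 0$.} Then the left side of \eqref{eq:overlap-endpoint} is $0$, so it suffices to show $w(n_1,p)w(n_2,q)\le B_n$. Write $n_1+n_2\le n$. If $k<0$, admissibility gives $n_1+n_2\equiv p+q\pmod 2$, so $n_1+n_2\le n-\eps$. We can then increase $n_1$ (in steps of $2$, staying admissible) until $n_1+n_2=n-\eps$. Since $w(m,t)$ is non-decreasing along admissible $m$, the product does not decrease, and by \eqref{eq:answer2} the final product is at most $B_n$. If $k=0$, then $n\equiv p+q$, so $\eps=0$ and the pair $(n_1,n_2)$ is itself feasible in \eqref{eq:answer2}; again the product is at most $B_n$.

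\emph{Case $k\ge1$.} Since $n_1\equiv p$ and $n_2\equiv q$, we have $k+\eps\equiv 0\pmod 2$, so $k\ge1$ forces $k+\eps\le 2k$. Claim~\ref{cl:product-increment} applies because $n_1+n_2\ge n-\eps$, and gives
\[
w(n_1,p)w(n_2,q)-B_n\le\frac{(p+q-2)(k+\eps)}{5(n-1)\sqrt{2(n+1)}}\le \frac{2(p+q-2)k}{5(n-1)\sqrt{2(n+1)}}.
\]
Multiplying by $\frac{11\sqrt n}{10(p+q-2)}$, the right side of \eqref{eq:overlap-endpoint} is at most
\[
\frac{22k\sqrt n}{50(n-1)\sqrt{2(n+1)}}\le \frac{0.32\,k}{n}
\]
for large $n$; note $\sqrt{n}/((n-1)\sqrt{n+1})\approx 1/n$ and $22/(50\sqrt2)\approx0.311$.

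For the left side, both $n_1\ge256p^2$ and $n_2\ge256q^2$, so \eqref{eq:profile-lower-central} gives
\[
\theta(n_1,p)\theta(n_2,q)\ge\Bigl(\frac{27}{34}\Bigr)^2\frac1{\sqrt{n_1n_2}}\ge \frac{0.63}{n},
\]
using $n_1,n_2\le n$. Hence the left side of \eqref{eq:overlap-endpoint} is at least $0.157\,k/n$.

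This is the main obstacle: $0.157<0.32$. The crude bounds $n_1,n_2\le n$ and $k+\eps\le 2k$ each lose roughly a factor of $2$, so the inequality cannot be closed this way in general. Two refinements are needed:
\begin{itemize}
\item[(a)] When $k$ is small relative to $n$, both $n_1$ and $n_2$ are close to $n/2$ only if they are balanced. In general $\sqrt{n_1n_2}\le (n_1+n_2)/2=(n+k)/2$, which gives $\theta\theta\ge 1.26/(n+k)$.
\item[(b)] For $k=1$ with $\eps=1$, the loss factor $(k+\eps)/k=2$ is genuine. In that case $\sqrt{n_1n_2}\le (n+1)/2$ gives left side $\ge 0.315/(n+1)$, against the right side $\approx0.311\cdot 2/n$. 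This does not close either.
\end{itemize}
So I would instead revisit the proof of Claim~\ref{cl:product-increment} (the appendix): the increment from $(n_1,n_2)$ down to a feasible pair is a telescoping sum of terms $w(m,p)-w(m-2,p)=\frac{p-1}{m-1}2^{-m}\binom{m-1}{(m-p)/2}\approx\frac{p-1}{2(m-1)}\theta(m,p)$, multiplied by $w(\cdot,q)\le1/2$. Doing this directly yields an increment bounded by about $\frac{(p-1)}{4n_1}\theta(n_1,p)\cdot(k+\eps)/2$ (plus the analogous $q$ term). Comparing with $\frac14\theta(n_1,p)\theta(n_2,q)k$, the ratio involves $\frac{(p-1)\sqrt{n_2}}{n_1}$, which after multiplying by $\frac{11\sqrt n}{10(p+q-2)}$ is $O(1)\cdot \sqrt{n n_2}/n_1$. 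Since the telescoping can be taken along whichever coordinate is larger, this is at most a constant below $1$. Getting that constant explicitly below $1$, while handling $k+\eps$ versus $k$, is where I expect the real work to lie.
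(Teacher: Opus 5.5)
\textbf{Verdict: there is a genuine gap.} Your setup is sound and matches the paper: the case $k\le0$, the parity fact $k+\eps\equiv0\pmod 2$, and invoking Claim~\ref{cl:product-increment} when $k\ge1$. The gap is that you never finish the coefficient comparison. You also discard the route that works because of an arithmetic slip in (b).

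Your constant $0.311\approx 22/(50\sqrt2)$ already contains the factor $2$ from $k+\eps\le 2k$. So for $k=\eps=1$ the right side of \eqref{eq:overlap-endpoint} is $\frac{22\sqrt n}{50(n-1)\sqrt{2(n+1)}}\approx 0.311/n$, not $0.622/n$. Your lower bound $0.315/(n+1)$ beats it for $n\ge256$.

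The missing idea is to treat the two losses jointly rather than separately. Combine $\sqrt{n_1n_2}\le(n+k)/2$ from (a) with
\[
(n+k)(k+\eps)\le(n+k)(k+1)\le 2(n+1)k .
\]
The last step is equivalent to $(k-1)(n-k)\ge0$, which holds because $1\le k\le n$. Together with \eqref{eq:profile-lower-central} this gives
\[
\frac{w(n_1,p)w(n_2,q)-B_n}{\theta(n_1,p)\theta(n_2,q)}
\le\Bigl(\frac{34}{27}\Bigr)^2\frac{(p+q-2)\sqrt{n+1}\,k}{5\sqrt2\,(n-1)}
\le\frac{5(p+q-2)k}{22\sqrt n}.
\]
The second inequality uses $(34/27)^2/(5\sqrt2)<9/40$ and $9\sqrt{n+1}/(40(n-1))\le5/(22\sqrt n)$ for $n\ge256$. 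Rearranging gives \eqref{eq:overlap-endpoint}, and this is exactly the paper's argument.

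Your fallback plan is not a proof as written, since the decisive constant is left undetermined. It is also unsound for large $k$. You bound the whole telescoped increment by a multiple of $\theta(n_1,p)/n_1$ taken at the top endpoint. But by \eqref{eq:w-ratio}, $\theta(m,p)$ decreases in $m$ in this range, so the steps $w(m,p)-w(m-2,p)=\frac{(p-1)\theta(m,p)}{2(m-1)}$ grow as you descend. Meanwhile $k$ can be comparable to $n_1$, or even as large as $n$, so telescoping along a single coordinate with endpoint values fails. Handling this requires alternating between the two coordinates, which is what the greedy induction inside Claim~\ref{cl:product-increment} already does. There is no need to reopen that claim; its conclusion suffices once the two losses are combined as above.
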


The proof is given in Appendix~\ref{app:claims}.

\begin{lemma}
Suppose $p,q\ge2$ and $n\ge2^{16}(p+q-2)^2$.
Let $n_1$ and $n_2$ be admissible for $p$ and $q$, respectively, with
$256p^2\le n_1\le n$ and $256q^2\le n_2\le n$.
Suppose also that $n_1+n_2=n-\eps$.
If
\begin{equation}\label{ineq:ratio-bound}
\left(\frac{p-1}{2(q-1)}\right)^{2/3}
 \le\frac {n_1}{n_2}\le\left(\frac{2(p-1)}{q-1}\right)^{2/3},
 \qquad n_1,n_2\ge\frac{n}{2(p+q-2)^{2/3}},
\end{equation}
then
\begin{equation}\label{eq:geometric}
 \frac{(p-1)\sqrt{n(n_2+2)}}{n_1}\le\frac98(p+q-2),
 \qquad
 \frac{(p-1)\sqrt{n_2(n_2+1)}}{n_1}\le\frac78(p+q-2),
\end{equation}
and the two inequalities obtained by exchanging $(n_1,p)$ and $(n_2,q)$ hold as well.
\end{lemma}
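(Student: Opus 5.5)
The plan is to reduce both inequalities in \eqref{eq:geometric} to a one-variable maximization in the proportion $\tau=(p-1)/(p+q-2)$. Only the lower half of the ratio bound \eqref{ineq:ratio-bound} is needed, plus lower-order corrections that the conditions $n_1,n_2\ge256\max\{p,q\}^2\ge1024$ make negligible. Write $a=p-1$, $b=q-1$, $s=a+b$ and $r=n_1/n_2$. Since $n_1+n_2=n-\eps$, we have $n\le n_1+n_2+1$. Both left-hand sides increase as $n_1$ decreases with $n_2$ fixed, so I will substitute the worst case $r\ge r_0:=(a/(2b))^{2/3}$, that is, $n_2/n_1\le c:=(2b/a)^{2/3}$.

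For the second inequality, I bound $\sqrt{n_2(n_2+1)}\le n_2+\tfrac12$, so the left side is at most $(1+\tfrac1{2n_2})\,a\,n_2/n_1\le(1+\tfrac1{2048})\,a c$. Dividing by $s$ gives
\[
\frac{ac}{s}=2^{2/3}\tau^{1/3}(1-\tau)^{2/3}.
\]
Calculus shows this is maximized at $\tau=1/3$, where it equals $2^{4/3}/3\approx0.840$. This is below $7/8=0.875$, and the margin easily absorbs the factor $1+1/2048$.

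For the first inequality, I write $n(n_2+2)\le(n_1+n_2+1)(n_2+2)\le(1+\tfrac{3}{n_2})(n_1+n_2)n_2$. Then the left side is at most
\[
\Bigl(1+\tfrac3{n_2}\Bigr)^{1/2}a\,\frac{\sqrt{(n_1+n_2)n_2}}{n_1}=\Bigl(1+\tfrac3{n_2}\Bigr)^{1/2}a\sqrt{x^2+x},\qquad x=n_2/n_1\le c .
\]
Since $\sqrt{x^2+x}$ is increasing in $x$, it suffices to bound $F(\tau)=\tau\sqrt{c^2+c}$ with $c=(2(1-\tau)/\tau)^{2/3}$. As $\tau\to0$, $F(\tau)\sim2^{2/3}\tau^{1/3}\to0$, and $F(1)=0$. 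Sample evaluations give $F(1/3)\approx0.99$, $F(0.45)\approx1.02$ and $F(1/2)\approx1.01$. I expect a clean argument that $\sup F\le1.03$, which is comfortably below $9/8$ even after the factor $(1+3/1024)^{1/2}$.

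The exchanged inequalities follow by the same computation with the roles of $(n_1,a)$ and $(n_2,b)$ swapped, using the upper half of \eqref{ineq:ratio-bound}, i.e.\ $n_1/n_2\le(2a/b)^{2/3}$. The assumption $n_i\ge n/(2(p+q-2)^{2/3})$ is not essential here beyond guaranteeing that the $n_i$ are large; its role is in the surrounding argument.

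The main obstacle is making the bound $\sup_{\tau\in(0,1)}F(\tau)<9/8$ rigorous rather than numerical. I would set $u=\tau$, square, and show $\tau^2(c^2+c)\le1.06$ on $(0,1)$. One route is to split into the regions $\tau\le1/5$ and $\tau\ge1/5$. On $\tau\le1/5$, the crude bound $\tau^2(c^2+c)\le2^{4/3}\tau^{2/3}+2^{2/3}\tau^{4/3}$ suffices. On $\tau\ge1/5$, I would use monotonicity on a few subintervals with endpoint evaluations: since $c$ is decreasing and $\tau$ is increasing, bound $F$ on $[\tau_1,\tau_2]$ by $\tau_2\sqrt{c(\tau_1)^2+c(\tau_1)}$. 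A grid of step about $0.05$ keeps all bounds below $1.1$.
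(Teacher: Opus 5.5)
Your reduction is correct and follows the paper's route. You use only $n_2/n_1\le(2(q-1)/(p-1))^{2/3}$, absorb $n\le n_1+n_2+1$, $(n_2+2)/n_2$ and $\sqrt{(n_2+1)/n_2}$ into factors close to $1$ (the paper uses $33/32$ and $49/48$), and bound a homogeneous expression in $p-1,q-1$ by a multiple of $p+q-2$. Your calculus argument for the second inequality is complete, and it gives the sharp constant $2^{4/3}/3$ in place of the paper's $6/7$.

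The gap is the step you flag yourself. The bound $\sup F<9/8$ is not proved, and the verification you propose fails as stated. On $[\tau_1,\tau_2]$ your bound $\tau_2\sqrt{c(\tau_1)^2+c(\tau_1)}$ equals $F(\tau_1)\,\tau_2/\tau_1$. For $\tau_1\in[0.25,0.45]$ you have $F(\tau_1)\ge0.94$, and a step of $0.05$ costs a factor $1+0.05/\tau_1\ge1.1$ there. As a result, each of the five intervals between $0.25$ and $0.5$ gives a bound exceeding $9/8$. For instance, $c(0.35)=(26/7)^{2/3}\approx2.398$, so on $[0.35,0.40]$ the bound is $0.40\sqrt{c^2+c}\approx1.142$. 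This is above your claimed $1.1$ and above $9/8$ itself, even before the factor $(1+3/1024)^{1/2}$. A step of $0.025$ would work, but no grid is needed. Squaring gives exactly
\[
F(\tau)^2=2^{4/3}\bigl[\tau(1-\tau)^2\bigr]^{2/3}+2^{2/3}\bigl[\tau^2(1-\tau)\bigr]^{2/3},
\]
and each bracket is at most $4/27$. Hence $F(\tau)^2\le(2^{8/3}+4)/9<1.15$, so $F<1.073$, which leaves ample room below $9/8$ after the correction factor. This term-by-term AM--GM is essentially what the paper does; its weighted AM--GM gives $7/6$ in place of $1.15$. Your bound on $\tau\le1/5$ is the same expression with the factors $(1-\tau)^{4/3}$ and $(1-\tau)^{2/3}$ discarded. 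Keeping those factors makes the case split unnecessary.
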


\begin{proof}
By \eqref{ineq:ratio-bound} and weighted AM--GM we obtain that
$$
 \begin{aligned}
 \frac{(p-1)\sqrt{n(n_2+2)}}{n_1}
 &\le\frac{33}{32}
 \sqrt{2^{4/3}(p-1)^{2/3}(q-1)^{4/3}
       +2^{2/3}(p-1)^{4/3}(q-1)^{2/3}}\\
 &\le\frac{33}{32}\sqrt{\frac76}\,(p+q-2)<\frac98(p+q-2),\\
 \frac{(p-1)\sqrt{n_2(n_2+1)}}{n_1}
 &\le\frac{49}{48}\,2^{2/3}(p-1)^{1/3}(q-1)^{2/3}
 \le\frac{49}{48}\cdot\frac67(p+q-2).
 \end{aligned}
$$
Here $n/(n_1+n_2),(n_2+2)/n_2\le33/32$ and
$\sqrt{(n_2+1)/n_2}\le49/48$.
\end{proof}

\section{Stability results}\label{sec:stability}

\subsection{Stability of Katona's theorem}\label{subsec:katona-stability}

Let us first prove a stability result of the Katona Theorem.
\Needspace{8\baselineskip}
\begin{lemma}\label{lem:stability}
Let $n\geq t\ge2$ be integers and let $\cF\subset 2^{[n]}$ be $t$-intersecting.
\begin{itemize}
  \item[(i)] If $n-t$  is even, then
  \begin{align}\label{eq:stability-e}
\mu(\cF\triangle\cK(n,[n],t))\le\frac{n-1}{t-1}\bigl(w(n,t)-\mu(\cF)\bigr).
  \end{align}
  \item[(ii)] If $n-t$ is odd and $\cF$ is left-shifted, then
   \begin{align}\label{eq:stability-o}
\mu(\cF\triangle\cK(n,[n-1],t))\le\frac{n-2}{t-1}\bigl(w(n-1,t)-\mu(\cF)\bigr).
  \end{align}
\end{itemize}
\end{lemma}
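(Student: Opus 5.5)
The plan is to make Katona's level-pairing proof quantitative and read the stability bound off the slack in the shadow inequality. For part (i), set $k=(n+t)/2$, so that $\cK(n,[n],t)=\bigcup_{j\ge k}\binom{[n]}j$. For each level $i$ with $t\le i\le k-1$ (levels below $t$ are empty, since two equal sets of size $i<t$ already violate $t$-intersection), pair it with $i'=n+t-1-i\ge k$. Note $n-i'=i-t+1$. If $C\in\partial^{t-1}\cF^{(i)}$, say $C\subseteq A\in\cF^{(i)}$, and $C=[n]\setminus B$ with $B\in\cF^{(i')}$, then $|A\cap B|=|A\setminus C|=t-1$, a contradiction. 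Hence $\partial^{t-1}\cF^{(i)}$ and $(\cF^{(i')})^c$ are disjoint subfamilies of $\binom{[n]}{i-t+1}$, and therefore
\[
|\cF^{(i')}|\le\binom{n}{i'}-|\partial^{t-1}\cF^{(i)}|.
\]
Apply Theorem~\ref{thm:classical-shadow} with $j=t-1$ to get
\[
|\partial^{t-1}\cF^{(i)}|\ge\frac{\binom{2i-t}{i-t+1}}{\binom{2i-t}{i}}|\cF^{(i)}|=\frac{i}{i-t+1}|\cF^{(i)}|.
\]
So the number of sets missing at level $i'$ is at least $r_i|\cF^{(i)}|$ with $r_i-1=\frac{t-1}{i-t+1}\ge\frac{t-1}{k-t}$. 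As $i$ runs over $t,\dots,k-1$, the level $i'$ runs over $n,\dots,k$, so every Katona level is used exactly once.

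Next comes the bookkeeping. Let $a=\sum_{i<k}\mu(\cF^{(i)})$ be the measure of the added sets and $b=\sum_{j\ge k}\mu(\binom{[n]}j\setminus\cF)$ that of the missing sets. Then the deficit is $D=w(n,t)-\mu(\cF)=b-a$ and $\mu(\cF\triangle\cK(n,[n],t))=a+b=D+2a$. The pairing gives $b\ge\sum_i r_i\mu(\cF^{(i)})$, hence
\[
D\ge\frac{t-1}{k-t}\,a,\quad\text{so}\quad a\le\frac{k-t}{t-1}D.
\]
Therefore
\[
\mu(\cF\triangle\cK(n,[n],t))\le D\Bigl(1+\frac{2(k-t)}{t-1}\Bigr)=\frac{n-1}{t-1}D,
\]
using $2k-2t=n-t$. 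This is exactly \eqref{eq:stability-e}.

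For part (ii), I would reduce to part (i) on $[n-1]$ through the two restricted projections $\cF(n),\cF(\bar n)\subseteq2^{[n-1]}$; here $(n-1)-t$ is even. The family $\cF(\bar n)$ is clearly $t$-intersecting. The key step, and the one I expect to need care, is that left-shiftedness makes $\cF(n)$ $t$-intersecting as well. Suppose $A\cup\{n\},B\cup\{n\}\in\cF$ with $|A\cap B|=t-1$. If some $j\in[n-1]\setminus(A\cup B)$ exists, shiftedness gives $A\cup\{j\}\in\cF$, and its intersection with $B\cup\{n\}$ has size $t-1$, a contradiction. Otherwise $A\cup B=[n-1]$. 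Then I would instead shift an element of $A\setminus B$ or use $B\cup\{n\}$ against a shifted copy of $A\cup\{n\}$ that replaces $n$ by an element of $B\setminus A$. If both $A\setminus B$ and $B\setminus A$ are empty, then $A=B=[n-1]$ and $|A\cap B|=n-1\ge t$, so this case cannot occur with $|A\cap B|=t-1$. A parity or counting argument may be needed here, and this is the main obstacle.

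Once this is settled, apply \eqref{eq:stability-e} to both projections on the ground set $[n-1]$. Since $\cK(n,[n-1],t)$ projects to $\cK(n-1,[n-1],t)$ for both values of coordinate $n$, we have
\[
\mu^n(\cF\triangle\cK(n,[n-1],t))=\tfrac12\bigl(\mu^{n-1}(\cF(n)\triangle\cK)+\mu^{n-1}(\cF(\bar n)\triangle\cK)\bigr),
\qquad \mu(\cF)=\tfrac12\bigl(\mu(\cF(n))+\mu(\cF(\bar n))\bigr).
\]
Averaging the two instances of \eqref{eq:stability-e}, each with constant $\frac{n-2}{t-1}$, gives \eqref{eq:stability-o}.
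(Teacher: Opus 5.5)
Your part (i) is the paper's argument: the same level pairing $i\leftrightarrow n+t-1-i$, Katona's shadow bound with $j=t-1$, and the same bookkeeping. One small slip: $i'$ actually runs over $n-1,\dots,k$, not $n,\dots,k$, which is harmless since only distinctness of the $i'$ matters.

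Part (ii), however, has a genuine gap, exactly at the step you flagged. For a left-shifted $t$-intersecting family with $n-t$ odd, $\cF(n)$ need \emph{not} be $t$-intersecting. Consider the case $A\cup B=[n-1]$, $|A\cap B|=t-1$. Parity only gives that $|A|+|B|=n+t-2$ is odd, i.e.\ $|A|\neq|B|$. Since every coordinate lies in $A\cup B\cup\{n\}$, no shift can lower the intersection. Concretely, let $n=5$, $t=2$ and
\[
\cF=\{F\subseteq[5]:\{1,2\}\subseteq F,\ |F|\ge3\}\cup\bigl\{\{1,3,4,5\}\bigr\}.
\]
This family is increasing and $2$-intersecting: every member contains $1$, and every other member contains $2$ and an element of $\{3,4,5\}$. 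It is also left-shifted. The shifts of $\{1,3,4,5\}$ are $\{1,2,4,5\},\{1,2,3,5\},\{1,2,3,4\}\in\cF$, and shifting any other member keeps $\{1,2\}$ and the size. Yet $\cF(5)=\bigl\{\{1,2\},\{1,2,3\},\{1,2,4\},\{1,3,4\},\{1,2,3,4\}\bigr\}$ contains $\{1,2\}$ and $\{1,3,4\}$, which meet in one element. Worse, the inequality of (i) itself fails for this projection. We have $\mu^4(\cF(5))=5/16=w(4,2)$, so the right-hand side is $0$. But $\cF(5)\triangle\cK(4,[4],2)=\bigl\{\{1,2\},\{2,3,4\}\bigr\}$ is nonempty. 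So averaging two instances of (i) cannot prove (ii), however the intersection property is argued.

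The missing idea is that shiftedness is needed only on a single level, and there the parity argument does work. Write $n=t+2r+1$. All sets of size $\ge t+r+1$ lie in $\cK(n,[n-1],t)$, and all sets of size $\le t+r-1$ lie outside it. So your pairing from (i) handles levels $t,\dots,t+r-1$ verbatim, since their partner levels have sizes in $[t+r+1,n-1]$.

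On level $t+r$, the surplus is $\cH=\{H\in\cF^{(t+r)}:n\in H\}$. The deficit is the family $\cG$ of $(t+r)$-subsets of $[n-1]$ missing from $\cF$. The sets $H\setminus\{n\}$, $H\in\cH$, all have the same size $t+r-1$. Hence two of them meeting in $t-1$ elements have union of size $n-2$. Shifting $n$ into the free coordinate gives a contradiction, so $\cH'=\{H\setminus\{n\}:H\in\cH\}$ is $t$-intersecting.

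The complements in $[n-1]$ of $\partial^{t-1}\cH'$ lie in $\cG$, so Katona's shadow theorem gives $|\cG|\ge\frac{t+r-1}{r}|\cH|$. Your bookkeeping then yields the constant $\frac{2r+t-1}{t-1}=\frac{n-2}{t-1}$. The case $n=t+1$, where $r=0$, is handled separately: there shiftedness forces $\cF\subseteq\cK(n,[n-1],t)$. This is exactly how the paper proves (ii).
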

\begin{proof}
For $n=t$ the assertion is immediate. If $n=t+1$, a left-shifted
$t$-intersecting family contains no $t$-sets containing $n$: shifting $n$ to
the unique missing element of $[n-1]$ would create two members intersecting
in $t-1$ elements. Thus it is contained in $\cK(n,[n-1],t)$, and the
assertion is again immediate. Thus we may  assume $n-t\ge 2$.

By \eqref{eq:classical-shadow}, every $t$-intersecting
$\cF^{(j)}\subseteq\binom{[n]}j$ with $j\geq t$ satisfies
\begin{equation}\label{eq:katona}
 |\partial^{t-1}\cF^{(j)}|
 \ge\frac{\binom{2j-t}{j-t+1}}{\binom{2j-t}j}|\cF^{(j)}|
 =\frac{j}{j-t+1}|\cF^{(j)}|.
\end{equation}

First suppose $n-t=2r$. Then
 $$
 \cF\setminus \cK(n,[n],t) = \bigcup_{t\leq j\leq t+r-1} \cF^{(j)}.
 $$
For each $G\in\partial^{t-1}\cF^{(j)}$, there is some $H\in\cF^{(j)}$ with $G\subset H$ and $|H\cap([n]\setminus G)|=t-1$, implying $[n]\setminus G \notin \cF$. Therefore,
$$
(\partial^{t-1}\cF^{(j)})^c \subset \binom{[n]}{n-j+t-1}\setminus \cF, j=t,t+1,\ldots,t+r-1.
$$
Thus for $j=t,t+1,\ldots,t+r-1$,
\begin{align}\label{ineq-new3.2}
\left|\binom{[n]}{n-j+t-1}\setminus \cF\right| \geq |(\partial^{t-1}\cF^{(j)})^c| =|\partial^{t-1}\cF^{(j)}|\overset{\eqref{eq:katona}}{\geq} \frac{j}{j-t+1}|\cF^{(j)}|\geq \frac{t+r-1}{r}|\cF^{(j)}|.
\end{align}
Note that $n=t+2r$ and $j\leq t+r-1$ imply $n-j+t-1\geq t+r$. It follows that
$$
(\partial^{t-1}\cF^{(j)})^c \subset \cK(n,[n],t)\setminus \cF.
$$
Summing over $j=t,t+1,\ldots,t+r-1$, we obtain that
$$
|\cK(n,[n],t)\setminus \cF| \geq  \frac{t+r-1}{r} \sum_{t\leq j\leq t+r-1}|\cF^{(j)}| = \frac{t+r-1}{r} | \cF\setminus \cK(n,[n],t)|.
$$
Using $|\cF\triangle\cK(n,[n],t)|=|\cK(n,[n],t)\setminus \cF|+| \cF\setminus \cK(n,[n],t)|$ and $|\cK(n,[n],t)|-|\cF|= |\cK(n,[n],t)\setminus \cF|-| \cF\setminus \cK(n,[n],t)|$, we conclude that
$$
 (t-1)|\cF\triangle\cK(n,[n],t)|
 \le(n-1)\bigl(|\cK(n,[n],t)|-|\cF|\bigr),
$$
which proves \eqref{eq:stability-e}.

Next assume $n=t+2r+1$ and let $\cF\subseteq2^{[n]}$ be left-shifted. By \eqref{ineq-new3.2}, we have
for $j=t,t+1,\ldots,t+r-1$,
\begin{align}\label{ineq-2.2}
\left|\binom{[n]}{n-j+t-1}\setminus \cF\right| \geq \frac{t+r-1}{r}|\cF^{(j)}|.
\end{align}
Since $n=t+2r+1$ and $j\leq t+r-1$ imply $n-j+t-1\geq t+r+1$,
$$
(\partial^{t-1}\cF^{(j)})^c \subset \cK(n,[n-1],t)\setminus \cF.
$$
Let us consider the $(t+r)$-sets in $\cF$. Define
$$
\cH= \{H\in \cF^{(t+r)}\colon n\in H\},\ \cG= \binom{[n-1]}{t+r}\setminus \{H\in \cF^{(t+r)}\colon n\notin H\}.
$$
Note that
$$
\cG= (\cK(n,[n-1],t)\setminus \cF)\cap \binom{[n]}{t+r}\mbox{ and }\cH= (\cF\setminus  \cK(n,[n-1],t))\cap \binom{[n]}{t+r}.
$$
\begin{claim}
 \begin{align}\label{ineq-2.3}
|\cG| \geq \frac{t+r-1}{r}|\cH|.
\end{align}
\end{claim}
\begin{proof}
Let $\cH' =\{H\setminus \{n\}\colon H\in \cH\}$. Since $\cF$ is $t$-intersecting, we infer that
$$
\left\{[n-1]\setminus E\colon E\in \partial^{(t-1)} \cH'\right\} \subset \cG.
$$
Since $\cF$ is left-shifted and $n=t+2r+1$, we infer that $\cH'$ is also $t$-intersecting. By \eqref{eq:katona}, we obtain that
\begin{equation*}
|\cG| \geq |\partial^{(t-1)} \cH'| \geq \frac{t+r-1}{r} |\cH'|=\frac{t+r-1}{r} |\cH|. \qedhere
\end{equation*}
\end{proof}

By \eqref{ineq-2.2} and \eqref{ineq-2.3}, we infer that
$$
|\cK(n,[n-1],t)\setminus \cF| \geq \frac{t+r-1}{r} | \cF\setminus \cK(n,[n-1],t)|.
$$
Using $|\cF\triangle\cK(n,[n-1],t)|=|\cK(n,[n-1],t)\setminus \cF|+| \cF\setminus \cK(n,[n-1],t)|$ and $|\cK(n,[n-1],t)|-|\cF|= |\cK(n,[n-1],t)\setminus \cF|-| \cF\setminus \cK(n,[n-1],t)|$, we conclude that
$$
 (t-1)|\cF\triangle\cK(n,[n-1],t)|
 \le(n-2)\bigl(|\cK(n,[n-1],t)|-|\cF|\bigr),
$$
which proves \eqref{eq:stability-o}.
\end{proof}

\subsection{Influence stability}\label{sec:influence}

Let us recall the {\it generating-set method}, developed by Ahlswede and Khachatrian~\cite{AK}.
We use the following formulation for increasing families.

For an increasing family $\cF$, let $\cG(\cF)$ consist of its
inclusion-minimal members, which is often called the {\it generating set} of $\cF$. Then
$\cF=\langle\cG(\cF)\rangle$, where
$\langle\cG\rangle=\{A:G\subseteq A\text{ for some }G\in\cG\}$.
The \emph{support} of $\cF$ is $\bigcup_{G\in\cG(\cF)}G$.
For a nonempty left-shifted family $\cF$, its support is an initial interval $[\ell]$.
Define the {\it boundary generating set} as
$\cG^*=\{G\in\cG:\ell\in G\}$. Recall that  $(\cG^*)^{(i)}=\{G\in\cG^*:|G|=i\}$.

\begin{lemma}[Generating-set properties~\cite{AK}]\label{lem:generators}
Let $\cF\subseteq2^{[n]}$ be increasing, left-shifted and $t$-intersecting. Let $[\ell]$ be the support of $\cF$.
Then
\begin{itemize}
  \item[(i)] If $G\in \cG^*$ and $G\setminus \{\ell\}\subsetneq F\subseteq[\ell]$, then $F\in \cF$.
  \item[(ii)] If $G_1,G_2\in\cG^*$ satisfy $|G_1\cap G_2|=t$, then $G_1\cup G_2=[\ell]$ and $|G_1|+|G_2|=\ell+t$.
\end{itemize}
\end{lemma}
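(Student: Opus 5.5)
The statement has two parts, and I would prove both by exploiting the fact that $\cF$ is increasing and left-shifted together with the minimality of generators. Throughout I use that the support $[\ell]$ is an initial interval. I also use that every $G\in\cG$ satisfies $G\subseteq[\ell]$, and that membership of any set $A$ in $\cF$ depends only on $A\cap[\ell]$.

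\emph{Part (i).} Let $G\in\cG^*$, so $\ell\in G$, and let $G\setminus\{\ell\}\subsetneq F\subseteq[\ell]$.
\begin{itemize}
\item If $\ell\in F$, then $G\subseteq F$, and $F\in\cF$ because $\cF$ is increasing.
\item Otherwise $F\subseteq[\ell-1]$, and $F$ strictly contains $G\setminus\{\ell\}$. Hence there is some $i\in F\setminus G$ with $i<\ell$.
\item Left-shiftedness gives $S_{i\ell}(\cF)=\cF$. Applied to $G\in\cF$ with $\ell\in G$ and $i\notin G$, this yields $(G\setminus\{\ell\})\cup\{i\}\in\cF$.
\item This set is contained in $F$, so $F\in\cF$ by increasingness.
\end{itemize}

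\emph{Part (ii).} Let $G_1,G_2\in\cG^*$ with $|G_1\cap G_2|=t$; note $\ell\in G_1\cap G_2$. First I show $G_1\cup G_2=[\ell]$.
\begin{itemize}
\item Suppose some $j\in[\ell]\setminus(G_1\cup G_2)$ exists. Then $j<\ell$.
\item Shifting $\ell\to j$ in $G_1$ gives $G_1'=(G_1\setminus\{\ell\})\cup\{j\}\in\cF$, again by left-shiftedness.
\item Then $|G_1'\cap G_2|=t-1$, since we removed the common element $\ell$ and added $j\notin G_2$.
\item This contradicts the $t$-intersecting property. Hence $G_1\cup G_2=[\ell]$.
\end{itemize}
The size identity now follows from inclusion--exclusion: $|G_1|+|G_2|=|G_1\cup G_2|+|G_1\cap G_2|=\ell+t$.

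\emph{Remarks.}
\begin{itemize}
\item Minimality of the generators is not actually needed in either part. Only $G\in\cF$, $\ell\in G$, the shift property and the intersection property are used.
\item The one point to check carefully is the direction of the shift. For a left-shifted family, $S_{i\ell}$ with $i<\ell$ replaces $\ell$ by the smaller element $i$. If the replaced set were not in $\cF$, the shift would change $\cF$, contradicting $S_{i\ell}(\cF)=\cF$.
\end{itemize}
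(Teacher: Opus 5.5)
Your proof is correct and follows the paper's argument essentially step for step: in (i) you split on whether $\ell\in F$, and otherwise shift $\ell$ to an element of $F\setminus G$. In (ii) you shift $\ell$ to a missing element $j$ to produce a member of $\cF$ meeting $G_2$ in only $t-1$ elements. (Your closing remark slightly overstates things, though: part (ii) does use $G_1\cup G_2\subseteq[\ell]$, which comes from the generators lying in the support, even though minimality itself is not needed.)
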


\begin{proof}
For (i), if $\ell\in F$, then $G\subseteq F$ and $F\in \langle\cG\rangle=\cF$. Otherwise choose
$j\in F\setminus G$. By left-shiftedness,  $(G\setminus\{\ell\})\cup\{j\}\in\cF$,
and by increasingness  $F\in\cF$ follows.
For (ii), if $j\in[\ell]\setminus(G_1\cup G_2)$, by replacing $\ell$ by $j$
in $G_1$ we obtain a member of $\cF$ whose intersection with $G_2$ has size
$t-1$, a contradiction.
\end{proof}

By using the generating set method, we prove the next lemma, which shows that if a  $t$-intersecting increasing family is close to a Katona family in measure, then its total influence is close to the total influence of the  Katona family as well.

\begin{lemma}\label{lem:profile}
Let $n\geq t\geq2$ and let $\cF\subseteq2^{[n]}$ be increasing
and $t$-intersecting. For every admissible $m\ge t$,
\begin{equation}\label{eq:profile-supporting}
 I[\cF]\ge J(m,t)+\frac{2(m+t(t-2))}{t-1}
                  \bigl(\mu(\cF)-w(m,t)\bigr).
\end{equation}
In particular, if
$\mu(\cF)=\alpha w(m-2,t)+(1-\alpha) w(m,t)$
for some $\alpha\in[0,1]$ and admissible $m\ge t$, then
\begin{equation}\label{eq:profile-interpolation}
 I[\cF]\ge \alpha J(m-2,t)+(1-\alpha) J(m,t).
\end{equation}
\end{lemma}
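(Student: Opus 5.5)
The plan is to reduce \eqref{eq:profile-supporting} to a statement about the minimizers of the linear functional
\[
\Phi_c(\cF):=I[\cF]-c\,\mu(\cF),\qquad c=\frac{2(m+t(t-2))}{t-1},
\]
over increasing $t$-intersecting families. The first observation is that for increasing $\cF$, summing \eqref{ineq-2.1} over $i$ gives
\[
I[\cF]=2^{1-n}\sum_{F\in\cF}\bigl(2|F|-n\bigr).
\]
Hence $\Phi_c(\cF)=2^{-n}\sum_{F\in\cF}\bigl(2(2|F|-n)-c\bigr)$ depends only on the level sizes $|\cF^{(k)}|$. Shifting preserves level sizes, increasingness and $t$-intersection. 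So I may assume $\cF$ is left-shifted, which makes the generating-set structure of Lemma~\ref{lem:generators} available. Note also that $\mu$ and $I$ of $\cK(n,[\ell],t)$ do not depend on $n$. Thus it suffices to show
\[
\Phi_c(\cF)\ge\min_{\ell}\Phi_c(\cK(n,[\ell],t)),
\]
with the minimum over admissible $\ell$ (including the convention $\ell=t-2$, the empty family), together with the fact that this minimum is attained at $\ell=m$.

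\textbf{Step 1 (convexity of the Katona profile).} I compute the slopes
\[
s_\ell=\frac{J(\ell,t)-J(\ell-2,t)}{w(\ell,t)-w(\ell-2,t)}
\]
of consecutive points $(w(\ell,t),J(\ell,t))$. From the formula for $w(\ell,t)-w(\ell-2,t)$ I get $w(\ell,t)-w(\ell-2,t)=\frac{(t-1)\theta(\ell,t)}{2(\ell-1)}$. With $k=(\ell-t)/2$, the identity $\theta(\ell-2,t)=\theta(\ell,t)\cdot\frac{4k(\ell-1-k)}{(\ell-1)(\ell-2)}$ gives $s_\ell$ as an explicit rational function of $\ell$ and $t$. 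I would then check directly that $s_m\le c\le s_{m+2}$, which I expect to reduce to a short polynomial identity: presumably $c$ is a suitable average of $s_m$ and $s_{m+2}$. This makes $\ell\mapsto J(\ell,t)-c\,w(\ell,t)$ minimized at $\ell=m$ and yields \eqref{eq:profile-supporting} for Katona families. The interpolation statement \eqref{eq:profile-interpolation} then follows by applying \eqref{eq:profile-supporting} with the admissible value $m$ itself, since $c\ge s_m$ and the chord lies below the supporting line on $[w(m-2,t),w(m,t)]$.

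\textbf{Step 2 (reduction to Katona families via generating sets).} Let $[\ell]$ be the support of the left-shifted $\cF$ and $\cG^*$ its boundary generating set. By Lemma~\ref{lem:generators}(ii), the sizes of boundary generators that meet in exactly $t$ elements pair up as $i$ and $\ell+t-i$. Following Ahlswede--Khachatrian, I consider the two modifications:
\begin{itemize}
\item[(a)] delete $\langle(\cG^*)^{(i)}\rangle$-only members, i.e.\ remove $(\cG^*)^{(i)}$ from the generators;
\item[(b)] replace $(\cG^*)^{(\ell+t-i)}$ by $\{G\setminus\{\ell\}\}$.
\end{itemize}
Both keep the family $t$-intersecting (using (i) and (ii)) and strictly shrink the support or the number of boundary generators. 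The changes of $\mu$ and $I$ are computable from level counts inside $[\ell]$, because the sets added or removed are exactly $G\cup T$ and $(G\setminus\{\ell\})\cup T$ with $T$ free outside $[\ell]$. I expect $\Delta\Phi_c$ under the two operations to be weighted by opposite-sign coefficients, so a convex combination yields $0$ and one of them satisfies $\Delta\Phi_c\le0$. Iterating ends at a family generated by a single layer $\binom{[\ell]}{(\ell+t)/2}$, or at the odd-$\ell$ variant, which equals some $\cK(n,[\ell'],t)$.

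\textbf{Main obstacle.} The exchange step is the crux. The bookkeeping must show that the sign of $\Delta\Phi_c$ for (a) and (b) is controlled by $i$ relative to $(\ell+t)/2$, and it must handle the case $i=\ell+t-i$ separately. If the clean averaging fails, I would fall back on the level-wise argument of Lemma~\ref{lem:stability}. There, each set of size $j$ below the threshold forces, via \eqref{eq:katona}, at least $\frac{j}{j-t+1}$ missing sets at level $n-j+t-1$. I would then compare the weights $2(2k-n)-c$ on the two paired levels, showing the gain from any low set is dominated by the weighted loss at its partner level, uniformly once $c$ lies between $s_m$ and $s_{m+2}$.
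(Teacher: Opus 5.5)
Your Step 1 is essentially right, but carry out the computation. By \eqref{eq:incrementa}--\eqref{eq:incrementJ} the chord slopes are $s_\ell=\frac{2(\ell+t(t-2))}{t-1}$, so $c=s_m$ exactly; it is not an average of $s_m$ and $s_{m+2}$. This matters for \eqref{eq:profile-interpolation}. If $c>s_m$, the line $J(m,t)+c(\mu-w(m,t))$ would lie \emph{below} the chord on $[w(m-2,t),w(m,t)]$, so your stated justification runs the wrong way. It is the equality $c=s_m$ that makes the line coincide with the chord.

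The genuine gap is in Step 2.
\begin{itemize}
\item If (a) and (b) are separate moves, (b) alone destroys $t$-intersection. An added $G\setminus\{\ell\}$ meets a retained $G'\in(\cG^*)^{(i)}$ with $|G\cap G'|=t$ in only $t-1$ points. The valid exchanges are the combined Ahlswede--Khachatrian move $\cF_{i,j}$ and its mirror $\cF_{j,i}$. Their $\Phi_c$-changes sum to $-2^{2-\ell}\bigl(|(\cG^*)^{(i)}|+|(\cG^*)^{(j)}|\bigr)<0$ for every $c$.
\item These exchanges only force $\cG^*\subseteq\binom{[\ell]}{(\ell+t)/2}$; they do not lead to a Katona family. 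For $t=2$, the family $\langle\{1,2,3\},\{1,2,4\},\{1,3,4\}\rangle$ is left-shifted, has all boundary generators in the middle layer, and is not Katona.
\item The middle-layer case you postpone is exactly where $m$ enters. You need the support-reducing move $\cF_i$: delete the boundary generators containing $i$ and truncate those omitting $i$. Averaging over $i<\ell$ gives $\sum_{i<\ell}\bigl(\Phi_c(\cF_i)-\Phi_c(\cF)\bigr)=2^{-\ell}|\cG^*|(t-1)(c-s_\ell)$. This is nonpositive only when $\ell\ge m$.
\item For $\ell\le m-2$, pushing down \emph{raises} $\Phi_c$, and your plan has nothing for this regime.
\end{itemize}

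The paper handles this without iterating. It takes a maximizer of $\mu-\lambda I$ with $\lambda=1/c$ and minimum support, which also sidesteps re-shifting and termination issues. The exchanges then contradict extremality unless $\cG^*$ lies in the middle layer and $\ell<m$. Katona's theorem gives $\mu(\cF)\le w(\ell,t)\le w(m-2,t)$. Induction on $m$ finishes: the bound at $m-2$ has slope $s_{m-2}<c$ while $\mu(\cF)-w(m-2,t)\le 0$, so $I-c\mu\ge J(m-2,t)-c\,w(m-2,t)=J(m,t)-c\,w(m,t)$.

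Your level-pairing fallback cannot substitute for this. It measures $\cF$ against $\cK(n,[n],t)$, but for admissible $n>m$ one has $\Phi_c(\cK(n,[m],t))<\Phi_c(\cK(n,[n],t))$. Concretely, take $t=2$, $n$ even and $j=n/2$, with weights $w_k=2(2k-n)-c$. The net change $w_j-\frac{j}{j-1}w_{j+1}$ equals $4(m-n)/(n-2)<0$, so the gain from low sets is not dominated.
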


\begin{proof}
Direct computation shows that, for any admissible $m\ge t$,
\begin{align}
 &w(m,t)-w(m-2,t)
=\frac{(t-1)\theta(m,t)}{2(m-1)},
 \label{eq:incrementa}\\[3pt]
 &J(m,t)-J(m-2,t)
=\frac{m+t(t-2)}{m-1}\theta(m,t).
 \label{eq:incrementJ}
\end{align}
Fix $\lambda>0$.
For $\cF\subseteq2^{[n]}$, define
$$
 f_\lambda(\cF):=\mu(\cF)-\lambda I[\cF].
$$
Among all increasing $t$-intersecting families in $2^{[n]}$,
choose a maximizer $\cF$ of $f_\lambda$ with minimum support size.

Since $\cF$ is increasing, by \eqref{ineq-2.1} and double counting we infer that
$$
 I[\cF]
 =\sum_{i=1}^n
       \frac{|\cF(i)|-|\cF(\bar i)|}{2^{n-1}}
 =2^{-n+1}\sum_{A\in\cF}(2|A|-n).
$$
Consequently,
\begin{equation}\label{eq:profile-functional}
 f_\lambda(\cF)
 =2^{-n}\sum_{A\in\cF}
       \bigl(1-2\lambda(2|A|-n)\bigr).
\end{equation}
The shifting operators $S_{ij}$, $1\le i<j\le n$, preserve
increasingness, $t$-intersection and the number of members of
each size.
By \eqref{eq:profile-functional}, they also preserve
$f_\lambda$. Moreover, the shifting operators do not increase the size of its support.
Thus a maximizer $\cF$ may be assumed left-shifted. Let $\cG=\cG(\cF)$ and let $\ell$ be its support size. Since $\cG$ is $t$-intersecting, we infer that  $\ell\ge t$ if $\cF\neq \emptyset$.

\begin{claim}\label{cl:profile-support}
\begin{equation}\label{eq:profile-support-size}
 \ell<\frac{t-1}{2\lambda}-t(t-2).
\end{equation}
Moreover,
\begin{equation}\label{eq:generator-middle}
\ell \equiv t\pmod2 \mbox{ and } \cG^*\subset \binom{[\ell]}{(\ell+t)/2}
\end{equation}
\end{claim}

Let us first show that Claim \ref{cl:profile-support} implies the lemma. We  prove \eqref{eq:profile-supporting} by induction on
admissible $m$, with $\lambda=\frac{t-1}{2(m+t(t-2))}$.
For $m=t$, we have $\lambda=1/(2t)$.
A  maximizer $\cF$ would have $\ell<t$ by
\eqref{eq:profile-support-size}. Since $\cG(\cF)$ is $t$-intersecting, we infer that $\cF=\varnothing$.
Hence the maximum of $f_\lambda$ is zero, and
$$
 I[\cF]\ge 2t\mu(\cF)
$$
for every increasing $t$-intersecting family $\cF$.
Since $J(t,t)=2t w(t,t)$, this is
\eqref{eq:profile-supporting} for $m=t$.

Let $m>t$ and assume that \eqref{eq:profile-supporting}
has been proved for $m-2$ and for every increasing
$t$-intersecting family.
Choose an increasing $t$-intersecting family $\cF$ with $f_{\lambda}(\cF)$ maximal and the support size minimal as above.
If $\cF\ne\varnothing$, then
\eqref{eq:profile-support-size} implies $\ell<m$.
By \eqref{eq:generator-middle}, $\ell\equiv t\pmod2$,
so $\ell\le m-2$.
Theorem~\ref{thm-katona} and \eqref{eq:incrementa} imply
$$
 \mu(\cF)\le w(\ell,t)\le w(m-2,t).
$$
The same upper bound holds if $\cF=\varnothing$.

By the induction hypothesis,
\begin{align*}
I[\cF]\ge J(m-2,t)+\frac{2(m-2+t(t-2))}{t-1}
                  \bigl(\mu(\cF)-w(m-2,t)\bigr).
\end{align*}
It follows that
$$
 \begin{aligned}
 f_\lambda(\cF) &= \mu(\cF) -\lambda I[\cF]\\[3pt]
 &\le w(m-2,t)-\lambda J(m-2,t)+\frac{2}{m+t(t-2)}
       \bigl(\mu(\cF_0)-w(m-2,t)\bigr)\\
 &\le w(m-2,t)-\lambda J(m-2,t)\\
 &=w(m,t)-\lambda J(m,t),
 \end{aligned}
$$
where the last equality follows from
\eqref{eq:incrementa} and \eqref{eq:incrementJ}.
By maximality, the same upper bound holds for
$f_\lambda(\cF)$ for every increasing
$t$-intersecting $\cF\subseteq2^{[n]}$.
Rearranging proves \eqref{eq:profile-supporting} and
completes the induction.

Finally, \eqref{eq:incrementa} and \eqref{eq:incrementJ} show that
the right-hand side of \eqref{eq:profile-supporting} equals
$\alpha J(m-2,t)+(1-\alpha) J(m,t)$ when
$\mu(\cF)=\alpha w(m-2,t)+(1-\alpha) w(m,t)$.
This proves \eqref{eq:profile-interpolation}.
\end{proof}

\begin{proof}[Proof of Claim~\ref{cl:profile-support}]
Note that  $\cF =\cF_{\mid [\ell]} \times 2^{[\ell+1,n]}$ implies $\mu(\cF)= \mu(\cF_{\mid [\ell]})$ and $I[\cF]= I[\cF_{\mid [\ell]}]$. Thus we may assume $n=\ell$.
For $t\le i,j\le \ell$ with $i\ne j$ and $i+j=\ell+t$, define
$$
 \cF_{i,j}
 =(\cF\setminus(\cG^*)^{(i)})
   \cup\{G\setminus\{\ell\}:G\in(\cG^*)^{(j)}\},
$$
and define $\cF_{j,i}$ symmetrically.
Since sets in $(\cG^*)^{(i)}$ are all  inclusion-minimal members of $\cF$, deleting them preserves the increasingness.  By Lemma \ref{lem:generators} (i), every proper superset of $G\setminus\{\ell\}$ is in $\cF$.
Hence $\cF_{i,j}$ is increasing. To check $t$-intersection, it suffices to check the retained generators
and the added sets $G\setminus\{\ell\}$, $G\in(\cG^*)^{(j)}$.
Any violation would require two original boundary generators intersect
in exactly $t$ elements. By Lemma~\ref{lem:generators}(ii), their sizes
sum to $\ell+t$: a retained generator would therefore have size $i$,
and two added generators would require $i=j$. Both are excluded.
Thus $\cF_{i,j}$ is $t$-intersecting. The same holds for $\cF_{j,i}$.

By \eqref{eq:profile-functional},
$$
 f_\lambda(\cF_{i,j}) -f_\lambda(\cF)
 =2^{-\ell}\left(-\sum_{A\in(\cG^*)^{(i)}}
       \bigl(1-2\lambda(2|A|-\ell)\bigr) + \sum_{A\in(\cG^*)^{(j)}}
       \bigl(1-2\lambda(2|A|-2-\ell)\bigr)\right).
$$
It follows that
$$
 f_\lambda(\cF_{i,j})+f_\lambda(\cF_{j,i})
       -2f_\lambda(\cF)
 =2^{-\ell}4\lambda
       \bigl(|(\cG^*)^{(i)}|+|(\cG^*)^{(j)}|\bigr).
$$
The maximality of $f_\lambda(\cF)$ implies
$(\cG^*)^{(i)}=(\cG^*)^{(j)}=\varnothing$ whenever
$i\ne j$ and $i+j=\ell+t$.
Since $\mathcal G^*\ne\varnothing$, we conclude that
\eqref{eq:generator-middle} holds.

For each $i<\ell$, define
$$
 \cF_i
 =\bigl(\cF\setminus\{G\in\cG^*:i\in G\}\bigr)
   \cup
   \{G\setminus\{\ell\}:G\in\cG^*,\ i\notin G\}.
$$
By Lemma \ref{lem:generators} (i), every proper superset of $G\setminus\{\ell\}$ is in $\cF$.
Hence $\cF_i$ is increasing. It is generated by the members of $\cG\setminus\cG^*$ together with
$G\setminus\{\ell\}$ for $G\in\cG^*$ not containing $i$.
A violation of $t$-intersection would require two original generators
containing $\ell$ and intersect in exactly $t$ elements. Their union would
be $[\ell]$ by Lemma~\ref{lem:generators}(ii), impossible since both
corresponding boundary generators omit $i$.
Thus $\cF_i$ is $t$-intersecting.

Note that  the pairs
$(A,A\cup\{\ell\})$ with $A\notin\cF$ and
$A\cup\{\ell\}\in\cF$ are precisely
$$
 (G\setminus\{\ell\},G),
 \qquad G\in\mathcal G^*.
$$
In $\cF_i$, both members of such a pair are absent if
$i\in G$, and both are present if $i\notin G$.
All other pairs are unchanged.
Consequently $\cF_i$ has support size smaller than $\ell$.
The minimality of the support implies
$$
 f_\lambda(\cF_i)<f_\lambda(\cF)
 \qquad\text{for every }i<\ell.
$$

Since $\cG^*\subset \binom{[\ell]}{(\ell+t)/2}$,
using  \eqref{eq:profile-functional} we obtain that
\begin{align*}
f_\lambda(\cF_i)-f_\lambda(\cF)&=2^{-\ell}\left(-\sum_{G\in\cG^*:i\in G}
       \bigl(1-2\lambda(2|G|-\ell)\bigr)+
       \sum_{G\in\cG^*:i\notin G}
       \bigl(1-2\lambda(2(|G|-1)-\ell)\bigr)\right)\\[3pt]
       &=2^{-\ell}\left(-\sum_{G\in\cG^*:i\in G}
       \bigl(1-2\lambda t\bigr)+
       \sum_{G\in\cG^*:i\notin G}
       \bigl(1-2\lambda(t-2)\bigr)\right).
\end{align*}
Thus,
$$
 \begin{aligned}
 \sum_{i=1}^{\ell-1}
       \bigl(f_\lambda(\cF_i)-f_\lambda(\cF)\bigr)&
      =
 2^{-\ell}|\mathcal G^*|
 \left[
   \frac{\ell-t}{2}\bigl(1-2\lambda(t-2)\bigr)
   -\left(\frac{\ell+t}{2}-1\right)(1-2\lambda t)
 \right]\\
 &=
 2^{-\ell}|\mathcal G^*|
       \bigl(1-t+2\lambda[\ell+t(t-2)]\bigr)<0.
 \end{aligned}
$$
Since $\mathcal G^*\ne\varnothing$, \eqref{eq:profile-support-size} follows.
\end{proof}

\begin{lemma}\label{lem:partial-influence}
Let $f\colon \{0,1\}^n\mapsto \{0,1\}$ be an increasing function. Then for any $P\subset [n]$,
\begin{align}\label{ineq-IPf}
I_P[f]\leq \sqrt{|P|}.
\end{align}
\end{lemma}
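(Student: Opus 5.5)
For an increasing Boolean $f$ the influence has a linear (first-order Fourier) expression. I will use it and then apply Cauchy--Schwarz together with Parseval.

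By \eqref{ineq-2.1}, for increasing $f$ we have
\[
I_i[f]=\E[f(X)\mid X_i=1]-\E[f(X)\mid X_i=0]=2\,\E\bigl[f(X)(2X_i-1)\bigr]=-2\hat f(\{i\}),
\]
since $\chi_{\{i\}}(x)=(-1)^{x_i}=1-2x_i$. The sign is immaterial; what matters is that $I_i[f]=2|\hat f(\{i\})|$.

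Summing over $i\in P$ and applying Cauchy--Schwarz gives
\[
I_P[f]=\sum_{i\in P}2|\hat f(\{i\})|\le 2\sqrt{|P|}\Bigl(\sum_{i\in P}\hat f(\{i\})^2\Bigr)^{1/2}.
\]
To bound the level-one weight, I apply Parseval \eqref{eq:classical-parseval} to the centered function $f-\E f$:
\[
\sum_{i\in P}\hat f(\{i\})^2\le\sum_{S\ne\varnothing}\hat f(S)^2=\Var(f)=\E f(1-\E f)\le\tfrac14 .
\]
Combining the two displays yields $I_P[f]\le 2\sqrt{|P|}\cdot\tfrac12=\sqrt{|P|}$, which is \eqref{ineq-IPf}.

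An equivalent route avoids Fourier notation. Write $I_P[f]=\E\bigl[f(X)\sum_{i\in P}(2X_i-1)\bigr]\cdot 2$, replace $f$ by $f-\E f$ (which is allowed because $\sum_i(2X_i-1)$ has mean zero), and apply Cauchy--Schwarz with $\E\bigl(\sum_{i\in P}(2X_i-1)\bigr)^2=|P|$.

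The only point needing care is the factor of $2$, namely checking that $I_i[f]=2|\hat f(\{i\})|$ under the paper's $\{0,1\}$ convention for $\chi_S$; the variance bound $\tfrac14$ then gives exactly the constant $1$. I do not expect any real obstacle.
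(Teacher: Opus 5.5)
Your proof is correct and is essentially the paper's argument: both rest on the identity $I_P[f]=4\E[Zf]$ with $Z=\sum_{i\in P}X_i-|P|/2$ (your formula $I_i[f]=-2\hat f(\{i\})$), followed by Cauchy--Schwarz against $Z$ using $\E Z^2=|P|/4$. The only difference is where the Boolean range enters. The paper uses $0\le f\le 1$ pointwise to get $\E[Zf]\le \E Z_+=\tfrac12\E|Z|$, whereas you center $f$ and use $\Var(f)\le\tfrac14$, which incidentally yields the variance-sensitive bound $I_P[f]\le 2\sqrt{|P|\Var(f)}$.
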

This follows by fixing the coordinates outside $P$, applying the usual
total-influence bound to the remaining coordinates, and averaging;
see~\cite[Theorem~2.33 and Exercise~2.23]{OD}. We include its short proof.
\begin{proof}
Since $f$ is increasing and Boolean,
\begin{align}\label{ineq-Iif}
 I_i[f]
 =\E[f\mid X_i=1]-\E[f\mid X_i=0]
 =4\E[(X_i-\tfrac12)f].
\end{align}
Let
$Z=\sum_{i\in P}X_i-|P|/2$.
Summing over $i\in P$ we obtain that $I_P[f]=4\E[Zf]$.
Moreover, $f\in \{0,1\}$ implies $Zf\le Z_+$, and
$\E Z=0$ implies $\E Z_+=\tfrac12\E|Z|$.
The variables $X_i$ are independent Bernoulli random variables
with parameter $1/2$, so $\E Z^2=|P|/4$.
Hence, by the Cauchy--Schwarz inequality,
\begin{equation*}
 I_P[f] =4\E[Zf] \le4\E Z_+ =2\E|Z|\le2\sqrt{\E Z^2} =\sqrt{|P|}. \qedhere
\end{equation*}
\end{proof}

\begin{lemma}\label{lem:tails}
Let $t\ge2$, and suppose that $f$ is the indicator function of an
increasing $t$-intersecting subfamily of $2^{[n]}$.
Assume that, for some $\alpha\in[0,1]$ and  admissible $m$ with $n\ge m\ge16t^2$,
$$
 \E f=\alpha w(m,t)+(1-\alpha)w(m+2,t).
$$
Then for every $P\subseteq[n]$,
\begin{equation}\label{eq:tails}
 I_{P^c}[f]\ge
 \frac12\Bigl[
   \alpha\theta(m,t)(m-|P|)_+
   +(1-\alpha)\theta(m+2,t)(m+2-|P|)_+
 \Bigr].
\end{equation}
\end{lemma}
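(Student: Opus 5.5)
The plan is to write $I_{P^c}[f]=I[f]-I_P[f]$, bound $I[f]$ from below by Lemma~\ref{lem:profile}, and bound $I_P[f]$ from above by an extremal "threshold" argument in the coordinates of $P$. Throughout put $k=|P|$. We may assume $k<m+2$, since otherwise the right-hand side of \eqref{eq:tails} is zero.

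\textbf{Lower bound on $I[f]$.} By \eqref{eq:profile-interpolation}, applied with $m+2$ in place of $m$ (admissible, and $\E f$ is the stated convex combination),
\[
I[f]\ge \alpha J(m,t)+(1-\alpha)J(m+2,t).
\]

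\textbf{Upper bound on $I_P[f]$.} As in the proof of Lemma~\ref{lem:partial-influence}, $I_P[f]=4\E[Zf]$ with $Z=\sum_{i\in P}X_i-k/2$. Among all $[0,1]$-valued functions with $\E f=\mu$ fixed, $\E[Zf]$ is maximized by putting all mass on the largest values of $Z$ (a bathtub argument). So $I_P[f]\le 4\E[Z h]$, where $h$ is the fractional threshold function of $|x\cap P|$ with $\E h=\mu=\E f$. Such a threshold family on $P$ is a Katona-type family $\cK(n,P,t')$ for some effective $t'$, possibly interpolated between two consecutive levels. Its total influence is explicit: it is $k$ times a central binomial-type probability $2^{-(k-1)}\binom{k-1}{s}$ at the cut level $s$, linearly interpolated. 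This gives an explicit upper bound $U(k,\mu)$ on $I_P[f]$.

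\textbf{Comparison.} The main task is to show
\[
\alpha J(m,t)+(1-\alpha)J(m+2,t)-U(k,\mu)\ \ge\ \tfrac12\bigl[\alpha\theta(m,t)(m-k)_++(1-\alpha)\theta(m+2,t)(m+2-k)_+\bigr].
\]
Two reductions come first:
\begin{itemize}
\item Both sides are piecewise linear in $\alpha$ (the function $U$ is concave in $\mu$, which only helps). So it suffices to treat the endpoints $\alpha\in\{0,1\}$, i.e.\ $\mu=w(m,t)$ with $m$ admissible.
\item Then $J(m,t)=m\theta(m,t)$, and the claim becomes $U(k,w(m,t))\le \theta(m,t)(m+k)/2$.
\end{itemize}
To prove this, I would locate the cut level of the threshold on $P$ via Lemma~\ref{lem:binomial}, using \eqref{eq:explicitbinomial}, \eqref{eq:centralw} and $m\ge16t^2$. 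Since $w(m,t)=\tfrac12-\Theta((t-1)/\sqrt m)$, the cut on $k$ coordinates lies about $(t-1)\sqrt{k/m}$ standard deviations above the middle. Then $U$ is roughly $k\,\theta$ evaluated at that cut, and one compares ratios of binomial coefficients $\binom{k-1}{s}2^{-(k-1)}$ against $\binom{m-1}{(m-t)/2}2^{-(m-1)}$. For this I would use log-concavity of the binomial coefficients (the appendix inequality for log-concave sequences) to show that the per-coordinate influence $U/k$ of the optimal threshold on $k\le m$ coordinates is at most $\theta(m,t)(m+k)/(2k)$.

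\textbf{Expected obstacle.} This comparison is the delicate step. The heuristic $\sqrt k\,e^{-a k/m}$ versus $\sqrt m\,e^{-a}$, with $a\approx (t-1)^2/2$, is not monotone in $k$ for large $a$, so the factor $\tfrac12$ slack and the hypothesis $m\ge16t^2$ must be used carefully.

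\textbf{Fallback if the bathtub bound is too lossy for small $k$.} Split at $k=m/2$:
\begin{itemize}
\item For small $k$, use the crude bound $I_P[f]\le\sqrt k$ from Lemma~\ref{lem:partial-influence}, together with $J(m,t)\ge \sqrt m/2$ from \eqref{eq:centralw}.
\item For $k$ close to $m$, use the majority projection bound of Lemma~\ref{lem:projection} combined with Katona's theorem on $P$, which gives $I_{P^c}[f]\ge 2(\E f-w(k,t))$, and telescope the increments \eqref{eq:incrementa}.
\end{itemize}
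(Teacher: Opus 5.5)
There is a genuine gap. Your decomposition $I_{P^c}[f]=I[f]-I_P[f]$ and the lower bound on $I[f]$ from \eqref{eq:profile-interpolation} match the paper, and the bathtub step is the right instinct. The argument breaks at the reduction to $\alpha\in\{0,1\}$.

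\textbf{The endpoint reduction is invalid.} Your bathtub bound $U(k,\mu)$ is concave in $\mu$, so the map $\alpha\mapsto\alpha J(m,t)+(1-\alpha)J(m+2,t)-U(k,\mu(\alpha))$ is \emph{convex}. A convex function can dominate a linear one at both endpoints and still dip below it in between, so concavity of $U$ works against you, not for you. Nothing rules this out: $U(k,\cdot)$ is piecewise linear with kinks at the tail masses $2^{-k}\sum_{i\ge s}\binom ki$, and nothing prevents one of them from lying in $[w(m,t),w(m+2,t)]$.

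\textbf{The endpoint inequality is not proved.} Even granting the reduction, $U(k,w(v,t))\le\frac12\theta(v,t)(v+k)$ is only sketched through Gaussian heuristics. The log-concave coupling lemma of the appendix addresses a different comparison and does not supply it.

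\textbf{The fallback fails in both regimes.} The crude bound $I_P[f]\le\sqrt k$ needs $\frac12\theta(m,t)(m+k)\ge\sqrt k$. With $\theta(m,t)\approx\sqrt{2/(\pi m)}$ this breaks down once $k$ exceeds roughly $m/4$: at $k=m/2$ one has $J(m,t)-\sqrt k\approx0.09\sqrt m$ against a target of about $0.2\sqrt m$, and using only $J(m,t)\ge\sqrt m/2$ gives a negative bound. The majority-projection bound $I_{P^c}[f]\ge2(\E f-w(k,t))$ is too weak by a factor of order $m/(t-1)$. For example, with $k=m-2$ and $\E f=w(m,t)$ it gives $(t-1)\theta(m,t)/(m-1)$, whereas $\theta(m,t)$ is needed.

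\textbf{The missing idea: a Lagrange multiplier instead of a fixed mean.} You need an \emph{affine} upper bound for $I_P[f]$ whose extremizer is itself a Katona family. Let $m_0$ be the largest admissible integer at most $|P|$. Put $c_P=2(t-2)$ if $|P|=m_0$ and $c_P=2(t-1)$ if $|P|=m_0+1$. Write $I_P[f]-c_P\E f=4\E[(Z-c_P/4)f]$. The Katona family on $m_0$ coordinates of $P$ equals $1$ wherever $Z-c_P/4>0$ and $0$ wherever $Z-c_P/4<0$, so it maximizes this functional over $0\le f\le1$. This gives $I_P[f]\le J(m_0,t)+c_P(\E f-w(m_0,t))$ for every $f$.

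Subtracting this from the profile bound gives $I_{P^c}[f]\ge\alpha G(m)+(1-\alpha)G(m+2)$, where $G(u)=J(u,t)-J(m_0,t)-c_P(w(u,t)-w(m_0,t))$. This is now genuinely linear in $\alpha$, so the endpoints do suffice. By \eqref{eq:incrementa} and \eqref{eq:incrementJ}, $G(u)-G(u-2)$ equals $\bigl(1+\frac{t-1}{u-1}\bigr)\theta(u,t)$ or $\theta(u,t)$. Moreover $\theta(u,t)\le\theta(u-2,t)$ once $u\ge(t-1)^2+1$, which is where $|P|\ge t^2$ is used. Telescoping from $m_0$ then gives $G(v)\ge\frac{v-m_0}{2}\theta(v,t)\ge\frac12\theta(v,t)(v-|P|)_+$.

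The crude bound $I_P[f]\le\sqrt{|P|}$ is needed only when $|P|<t^2$. There $J(u,t)\ge\sqrt u/2\ge2t>2\sqrt{|P|}$ by \eqref{eq:centralw} and $m\ge16t^2$.
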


\begin{proof}
Applying \eqref{eq:profile-interpolation} with $m$ replaced by $m+2$, we obtain that
\begin{equation}\label{eq:tails-two-point-profile}
 I[f]\ge \alpha J(m,t)+(1-\alpha)J(m+2,t).
\end{equation}

Suppose first that $|P|<t^2$.
For each $u\in\{m,m+2\}$, by \eqref{eq:centralw}
$$
 J(u,t)\ge\frac{\sqrt u}{2}\ge2t>2\sqrt{|P|}.
$$
It follows that $\sqrt{|P|} <\frac{1}{2} (\alpha J(m,t) +(1-\alpha)J(m+2,t))$.
Consequently, by
\eqref{eq:tails-two-point-profile} and
\eqref{ineq-IPf},
$$
 \begin{aligned}
 I_{P^c}[f]
 =I[f]-I_P[f]
 &\ge \alpha J(m,t)+(1-\alpha)J(m+2,t)-\sqrt{|P|}\\
 &\ge \frac12\bigl[\alpha J(m,t)+(1-\alpha)J(m+2,t)\bigr]\\
 &\ge \frac12\Bigl[
   \alpha\theta(m,t)(m-|P|)_+
   +(1-\alpha)\theta(m+2,t)(m+2-|P|)_+
 \Bigr],
 \end{aligned}
$$
proving \eqref{eq:tails} in this case. Thus we may assume $|P|\geq t^2$.

Let $m_0$ be the largest admissible integer at most $|P|$, so that
$|P|\in\{m_0,m_0+1\}$, and set
$$
 c_P=
 \begin{cases}
  2(t-2),&|P|=m_0,\\
  2(t-1),&|P|=m_0+1.
 \end{cases}
$$

\begin{claim}\label{cl:partial-affine}
For every increasing Boolean $f$, and every $P$ with $|P|\ge t$,
\begin{equation}\label{eq:partial-affine}
 I_P[f]\le
 J(m_0,t)+c_P\bigl(\E f-w(m_0,t)\bigr).
\end{equation}
\end{claim}
\begin{proof}
By \eqref{ineq-Iif},
$$
 I_P[f]-c_P\E f
 =4\E\left[
 \left(\sum_{i\in P}X_i-\frac{|P|}{2}-\frac{c_P}{4}\right)f
 \right].
$$
To maximize this expectation over $0\le f\le1$, it suffices
to take $f=1$ where its coefficient is positive and $f=0$
where its coefficient is negative.

If $|P|=m_0$, then
$$
 \sum_{i\in P}X_i-\frac{|P|}{2}-\frac{c_P}{4}=\sum_{i\in P}X_i-\frac{m_0+t}{2}+1.
$$
Since $\sum_{i\in P}X_i$ is an integer, the maximum  of $I_P[f]-c_P\E f$ is
attained by  letting $f=\mathbbm{1}_{\{\sum_{i\in P}X_i\ge(m_0+t)/2\}}$.

If $|P|=m_0+1$, then
$$
  \sum_{i\in P}X_i-\frac{|P|}{2}-\frac{c_P}{4}=\sum_{i\in P}X_i-\frac{m_0+t}{2}.
$$
Let $\max P=j$.
In this case, the maximum  of $I_P[f]-c_P\E f$ is
attained by letting $f=\mathbbm{1}_{\{
 \sum_{i\in P\setminus\{j\}}X_i\ge(m_0+t)/2
 \}}$.

Both indicator functions are increasing, have mean
$w(m_0,t)$, and have total influence $J(m_0,t)$ on $P$.
Thus,
$$
 I_P[f]-c_P\E f\le J(m_0,t)-c_Pw(m_0,t),
$$
and \eqref{eq:partial-affine} follows.
\end{proof}

It remains to prove \eqref{eq:tails} when $|P|\ge t^2$.
If $|P|\ge m+2$, then the  right-hand side of \eqref{eq:tails} is zero and there is nothing to prove.
Thus we may assume $|P|\leq m+1$ and $m_0\le m$.

For any admissible $u\ge m_0$, define
$$
 G(u)=J(u,t)-J(m_0,t)
      -c_P\bigl(w(u,t)-w(m_0,t)\bigr).
$$
Using $\E f=\alpha w(m,t)+(1-\alpha)w(m+2,t)$, subtracting
\eqref{eq:partial-affine} from
\eqref{eq:tails-two-point-profile} we obtain
\begin{equation}\label{eq:tails-two-point-gap}
 I_{P^c}[f]\ge
 \alpha G(m)+(1-\alpha)G(m+2).
\end{equation}

\begin{claim}\label{cl:tail-gap}
For every admissible $v\ge m_0$,
\begin{equation}\label{eq:tails-endpoint-gap}
 G(v)\ge\frac12\theta(v,t)(v-|P|)_+.
\end{equation}
\end{claim}
\begin{proof}
For $v=m_0$, both sides are zero.
Suppose that $v>m_0$, so that $v\ge m_0+2>|P|$.
For every admissible $u>m_0$,
by \eqref{eq:incrementa} and \eqref{eq:incrementJ}
$$
 G(u)-G(u-2)=
 \begin{cases}
  \displaystyle
  \left(1+\frac{t-1}{u-1}\right)\theta(u,t),
       &|P|=m_0,\\[6pt]
  \theta(u,t),&|P|=m_0+1.
 \end{cases}
$$
In particular, $G(u)-G(u-2) \geq \theta(u,t)$.
Moreover,
\begin{equation}\label{eq:w-ratio}
 \frac{\theta(u,t)}{\theta(u-2,t)}
 =
 \frac{(u-1)(u-2)}{(u+t-2)(u-t)}
 \le1
 \qquad\text{whenever }u\ge(t-1)^2+1.
\end{equation}
Since $|P|\ge t^2$, every admissible $u>m_0$ satisfies
$$
 u\ge m_0+2\ge |P|+1\ge t^2+1.
$$
Thus,
\begin{align*}
G(v)= G(v)-G(m_0)&\geq \theta(v,t)+\theta(v-2,t)+\cdots+\theta(m_0+2,t)\\
&\ge\frac{v-m_0}{2}\theta(v,t)\ge\frac12\theta(v,t)(v-|P|)_+.\qedhere
\end{align*}
\end{proof}

By \eqref{eq:tails-endpoint-gap} and
\eqref{eq:tails-two-point-gap}, we conclude that
\begin{equation*}
 I_{P^c}[f]\ge
 \frac12\Bigl[
   \alpha\theta(m,t)(m-|P|)_+
   +(1-\alpha)\theta(m+2,t)(m+2-|P|)_+
 \Bigr].\qedhere
\end{equation*}
\end{proof}

\section{Small cross-total-influence of a counterexample}\label{sec:overlap}

Unless stated otherwise, $f=\mathbbm{1}_{\cA}$ and $g=\mathbbm{1}_{\cB}$ are the
oppositely shifted indicators supplied by Lemma~\ref{lem:closure}.
Their measures are at most $1/2$, and
\begin{equation}\label{eq:pair}
 \mu(\cA\cap\cB^c)=\mu(\cA)\,\mu(\cB)-\Cov(f,g^*).
\end{equation}

\begin{lemma}\label{lem:small-overlap}
Suppose $p,q\ge2$, $n\ge2^{16}(p+q-2)^2$, and
$\mu(\cA\cap\cB^c)>B_n$. Then
\begin{equation}\label{eq:small-overlap}
 0<\mathcal{I}[f,g]
 <n\exp\left(2-\frac{11\sqrt n}{40(p+q-2)}\right).
\end{equation}
\end{lemma}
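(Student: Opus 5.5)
The plan follows the proof overview: compare an upper bound on $\Cov(f,g^*)$, coming from the counterexample assumption together with Lemma~\ref{lem:tails}, with a Talagrand-type lower bound in terms of $\mathcal I[f,g]$.

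\textbf{Positivity.} I would start by showing that $\mathcal I[f,g]>0$. If $\mathcal I[f,g]=0$, then $f$ and $g$ depend on disjoint coordinate sets $X$ and $Y$. In that case $\cA\cap\cB^c$ is a product of a $p$-intersecting family on $X$ and a $q$-union family on $Y$, so its measure is at most $w(|X|,p)w(|Y|,q)\le B_n$. This contradicts the hypothesis.

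\textbf{Upper bound on the covariance.} By \eqref{eq:pair} and the hypothesis,
$$\Cov(f,g^*)<\mu(\cA)\mu(\cB)-B_n.$$
\begin{itemize}
\item[(a)] Write $\mu(\cA)=\alpha w(m,p)+(1-\alpha)w(m+2,p)$, and similarly $\mu(\cB)$ with parameters $\beta$ and $m'$. Since $\mu(\cA)\mu(\cB)>B_n\ge\frac14-\frac{p+q-2}{\sqrt n}$, both measures are close to $1/2$, so by \eqref{eq:closer-half-dimension} we get $m,m'\ge 16t^2$ and Lemma~\ref{lem:tails} applies.
\item[(b)] Take $P=\{i:I_i[f]\ge I_i[g]\}$, which is an initial interval because $\cA$ is left-shifted and $\cB$ right-shifted. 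Then
$$\mathcal I[f,g]\ge \sum_{i\notin P}I_i[f]\,I_i[g]+\sum_{i\in P}I_i[f]\,I_i[g].$$
\item[(c)] Within the shifted order, $I_i[f]$ is decreasing and $I_i[g]$ increasing. Summing by parts, the term $\sum_{i\in P^c}I_i[f]I_i[g]$ is at least $\min_{i\in P^c}I_i[g]\cdot I_{P^c}[f]$, and symmetrically for the other term.
\item[(d)] Feed in the tail bounds $I_{P^c}[f]\gtrsim\frac12\theta(m,p)(m-|P|)_+$ and $I_P[g]\gtrsim\frac12\theta(m',q)(m'-|P^c|)_+$. This gives
$$\mathcal I[f,g]\gtrsim \tfrac14\theta(m,p)\theta(m',q)(m+m'-n)_+ .$$
\item[(e)] Lemma~\ref{cl:overlap-endpoint}, with the interpolation in $\alpha,\beta$ handled by Claim~\ref{cl:product-increment}, then converts this into
$$\mu(\cA)\mu(\cB)-B_n<\frac{10(p+q-2)}{11\sqrt n}\,\mathcal I[f,g].$$
\end{itemize}
The bookkeeping that turns the product of the two tail sums into the single factor $(m+m'-n)_+$ is where I expect the main difficulty. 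The natural approach is to choose the index at the boundary of $P$ and use that the influences there are comparable to $\theta$, via \eqref{eq:centralw}.

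\textbf{Lower bound and conclusion.} For the lower bound I would use the covariance formula \eqref{eq:covariance-integral}. Since $D_if$ and $D_ig^*$ are nonnegative, $\langle D_if,T_\rho D_ig^*\rangle\ge0$, and by reverse hypercontractivity (Theorem~\ref{thm:classical-reverse})
$$\langle D_if,T_\rho D_ig^*\rangle\ge \|D_if\|_{1-\rho}\|D_ig^*\|_{1-\rho}=(I_i[f]I_i[g])^{1/(1-\rho)}.$$
Integrating over $\rho$ in the Talagrand manner gives roughly $I_iI_i'/(1+\ln(1/(I_iI_i')))$. Summing over $i$ and using concavity of $x/\ln(e^2/x)$ with Jensen (the average of $I_iI_i'$ is $\mathcal I/n$) yields
$$\Cov(f,g^*)\ge\frac{\mathcal I[f,g]}{4(2+\ln(n/\mathcal I[f,g]))}.$$
Combining the two bounds and dividing by $\mathcal I[f,g]>0$ gives
$$2+\ln\frac{n}{\mathcal I}>\frac{11\sqrt n}{40(p+q-2)},$$
which rearranges to \eqref{eq:small-overlap}.
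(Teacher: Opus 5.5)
Your lower bound on $\Cov(f,g^*)$ is the paper's Lemma~\ref{lem:correlation}. One slip: Jensen there needs \emph{convexity} of $x\mapsto x/(2+\ln(1/x))$, which is what holds, not concavity. Your positivity argument via disjoint dependence sets is a valid alternative to the paper's, which simply reads $\mathcal I[f,g]>0$ off $0\le\Cov(f,g^*)<\frac{10(p+q-2)}{11\sqrt n}\mathcal I[f,g]$.

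The genuine gap is in steps (c)--(d). With $k=|P|$, your single split gives $\mathcal I[f,g]\ge I_{k+1}[g]\,I_{P^c}[f]+I_k[f]\,I_P[g]$. To reach $\frac14\theta(m,p)\theta(m',q)(m+m'-n)_+$ from this, you need the pointwise bounds $I_k[f]\ge\frac12\theta(m,p)$ and $I_{k+1}[g]\ge\frac12\theta(m',q)$ on individual influences. Nothing supplies these:
\begin{itemize}
\item \eqref{eq:centralw} bounds the binomial quantity $\theta(m,t)$, not any $I_i[f]$;
\item at this stage you only know that $\mu(\cA),\mu(\cB)$ are close to $1/2$. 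Closeness to Katona families is proved later, from this very lemma.
\end{itemize}
What the shifted monotonicity does give is $I_k[f]\ge I_P[g]/k$ and $I_{k+1}[g]\ge I_{P^c}[f]/(n-k)$. This turns your estimate into $I_{P^c}[f]^2/(n-k)+I_P[g]^2/k$. After inserting Lemma~\ref{lem:tails}, that is quadratic in the overlap $m+m'-n$. It therefore misses the required linear bound by a factor of order $n/(m+m'-n)$, precisely when the overlap is small.

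The missing idea is to use every split at once. Order the coordinates so that $I_1[g]\le\cdots\le I_n[g]$ and sum by parts:
\[
 \mathcal I[f,g]=I_1[g]\,I[f]+\sum_{k=1}^{n-1}\bigl(I_{k+1}[g]-I_k[g]\bigr)\,I_{[k+1,n]}[f].
\]
Then apply Lemma~\ref{lem:tails} to $f$ with $P=[k]$ for \emph{every} $k$. The weights $(a-k)_+$ telescope to
\[
 \mathcal I[f,g]\ge\tfrac12\bigl[\alpha\theta(a,p)I_{[1,a]}[g]+(1-\alpha)\theta(a+2,p)I_{[1,a+2]}[g]\bigr].
\]
A second application of Lemma~\ref{lem:tails}, to $g$ with $P=[a+1,n]$ and $P=[a+3,n]$, gives the four-term bound \eqref{eq:abel-overlap}. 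No individual influence is ever bounded below.

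Keeping all four terms also matters for your step (e). $\mu(\cA)\mu(\cB)$ is the matching bilinear combination of the four products $w(\cdot,p)w(\cdot,q)$, and Lemma~\ref{cl:overlap-endpoint} is applied term by term. The interpolation is handled by this bilinearity; Claim~\ref{cl:product-increment} is used only inside the proof of that lemma. A single term $\theta(m,p)\theta(m',q)(m+m'-n)_+$ vanishes when $m+m'\le n$. Yet for $\alpha,\beta$ near $0$ the excess $\mu(\cA)\mu(\cB)-B_n$ can still be positive there, since $(m+2)+(m'+2)$ may exceed $n$.
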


The next lemma provides an upper bound for $\Cov(f,g^*)$ under the
counterexample assumption. Lemma~\ref{lem:correlation}, applied
to $f,g^*$, leads to a lower bound on the same covariance.

\begin{lemma}\label{lem:overlap}
Let $p,q\ge2$ and $n\ge2^{16}(p+q-2)^2$. Suppose that $\cA\subset 2^{[n]}$ is an increasing $p$-intersecting family,
$\cB\subset 2^{[n]}$ is an increasing $q$-intersecting family satisfying
$\mu(\cA\cap\cB^c)>B_n$. Then
\begin{equation}\label{eq:overlap}
 \Cov(f,g^*)<\frac{10(p+q-2)}{11\sqrt n}\,\mathcal{I}[f,g].
\end{equation}
\end{lemma}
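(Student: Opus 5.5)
The plan is to bound $\mu(\cA)\mu(\cB)-B_n$ from above by a multiple of $\mathcal{I}[f,g]$. By \eqref{eq:pair} and the hypothesis $\mu(\cA\cap\cB^c)>B_n$ we have $\Cov(f,g^*)<\mu(\cA)\mu(\cB)-B_n$, so it suffices to prove
\[
\mu(\cA)\mu(\cB)-B_n\le\frac{10(p+q-2)}{11\sqrt n}\,\mathcal{I}[f,g].
\]
We may assume, as for the families of Lemma~\ref{lem:closure} (to which the lemma is applied), that $\cA$ is left-shifted and $\cB$ is right-shifted.

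\emph{Reduction to large effective dimensions.} By Harris--Kleitman, $\Cov(f,g^*)\ge0$, so $\mu(\cA)\mu(\cB)>B_n$. Combined with \eqref{ineq-BnlowerB} and $\mu(\cA),\mu(\cB)\le\frac12$, this gives $\frac12-\mu(\cA),\ \frac12-\mu(\cB)<\frac1{96}$. Since $w(m,t)$ is non-decreasing in admissible $m$ and tends to $\frac12$, we can write
\[
\mu(\cA)=\alpha w(m,p)+(1-\alpha)w(m+2,p),\qquad \mu(\cB)=\beta w(k,q)+(1-\beta)w(k+2,q)
\]
with $m$ admissible for $p$, $k$ admissible for $q$, and $\alpha,\beta\in[0,1]$. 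By Katona's theorem $m+2\le n$ (or $\alpha=1$ with $m\le n$), and by \eqref{eq:closer-half-dimension} we get $m\ge256p^2$ and $k\ge256q^2$. This places us in the range of Lemma~\ref{lem:tails} and Lemma~\ref{cl:overlap-endpoint}.

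\emph{Lower bound on $\mathcal{I}[f,g]$.} Set $a_i=I_i[f]$, which is non-increasing in $i$ since $\cA$ is left-shifted, and $b_i=I_i[g]$, which is non-decreasing since $\cB$ is right-shifted. Lemma~\ref{lem:tails} with $P=[j]$ gives suffix bounds
\[
\sum_{i>j}a_i\ge\tfrac12\bigl[\alpha\theta(m,p)(m-j)_++(1-\alpha)\theta(m+2,p)(m+2-j)_+\bigr],
\]
and, with $P=[j+1,n]$ applied to $g$, prefix bounds
\[
\sum_{i\le j}b_i\ge\tfrac12\bigl[\beta\theta(k,q)(k-n+j)_++\cdots\bigr].
\]
I would then Abel-sum, $\sum_i a_ib_i=\sum_j(b_j-b_{j-1})\sum_{i\ge j}a_i$, where the weights $b_j-b_{j-1}$ are nonnegative. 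Minimizing over monotone sequences obeying the prefix constraint, the extremal $b$ is a step profile. The goal is
\[
\mathcal{I}[f,g]\ge\sum_{u\in\{m,m+2\},\,v\in\{k,k+2\}}\lambda_u\kappa_v\,\tfrac14\theta(u,p)\theta(v,q)(u+v-n)_+,
\]
with weights $\lambda_m=\alpha$, $\lambda_{m+2}=1-\alpha$, $\kappa_k=\beta$, $\kappa_{k+2}=1-\beta$. Heuristically this holds because each mass $\frac12\theta(u,p)$ per coordinate of $f$ on a window of length $u$ meets each mass $\frac12\theta(v,q)$ of $g$ on a window of length $v$, and the two windows overlap in at least $(u+v-n)_+$ coordinates.

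\emph{Concluding, and the main obstacle.} Apply Lemma~\ref{cl:overlap-endpoint} to each of the four pairs $(u,v)$. This gives
\[
\tfrac14\theta(u,p)\theta(v,q)(u+v-n)_+\ge\frac{11\sqrt n}{10(p+q-2)}\bigl(w(u,p)w(v,q)-B_n\bigr).
\]
Since $\mu(\cA)\mu(\cB)=\sum_{u,v}\lambda_u\kappa_v\,w(u,p)w(v,q)$ by bilinearity, summing with weights $\lambda_u\kappa_v$ yields the required inequality. This would give \eqref{eq:overlap} even with $\le$, and the strict inequality comes from the strict $\Cov(f,g^*)<\mu(\cA)\mu(\cB)-B_n$. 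I expect the hardest step to be the rearrangement step. The two families of tail constraints are each piecewise linear with two slopes, and the extremal monotone sequences need not be simple indicators, so the Abel-summation lower bound must be shown to dominate the bilinear interpolation, not only the endpoint cases. I would first try to show that the extremal $a$ and $b$ are uniform on $[1,u]$ and $[n-v+1,n]$ respectively (mixtures of two such), and then use bilinearity of the objective in these mixtures.
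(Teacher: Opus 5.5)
Your outline matches the paper's proof step by step, but the step you yourself flag as the main obstacle, the four-term lower bound on $\mathcal{I}[f,g]$ (the paper's \eqref{eq:abel-overlap}), is only justified heuristically. The extremal analysis of monotone profiles you propose for it is left undone. It is also unnecessary: every estimate enters linearly with nonnegative coefficients, so nothing needs to be optimised. With $b_i=I_i[g]$ non-decreasing, write
\[
 \mathcal{I}[f,g]=b_1\,I[f]+\sum_{j=1}^{n-1}(b_{j+1}-b_j)\,I_{[j+1,n]}[f].
\]
Substitute Lemma~\ref{lem:tails} with $P=[j]$ (and $P=\varnothing$ for $I[f]$) into each term; the weights $b_1$ and $b_{j+1}-b_j$ are nonnegative. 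Each of the two resulting pieces collapses by the exact identity
\[
 u\,b_1+\sum_{j=1}^{n-1}(b_{j+1}-b_j)(u-j)_{+}=\sum_{i=1}^{u}b_i\qquad(u\le n),
\]
which gives
\[
 \mathcal{I}[f,g]\ge\frac12\Bigl[\alpha\theta(m,p)\sum_{i\le m}b_i+(1-\alpha)\theta(m+2,p)\sum_{i\le m+2}b_i\Bigr].
\]
Finally, $\sum_{i\le u}b_i=I_{[1,u]}[g]$ is bounded below by Lemma~\ref{lem:tails} applied to $g$ with $P=[u+1,n]$, namely by $\frac12\bigl[\beta\theta(k,q)(u+k-n)_{+}+(1-\beta)\theta(k+2,q)(u+k+2-n)_{+}\bigr]$. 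Multiplying out gives exactly your four-term bound. The two slopes never interact nonlinearly, so your concern about non-indicator extremal sequences does not arise.

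Separately, the statement does not assume that $\cA$ is left-shifted and $\cB$ right-shifted. Adding this is not a legitimate ``without loss of generality'': shifting the two families independently changes $\cA\cap\cB^c$, $\Cov(f,g^*)$ and $\mathcal{I}[f,g]$. It would suffice for the application in Lemma~\ref{lem:small-overlap}, but it proves a weaker lemma than the one stated. It is also not needed. Lemma~\ref{lem:tails} holds for every $P\subseteq[n]$, so you can simply relabel the coordinates so that $I_1[g]\le\cdots\le I_n[g]$, as the paper does. The monotonicity of $I_i[f]$ is never used.
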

\begin{proof}
By \eqref{eq:pair} and the assumption $\mu(\cA\cap\cB^c)>B_n$,
$\Cov(f,g^*)<\mu(\cA)\,\mu(\cB)-B_n$.
It therefore suffices to prove
\begin{equation}\label{eq:product-influence}
 \mu(\cA)\,\mu(\cB)-B_n
 \le\frac{10(p+q-2)}{11\sqrt n}\,\mathcal{I}[f,g].
\end{equation}
Since $f$, $g^*$ are both increasing, by the Harris--Kleitman lemma $\mu(\cA)\,\mu(\cB)>B_n$.
Since $\mu(\cA),\mu(\cB)\le1/2$, by \eqref{ineq-BnlowerB} we  have
\begin{align}\label{ineq-new4.1}
 \min\{\mu(\cA),\mu(\cB)\} > 2B_n\ge\frac12-\frac{2(p+q-2)}{\sqrt n}
       \ge\frac12-\frac1{128}.
\end{align}

Note that $w(m,p)$ is a non-decreasing function of $m$. By Theorem~\ref{thm-katona}, $\mu(\cA)\le w(n-\eps_1,p)$ and $\mu(\cB)\le w(n-\eps_2,q)$, where $\eps_1,\eps_2\in\{0,1\}$ satisfy $\eps_1\equiv n-p\pmod2$ and $\eps_2\equiv n-q\pmod2$.   Thus we may assume that
$$
 \mu(\cA)=\alpha w(a,p)+(1-\alpha)w(a+2,p),
 \qquad
 \mu(\cB)=\beta w(b,q)+(1-\beta)w(b+2,q),
$$
where $a,b$ are admissible for $p,q$, respectively,
$a+2,b+2\le n$, and $\alpha,\beta\in[0,1]$.

Since $\mu(\cA)\le w(a+2,p)$, by \eqref{ineq-new4.1}
$$
 \frac12-w(a+2,p)
 \le\frac12-\mu(\cA)
 <\frac1{128}<\frac1{96}.
$$
By~\eqref{eq:closer-half-dimension},
$a+2>256p^2+2$ and thereby $a>256p^2$.
Similarly, $b>256q^2$.
Thus one can apply \eqref{eq:tails} to both $f$ and $g$.

Relabel the coordinates, if necessary, so that
$$
 I_1[g]\le I_2[g]\le\cdots\le I_n[g].
$$
Using summation by parts,
$$
 \begin{aligned}
 \mathcal{I}[f,g]
 =I_1[g]I[f]+
 \sum_{k=1}^{n-1}
 \bigl(I_{k+1}[g]-I_k[g]\bigr)
 I_{[k+1,n]}[f].
 \end{aligned}
$$
For each $0\le k\le n-1$, by \eqref{eq:tails} we have
$$
 I_{[k+1,n]}[f]
 \ge\frac12\Bigl[
   \alpha\theta(a,p)(a-k)_+
   +(1-\alpha)\theta(a+2,p)(a+2-k)_+
 \Bigr].
$$
Using
$$
 \begin{aligned}
 &aI_1[g]+
 \sum_{k=1}^{n-1}
 \bigl(I_{k+1}[g]-I_k[g]\bigr)(a-k)_+=
 aI_1[g]+
 \sum_{k=1}^{a-1}(a-k)
 \bigl(I_{k+1}[g]-I_k[g]\bigr)=\sum_{i=1}^{a}I_i[g],
 \end{aligned}
$$
and the same identity with $a$ replaced by $a+2$, we obtain
$$
 \begin{aligned}
 \mathcal{I}[f,g]
 \ge\frac12\Biggl[
 &\alpha\theta(a,p)\sum_{i=1}^{a}I_i[g]+(1-\alpha)\theta(a+2,p)
                    \sum_{i=1}^{a+2}I_i[g]
 \Biggr].
 \end{aligned}
$$

Applying  \eqref{eq:tails} to $g$ with
$P=[a+1,n]$ and
$P=[a+3,n]$, we obtain that
$$
 \begin{aligned}
 \sum_{i=1}^{a}I_i[g]
 &\ge\frac12\Bigl[
   \beta\theta(b,q)(a+b-n)_+ +(1-\beta)\theta(b+2,q)(a+b+2-n)_+
 \Bigr],
 \end{aligned}
$$
and
$$
 \begin{aligned}
 \sum_{i=1}^{a+2}I_i[g]
 &\ge\frac12\Bigl[
   \beta\theta(b,q)(a+b+2-n)_+
   +(1-\beta)\theta(b+2,q)(a+b+4-n)_+
 \Bigr].
 \end{aligned}
$$
Substituting these two bounds, we conclude that
\begin{equation}\label{eq:abel-overlap}
 \begin{aligned}
 \mathcal{I}[f,g]
\ge&\frac14\Bigl[
\alpha\beta\theta(a,p)\theta(b,q)(a+b-n)_++\alpha(1-\beta)\theta(a,p)\theta(b+2,q)
                         (a+b+2-n)_+\\
 &+(1-\alpha)\beta\theta(a+2,p)\theta(b,q)
                         (a+b+2-n)_+\\
                         &+(1-\alpha)(1-\beta)\theta(a+2,p)\theta(b+2,q)
                         (a+b+4-n)_+
 \Bigr].
 \end{aligned}
\end{equation}

Combining \eqref{eq:abel-overlap} and
\eqref{eq:overlap-endpoint},
$$
 \begin{aligned}
 \mathcal{I}[f,g]
 &\ge\frac{11\sqrt n}{10(p+q-2)}\Bigl[
 \alpha\beta w(a,p)w(b,q)+\alpha(1-\beta)w(a,p)w(b+2,q)\\
 &\qquad+(1-\alpha)\beta w(a+2,p)w(b,q)\\
 &\qquad+(1-\alpha)(1-\beta)
 w(a+2,p)w(b+2,q)-B_n\Bigr]\\
 &=\frac{11\sqrt n}{10(p+q-2)}
 \bigl(\mu(\cA)\,\mu(\cB)-B_n\bigr).
 \end{aligned}
$$
Rearranging proves \eqref{eq:product-influence}, and hence
\eqref{eq:overlap}.
\end{proof}

We need the following correlation inequality, which is a weak version of Talagrand's correlation inequality \cite{Ta}.
However, it is useful for our proof because the bound has explicit constant.

\begin{lemma}[A quantitative correlation bound]\label{lem:correlation}
For increasing Boolean functions $f,g$,
\begin{equation}\label{eq:correlation}
 \Cov(f,g)\ge
 \frac{\mathcal{I}[f,g]}
 {4\bigl(2+\ln(n/\mathcal{I}[f,g])\bigr)}.
\end{equation}
The right-hand side is defined to be zero when $\mathcal{I}[f,g]=0$.
\end{lemma}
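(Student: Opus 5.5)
We prove Lemma~\ref{lem:correlation} from the covariance formula \eqref{eq:covariance-integral}, hypercontractivity (Theorem~\ref{thm:classical-hypercontractivity}), and a level-one bound on the derivatives.

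Write $d_i=D_if$ and $e_i=D_ig$. Since $f,g$ are increasing and Boolean, these are Boolean functions on $\{0,1\}^{[n]\setminus\{i\}}$ with $\E d_i=I_i[f]=:a_i$ and $\E e_i=I_i[g]=:b_i$. By \eqref{eq:covariance-integral},
\[
\Cov(f,g)=\frac14\sum_i\int_0^1\langle d_i,T_\rho e_i\rangle\,d\rho .
\]
Every summand is nonnegative, because $T_\rho$ preserves nonnegativity. So it suffices to bound $\int_0^1\langle d_i,T_\rho e_i\rangle d\rho$ from below by roughly $a_ib_i/(2+\ln(1/(a_ib_i)))$. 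After that we sum over $i$ and use convexity.

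\textbf{Per-coordinate bound.} We split off the constant Fourier term:
\[
\langle d_i,T_\rho e_i\rangle=a_ib_i+\sum_{S\ne\varnothing}\rho^{|S|}\hat d_i(S)\hat e_i(S).
\]
The remaining sum is not obviously nonnegative, so we argue differently. Pass to the symmetric form $\langle T_{\sqrt\rho}d_i,T_{\sqrt\rho}e_i\rangle$. Then apply reverse hypercontractivity (Theorem~\ref{thm:classical-reverse}) to the nonnegative Boolean functions $d_i,e_i$:
\[
\langle d_i,T_\rho e_i\rangle\ge \|d_i\|_{1-\rho}\|e_i\|_{1-\rho}=(a_ib_i)^{1/(1-\rho)}.
\]
Integrate over $\rho\in[0,1]$ and put $x=a_ib_i\le1$:
\[
\int_0^1 x^{1/(1-\rho)}\,d\rho=\int_1^\infty x^{s}s^{-2}\,ds .
\]
Restricting to $s\in[1,1+1/\ln(1/x)]$ gives at least $x\,e^{-1}\cdot\frac{1}{(1+1/L)\,L+\dots}$ with $L=\ln(1/x)$. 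A cleaner route is the exponential-integral bound
\[
\int_1^\infty e^{-Ls}s^{-2}\,ds\ \ge\ \frac{e^{-L}}{L+2}.
\]
This follows from $s^{-2}\ge e^{-2(s-1)}$, which gives
\[
\int_1^\infty e^{-Ls}e^{-2(s-1)}\,ds=\frac{e^{-L}}{L+2}.
\]
Hence
\[
\int_0^1\langle d_i,T_\rho e_i\rangle\,d\rho\ \ge\ \frac{x}{2+\ln(1/x)},\qquad x=a_ib_i .
\]
This step, getting the exact constant $2$ through the elementary inequality $s^{-2}\ge e^{-2(s-1)}$ for $s\ge1$, is the crux. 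It holds because $\ln s\le s-1$.

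\textbf{Summation.} The function $\psi(x)=x/(2+\ln(1/x))$ is convex on $(0,1]$; the second-derivative check is routine. Set $\psi(0)=0$. Jensen over the $n$ coordinates gives
\[
\sum_i\psi(a_ib_i)\ \ge\ n\,\psi\!\left(\frac{\mathcal I[f,g]}{n}\right)=\frac{\mathcal I[f,g]}{2+\ln(n/\mathcal I[f,g])}.
\]
Multiplying by $\frac14$ yields \eqref{eq:correlation}. The case $\mathcal I[f,g]=0$ is trivial by Harris--Kleitman. One point still has to be verified: that $\psi$ is convex on all of $[0,1]$. If convexity fails near $1$, we would instead use that $\psi$ is increasing and $\psi(x)/x$ is decreasing, and redo the averaging with weights $a_ib_i$. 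The main obstacle is the per-coordinate reverse-hypercontractive integral with the sharp constant.
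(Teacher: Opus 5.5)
Your proposal is correct and follows the paper's proof essentially step for step. Both use the covariance formula \eqref{eq:covariance-integral}, reverse hypercontractivity to get $(a_ib_i)^{1/(1-\rho)}$, the substitution $s=1/(1-\rho)$ with $s^{-2}\ge e^{-2(s-1)}$ (the paper's $(1+\theta)^2\le e^{2\theta}$), and Jensen for the convex $\psi$. The convexity you left open does hold on all of $(0,1]$: with $u=2+\ln(1/x)$ one gets $\psi''(x)=(u+2)/(xu^3)>0$, so both the fallback averaging argument and the detour through $T_{\sqrt\rho}$ are unnecessary.
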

\begin{proof}
By \eqref{eq:covariance-integral}, it suffices to bound the integrand.
Since $f,g$ are increasing and Boolean, $D_i f$ and $D_i g$ are
nonnegative Boolean functions with means $I_i[f],I_i[g]$.
Thus by  \eqref{eq:classical-reverse} we have  for $0<\theta<1$,
$$
 \E[D_i f\,T_\theta D_i g]
 \ge\bigl(I_i[f]I_i[g]\bigr)^{1/(1-\theta)}.
$$
For $I_i[f]I_i[g]>0$, a change of variables and
$(1+\theta)^2\le e^{2\theta}$ yield
\begin{equation}\label{eq:integrated-reverse}
 \begin{aligned}
 \int_0^1\bigl(I_i[f]I_i[g]\bigr)^{1/(1-\theta)}\,d\theta
 =\int_0^\infty
       \frac{\bigl(I_i[f]I_i[g]\bigr)^{1+\theta}}
            {(1+\theta)^2}\,d\theta\ge\frac{I_i[f]I_i[g]}{2+\ln(1/(I_i[f]I_i[g]))}.
 \end{aligned}
\end{equation}
The last expression is a convex function of $I_i[f]I_i[g]$,
extended continuously to zero with value zero.
Thus, by \eqref{eq:covariance-integral}, \eqref{eq:integrated-reverse}
and Jensen's inequality we conclude that
$$
 \begin{aligned}
 \Cov(f,g)
 \ge\frac14\sum_i
       \frac{I_i[f]I_i[g]}{2+\ln(1/(I_i[f]I_i[g]))}\ge
 \frac{\mathcal{I}[f,g]}
 {4\bigl(2+\ln(n/\mathcal{I}[f,g])\bigr)}.
 \end{aligned}
$$
This proves \eqref{eq:correlation}.
\end{proof}
\begin{proof}[Proof of Lemma~\ref{lem:small-overlap}]
By Harris--Kleitman we  have $\Cov(f,g^*)\ge0$, so the strict inequality
\eqref{eq:overlap} implies $\mathcal{I}[f,g]>0$.
Apply \eqref{eq:correlation} to $f,g^*$, using
$\mathcal{I}[f,g^*]=\mathcal{I}[f,g]$, and combine it with
\eqref{eq:overlap}:
$$
 \frac{\mathcal{I}[f,g]}
      {4\bigl(2+\ln(n/\mathcal{I}[f,g])\bigr)}
 \le\Cov(f,g^*)
 <\frac{10(p+q-2)}{11\sqrt n}\,\mathcal{I}[f,g].
$$
Dividing by $\mathcal{I}[f,g]$ and rearranging, we obtain that
$$
 2+\ln\frac{n}{\mathcal{I}[f,g]}
 >\frac{11\sqrt n}{40(p+q-2)},
$$
which implies \eqref{eq:small-overlap}.
\end{proof}

\section{Localization near a product}\label{sec:localization}

\subsection{Smoothing estimates used in localization}\label{subsec:smoothing}

The next two lemmas estimate individual terms in
\eqref{eq:covariance-integral} when one function is close to the indicator of a Katona family. Lemma~\ref{lem:smoothed-slice} bounds the
fluctuation of its derivative after smoothing. Lemma~\ref{lem:smoothed-threshold}
then gives a lower bound proportional to
$I_i[f]/\sqrt{1+\ln(1/I_i[f])}$. We use its constant $64$ to obtain
the bound $\max\{\delta_f,\delta_g\}\le1/(64n)$ in
Proposition~\ref{prop:localization}. Its improved constant $24$ then
implies $2\delta_f/\theta(n_1,p),2\delta_g/\theta(n_2,q)\le e^{-L}$,
which are needed in Section~\ref{sec:completion}.

\begin{lemma}\label{lem:smoothed-slice}
Let $h$ be the indicator function of $\cK(n,P,t)$ or its Boolean dual, where
$m=|P|$ is admissible for $t\ge2$, and $m\ge256t^2+2$. For every $i\in P$,
\begin{equation}\label{eq:smoothed-slice}
 \|T_{1/2}D_i h-\theta(m,t)\|_2
 \le\theta(m,t)/5.
\end{equation}
\end{lemma}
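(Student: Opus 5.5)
The plan is to make $D_ih$ completely explicit. Then the $L^2$ deviation reduces to one comparison of a correlated two-point probability with $\theta(m,t)^2$.

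\textbf{Step 1: identify $D_ih$ and reduce to the correlated slice probability.} Since $I_i[h^*]=I_i[h]$ and $D_i h^*(x)=D_ih(\mathbf 1-x)$, and $T_{1/2}$ commutes with $x\mapsto \mathbf 1-x$, it suffices to treat $h=\mathbbm 1_{\cK(n,P,t)}$. For $i\in P$, put $M=m-1$, $S=\sum_{j\in P\setminus\{i\}}x_j$ and $k=\frac{m+t}{2}-1$. Then $D_ih=\mathbbm 1_{\{S=k\}}$, with mean $\theta:=\theta(m,t)=2^{-M}\binom Mk$. Because $T_{1/2}$ is self-adjoint, $T_{1/2}T_{1/2}=T_{1/4}$ and $T_{1/2}$ fixes constants, Parseval \eqref{eq:classical-parseval} gives
\[
\|T_{1/2}D_ih-\theta\|_2^2=\langle D_ih,T_{1/4}D_ih\rangle-\theta^2
=\theta\,\PP(S''=k\mid S=k)-\theta^2 .
\]
Here $S''$ is obtained from $S$ by independently flipping each of the $M$ coordinates with probability $3/8$ (correlation $1/4$). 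Conditionally on $S=k$,
\[
S''\sim \mathrm{Bin}(k,5/8)+\mathrm{Bin}(M-k,3/8).
\]
So \eqref{eq:smoothed-slice} is equivalent to
\[
\PP\bigl(\mathrm{Bin}(k,5/8)+\mathrm{Bin}(M-k,3/8)=k\bigr)\le \tfrac{26}{25}\,\theta(m,t).
\]

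\textbf{Step 2: heuristic and the needed explicit bounds.} The sum in Step 1 has variance $\sigma^2=\frac{15}{64}M$ and mean within $(t-1)/4$ of $k$. A local limit heuristic gives the left side as about
\[
\frac{8}{\sqrt{15}}\cdot\frac1{\sqrt{2\pi M}}\approx 1.033\cdot\frac{2}{\sqrt{2\pi M}},
\]
while $\theta\approx \frac{2}{\sqrt{2\pi M}}e^{-(t-1)^2/(2M)}$. With $m\ge 256t^2+2$ the Gaussian factor is at least $1-2^{-9}$. Hence the target ratio $1.04$ holds with only a few tenths of a percent of slack.

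To make this rigorous I would prove two explicit bounds. The first is an upper bound on the point probability of the binomial convolution, via the Fourier inversion formula
\[
\PP(Z=k)=\frac1{2\pi}\int_{-\pi}^{\pi}\bigl|\tfrac58 e^{i\phi}+\tfrac38\bigr|^{k}\,\bigl|\tfrac38e^{i\phi}+\tfrac58\bigr|^{M-k}\,d\phi ,
\]
using $|\tfrac58e^{i\phi}+\tfrac38|^2=1-\tfrac{15}{32}(1-\cos\phi)\le \exp(-\tfrac{15}{32}(1-\cos\phi))$ and $1-\cos\phi\ge \phi^2/2-\phi^4/24$. This yields $\PP(Z=k)\le \frac{8}{\sqrt{15}}\frac{1}{\sqrt{2\pi M}}(1+O(1/M))$ with an explicit error term. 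The second is a lower bound on the central binomial-type coefficient $2^{-M}\binom Mk$ via Stirling with explicit remainders (Robbins' bounds), including the factor $\exp(-(t-1)^2/(2M)-O((t-1)^4/M^3))$. The earlier bound \eqref{eq:profile-lower-central} is too crude here: $27/34\approx0.794$ loses about $0.5\%$ against $2/\sqrt{2\pi}$. So I will redo this estimate directly, using $m\ge 256\cdot4+2=1026$ to make all $O(1/M)$ errors below $10^{-3}$.

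\textbf{Main obstacle.} The difficulty is the tightness of the constant: $\sqrt{1+1/25}=1.0198$ against the intrinsic ratio $(1-\rho^2)^{-1/2}=\sqrt{16/15}=1.0328$ in the variance form. Equivalently, $1.04$ against about $1.0335$ in the form of Step 1. Every error term must therefore be controlled to better than about $0.5\%$.

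If the characteristic-function integral is too lossy near $\phi=\pm\pi$, I would fall back on an alternative. One option is to bound the tail of the integral crudely by $\exp(-\tfrac{15}{32}M\cdot c)$. Another is to bound the convolution probability by $\sum_s \PP(\mathrm{Bin}(k,5/8)=s)\max_r\PP(\mathrm{Bin}(M-k,3/8)=r)$, refined by log-concavity: a unimodal log-concave convolution of two near-Gaussian pieces. The hypothesis $m\ge256t^2+2$ is exactly what keeps the Gaussian exponent in $\theta$ harmless, so the whole argument rests on these explicit local-limit estimates.
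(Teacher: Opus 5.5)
Your reduction is correct, and it gives a genuinely different proof from the paper's.

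\textbf{How the paper argues.} The paper stays on the Fourier side. From $-(t-1)D_mh=D_mh\sum_{\ell}\chi_{\{\ell\}}$ it derives a three-term recurrence for $\widehat{D_mh}([j])$. It computes the normalized level weights $a_j^2=\binom{m-1}{j}\widehat{D_mh}([j])^2/\theta(m,t)^2$ exactly for $j\le3$, proves the crude bound $a_j^2\le(17/16)^j$ by induction for $j\le(m-1)/2$, and extends it to all $j$ by the reflection $|\widehat{D_mh}([m-1-j])|=|\widehat{D_mh}([j])|$. It then sums $\sum_{j\ge1}4^{-j}a_j^2<0.0393<1/25$. The argument is purely algebraic: the damping $4^{-j}$ absorbs the crudeness, so no Stirling or local-limit estimates are needed.

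\textbf{What your route buys and costs.} Your identity $\|T_{1/2}D_ih-\theta\|_2^2=\theta\,\PP(S''=k\mid S=k)-\theta^2$ collapses the whole Fourier sum into one correlated point probability. It also explains the true constant $(1-1/16)^{-1/2}-1=4/\sqrt{15}-1\approx0.033$. In exchange, you must carry out explicit analytic estimates inside a margin of well under $1\%$. They do go through:
\begin{itemize}
\item Since $|\tfrac58e^{i\phi}+\tfrac38|=|\tfrac38e^{i\phi}+\tfrac58|$, the modulus of the characteristic function is $(1-\tfrac{15}{32}(1-\cos\phi))^{M/2}$. This decreases in $|\phi|$ down to $4^{-M}$ at $\pm\pi$, so your worry about the endpoints is unfounded.
\item The inversion bound is then at most $\frac1{2\pi}\int_{-\pi}^{\pi}e^{-\lambda(1-\cos\phi)}\,d\phi\le(2\pi\lambda)^{-1/2}(1+O(1/\lambda))$, with $\lambda=15M/64\ge240$.
\item Handle the quartic term with $e^{x}\le1+(e-1)x$ for $0\le x=\lambda\phi^4/24\le1$. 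This keeps the error near $0.1\%$. A fixed cutoff $\phi_0$ with the uniform factor $1-\phi_0^2/12$ would instead consume most of the margin at $M\approx1025$.
\item The central binomial bound costs about another $0.1\%$, and $\binom Mk/\binom M{\lfloor M/2\rfloor}\ge1-2^{-9}$.
\end{itemize}
Altogether the ratio comes out near $1.037<1.04$.

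\textbf{Three slips to fix.}
\begin{itemize}
\item With moduli inside the integral, the inversion formula is an inequality, not an equality.
\item The conditional mean of $S''$ is $M/2+(t-1)/8$, so its distance from $k$ is $3(t-1)/8$, not at most $(t-1)/4$. This is harmless, since your modulus bound discards the centring anyway.
\item In the ``variance form'', the correct comparison is $\sqrt{26/25}\approx1.0198$ against $(16/15)^{1/4}\approx1.0163$. Comparing it with $\sqrt{16/15}\approx1.0328$, as you wrote, would suggest that the lemma is false.
\end{itemize}
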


\begin{proof}
It suffices to consider $h=\mathbbm{1}_{\cK(n,P,t)}$.
Indeed, its Boolean dual $h^*(x)=1-h(1-x)$ satisfies
 $D_i h^*(x)=D_i h(1-x)$.
Since the noise operator $T_{1/2}$ commutes with coordinate complementation,
$$
 \|T_{1/2}D_i h^*-\theta(m,t)\|_2
 =
 \|T_{1/2}D_i h-\theta(m,t)\|_2.
$$
Thus the estimate for $h$ also holds for $h^*$.

By relabelling, assume that $P=[m]$ and $i=m$. Then
$$
 D_m h(x)
 =
 \mathbbm{1}_{\left\{\sum_{i=1}^{m-1}x_i=(m+t)/2-1\right\}},
 \qquad
 \widehat{D_m h}(\varnothing)=\theta(m,t).
$$
By symmetry, the Fourier coefficient of $D_m h$ at $S\subseteq[m-1]$
depends only on $|S|$.

\begin{claim}\label{cl:slice-recurrence}
For $2\le j\le m-1$,
\begin{align}\label{ineq-3.4}
 (m-j)\widehat{D_m h}([j])
 =
 -(t-1)\widehat{D_m h}([j-1])
 -(j-1)\widehat{D_m h}([j-2]).
\end{align}
and
\begin{align}\label{ineq-3.6}
\widehat{D_m h}(\{1\}) &=-\frac{t-1}{m-1}\theta(m,t),\\
\widehat{D_m h}([2]) &=\frac{(t-1)^2-(m-1)}{(m-1)(m-2)}\theta(m,t).
\end{align}

\end{claim}
The proof is given in Appendix~\ref{app:claims}.

Let $a_j= \binom{m-1}{j}^{1/2}
 |\widehat{D_m h}([j])|/\theta(m,t)$.
Multiplying \eqref{ineq-3.4} by $\binom{m-1}{j}^{1/2}/\theta(m,t)$ and
taking absolute values, we obtain
$$
 \begin{aligned}
 a_j
 &\le
 \frac{t-1}{\sqrt{j(m-j)}}a_{j-1}+
 \sqrt{\frac{(j-1)(m-j+1)}{j(m-j)}}a_{j-2}.
 \end{aligned}
$$
Since $m-1>256t^2$, we infer that
\begin{align}\label{ineq-3.5}
 a_j
 \le
 \frac1{8\sqrt{2j}}
 a_{j-1}+
 \frac{33}{32}\sqrt{\frac{j-1}{j}}a_{j-2}, \mbox{ for }2\le j\le(m-1)/2.
\end{align}

By \eqref{ineq-3.6},
$$
 \begin{aligned}
 a_1&=\frac{\sqrt{m-1}\,
       |\widehat{D_m h}(\{1\})|}{\theta(m,t)}
 =\frac{t-1}{\sqrt{m-1}}\le\frac1{16},\\
 a_2^2&=\binom{m-1}{2}
 \frac{|\widehat{D_m h}([2])|^2}{\theta(m,t)^2}
 =
 \frac{\bigl((t-1)^2-(m-1)\bigr)^2}
      {2(m-1)(m-2)}
 \le\frac12,\\
 a_3&\leq \frac1{8\sqrt{6}}
 a_{2}+
 \frac{33}{32}\sqrt{\frac{2}{3}}a_1
 \le
 \frac1{16\sqrt3}+\frac{33}{512}\sqrt{\frac23}
 <\frac18.
 \end{aligned}
$$
Note that by Cauchy--Schwarz
$$
 \frac{1}{8\sqrt{2j}} \sqrt{17/16}
 +\frac{33}{32}\sqrt{\frac{j-1}{j}}
 \le
 \sqrt{\frac{17}{2048}+\left(\frac{33}{32}\right)^2}
 <\frac{17}{16}.
$$
By induction in \eqref{ineq-3.5}, we obtain that
\begin{align}\label{ineq-3.8}
 \binom{m-1}{j}
 \frac{|\widehat{D_m h}([j])|^2}{\theta(m,t)^2}
 \le\left(\frac{17}{16}\right)^j,
 \qquad 0\le j\le\frac{m-1}{2}.
\end{align}

\begin{claim}\label{cl:slice-symmetry}
\begin{align}\label{ineq-3.7}
 |\widehat{D_m h}([m-1-j])|
 =
 |\widehat{D_m h}([j])|.
\end{align}
\end{claim}
The proof is given in Appendix~\ref{app:claims}.

For $(m-1)/2<j\le m-1$, by
\eqref{ineq-3.7}, \eqref{ineq-3.8} and
$\binom{m-1}{j}=\binom{m-1}{m-1-j}$,
$$
 \binom{m-1}{j}
 \frac{|\widehat{D_m h}([j])|^2}{\theta(m,t)^2}
 =
 \binom{m-1}{m-1-j}
 \frac{|\widehat{D_m h}([m-1-j])|^2}{\theta(m,t)^2}\\
\le
 \left(\frac{17}{16}\right)^{m-1-j}
 \le
 \left(\frac{17}{16}\right)^j.
$$
Thus \eqref{ineq-3.8} holds for every $0\le j\le m-1$.

Note that
$$
 \widehat{T_{1/2}D_m h}(S)
 =2^{-|S|}\widehat{D_m h}(S),
 \qquad
 \widehat{D_m h}(\varnothing)
 =\E[D_m h]=\theta(m,t).
$$
Since Fourier coefficients
depend only on the cardinality of the indexing set, by Parseval's identity
$$
 \begin{aligned}
 \|T_{1/2}D_m h-\theta(m,t)\|_2^2
 &=
 \sum_{\varnothing\ne S\subseteq[m-1]}
 4^{-|S|}|\widehat{D_m h}(S)|^2=
 \sum_{j=1}^{m-1}
 4^{-j}\binom{m-1}{j}
 |\widehat{D_m h}([j])|^2.
 \end{aligned}
$$
Thus,
$$
 \begin{aligned}
 \frac{\|T_{1/2}D_m h-\theta(m,t)\|_2^2}
      {\theta(m,t)^2}
 &=
 \sum_{j=1}^{m-1}
 4^{-j}\binom{m-1}{j}
 \frac{|\widehat{D_m h}([j])|^2}{\theta(m,t)^2}\\
 &\le
 \frac1{1024}+\frac1{32}+\frac1{4096}
 +\sum_{j=4}^{\infty}\left(\frac{17}{64}\right)^j\\
 &=
 \frac1{1024}+\frac1{32}+\frac1{4096}
 +\frac{(17/64)^4}{1-17/64}
 <\frac1{25}.
 \end{aligned}
$$
Taking square roots proves \eqref{eq:smoothed-slice}.
\end{proof}

\begin{lemma}
\label{lem:smoothed-threshold}
Let $h$ be the indicator function of $\cK(n,P,t)$ or its Boolean dual, where
$m=|P|$ is admissible for $t\ge2$, and $m\ge256t^2+2$.
Let $g$ be increasing and Boolean. If
$(2\E|g-h|)^{4/5}\le\theta(m,t)/4$, then for every increasing
Boolean $f$ and $i\in P$,
\begin{equation}\label{eq:smoothed-correlation}
 \frac14\int_0^1
 \langle D_i f,\, T_\theta D_i g\rangle \,d\theta
 \ge
 \frac{I_i[f]}{64\sqrt m\sqrt{1+\ln(1/I_i[f])}}.
\end{equation}
If $n\ge2^{18}$ and $\E|g-h|\le1/(64n)$, the denominator
$64$ in \eqref{eq:smoothed-correlation} can be replaced by $24$.
The right-hand side is defined to be zero when $I_i[f]=0$.
\end{lemma}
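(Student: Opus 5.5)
We lower-bound the single-coordinate integrand $\langle D_i f, T_\theta D_i g\rangle$ by replacing $D_i g$ with $D_i h$, whose smoothed version is nearly constant by Lemma~\ref{lem:smoothed-slice}. Then we use reverse hypercontractivity only for small $\theta$.

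Write $\theta_0=\theta(m,t)$ and $a=I_i[f]=\E D_i f$. For $\rho\in[0,1/2]$ we have $T_\rho=T_{2\rho}T_{1/2}$, so
\[
\langle D_i f, T_\rho D_i h\rangle=\langle T_{2\rho}D_i f,\, T_{1/2}D_i h\rangle
\ge \theta_0 a-\|T_{2\rho}D_if\|_2\,\|T_{1/2}D_ih-\theta_0\|_2 .
\]
We bound $\|T_{2\rho}D_if\|_2\le\|D_if\|_{1+4\rho^2}=a^{1/(1+4\rho^2)}$ by Bonami--Beckner, and the last factor by $\theta_0/5$ via Lemma~\ref{lem:smoothed-slice}. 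This gives
\[
\langle D_if,T_\rho D_ih\rangle\ge\theta_0\bigl(a-\tfrac15a^{1/(1+4\rho^2)}\bigr).
\]
The right side is at least $\tfrac12\theta_0 a$ once $a^{-4\rho^2/(1+4\rho^2)}\le 5/2$, i.e.\ for $\rho\lesssim c/\sqrt{\ln(1/a)}$ (up to $\rho\le 1/2$).

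Next we handle the error $\langle D_if,T_\rho(D_ig-D_ih)\rangle$. By Hölder and hypercontractivity it is at most
\[
\|T_{\rho} D_if\|_{r'}\,\|D_ig-D_ih\|_{r}\le a^{1/(1+\rho^2)}\,\|D_i g-D_i h\|_r .
\]
Since $D_ig-D_ih$ takes values in $\{-1,0,1\}$ with $\E|D_ig-D_ih|\le 2\E|g-h|$, we get $\|D_ig-D_ih\|_r\le(2\E|g-h|)^{1/r}$. Choosing exponents with $1/r=4/5$ (say $\rho$ small, $r'=5$, legitimate when $\rho\le 1/2$), the error is at most $a^{\,\cdot}(2\E|g-h|)^{4/5}\le a^{\,\cdot}\theta_0/4$. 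The bookkeeping here needs care: I would take $r=5/4$, $r'=5$, and use $\|T_\rho D_if\|_5\le\|D_if\|_{1+4\rho^2}$ for $\rho\le1/2$, giving the factor $a^{1/(1+4\rho^2)}$. Combined with the main term, this yields
\[
\langle D_if,T_\rho D_ig\rangle\ge\theta_0\bigl(a-(\tfrac15+\tfrac14)a^{1/(1+4\rho^2)}\bigr)\ge c\,\theta_0 a
\]
for $\rho\le c'/\sqrt{1+\ln(1/a)}$. All integrands are nonnegative by Harris--Kleitman applied to the increasing functions $D_if$ and $T_\rho D_ig$ (restricted appropriately, or simply because both are nonnegative). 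So we integrate only over $[0,\rho_0]$ and use $\theta_0\ge 27/(34\sqrt m)$ from \eqref{eq:profile-lower-central}. This gives $\frac14\rho_0 c\,\theta_0 a\ge a/(64\sqrt m\sqrt{1+\ln(1/a)})$ after tracking constants.

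For the improved constant $24$: under $\E|g-h|\le 1/(64n)$ and $n\ge2^{18}$, the error term is tiny relative to $\theta_0\ge 1/(2\sqrt n)$. So the coefficient $\tfrac14$ drops out, the admissible range of $\rho$ enlarges, and the sharper \eqref{eq:profile-upper-sharp}-type estimates on $\theta_0$ improve the constant.

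The main obstacle is the constant-chasing: choosing the cutoff $\rho_0$ optimally as a function of $\ln(1/a)$ so that the explicit constants $64$ and $24$ actually come out. The structural argument, which splits into a main term plus an error controlled by hypercontractivity, is straightforward.
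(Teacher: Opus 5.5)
Your argument is correct and is essentially the paper's proof in dual form. The paper bounds $\|T_\theta D_i g-\theta(m,t)\|_{2+2\ln(1/I_i[f])}$ by $\|T_{1/2}D_ig-\theta(m,t)\|_2$ and pairs it with $\|D_if\|$ via H\"older. You instead put the noise on $D_if$, using $\|T_{2\rho}D_if\|_2,\ \|T_\rho D_if\|_5\le I_i[f]^{1/(1+4\rho^2)}$. This yields the same pointwise bound $\langle D_if,T_\rho D_ig\rangle\ge\theta(m,t)I_i[f]-\bigl(\tfrac{\theta(m,t)}5+(2\E|g-h|)^{4/5}\bigr)I_i[f]^{1/(1+4\rho^2)}$, and the constants $64$ and $24$ do come out, for instance by cutting off where $I_i[f]^{-4\rho^2/(1+4\rho^2)}\le 3/2$, respectively $\le 2$.

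Two small corrections. First, nonnegativity of the integrand comes simply from $D_if,\,T_\rho D_ig\ge0$, not from Harris--Kleitman, since $D_if$ need not be increasing. Second, the constant $24$ uses the lower bound \eqref{eq:profile-lower-central} together with $(2\E|g-h|)^{4/5}\le\theta(m,t)/100$, not the upper bound \eqref{eq:profile-upper-sharp}.
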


\begin{proof}
If $I_i[f]=0$, the conclusion is immediate.
Assume that $I_i[f]>0$.
Since $g,h$ are increasing and Boolean,
$$
 |D_i g-D_i h|\le1,
 \mbox{ and }
 \E|D_i g-D_i h|\le2\E|g-h|.
$$
By \eqref{eq:classical-hypercontractivity},
$$
 \|T_{1/2}(D_i g-D_i h)\|_2
 \le\|D_i g-D_i h\|_{5/4} \leq (\E |D_i g-D_i h|^{5/4})^{4/5}
 \le(2\E|g-h|)^{4/5}.
$$
By the semigroup identity
$T_\theta=T_{2\theta}T_{1/2}$ \cite{OD} and \eqref{eq:classical-hypercontractivity}, for $0\le 2\theta\le
 1/\sqrt{1+2\ln(1/I_i[f])}$ we have
\begin{align*}
 \|T_\theta D_i g-\theta(m,t)\|_{2+2\ln(1/I_i[f])} &= \|T_{2\theta}\left(T_{1/2}D_i g-\theta(m,t)\right)\|_{2+2\ln(1/I_i[f])}\\[3pt]
 &\overset{\eqref{eq:classical-hypercontractivity}}{\le}
 \|T_{1/2}D_i g-\theta(m,t)\|_2\\[3pt]
  &\leq
 \|T_{1/2}D_i h-\theta(m,t)\|_2+\|T_{1/2}(D_i g-D_i h)\|_2\\[3pt]
 &\overset{\eqref{eq:smoothed-slice}}{\leq}
 \frac{\theta(m,t)}5+(2\E|g-h|)^{4/5}.
\end{align*}
Since $D_i f$ is Boolean with mean $I_i[f]$ and
$$
 I_i[f]^{-1/(2+2\ln(1/I_i[f]))}
 \le\sqrt{e}<\frac53,
$$
by $\E |D_i f| =I_i[f]$ we obtain that for  $0\le \theta\le
 1/(2\sqrt{1+2\ln(1/I_i[f])})$
\begin{align*}
\langle D_i f,\, T_\theta D_i g\rangle&=I_i[f]\theta(m,t)
+\langle D_i f,\
       T_\theta D_i g-\theta(m,t)\rangle\\
&\ge I_i[f]\theta(m,t)
-\langle |D_i f|,
       |T_\theta D_i g-\theta(m,t)|\rangle \\
 &
 \geq I_i[f]\theta(m,t)
 -
 I_i[f]^{1-1/(2+2\ln(1/I_i[f]))}\|T_\theta D_i g-\theta(m,t)\|_{2+2\ln(1/I_i[f])}\\
 &\ge
 I_i[f]\theta(m,t)
 -
 I_i[f]^{1-1/(2+2\ln(1/I_i[f]))}
 \left(\frac{\theta(m,t)}5+(2\E|g-h|)^{4/5}\right)\\
 &\ge
 I_i[f]\left(
 \frac23\theta(m,t)-\frac53(2\E|g-h|)^{4/5}
 \right).
\end{align*}

Since $(2\E|g-h|)^{4/5}\le\theta(m,t)/4$,  for  $0\le \theta\le
 1/(2\sqrt{1+2\ln(1/I_i[f])})$ we have
$$
\langle D_i f,\, T_\theta D_i g\rangle
 \ge\frac14I_i[f]\theta(m,t)
 \overset{\eqref{eq:profile-lower-central}}{\ge}\frac{3I_i[f]}{16\sqrt m}.
$$
Since $\E[D_i f\,T_\theta D_i g]\ge0$ for all
$0\le\theta\le1$, by  $2\sqrt{1+2\ln(1/I_i[f])}
 \le3\sqrt{1+\ln(1/I_i[f])}$ we conclude that
$$
 \frac14\int_0^1
 \langle D_i f,\, T_\theta D_i g\rangle\,d\theta
 \ge
 \frac{3I_i[f]}
 {128\sqrt m\sqrt{1+2\ln(1/I_i[f])}}\ge
 \frac{I_i[f]}
 {64\sqrt m\sqrt{1+\ln(1/I_i[f])}}.
$$

If $n\ge2^{18}$ and $\E|g-h|\le1/(64n)$, then
$$
 \|T_{1/2}(D_i g-D_i h)\|_2
 \le(2\E|g-h|)^{4/5}
 \le\frac1{512\sqrt n}
 \overset{\eqref{eq:profile-lower-central}}{<}\frac{\theta(m,t)}{100}.
$$
Then for $0\le\theta\le 1/(2\sqrt{1+2\ln(1/I_i[f])})$,
$$
\langle D_i f,\, T_\theta D_i g\rangle
 \ge\frac{13}{20}I_i[f]\theta(m,t)
 \ge\frac{I_i[f]}{2\sqrt m}.
$$
Consequently,
\begin{equation*}
 \frac14\int_0^1
\langle D_i f,\, T_\theta D_i g\rangle\,d\theta
 \ge
 \frac{I_i[f]}
 {16\sqrt m\sqrt{1+2\ln(1/I_i[f])}}>
 \frac{I_i[f]}
 {24\sqrt m\sqrt{1+\ln(1/I_i[f])}}. \qedhere
\end{equation*}
\end{proof}

\subsection{Localization of a hypothetical counterexample}
\label{subsec:localization-proof}

\begin{proposition}\label{prop:localization}
Suppose $p,q\ge2$, $n\ge2^{15}(p+q-2)^3$ and $n-p-q$ is odd.
If $\mu(\cA\cap\cB^c)>B_n$, then there is a partition
$[n]=X\sqcup\{z\}\sqcup Y$, with $X=[n_1]$, $z=n_1+1$, and
$Y=[n_1+2,n]$, such that $n_1$ is admissible for $p$, $n_2=|Y|$ is
admissible for $q$, and
\begin{equation}\label{eq:block-ratio}
 \left(\frac{p-1}{2(q-1)}\right)^{2/3}
 \le\frac {n_1}{n_2}\le\left(\frac{2(p-1)}{q-1}\right)^{2/3},
 \qquad n_1,n_2\ge\frac{n}{2(p+q-2)^{2/3}}.
\end{equation}
Let
$\delta_f=\mu(\cA\triangle\cK(n,X,p))$,
$\delta_g=\mu(\cB\triangle\cK(n,Y,q))$ and
\begin{equation}\label{eq:rarity}
 L=\frac{n}{96(p+q-2)^2},
\end{equation}
we have
\begin{equation}\label{eq:localization}
 \max\{\delta_f,\delta_g\}\le\frac1{64n},
 \qquad
 \max\left\{\frac{2\delta_f}{\theta(n_1,p)},
             \frac{2\delta_g}{\theta(n_2,q)}\right\}\le e^{-L}.
\end{equation}
\end{proposition}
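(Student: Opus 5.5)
Starting from Lemma~\ref{lem:small-overlap}, I will produce the partition and then sharpen the approximation in two rounds. Let $P=\{i:I_i[f]\ge I_i[g]\}$. This is an initial interval $[k]$, because $\cA$ is left-shifted and $\cB$ is right-shifted. Since $\min\{I_i[f],I_i[g]\}^2\le I_i[f]I_i[g]$, the bound \eqref{eq:small-overlap} gives
\[
\sum_{i\notin P}I_i[f]\le\sum_{i\notin P}\sqrt{I_i[f]I_i[g]}\le\sqrt{n\,\mathcal I[f,g]}\le n\,e^{1-11\sqrt n/[80(p+q-2)]}.
\]
The same bound holds for $I_P[g]$. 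This quantity is tiny by \eqref{eq:initiallog}.

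Lemma~\ref{lem:projection} then shows that $\Pi_P\cA$ is an increasing, left-shifted, $p$-intersecting family on $P$ with $\mu(\cA\triangle\Pi_P^*\cA)\le I_{P^c}[f]/2$. Symmetrically, $\Pi_{P^c}\cB$ is an increasing, right-shifted, $q$-intersecting family on $P^c$. Because $\cA\cap\cB^c$ is nearly $\Pi_P^*\cA\cap(\Pi_{P^c}^*\cB)^c$, and this is a genuine product, we get $\mu(\cA)\mu(\cB)\le\mu(\Pi_P\cA)\mu(\Pi_{P^c}\cB)+O(\text{err})$. Together with $\mu(\cA\cap\cB^c)>B_n$ and Harris--Kleitman, this forces
\[
\mu(\Pi_P\cA)\mu(\Pi_{P^c}\cB)\ge B_n-O(\text{err}).
\]
Katona's theorem then says both projections have measure close to $w(|P|,p)$ and $w(|P^c|,q)$. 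More precisely, $w(|P|,p)-\mu(\Pi_P\cA)$ is at most a small error plus $B_n-w(|P|,p)w(|P^c|,q)$, where the latter is controlled by Claim~\ref{cl:product-increment}.

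Lemma~\ref{lem:stability}(i) or (ii) applies according to the parity of $k$, using left/right-shiftedness in the odd case. It shows that $\Pi_P\cA$ is close to $\cK$ on an initial segment $X=[n_1]$, $n_1\in\{k,k-1\}$ admissible for $p$, and similarly that $\cB$ is close to $\cK(n,Y,q)$ with $Y$ a final segment. Because $n-p-q$ is odd, exactly one coordinate $z$ is left in between. The ratio condition \eqref{eq:block-ratio} comes from optimality: since $\frac12-w(m,t)\approx c(t-1)/\sqrt m$, maximizing the product in $B_n$ balances $(p-1)n_1^{-3/2}$ against $(q-1)n_2^{-3/2}$. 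A split outside the range in \eqref{eq:block-ratio} would lose more than the available slack, which is about $(p+q-2)/\sqrt n$ times the error. This computation is routine once the error is $o(n^{-1})$.

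The main obstacle is improving the error. First $\delta_f$ must get below $1/(64n)$, and then $2\delta_f/\theta(n_1,p)$ must get below $e^{-L}$. The first-round error from Lemma~\ref{lem:stability} carries a factor $(n-1)/(t-1)$, which is polynomial in $n$ times $e^{-c\sqrt n/(p+q-2)}$. That is already below $1/(64n)$ by \eqref{eq:initiallog}, which handles the first bound.

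For the $e^{-L}$ bound I will bootstrap using Lemma~\ref{lem:smoothed-threshold}. Once $g$ is within $1/(64n)$ of the dual Katona indicator on $Y$, the covariance formula \eqref{eq:covariance-integral} gives
\[
\Cov(f,g^*)\ge\sum_{i\in Y}\frac{I_i[f]}{24\sqrt{n_2}\sqrt{1+\ln(1/I_i[f])}}.
\]
The overlap upper bound \eqref{eq:overlap} caps this by $(p+q-2)/\sqrt n$ times the product excess. Comparing the two yields that $I_Y[f]$ is at most about $\exp(-n/(C(p+q-2)^2))$; the square root of the logarithm is what produces the quadratic $L$. Rerunning the projection step with $P=X\cup\{z\}$ and Lemma~\ref{lem:stability} then gives $\delta_f\le\theta(n_1,p)e^{-L}/2$, with the numerics supplied by \eqref{eq:weak-bootstrap-number} and \eqref{eq:finalnumbers}. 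The delicate points are the constants and making sure that the single coordinate $z$ does not spoil the estimates.
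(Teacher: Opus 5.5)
Your first round (majority projections onto $P$, Katona's theorem, Lemma~\ref{lem:stability}, and the block ratio obtained by moving two coordinates) matches the paper. The step that improves the error, however, has a genuine gap, in two places.

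\emph{First gap: the bound $\max\{\delta_f,\delta_g\}\le1/(64n)$ does not follow from the first round.} Inequality \eqref{eq:initiallog} only gives $e^{-11\sqrt n/[80(p+q-2)]}<1/(4e(n+1)^{5/2})$. So the projection error is $\eta<1/(8n^{3/2})$, and after the factor of order $n_1/(p-1)$ from Lemma~\ref{lem:stability} you are left with $\delta_f=O(n^{-1/2})$. Concretely, for $p=q=2$ and $n=2^{18}$ the first-round estimate is of order $10^{-5}$, whereas $1/(64n)<10^{-7}$. Hence you cannot yet use the constant $24$ in Lemma~\ref{lem:smoothed-threshold}, whose hypothesis is exactly $\E|g-h|\le1/(64n)$.

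An intermediate bootstrap with the constant $64$ is needed. Its hypothesis, $(2\delta_g)^{4/5}\le\theta(n_2,q)/4$ and the counterpart for $f$, is what the first-round bound does give via \eqref{eq:coarse-error-number}; when $p+q=4$ this holds only after the block-ratio bounds give $n_1,n_2<5n/8$. That round yields $I_Y[f]+I_X[g]<4n^2\exp(1-n/(18^2(p+q-2)^2))$, and hence $\delta_f+\delta_g\le1/(64n)$ by \eqref{eq:weak-bootstrap-number}. Since $n/(18^2(p+q-2)^2)<L$, this round cannot give $e^{-L}$ by itself, so a second round with the constant $24$ is genuinely needed.

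\emph{Second gap: the cap \eqref{eq:overlap} you propose for the bootstrap loses too much for the constant $96$ in $L$.} Take $\mathcal I[f,g]\approx\theta(n_2,q)I_Y[f]+\theta(n_1,p)I_X[g]$ with $\theta(n_2,q)\approx0.8/\sqrt{n_2}$; there is also an $I_z[f]I_z[g]$ term that the smoothing lower bound does not see. Comparing with the constant-$24$ lower bound gives only $1+\ln(n_2/I_Y[f])\gtrsim n/(300(p+q-2)^2)$, roughly $L/3$. After the polynomial losses of Lemma~\ref{lem:stability}, this does not reach $e^{-L}$.

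The missing ingredient is a cap that is linear in the exterior influences with coefficient of order $(p-1)/n_1$. Subtract \eqref{eq:partial-affine} with $P=X\cup\{z\}$ from \eqref{eq:profile-supporting} with $m=n_1+2$ to get $\mu(\cA)-w(n_1,p)\le\frac{p-1}{2(n_1+1)}I_Y[f]$; this is also how the coordinate $z$ is absorbed. Combined with $w(n_1,p)w(n_2,q)\le B_n$, this gives $\Cov(f,g^*)<\Gamma=\frac{p-1}{4(n_1+1)}I_Y[f]+\frac{q-1}{4(n_2+1)}I_X[g]$. Together with \eqref{eq:geometric} and Jensen, this yields $1+\ln(n/\Gamma)>16n/(27^2(p+q-2)^2)>2L$. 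The spare factor $2$ absorbs the losses in $\delta_f+\delta_g\le12n^2\Gamma$ and $1/\theta\le2\sqrt n$.

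Two minor points. The deficit $B_n-w(n_1,p)w(n_2,q)$ is nonnegative by the definition of $B_n$ and is bounded by $\eta/2$ directly. Claim~\ref{cl:product-increment} bounds the opposite difference and plays no role here. Also, the block-ratio step needs this deficit below $c\,n^{-3/2}$, because moving two coordinates changes the product only by order $n^{-3/2}$. So $o(n^{-1})$ is not enough, although $\eta/2<1/(16n^{3/2})$ does suffice.
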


The bound $\max\{\delta_f,\delta_g\}\le1/(64n)$ permits the
constant $24$ in Lemma~\ref{lem:smoothed-threshold}.
The other bounds in \eqref{eq:localization} control the measures
of the exceptional families of restricted projections.
Indeed,
$$
 \bigl(\mu^{n_1}(\cA(y,Y\cup\{z\}))-w(n_1,p)\bigr)_+
 \le\mu^{n_1}\bigl(\cA(y,Y\cup\{z\})
                 \triangle\cK(n,X,p)_{\mid X}\bigr).
$$
Averaging over $y$ and applying Markov's inequality gives
\begin{equation}\label{eq:localization-purpose}
\mu^{n_2+1}\bigl(\{y\subseteq Y\cup\{z\}:\mu^{n_1}(\cA(y,Y\cup\{z\}))>w(n_1,p)+\theta(n_1,p)/2\}\bigr)\le\frac{2\delta_f}{\theta(n_1,p)}\le e^{-L}.
\end{equation}
The analogous estimate holds for $\cB$.
Theorem~\ref{thm:exterior} uses these estimates for the neighborhood
hypothesis \eqref{eq:multiradius-hypothesis}, and
Lemma~\ref{lem:excess-covariance} uses them in
\eqref{eq:rare-section-influence}.
The bound $\delta_f\le1/(64n)$ alone would give only
$1/(32n\theta(n_1,p))$ in \eqref{eq:localization-purpose},
not the required bound $e^{-L}$.

\begin{proof}
\emph{Step 1: majority projections and approximation by Katona families.}
Let $P=\{i:I_i[f]\ge I_i[g]\}$,
$\cA'=\Pi_P^*\cA$, and $\cB'=\Pi_{P^c}^*\cB$.
By Section~\ref{subsec:shifting}, $P$ is an initial interval.
Set $\eta=\mu(\cA\triangle\cA')+\mu(\cB\triangle\cB')$.
By Lemma~\ref{lem:projection}, Cauchy--Schwarz and
\eqref{eq:small-overlap}, we obtain that
\begin{equation}\label{eq:first-projection-error}
 \begin{aligned}
 \eta
 \le\frac12\sum_{i=1}^n\min\{I_i[f],I_i[g]\}
 \le\frac12\sqrt{n\mathcal{I}[f,g]}<\frac{e n}{2}
       \exp\left(-\frac{11\sqrt n}{80(p+q-2)}\right)
 <\frac1{8n^{3/2}},
 \end{aligned}
\end{equation}
where the last step is \eqref{eq:initiallog}.

By \eqref{eq:pair} and the Harris--Kleitman correlation inequality,
$\mu(\cA)\,\mu(\cB)>B_n$. Since both families have measure at most $1/2$, by
\eqref{ineq-BnlowerB} and \eqref{eq:first-projection-error} we have
\begin{equation}\label{ineq-4.2}
 \begin{aligned}
 \min\{\mu(\cA'),\mu(\cB')\}\geq 2B_n - \eta
 &>\frac12-\frac{2(p+q-2)}{\sqrt n}-\eta\\
 &>\frac12-\frac{5(p+q-2)}{2\sqrt n}
 \ge\frac12-\frac5{512}>\frac{12}{25}.
 \end{aligned}
\end{equation}
In particular, both projections are nonempty. Their intersection
properties imply $|P|\ge p$ and $|P^c|\ge q$.
Let $n_1,n_2$ be the largest admissible integers at most $|P|,|P^c|$,
respectively. As $n-p-q$ is odd, exactly one of these two dimensions
has been reduced by one, so $n_1+n_2=n-1$. Set
$X=[n_1]$, $Y=[n-n_2+1,n]$, and $z=n_1+1$.

By the Katona theorem,
$\mu(\cA')\le w(n_1,p)$ and $\mu(\cB')\le w(n_2,q)$.
The definition of $B_n$, the inequality $\mu(\cA)\,\mu(\cB)>B_n$, and the
fact that all four families $\cA,\cB,\cA',\cB'$  have measures at most $1/2$ yield
\begin{equation}\label{ineq-4.1}
 0\le B_n-w(n_1,p)w(n_2,q)
 \le B_n-\mu(\cA')\mu(\cB') \le \mu(\cA)\mu(\cB)-\mu(\cA')\mu(\cB')<\frac\eta2
 <\frac1{16n^{3/2}},
\end{equation}
which follows from
\begin{align*}
\mu(\cA)\mu(\cB)-\mu(\cA')\mu(\cB') &\leq  \mu(\cB)(\mu(\cA)-\mu(\cA'))+\mu(\cA')(\mu(\cB)-\mu(\cB'))\\[3pt]
&\leq \frac{1}{2} (|\mu(\cA)-\mu(\cA')|+|\mu(\cB)-\mu(\cB')|) \\[3pt]
&\leq \frac{\eta}{2}.
\end{align*}
Moreover, \eqref{ineq-4.2} implies
$1/2-w(n_1,p),1/2-w(n_2,q)<5/512<1/96$.
Thus \eqref{eq:closer-half-dimension} implies
$n_1>256p^2+2$ and $n_2>256q^2+2$.
Using \eqref{ineq-4.1} and \eqref{ineq-4.2}, we obtain
$$
 0\le w(n_1,p)-\mu(\cA')
 \le\frac{B_n-\mu(\cA')\mu(\cB')}{\mu(\cB')}
 <\frac{25}{24}\eta,
$$
and the analogous inequality for $\cB'$.
Applying Lemma~\ref{lem:stability} to the two projections, we obtain that
\[
\mu(\cA'\triangle\cK(n,X,p)) \leq \frac{n_1-1}{p-1}(w(n_1,p)-\mu(\cA')) \leq \frac{n_1-1}{p-1}\, \frac{25}{24}\eta.
\]
Similarly,
\[
\mu(\cB'\triangle\cK(n,Y,q)) \leq \frac{n_2-1}{q-1}(w(n_2,q)-\mu(\cB')) \leq \frac{n_2-1}{q-1}\, \frac{25}{24}\eta.
\]
Consequently,
$$
 \delta_f= \mu(\cA\triangle \cK(n,X,p))\le \mu(\cA\triangle \cA')+\mu(\cA'\triangle\cK(n,X,p)) \leq \left(1+\frac{25(n_1-1)}{24(p-1)}\right)\eta
 \le\frac{13n_1}{12(p-1)}\eta,
$$
and similarly
\[
 \delta_g\le\frac{13n_2}{12(q-1)}\eta.
\]
Together with \eqref{eq:first-projection-error}, this gives
\begin{equation}\label{eq:first-distance}
 \delta_f<\frac{3nn_1}{2(p-1)}e^{-11\sqrt n/[80(p+q-2)]},
 \qquad
 \delta_g<\frac{3nn_2}{2(q-1)}e^{-11\sqrt n/[80(p+q-2)]}.
\end{equation}

\emph{Step 2: comparison of the block sizes.}
Suppose $n_1/n_2<((p-1)/(2(q-1)))^{2/3}$.
Moving two coordinates from the second block to the first, and using
\eqref{eq:incrementa}, \eqref{eq:profile-upper-sharp}, and
\eqref{eq:profile-lower-central} we obtain that
\begin{equation}\label{eq:block-transfer}
 \begin{aligned}
 \frac1{16n^{3/2}}
 &>B_n-w(n_1,p)w(n_2,q)\\
 &\ge w(n_1+2,p)w(n_2-2,q)-w(n_1,p)w(n_2,q)\\
  &= (w(n_1+2,p)-w(n_1,p))w(n_2-2,q)-w(n_1,p)(w(n_2,q)-w(n_2-2,q))\\
   &\ge \frac{(p-1)\theta(n_1+2,p)}{2(n_1+1)}w(n_2-2,q)-w(n_1,p)\frac{(q-1)\theta(n_2,q)}{2(n_2-1)}\\
 &\ge\frac{4(p-1)}{25n_1^{3/2}}-\frac{q-1}{4n_2^{3/2}}
 >\frac{7(q-1)}{100n_2^{3/2}}>\frac1{16n^{3/2}},
 \end{aligned}
\end{equation}
a contradiction. In the five line we used $w(n_2-2,q)\ge7/16$
and $(n_1+2)^{3/2}\le17n_1^{3/2}/16$, which follow from $n_1>256p^2+2$. Exchanging the blocks proves both ratio bounds in
\eqref{eq:block-ratio}. By H\"older's inequality,
$$
 1+\frac {n_2}{n_1}\le1+\bigl(2(p+q-3)\bigr)^{2/3}
 \le5^{1/3}(p+q-2)^{2/3}<\frac74(p+q-2)^{2/3}.
$$
Since $n_1+n_2=n-1$, this implies the lower bound for $n_1$ in
\eqref{eq:block-ratio}; the bound for $n_2$ is symmetric.

If $p+q-2=2$, \eqref{eq:block-ratio} implies $n_1,n_2<5n/8$.
Thus by \eqref{eq:first-distance}, \eqref{eq:coarse-error-number} and
\eqref{eq:profile-lower-central} we obtain that
\begin{equation}\label{eq:initial-smoothing-error}
 (2\delta_f)^{4/5}\le\frac3{16\sqrt {n_1}}\le\frac{\theta(n_1,p)}4,
 \qquad
 (2\delta_g)^{4/5}\le\frac3{16\sqrt {n_2}}\le\frac{\theta(n_2,q)}4.
\end{equation}

\emph{Step 3: bounds for $I_Y[f]+I_X[g]$ and $\delta_f+\delta_g$.}
Subtract \eqref{eq:partial-affine}, with $P=X\cup\{z\}$, from the
inequality \eqref{eq:profile-supporting},
with $m=n_1+2$ and $t=p$. By~\eqref{eq:incrementa} and
\eqref{eq:incrementJ},
\begin{equation}\label{eq:exterior-linear-bound}
 \mu(\cA)-w(n_1,p)\le\frac{p-1}{2(n_1+1)}I_Y[f],
 \qquad
 \mu(\cB)-w(n_2,q)\le\frac{q-1}{2(n_2+1)}I_X[g].
\end{equation}
Set
\begin{equation}\label{eq:weighted-exterior-influence}
 \Gamma=\frac{p-1}{4(n_1+1)}I_Y[f]+\frac{q-1}{4(n_2+1)}I_X[g].
\end{equation}
Since $\mu(\cA)\,\mu(\cB)>B_n\ge w(n_1,p)w(n_2,q)$ and all four measures $\mu(\cA),\mu(\cB), w(n_1,p), w(n_2,q)$ are at most
$1/2$, \eqref{eq:pair} and \eqref{eq:exterior-linear-bound} imply
\[
\begin{aligned}\label{eq:covariance-weighted-upper}
 \Cov(f,g^*)&<\mu(\cA)\,\mu(\cB)-w(n_1,p)w(n_2,q)\\[3pt]
 &\leq \frac{1}{2}(\mu(\cB)+w(n_2,q))(\mu(\cA)-w(n_1,p))+\frac{1}{2}(w(n_1,p)+\mu(\cA))(\mu(\cB)-w(n_2,q))\\[3pt]
  &\leq \frac{1}{2}(\mu(\cA)-w(n_1,p))+\frac{1}{2}(\mu(\cB)-w(n_2,q))\\[3pt]
 &\leq \Gamma.
\end{aligned}
\]
In particular, $\Gamma>0$.

By \eqref{eq:initial-smoothing-error}, we may apply
\eqref{eq:smoothed-correlation} to $f,g^*$ for $i\in Y$,
and to $g^*,f$ for $i\in X$.
Since $I_i[g^*]=I_i[g]$ and $T_\theta$ is self-adjoint,
\[
 \langle D_i f, T_\theta D_i g^*\rangle
 =\langle T_\theta D_i f, D_i g^*\rangle=
 \langle D_i g^*, T_\theta D_i f\rangle.
\]
Consequently,
\[
 \frac14\int_0^1
 \langle D_i f, T_\theta D_i g^*\rangle\,d\theta
 \ge
 \begin{cases}
 \displaystyle
 \frac{I_i(f)}
 {64\sqrt n_2\sqrt{1+\ln(1/I_i(f))}},
 &i\in Y,\\[8pt]
 \displaystyle
 \frac{I_i(g)}
 {64\sqrt n_1\sqrt{1+\ln(1/I_i(g))}},
 &i\in X.
 \end{cases}
\]
Thus,
$$
 \Cov(f,g^*)\ge\frac1{64\sqrt {n_2}}
    \sum_{i\in Y}\frac{I_i[f]}{\sqrt{1+\ln(1/I_i[f])}}
 +\frac1{64\sqrt {n_1}}
    \sum_{i\in X}\frac{I_i[g]}{\sqrt{1+\ln(1/I_i[g])}}.
$$
The function $u\mapsto u/\sqrt{1+\ln(1/u)}$, extended by zero at
$u=0$, is convex and increasing on $[0,1]$.
By \eqref{eq:geometric},
$$
 \frac1{64\sqrt {n_2}}\ge
 \frac{\sqrt n}{18(p+q-2)}\frac{p-1}{4(n_1+1)},
$$
and the exchanged inequality also holds.
Apply Jensen with weights $(p-1)/(4n(n_1+1))$ on the $Y$ coordinates
and $(q-1)/(4n(n_2+1))$ on the $X$ coordinates, putting the remaining
weight at zero. Their sum is at most one and their weighted mean is
$\Gamma/n$. We obtain
\begin{equation}\label{eq:weighted-smoothing}
 \Cov(f,g^*)\ge
 \frac{\sqrt n}{18(p+q-2)}
 \frac{\Gamma}{\sqrt{1+\ln(n/\Gamma)}}.
\end{equation}
Combining \eqref{eq:covariance-weighted-upper} and
\eqref{eq:weighted-smoothing}, and using that both coefficients in
\eqref{eq:weighted-exterior-influence} are at least $1/(4n)$, yields
\begin{equation}\label{eq:weak-exterior-influence}
 I_Y[f]+I_X[g]\le4n\Gamma
 <4n^2\exp\left(1-\frac{n}{18^2(p+q-2)^2}\right).
\end{equation}

In view of Lemma~\ref{lem:projection},
\[
\begin{aligned}
 \mu(\cA\triangle\Pi_{X\cup\{z\}}^*\cA)
 \le\frac12 I_Y[f], \quad
 \mu(\cB\triangle\Pi_{Y\cup\{z\}}^*\cB)
 \le\frac12 I_X[g].
\end{aligned}
\]
By \eqref{eq:weak-bootstrap-number} and  $\mu(\cA)\,\mu(\cB)>B_n$,
\[
w(n_1,p)\geq  \mu(\Pi_{X\cup\{z\}}^*\cA)\geq \mu(\cA) -\frac12 I_Y[f] > 2B_n -4n^2\exp\left(1-\frac{n}{18^2(p+q-2)^2}\right) >\frac{1}{4}.
\]
Similarly, $w(n_2,q)\geq\mu(\Pi_{Y\cup\{z\}}^*\cB)>1/4$.
By the same product-deficit argument as in Step 1 one can obtain that
$$
 0\le w(n_1,p)-\mu(\Pi_{X\cup\{z\}}^*\cA)<I_Y[f]+I_X[g],
$$
and its counterpart for $\cB$.
By Lemma~\ref{lem:stability}, again with the order reversed for $\cB$,
\[
\begin{aligned}\label{eq:distance-from-exterior}
 \delta_f+\delta_g &\leq \mu(\cA\triangle\Pi_{X\cup\{z\}}^*\cA)+\mu(\Pi_{X\cup\{z\}}^*\cA)\triangle \cK(n,X,p)+\mu(\cB\triangle\Pi_{Y\cup\{z\}}^*\cB)+\mu(\Pi_{Y\cup\{z\}}^*\cB)\triangle \cK(n,Y,q)\\[3pt]
&\le\left(\frac12+\frac{n_1-1}{p-1}+\frac{n_2-1}{q-1}\right)
      \bigl(I_Y[f]+I_X[g]\bigr)\\[3pt]
&\le3n\bigl(I_Y[f]+I_X[g]\bigr).
\end{aligned}
\]
It follows from \eqref{eq:weak-exterior-influence} and
\eqref{eq:weak-bootstrap-number} that
\begin{equation}\label{eq:weak-localization}
 \delta_f+\delta_g
 <12n^3\exp\left(1-\frac{n}{18^2(p+q-2)^2}\right)
 \le\frac1{64n}.
\end{equation}

\emph{Step 4: the remaining inequalities in \eqref{eq:localization}.}
We can now use the constant $24$ in Lemma~\ref{lem:smoothed-threshold}.
Repeating the argument for \eqref{eq:weighted-smoothing} replaces its
factor $\sqrt n/(18(p+q-2))$ by $4\sqrt n/(27(p+q-2))$.
Thus
\begin{equation}\label{eq:bootstrap-log}
 1+\ln(n/\Gamma)>
 \frac{16n}{27^2(p+q-2)^2}>
 \frac{n}{48(p+q-2)^2}=2L.
\end{equation}
By \eqref{eq:distance-from-exterior},
$\delta_f+\delta_g\le12n^2\Gamma<12n^3e^{1-2L}$.
Finally, $\theta(n_1,p),\theta(n_2,q)\ge1/(2\sqrt n)$ and
\eqref{eq:finalnumbers} imply
$$
 \max\left\{\frac{2\delta_f}{\theta(n_1,p)},
             \frac{2\delta_g}{\theta(n_2,q)}\right\}
 \le4\sqrt n(\delta_f+\delta_g)
 <48 e n^{7/2}e^{-2L}\le e^{-L},
$$
because $48 e n^{7/2}\le(n+1)^6\le e^L$.
Together with \eqref{eq:weak-localization}, this proves
\eqref{eq:localization}.
\end{proof}

\section{Proof of the main theorem}\label{sec:completion}

Suppose first that $n-p-q$ is odd and that a counterexample exists.
Let $\cA,\cB$ and $f=\mathbbm{1}_{\cA}$,
$g=\mathbbm{1}_{\cB}$ be as in Lemma~\ref{lem:closure}, and take the
partition $[n]=X\sqcup Y\sqcup\{z\}$ from
Proposition~\ref{prop:localization}. Retain $n_1=|X|$, $n_2=|Y|$,
$\delta_f,\delta_g$ and $L$ from that proposition.
By \eqref{eq:pair}, it is enough to show that
$$
 \mu(\cA)\,\mu(\cB)-w(n_1,p)w(n_2,q)\le\frac23\Cov(f,g^*),
$$
since $w(n_1,p)w(n_2,q)\le B_n$ and $\Cov(f,g^*)\ge0$.

For $y\subseteq Y\cup\{z\}$ and $x\subseteq X\cup\{z\}$, define
$\cA_y=\cA(y,Y\cup\{z\})$ and $\cB_x=\cB(x,X\cup\{z\})$, and define
\begin{equation}\label{eq:excess-functions}
 \begin{aligned}
 \phi_\cA(y)=\left(\mu^{n_1}(\cA_y)-w(n_1,p)-\theta(n_1,p)/2\right)_+, \quad
 \phi_\cB(x)=\left(\mu^{n_2}(\cB_x)-w(n_2,q)-\theta(n_2,q)/2\right)_+.
 \end{aligned}
\end{equation}
The measures of these projections are taken on $2^X$ and $2^Y$,
respectively, whereas $\phi_\cA$ and $\phi_\cB$ are functions on
$2^{Y\cup\{z\}}$ and $2^{X\cup\{z\}}$.
For covariances, we also regard them as functions on $2^{[n]}$
independent of the remaining coordinates.

By \eqref{eq:localization-purpose} and its counterpart for $g$,
\begin{equation}\label{eq:excess-support}
 \mu^{n_2+1}(\phi_\cA>0)\le\frac{2\delta_f}{\theta(n_1,p)}\le e^{-L},
 \qquad
 \mu^{n_1+1}(\phi_\cB>0)\le\frac{2\delta_g}{\theta(n_2,q)}\le e^{-L}.
\end{equation}

In Sections~\ref{sec:exterior} and~\ref{subsec:rare-expansion}, we will prove
\eqref{eq:final-density-gain} and its counterpart for $g$.
More precisely, Theorem~\ref{thm:exterior} bounds
$(\mu(\cF)-w(n_1,t))_+$ by
$8(t-1)\E\phi/(n_1\zeta)$ for some $\zeta>0$, provided every increasing family of measure
at most $\rho$ on the coordinates outside $X$ satisfies
\eqref{eq:multiradius-hypothesis}.
Lemma~\ref{lem:rare} verifies this hypothesis for the cube
$2^{Y\cup\{z\}}$ with
$\rho=e^{-L}$, $\zeta=\sqrt L/(2\sqrt{n_2+1})$, and every
$i\in\{1,\ldots,p-1\}$.
Together with \eqref{eq:excess-support} and the numerical bounds in
Lemma~\ref{lem:numerical}, it follows that
$$
 (\mu(\cA)-w(n_1,p))_+
 \le\frac{16(p-1)\sqrt{n_2+1}}{n_1\sqrt L}\,\E\phi_\cA.
$$
Interchanging the two families $\cA$, $\cB$ yields the corresponding inequality
for $g$ on $2^{X\cup\{z\}}$.

In Section~\ref{subsec:excess-covariance}, we  prove
\eqref{eq:rare-correlation}, a lower bound for $\Cov(f,g^*)$ in terms
of the same quantities $\E\phi_\cA$ and $\E\phi_\cB$.
For this step, the bound $\max\{\delta_f,\delta_g\}\le1/(64n)$ from
Proposition~\ref{prop:localization} permits the constant $24$ in
Lemma~\ref{lem:smoothed-threshold}; the bounds
\eqref{eq:excess-support} permit the edge-isoperimetric estimate on
each positive level set of $\phi_\cA$ and $\phi_\cB$.
Finally, \eqref{eq:block-ratio} makes the coefficients in the two
upper bounds small enough to compare with this covariance lower
bound. The calculation is given in \eqref{eq:final-comparison}.

\subsection{A bound for the measure of an intersecting family}\label{sec:exterior}

We compare the restricted projections of $\cF$ with the Katona
family after applying Lemma~\ref{lem:simplicial-compression}.
The shadow inclusions below compare individual levels;
Lemma~\ref{lem:interval-coupling} combines their neighborhood bounds.

\Needspace{18\baselineskip}
\begin{theorem}\label{thm:exterior}
Let $[n]=X\sqcup Y\sqcup\{z\}$ and let
$\cF\subseteq2^{[n]}$ be increasing and $t$-intersecting, where $t\ge2$.
Suppose $n_1=|X|$ is admissible for $t$ and $n_1\ge4t$.
For $y\subseteq Y\cup\{z\}$, let $\cF_y=\cF(y,Y\cup\{z\})$ and set
$$
 \phi(y)=\left(\mu^{n_1}(\cF_y)-w(n_1,t)-\frac{\theta(n_1,t)}{2}\right)_+.
$$
All neighborhoods below are taken in $2^{Y\cup\{z\}}$,
with its uniform measure $\mu^{n-n_1}$. Suppose
$$
 \frac{2\mu^n(\cF\triangle\cK(n,X,t))}{\theta(n_1,t)}\le\rho<1,
 \qquad \frac{24(t-1)}{n_1}\le\zeta\le\frac1{16}.
$$
If every increasing family
$\cG\subseteq2^{Y\cup\{z\}}$ with $\mu^{n-n_1}(\cG)\le\rho$ satisfies
\begin{equation}\label{eq:multiradius-hypothesis}
 \mu^{n-n_1}(N_i(\cG))\ge(1+\zeta)^i\mu^{n-n_1}(\cG)
       \mbox { for all } i=1,2,\ldots, t-1,
\end{equation}
then
\begin{equation}\label{eq:exterior}
 (\mu^n(\cF)-w(n_1,t))_+\le\frac{8(t-1)}{n_1\zeta}\E\phi.
\end{equation}
\end{theorem}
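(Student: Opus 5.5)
We would first reduce to a structured situation and then compare, layer by layer, the sets that $\cF$ gains over $\cK(n,X,t)$ with the sets it is forced to lose. Apply Lemma~\ref{lem:simplicial-compression} with the compressed coordinate set $X$, i.e.\ the projections $\cF_y$ are indexed by $y\subseteq Y\cup\{z\}=[n]\setminus X$. This keeps $\cF$ increasing and $t$-intersecting. It does not change any $\mu^{n_1}(\cF_y)$, so $\mu^n(\cF)$ and $\phi$ are unchanged. By \eqref{eq:compression-distance} it does not increase $\mu^n(\cF\triangle\cK(n,X,t))$, so the hypothesis with $\rho$ survives. After compression each $\cF_y$ is a final segment of the simplicial order on $2^X$, hence comparable with $\cK_X=\{A\subseteq X:|A|\ge(n_1+t)/2\}$. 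Write $k_0=(n_1+t)/2$. We then decompose
$$
\mu^n(\cF)-w(n_1,t)=\E_y\bigl(\mu^{n_1}(\cF_y)-w(n_1,t)\bigr)_+-\E_y\bigl(w(n_1,t)-\mu^{n_1}(\cF_y)\bigr)_+ .
$$
The goal is to show that the negative part absorbs everything in the positive part except a multiple $8(t-1)/(n_1\zeta)$ of $\E\phi$.

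The exclusion mechanism is the one sketched in the overview. Take $y\subseteq y'$ with $|y'\setminus y|\le i$, where $0\le i\le t-1$. Let $A\in\cF_{y'}$ and $B\subseteq A$ with $|A\setminus B|=t-1-i$. Then $(X\setminus B)\cup y^c\notin\cF$. Thus a set of size $k_0-1-j$ in $\cF_{y'}$ forces the complements of its $(t-1-i)$-shadow to be missing from $\cF_{y^c}$. These missing sets lie at size $n_1-k_0+1+j+t-1-i=k_0+j-i$, which is at or above the Katona threshold when $i\le j$. So the excess at depth $j$ below threshold in $\cF_{y'}$ creates a deficit in every projection indexed by $N_i(\{y'\})^c$, for each $i\le\min(j,t-1)$. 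We would treat the layers as follows.

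\emph{First layer ($j=0$, $i=0$).} Here the excess of $\cF_{y'}$ is at most the measure of level $k_0-1$, which is about $\theta(n_1,t)/2$; this range is exactly what $\phi$ discards. We double count pairs (member of level $k_0-1$ in $\cF_{y'}$, missing set of $\cF_{y'^c}$ at level $k_0$) via the Katona shadow bound \eqref{eq:katona}. The aim is to show that the deficit on $y'^c$ is at least the excess on $y'$ times a factor $k_0/(k_0-t+1)>1$. This should be the inequality \eqref{eq:first-layer}. Since $y\mapsto y^c$ preserves $\mu^{n-n_1}$, the first-layer excess is paid for with room to spare, the spare being of order $(t-1)/n_1$ times the excess.

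\emph{Deeper layers ($j\ge1$).} The excess beyond $\theta/2$ is $\phi$, supported on the increasing family $\{\phi>0\}$, which has measure at most $\rho$ by Markov's inequality together with $2\mu^n(\cF\triangle\cK)/\theta\le\rho$. For each level $s$ the family $\cG_s=\{y:\phi(y)>s\}$ is increasing with measure at most $\rho$. Hypothesis \eqref{eq:multiradius-hypothesis} then gives $\mu(N_i(\cG_s))\ge(1+\zeta)^i\mu(\cG_s)$. Via the exclusion above this becomes a deficit in the complementary projections, namely \eqref{eq:exclusion} and \eqref{eq:radius-expansion}. Summing over the radii $i=1,\dots,t-1$ and integrating over $s$, the deficit exceeds the deep excess by a factor that we expect to be at least $1+c\zeta$. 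Combined with the $(t-1)/n_1$ slack from the first layer, the uncovered part is at most $8(t-1)\E\phi/(n_1\zeta)$. The constraints $24(t-1)/n_1\le\zeta\le1/16$ are what make the geometric sums and the constants close.

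The main obstacle is double counting. A single missing set in some $\cF_{y^c}$ can be charged simultaneously by the first layer and by several deeper layers and radii. To handle this we would order the charges by level and radius and use an interval-coupling statement, namely Lemma~\ref{lem:interval-coupling} proved via \cite[Corollary~2.1]{BGI}. It assigns to each level of missing sets a disjoint portion of the available neighborhood measure, exploiting log-concavity of the binomial level sizes. We would attempt that coupling first with the crude ordering ``level $k_0+j-i$ is charged only by pairs $(j,i)$ with $j-i$ fixed, largest $i$ first''.
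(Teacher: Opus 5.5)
Your compression step, the split into surplus and deficit, the exclusion mechanism, the Markov bound for $\{\phi>0\}$ and the appeal to Lemma~\ref{lem:interval-coupling} all match the paper. The accounting you build on them, however, runs in the wrong direction in both layers.

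\textbf{First layer.} You cannot invoke \eqref{eq:katona} here. The projection $\cF_{y'}$ is only $(t-|y'|)$-intersecting; the only $t$-intersection available is the cross condition between $\cF_{y'}$ and $\cF_{y'^c}$. The claimed gain factor $k_0/(k_0-t+1)$ is in fact false. Write $X=[n_1]$. After compression, a first-layer surplus of measure at most $\theta(n_1,t)/2$ consists of $(k_0-1)$-subsets of $X\setminus\{1\}$, since these come last in simplicial order. If it contains all of them, its $(t-1)$-shadow is all $(k_0-t)$-subsets of $X\setminus\{1\}$, and $\binom{n_1-1}{k_0-1}=\binom{n_1-1}{k_0-t}$. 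So the forced deficit equals the surplus exactly.

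This happens in genuine examples. Let $\cF_y$ be $\cK_X$ plus these sets for $y$ in a small intersecting up-set $\cG\subseteq2^{Y\cup\{z\}}$. Let $\cF_y$ be $\cK_X$ minus the $k_0$-sets containing $1$ when $y^c\in\cG$, and $\cK_X$ otherwise. This family is increasing, $t$-intersecting and satisfies the $\rho$ hypothesis. It has $\phi\equiv0$, and its first layer has no slack at all.

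Moreover, level $k_0-1$ has total measure $2^{-n_1}\binom{n_1}{k_0-1}\approx\theta(n_1,t)$. So $\phi$ discards only the sets of that level omitting $1$, roughly half of it. For the sets containing $1$, the normalized shadow bound on the restricted projection gives only the ratio $\frac{r(r+1)}{(r+t-1)(r+t)}$ with $r=k_0-t$. The first layer therefore produces a \emph{loss} of at most $\frac{4(t-1)}{n_1}\bigl(\mu^{n_1}(\cA_y^{(k_0-1)})-\theta(n_1,t)/2\bigr)_+$. This is \eqref{eq:first-layer}, and it is charged to $\phi$.

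\textbf{Deeper layers.} These also give a loss, not a gain. As written, surplus in both layers would give $\mu^n(\cF)\le w(n_1,t)$ outright, and the positive right-hand side of \eqref{eq:exterior} would play no role. Set $q_j=2^{-n_1}\binom{n_1}{k_0-1-j}$ and $\mathcal E_j(x)=\{y:\mu^{n_1}(\cA_y^{(k_0-1-j)})>xq_j\}$; for $j\ge1$, after compression these are exactly level sets of $\phi$. Set $\mathcal M_j(x)=\{y:\mu^{n_1}(\cB_y^{(k_0+j)})>xq_{j+t-1}\}$ with $\cB_y=\cK_X\setminus\cF_{y^c}$.

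Neighborhoods of different radii land on the same missing level and overlap, so you cannot sum over $i$. The hypothesis \eqref{eq:multiradius-hypothesis} together with exclusion yields only $\mu(\mathcal M_j(x))\ge\max_{0\le i\le t-1}(1+\zeta)^i\mu(\mathcal E_{j+i}(x))$. The missing level also carries the smaller weight $q_{j+t-1}<q_j$. If $\mu(\mathcal E_j(x))=a(1+\zeta)^{-j}$, these constraints are met with $\mu(\mathcal M_j(x))=a(1+\zeta)^{-j}$. Then the deficit is strictly smaller than the surplus, so no factor $1+c\zeta$ follows from the hypotheses.

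What is missing is the quantitative bound on this loss:
\begin{itemize}
\item Lemma~\ref{lem:interval-coupling} with $\gamma=1+\zeta$ and $h=t-1$ bounds the loss ratio by $\sum_j(q_j-q_{j+t-1})\gamma^{-j}/\sum_jq_j\gamma^{-j}$.
\item Each term satisfies $1-q_{j+t-1}/q_j\le6(t-1)(j+1)/n_1$.
\item The bound $q_{j+1}/q_j\le1+\zeta/8$ (this is where $\zeta\ge24(t-1)/n_1$ is used), together with $\zeta\le1/16$, bounds the $q_j\gamma^{-j}$-weighted mean of $j+1$ by $4/(3\zeta)$.
\end{itemize}
Combining these gives the coefficient $8(t-1)/(n_1\zeta)$.
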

\begin{proof}
Relabel $X=[n_1]$ and set $n_1=t+2r$.
Throughout this proof, $y^c=(Y\cup\{z\})\setminus y$,
complements of subsets of $[n_1]$ are taken in $[n_1]$, and
$\E_y$ averages over $y\subseteq Y\cup\{z\}$.
Apply Lemma~\ref{lem:simplicial-compression}, we assume that
$\cF=\mathcal C_X\cF$ and $\cF_y=\mathcal T_{[n_1]}(|\cF_y|)$ for all $y\subset Y\cup \{z\}$.

For convenient, let
\begin{equation}\label{eq:projection-comparison-families}
 \cK=\cK(n_1,[n_1],t),\qquad
 \cA_y=\cF_y\setminus\cK,\qquad
 \cB_y=\cK\setminus\cF_{y^c}.
\end{equation}
Note that both $\cF_y$ and $\cK$ consist of the last subsets in the same
simplicial order on $2^{[n_1]}$. Thus either $\cF_y\subseteq\cK$ or
$\cK\subseteq\cF_y$, and
\begin{equation}\label{eq:projection-surplus-deficit}
 \begin{aligned}
 \mu^{n_1}(\cA_y)=(\mu^{n_1}(\cF_y)-w(n_1,t))_+,\quad
 \mu^{n_1}(\cB_y)=(w(n_1,t)-\mu^{n_1}(\cF_{y^c}))_+.
 \end{aligned}
\end{equation}
If $y\subseteq y'$, then the increasingness of $\cA,\cB$ implies $\cA_y\subseteq\cA_{y'}$ and
$\cB_y\subseteq\cB_{y'}$.
Moreover,
$\mu^{n_1}(\cF_y)-w(n_1,t)=\mu^{n_1}(\cA_y)-\mu^{n_1}(\cB_{y^c})$.
Taking expectation on both sides,  we get
\begin{equation}\label{eq:projection-mean-balance}
 \begin{aligned}
 \mu^n(\cF)-w(n_1,t)
 =\E_y\mu^{n_1}(\cA_y)-\E_y\mu^{n_1}(\cB_y).
 \end{aligned}
\end{equation}
Note that
\[
 \phi(y)=\left(\mu^{n_1}(\cA_y)-\frac{\theta(n_1,t)}{2}\right)_+.
\]
Since $\E_y\mu^{n_1}(\cA_y)\le
\mu^n(\cF\triangle\cK(n,[n_1],t))$, by Markov's inequality
\begin{equation}\label{eq:projection-rare-support}
 \mu^{n-n_1}(\phi>0)
 \le\frac{2\E_y\mu^{n_1}(\cA_y)}{\theta(n_1,t)}\le\rho.
\end{equation}

Define
\begin{equation}\label{eq:shadow-layer-sums}
 \begin{aligned}
 q_j&=2^{-n_1}\binom {n_1}{r+t-1-j}
       && \mbox{ for  }0\le j\le r+t-1.
 \end{aligned}
\end{equation}
Set $q_j=0$ outside $0\le j\le r+t-1$.
If $\cA_y^{(r+t-1-j)}\ne\varnothing$, all preceding levels
$r+t-1,\ldots,r+t-j$ of $\cA_y$ are complete.
Likewise, if $\cB_y^{(r+t+j)}\ne\varnothing$, its levels
$r+t,\ldots,r+t+j-1$ are complete.
In particular,
\begin{equation}\label{eq:layer-event-meaning}
 \mu^{n_1}(\cA_y^{(r+t-1)})=\min\{\mu^{n_1}(\cA_y),q_0\},\qquad
 \mu^{n_1}(\cB_y^{(r+t)})=\min\{\mu^{n_1}(\cB_y),q_{t-1}\}.
\end{equation}
Figure~\ref{fig:katona-projections} illustrates the added and missing sets in these projections.

\begin{figure}[htbp]
\centering
\begin{tikzpicture}[x=1cm,y=1cm,font=\small,line width=.55pt]
\foreach \a in {0,4.6,9.2}{
 \begin{scope}[xshift=\a cm]
  \draw (0,2.2)--(1.65,0)--(0,-2.2)--(-1.65,0)--cycle;
  \node[above] at (0,2.2) {$[n_1]$};
  \node[below] at (0,-2.2) {$\varnothing$};
 \end{scope}
}
\begin{scope}
 \clip (0,2.2)--(1.65,0)--(0,-2.2)--(-1.65,0)--cycle;
 \fill[black!12] (-2,.25) rectangle (2,2.2);
 \draw[dashed] (-2,.25)--(2,.25);
 \draw (-1.5,.52)--(1.5,.52);
 \draw (-1.5,-.04)--(1.5,-.04);
\end{scope}
\node at (0,1.3) {$\cK$};
\node[right,font=\scriptsize] at (1.25,.72) {$r+t$};
\node[right,font=\scriptsize] at (1.62,-.1) {$r+t-1$};
\node[align=center] at (0,-3) {$|A|\ge r+t$\\defines $\cK$};
\begin{scope}[xshift=4.6cm]
 \begin{scope}
  \clip (0,2.2)--(1.65,0)--(0,-2.2)--(-1.65,0)--cycle;
  \fill[black!12] (-2,.25) rectangle (2,2.2);
  \fill[black!27] (-2,-.68) rectangle (2,.25);
  \fill[black!27] (.05,-.88) rectangle (2,-.68);
  \draw[dashed] (-2,.25)--(2,.25);
  \draw (-2,-.68)--(2,-.68);
  \draw (-2,-.88)--(2,-.88);
 \end{scope}
 \node at (0,1.3) {$\cK\subseteq\cF_y$};
 \node at (0,-.18) {$\cA_y$};
 \node[below,font=\scriptsize] at (.42,-.94) {level $r+t-1-j$};
 \node[align=center] at (0,-3) {$\cA_y=\cF_y\setminus\cK$};
\end{scope}
\begin{scope}[xshift=9.2cm]
 \begin{scope}
  \clip (0,2.2)--(1.65,0)--(0,-2.2)--(-1.65,0)--cycle;
  \fill[black!12] (-2,1.05) rectangle (2,2.2);
  \fill[black!12] (.05,.85) rectangle (2,1.05);
  \fill[pattern=north east lines,pattern color=black!55]
     (-2,.25) rectangle (2,.85);
  \fill[pattern=north east lines,pattern color=black!55]
     (-2,.85) rectangle (.05,1.05);
  \draw[dashed] (-2,.25)--(2,.25);
  \draw (-2,.85)--(2,.85);
  \draw (-2,1.05)--(2,1.05);
 \end{scope}
 \node[font=\scriptsize] at (0,1.53) {$\cF_{y^c}\subseteq\cK$};
 \node[fill=white,inner sep=1pt] at (0,.54) {$\cB_y$};
 \node[align=center] at (0,-3) {$\cB_y=\cK\setminus\cF_{y^c}$};
\end{scope}
\end{tikzpicture}
\caption{Three copies of $2^{[n_1]}$, ordered vertically by set size.
The dashed line separates $|A|\ge r+t$ from $|A|\le r+t-1$.
The middle and right drawings illustrate the case
$\cK\subseteq\cF_y$ and $\cF_{y^c}\subseteq\cK$.
The darker region is $\cA_y$; the hatched region is $\cB_y$ and
is absent from $\cF_{y^c}$.}
\label{fig:katona-projections}
\end{figure}

\begin{claim}\label{claim:projection}
The projections in~\eqref{eq:projection-comparison-families} have the following properties.
\begin{itemize}
\item[(i)] $\cF_y$ and $\cF_{y^c}$ are cross $t$-intersecting.
\item[(ii)] If $\cA_y^{(t-1)}\ne\varnothing$, then
$\cF_{y^c}=\varnothing$ and $\cB_y=\cK$; in particular,
$\mu^{n_1}(\cB_y)=w(n_1,t)$.
\item[(iii)] If $y\subseteq y'$, $|y'\setminus y|\le i$,
$0\le i\le t-1$, and $0\le j\le r$, then
\begin{equation}\label{eq:first-shadow-inclusion}
 \left(\partial^{t-1-i}
       \cA_{y'}^{(r+t-1-j-i)}\right)^c
 \subseteq\cB_y^{(r+t+j)}.
\end{equation}
In particular,
$(\partial^{t-1}\cA_y^{(r+t-1-j)})^c
\subseteq\cB_y^{(r+t+j)}$ for $0\le j\le r$.
\item[(iv)] For $0\le j\le r$,
$$
 \cA_y^{(r+t-1-j)}=\binom{[n_1]}{r+t-1-j}
 \mbox{ implies }
 \cB_y^{(r+t+j)}=\binom{[n_1]}{r+t+j}.
$$
Equivalently, $\mu^{n_1}(\cA_y^{(r+t-1-j)})=q_j$ implies
$\mu^{n_1}(\cB_y^{(r+t+j)})=q_{j+t-1}$.
\end{itemize}
\end{claim}
\begin{proof}[Proof of the claim~\ref{claim:projection}]
For $A\in\cF_y$ and $C\in\cF_{y^c}$,
by the $t$-intersection of $\cF$
$$
|A\cap C|=|(A\cup y)\cap(C\cup y^c)|\ge t.
$$
This proves (i), which also proves (ii) if $|A|=t-1$.

For (iii), take
$A\in\cA_{y'}^{(r+t-1-j-i)}$ and
$B\subseteq A$ with $|B|=r-j$. Then
\begin{equation}\label{eq:shadow-forbidden-intersection}
 |(A\cup y')\cap(([n_1]\setminus B)\cup y^c)|
 =t-1-i+|y'\setminus y|\le t-1.
\end{equation}
Thus $[n_1]\setminus B\notin\cF_{y^c}$. Its size is $r+t+j$, so it belongs to $\cB_y^{(r+t+j)}$; see Figure~\ref{fig:shadow-complements}. Finally, the $(t-1)$st lower shadow of the complete
$(r+t-1-j)$th level is the complete $(r-j)$th level.
Taking complements in (iii) with $i=0$ proves (iv).
\end{proof}
\begin{figure}[htbp]
\centering
\begin{tikzpicture}[x=1cm,y=1cm,font=\small,line width=.6pt,>=Stealth]
\foreach \a in {0,7.2}{
 \begin{scope}[xshift=\a cm]
  \begin{scope}
   \clip (0,2.3)--(1.85,0)--(0,-2.3)--(-1.85,0)--cycle;
   \fill[black!10] (-2,.25) rectangle (2,2.3);
   \draw[dashed] (-2,.25)--(2,.25);
  \end{scope}
  \draw (0,2.3)--(1.85,0)--(0,-2.3)--(-1.85,0)--cycle;
  \node[above] at (0,2.3) {$[n_1]$};
  \node[below] at (0,-2.3) {$\varnothing$};
 \end{scope}
}
\draw (-1.36,-.6)--(1.36,-.6);
\draw (-.82,-1.28)--(.82,-1.28);
\node[fill=white,inner sep=1pt] at (0,-.6) {$A\in\cA_{y'}$};
\node[fill=white,inner sep=1pt] at (0,-1.28) {$B\subseteq A$};
\draw[->] (-1.6,-.66)--(-1.16,-1.22);
\node[anchor=east,align=right,font=\scriptsize] at (-1.6,-1)
 {delete\\$t-1-i$ elements};
\node[align=center,font=\scriptsize] at (0,-1.82) {$|B|=r-j$};
\begin{scope}[xshift=7.2cm]
 \begin{scope}
  \clip (0,2.3)--(1.85,0)--(0,-2.3)--(-1.85,0)--cycle;
  \fill[pattern=north east lines,pattern color=black!55]
       (-2,.25) rectangle (2,1.02);
 \end{scope}
 \draw (-1.01,1.04)--(1.01,1.04);
 \node[fill=white,inner sep=1pt,font=\scriptsize] at (0,1.08)
 {$[n_1]\setminus B$};
 \node[fill=white,inner sep=1pt,font=\scriptsize] at (0,.55)
 {$\cB_y$};
 \node[align=center] at (0,-.9) {$\cB_y\subseteq\cK$\\excluded from $\cF_{y^c}$};
\end{scope}
\draw[->] (1.05,-1.28) .. controls (3,-1.55) and (4.6,.98) .. (6.1,1.04);
\node[fill=white,align=center,font=\scriptsize,inner sep=2pt] at (3.52,-.21)
 {complement in $[n_1]$\\$|[n_1]\setminus B|=r+t+j$};
\node[align=center] at (0,-3) {$A\cup y'\in\cF$};
\node[align=center] at (7.2,-3) {$([n_1]\setminus B)\cup y^c\notin\cF$};
\end{tikzpicture}
\caption{The shadow inclusion \eqref{eq:first-shadow-inclusion} used
in \eqref{eq:exclusion}. Here $y\subseteq y'$, $|y'\setminus y|\le i$,
$|A|=r+t-1-j-i$, and $|A\setminus B|=t-1-i$.
The two displayed subsets of $[n]$ intersect in at most $t-1$ elements,
so $[n_1]\setminus B\in\cB_y^{(r+t+j)}$.
The complete levels $r+t,\ldots,r+t+j-1$ of $\cB_y$ are hatched.
When $i=t-1$, no elements are deleted and $B=A$.}
\label{fig:shadow-complements}
\end{figure}

\begin{claim}\label{claim:thefirstlayer}
\begin{equation}
\begin{aligned}\label{eq:first-layer}
 \mu^{n_1}(\cA_y^{(r+t-1)})-\mu^{n_1}(\cB_y^{(r+t)})\le\frac{4(t-1)}{n_1}
       \left(\mu^{n_1}(\cA_y^{(r+t-1)})-\frac{\theta(n_1,t)}2\right)_+
\end{aligned}
\end{equation}
and
\begin{equation}\label{eq:level-excess-decomposition}
 \phi(y)=
 \left(\mu^{n_1}(\cA_y^{(r+t-1)})-\frac{\theta(n_1,t)}2\right)_+
 +\mu^{n_1}(\cA_y\setminus\cA_y^{(r+t-1)}).
\end{equation}
\end{claim}
\begin{proof}[Proof of the claim~\ref{claim:thefirstlayer}]
The case $i=j=0$ of \eqref{eq:first-shadow-inclusion} gives
$\mu^{n_1}(\cB_y^{(r+t)})\ge
\mu^{n_1}(\partial^{t-1}\cA_y^{(r+t-1)})$.
Among the $(r+t-1)$-subsets in simplicial order, those omitting $1$
follow those containing $1$, and
\begin{equation}\label{eq:half-influence-layer}
 2^{-n_1}\binom{n_1-1}{r+t-1}
 =2^{-n_1}\binom{n_1-1}r=\frac{\theta(n_1,t)}2.
\end{equation}
If $\mu^{n_1}(\cA_y^{(r+t-1)})\le\theta(n_1,t)/2$, every member of
$\cA_y^{(r+t-1)}$ omits $1$.
The normalized shadow bound \eqref{eq:normalized-shadow}, applied
on $[n_1]\setminus\{1\}$, therefore we have
\begin{equation}\label{eq:first-shadow-small}
 \mu^{n_1}(\cB_y^{(r+t)})\ge \mu^{n_1}(\partial^{t-1}\cA_y^{(r+t-1)})
 \ge\frac{\binom{n_1-1}r}{\binom{n_1-1}{r+t-1}}
       \mu^{n_1}(\cA_y^{(r+t-1)})
 =\mu^{n_1}(\cA_y^{(r+t-1)}).
\end{equation}

If $\mu^{n_1}(\cA_y^{(r+t-1)})\ge\theta(n_1,t)/2$, this level contains
all $(r+t-1)$-subsets omitting $1$.
Its shadow therefore contains all $r$-subsets omitting $1$.
Apply \eqref{eq:normalized-shadow} to the restricted projection
$\cA_y^{(r+t-1)}(1)$ on $[n_1]\setminus\{1\}$, and then adjoin $1$
to each set in its $(t-1)$st lower shadow.
These are further $r$-subsets in the shadow of
$\cA_y^{(r+t-1)}$, disjoint from those already counted. Hence
\begin{equation}\label{eq:first-shadow-large}
 \begin{aligned}
 \mu^{n_1}(\partial^{t-1}\cA_y^{(r+t-1)})
 &\ge\frac{\theta(n_1,t)}2+
  \frac{\binom{n_1-1}{r-1}}{\binom{n_1-1}{r+t-2}}
       \left(\mu^{n_1}(\cA_y^{(r+t-1)})-\frac{\theta(n_1,t)}2\right)\\
 &=\frac{\theta(n_1,t)}2+
  \frac{r(r+1)}{(r+t-1)(r+t)}
       \left(\mu^{n_1}(\cA_y^{(r+t-1)})-\frac{\theta(n_1,t)}2\right).
 \end{aligned}
\end{equation}
Since $t\ge2$ and $n_1=2r+t$,
$$
 1-\frac{r(r+1)}{(r+t-1)(r+t)}
 =\frac{n_1(t-1)}{(r+t-1)(r+t)}\le\frac{4(t-1)}{n_1}.
$$
Combining the two cases with \eqref{eq:first-shadow-inclusion} yields
\[
\begin{aligned}
 \mu^{n_1}(\cA_y^{(r+t-1)})-\mu^{n_1}(\cB_y^{(r+t)})&\leq \mu^{n_1}(\cA_y^{(r+t-1)})-\mu^{n_1}(\partial^{t-1}\cA_y^{(r+t-1)})\\
&\le\frac{4(t-1)}{n_1}
       \left(\mu^{n_1}(\cA_y^{(r+t-1)})-\frac{\theta(n_1,t)}2\right)_+.
\end{aligned}
\]
Also $q_0>\theta(n_1,t)/2$, so \eqref{eq:layer-event-meaning} implies \eqref{eq:level-excess-decomposition}.
\end{proof}

For $0\le x<1$, define increasing families on $2^{Y\cup\{z\}}$ by
\begin{equation}\label{eq:layer-events}
 \begin{aligned}
 \mathcal E_j(x)&=\{y:\mu^{n_1}(\cA_y^{(r+t-1-j)})>xq_j\},
       &&1\le j\le r+t-1,\\
 \mathcal M_j(x)&=\{y:\mu^{n_1}(\cB_y^{(r+t+j)})>xq_{j+t-1}\},
       &&1\le j\le r.
 \end{aligned}
\end{equation}
Outside these ranges the families are empty.
Integrating the indicators of these events and summing over the
disjoint levels gives
\begin{equation}\label{eq:layer-integrals}
 \begin{aligned}
 \E_y\mu^{n_1}(\cA_y\setminus\cA_y^{(r+t-1)})
 &=\int_0^1\sum_{j\ge1}q_j\mu^{n-n_1}(\mathcal E_j(x))\,dx,\\
 \E_y\mu^{n_1}(\cB_y\setminus\cB_y^{(r+t)})
 &=\int_0^1\sum_{j=1}^r q_{j+t-1}
                         \mu^{n-n_1}(\mathcal M_j(x))\,dx.
 \end{aligned}
\end{equation}
Indeed, for each $y$ and $q_j>0$,
$q_j\int_0^1\mathbbm{1}_{\mathcal E_j(x)}(y)\,dx
=\mu^{n_1}(\cA_y^{(r+t-1-j)})$; the second identity is identical.

\begin{claim}\label{claim:exclusion}
For $1\le j\le r$, $0\le i\le t-1$ and $0\le x<1$,
\begin{equation}\label{eq:exclusion}
 N_i(\mathcal E_{j+i}(x))\subseteq\mathcal M_j(x).
\end{equation}
\end{claim}
\begin{proof}[Proof of the claim~\ref{claim:exclusion}]
Let $y\in N_i(\mathcal E_{j+i}(x))$.
Since $\mathcal E_{j+i}(x)$ is increasing, there is
$y'\supseteq y$ with $|y'\setminus y|\le i$ and
$y'\in\mathcal E_{j+i}(x)$.
By \eqref{eq:first-shadow-inclusion} and
\eqref{eq:normalized-shadow},
$$
 \frac{\mu^{n_1}(\cB_y^{(r+t+j)})}{q_{j+t-1}}
 \ge\frac{|\partial^{t-1-i}\cA_{y'}^{(r+t-1-j-i)}|}
             {\binom {n_1}{r-j}}
 \ge\frac{\mu^{n_1}(\cA_{y'}^{(r+t-1-j-i)})}{q_{j+i}}>x.
$$
Thus $y\in\mathcal M_j(x)$.
\end{proof}

\begin{claim}\label{remaininglevels}
\begin{equation}\label{eq:deeper-target}
\E_y\mu^{n_1}(\cA_y\setminus\cA_y^{(r+t-1)})-\E_y\mu^{n_1}(\cB_y\setminus\cB_y^{(r+t)})\le\frac{8(t-1)}{n_1\zeta}\E_y\mu^{n_1}(\cA_y\setminus\cA_y^{(r+t-1)}).
\end{equation}
\end{claim}

\begin{proof}[Proof of Claim \ref{remaininglevels}]

For $j\ge1$, membership in $\mathcal E_j(x)$ forces the first
level of $\cA_y$ to be complete. Hence
$\mathcal E_j(x)\subseteq\{\phi>0\}$ and
$\mu^{n-n_1}(\mathcal E_j(x))\le\rho$ by
\eqref{eq:projection-rare-support}.
Combining \eqref{eq:multiradius-hypothesis} and
\eqref{eq:exclusion}, including $i=0$, gives
\begin{equation}\label{eq:layer-neighborhood-measures}
 \mu^{n-n_1}(\mathcal M_j(x))
 \ge\max_{0\le i\le t-1}(1+\zeta)^i
               \mu^{n-n_1}(\mathcal E_{j+i}(x)).
\end{equation}
We take a maximum, not a sum: neighborhoods for different $i$
may overlap. The sum over $j$ in \eqref{eq:layer-integrals} is
valid because the corresponding levels of $\cB_y$ are disjoint.

The ratios $q_{j+1}/q_j=(r+t-1-j)/(r+j+2)$ decrease with $j$,
so $q_1,\ldots,q_{r+t-1}$ are log-concave.
Apply Lemma~\ref{lem:interval-coupling} to this sequence, reindexed
from zero, with $h=t-1$, $\gamma=1+\zeta$, and
$z_{j-1}=\mu^{n-n_1}(\mathcal E_j(x))$.
For each fixed $x$ it gives
$$
\sum_{j\ge1}q_{j+t-1}\max_{0\le i\le t-1}(1+\zeta)^i\mu^{n-n_1}(\mathcal E_{j+i}(x))\ge\frac{\sum_{j\ge1}q_{j+t-1}(1+\zeta)^{-j}}{\sum_{j\ge1}q_j(1+\zeta)^{-j}}
\sum_{j\ge1}q_j\mu^{n-n_1}(\mathcal E_j(x)).
$$
The common factor from the reindexing cancels in the ratio.
Integrating and using \eqref{eq:layer-integrals} and
\eqref{eq:layer-neighborhood-measures}, we obtain
\begin{equation}\label{eq:deep-loss}
\E_y\mu^{n_1}(\cA_y\setminus\cA_y^{(r+t-1)})-\E_y\mu^{n_1}(\cB_y\setminus\cB_y^{(r+t)})\le\frac{\sum_{j\ge1}(q_j-q_{j+t-1})(1+\zeta)^{-j}}{\sum_{j\ge1}q_j(1+\zeta)^{-j}}\E_y\mu^{n_1}(\cA_y\setminus\cA_y^{(r+t-1)}).
\end{equation}

For $1\le j\le r$,
\begin{equation}\label{eq:shadow-layer-ratio}
 \begin{aligned}
 \frac{q_{j+t-1}}{q_j}
 &=\frac{\binom {n_1}{r-j}}{\binom {n_1}{r+t-1-j}}
 =\prod_{a=1}^{t-1}\frac{r+a-j}{r+a+j+1},\\
 1-\frac{q_{j+t-1}}{q_j}
 &\le\sum_{a=1}^{t-1}\frac{2j+1}{r+a+j+1}
 \le\frac{6(t-1)(j+1)}{n_1}.
 \end{aligned}
\end{equation}
Here each factor is in $[0,1]$, and $r=(n_1-t)/2\ge3n_1/8$.
For $j>r$ with $q_j>0$, we have $q_{j+t-1}=0$ and
$6(t-1)(j+1)/n_1\ge1$, so the final bound still holds.
Also,
$$
 \frac{q_{j+1}}{q_j}
 =1+\frac{t-3-2j}{r+j+2}
 \le1+\frac{3(t-1)}{n_1}\le1+\frac\zeta8.
$$
Consequently, for each integer $s\ge1$,
$$
 \sum_{j\ge s+1}q_j(1+\zeta)^{-j}
 \le\left(\frac{1+\zeta/8}{1+\zeta}\right)^s
       \sum_{j\ge1}q_j(1+\zeta)^{-j}.
$$
Using $j+1=2+\sum_{s\ge1}\mathbbm{1}_{\{j\ge s+1\}}$ gives
$$
\frac{\sum_{j\ge1}(j+1)q_j(1+\zeta)^{-j}}{\sum_{j\ge1}q_j(1+\zeta)^{-j}}\le2+\sum_{s\ge1}\left(\frac{1+\zeta/8}{1+\zeta}\right)^s=1+\frac{8(1+\zeta)}{7\zeta}\le\frac4{3\zeta},
$$
where the last step uses $\zeta\le1/16<4/45$.
Together with \eqref{eq:shadow-layer-ratio}, this proves
\eqref{eq:deeper-target}.
\end{proof}

Finally, add \eqref{eq:first-layer} and
\eqref{eq:deeper-target}, averaging the former over $y$.
By \eqref{eq:projection-mean-balance} and
\eqref{eq:level-excess-decomposition},
$$
\begin{aligned}
\mu^n(\cF)-w(n_1,t)&\le\frac{4(t-1)}{n_1}\E_y\left(\mu^{n_1}(\cA_y^{(r+t-1)})-\frac{\theta(n_1,t)}2\right)_+ +\frac{8(t-1)}{n_1\zeta}\E_y\mu^{n_1}(\cA_y\setminus\cA_y^{(r+t-1)})\\
&\le\frac{8(t-1)}{n_1\zeta}\E_y\phi.
 \end{aligned}
$$
The right-hand side is nonnegative, so this also bounds
$(\mu^n(\cF)-w(n_1,t))_+$.
\end{proof}

\begin{remark*}[Why the positive radii are used]
For a fixed $x$, the case $i=0$ of \eqref{eq:exclusion} gives only
$\mu^{n-n_1}(\mathcal M_j(x))\ge\mu^{n-n_1}(\mathcal E_j(x))$.
Consequently it gives
$$
\sum_{j\ge1}q_j\mu^{n-n_1}(\mathcal E_j(x))-\sum_{j=1}^r q_{j+t-1}\mu^{n-n_1}(\mathcal M_j(x))\le\sum_{j\ge1}(q_j-q_{j+t-1})\mu^{n-n_1}(\mathcal E_j(x)).
$$
This bound need not have the coefficient $8(t-1)/(n_1\zeta)$
required in \eqref{eq:deeper-target}. We give an explicit calculation.
Take $t=2$, $n_1=2^{16}$ and $\zeta=1/16$, so $r=n_1/2-1$.
For the numerical inequalities above, assign the same positive
value $a\le\rho/2$ to every $\mu^{n-n_1}(\mathcal E_j(x))$,
$1\le j\le r+1$, and take
$\mu^{n-n_1}(\mathcal M_j(x))=a$ for $1\le j\le r$.
These values satisfy all the $i=0$ estimates. They are test values
for those estimates, not a construction of a $t$-intersecting family.
Binomial symmetry gives
$$
 q_0=2^{-n_1}\binom {n_1}{n_1/2},\qquad
 q_1=\frac {n_1}{n_1+2}q_0,\qquad
 \sum_{j=1}^{r+1}q_j=\frac{1-q_0}{2}.
$$
The added measure is $a\sum_{j=1}^{r+1}q_j$, whereas the lower
bound for the missing measure is
$a\sum_{j=1}^r q_{j+1}=a(\sum_{j=1}^{r+1}q_j-q_1)$.
Their relative difference is therefore
$$
 \frac{q_1}{\sum_{j=1}^{r+1}q_j}
 =\frac{2n_1 q_0}{(n_1+2)(1-q_0)}
 >q_0\ge\frac1{512}
 =\frac{8(t-1)}{n_1\zeta}.
$$
Here $q_0=\theta(n_1,2)$ and the last inequality follows from
\eqref{eq:centralw}. Thus the zero-radius estimates alone do not
imply the required bound, even for constant values of the level measures.

Keeping $\mu^{n-n_1}(\mathcal E_j(x))=a$, the $i=1$ estimate requires
$\mu^{n-n_1}(\mathcal M_j(x))\ge(17/16)a$ for every $1\le j\le r$.
By \eqref{eq:profile-upper-sharp}, $q_0\le1/320$, and hence
$$
 \frac{\sum_{j=1}^r q_{j+1}\mu^{n-n_1}(\mathcal M_j(x))}
      {\sum_{j=1}^{r+1}q_j\mu^{n-n_1}(\mathcal E_j(x))}
 \ge\frac{17}{16}\left(1-
               \frac{q_1}{\sum_{j=1}^{r+1}q_j}\right)
 \ge\frac{17}{16}\left(1-\frac2{319}\right)>1.
$$
Thus radius one already makes the missing measure exceed the added
measure in this calculation.
For arbitrary values of the level measures, all the positive-radius
bounds in \eqref{eq:layer-neighborhood-measures} allow
Lemma~\ref{lem:interval-coupling} to replace the unweighted
coefficient above by the one in \eqref{eq:deep-loss}.
The calculation in the proof gives
$$
 \frac{\sum_{j\ge1}(q_j-q_{j+t-1})(1+\zeta)^{-j}}
      {\sum_{j\ge1}q_j(1+\zeta)^{-j}}
 \le\frac{6(t-1)}{n_1}\,
      \frac{\sum_{j\ge1}(j+1)q_j(1+\zeta)^{-j}}
           {\sum_{j\ge1}q_j(1+\zeta)^{-j}}
 \le\frac{8(t-1)}{n_1\zeta}.
$$
This is the coefficient in \eqref{eq:deeper-target}.
\end{remark*}

\subsection{Hamming neighborhoods of small families}\label{subsec:rare-expansion}

\begin{lemma}\label{lem:rare}
Suppose $m\ge8192$, $h\ge1$, $32\ln(m+1)\le L\le m/64$, and
$mL\ge1024h^2$. Every family $\cF\subseteq2^{[m]}$ with
$\mu(\cF)\le e^{-L}$ satisfies, for $1\le i\le h$,
\begin{equation}\label{eq:radius-expansion}
 \mu(N_i(\cF))\ge
       \left(1+\frac{\sqrt L}{2\sqrt m}\right)^i\mu(\cF).
\end{equation}
\end{lemma}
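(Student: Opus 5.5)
We reduce to a single radius and then iterate. Since $N_i(\cF)=N_1(N_{i-1}(\cF))$, it suffices to show that every family $\cG\subseteq2^{[m]}$ with $\mu(\cG)\le e^{-L/2}$ (say) satisfies
\[
\mu(N_1(\cG))\ge\Bigl(1+\frac{\sqrt L}{2\sqrt m}\Bigr)\mu(\cG).
\]
Then we check that the iterates $N_{i-1}(\cF)$, $i\le h$, stay below this threshold. For the one-step bound we use Harper's Theorem~\ref{thm:classical-harper}: $|N_1(\cG)|\ge|N_1(\cG_0)|$, where $\cG_0$ is the initial simplicial segment of size $|\cG|$. Since $N_1$ of an initial segment is again an initial segment, everything reduces to the explicit family $\cG_0$. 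Write $\cG_0=\binom{[m]}{\le k-1}\cup\cS$ with $\cS\subseteq\binom{[m]}{k}$ an initial segment of level $k$.

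Next we locate $k$. Since $\mu(\cG_0)\le e^{-L/2}$, the Chernoff bound applied to $\sum_{j<k}\binom mj$ forces $k\le m/2-c\sqrt{Lm}$ for an absolute constant $c$; we expect $c$ around $1/2$. The hypothesis $L\le m/64$ keeps $k\ge m/4$, well away from the edges. We then have $N_1(\cG_0)\supseteq\binom{[m]}{\le k}\cup\partial^{-1}\cS$. The gain therefore consists of the rest of level $k$ plus the upper shadow of $\cS$ in level $k+1$.

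The ratio is estimated level by level. First,
\[
\frac{\binom{m}{j+1}}{\binom mj}=\frac{m-j}{j+1}\ge1+\frac{m-2j-1}{j+1}\ge1+\frac{2c\sqrt{Lm}}{m}\cdot(\text{const}),
\]
which is at least $1+\sqrt L/\sqrt m$ for $j<k$. Hence the full levels satisfy $\mu(\binom{[m]}{\le k})\ge(1+\sqrt L/\sqrt m)\,\mu(\binom{[m]}{\le k-1})$. For the partial level we use the normalized upper-shadow bound, the dual form of \eqref{eq:normalized-shadow}:
\[
|\partial^{-1}\cS|\ge\frac{\binom m{k+1}}{\binom mk}|\cS|.
\]
Combining, the measure of $N_1(\cG_0)$ is at least the mediant-type sum, so the ratio is at least $\min$ of the two ratios, i.e.\ at least $1+\sqrt L/(2\sqrt m)$ after absorbing constants. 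The factor $1/2$ gives the slack needed for the Chernoff constant.

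The main obstacle is the iteration, since $\mu(N_{i}(\cF))$ grows, and we must ensure it stays below the threshold $e^{-L/2}$ for all $i<h$. A crude bound suffices: $|N_i(\cF)|\le\binom{m}{\le i}|\cF|\le(m+1)^i|\cF|$, so $\mu(N_{h-1}(\cF))\le(m+1)^{h}e^{-L}$. This is not small when $h$ is large, so this naive bound may fail. Instead we exploit the structure of Harper: $N_{i}(\cG_0)$ is the initial segment reaching level $k+i$. We aim to run the level-ratio argument directly for radius $i$, needing $k+i\le m/2-c'\sqrt{Lm}$. This holds when $h\le c''\sqrt{Lm}$, which is exactly what $mL\ge1024h^2$ provides ($h\le\sqrt{mL}/32$). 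So we apply Harper once at radius $i$, and bound the ratio of consecutive level sums up to level $k+i$ by $1+\sqrt L/(2\sqrt m)$ each. This yields the factor $(1+\sqrt L/(2\sqrt m))^i$ directly. The condition $32\ln(m+1)\le L$ is used to make the Chernoff estimate for $k$ clean, absorbing polynomial prefactors of the binomial tail.
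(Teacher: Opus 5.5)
Your final argument is correct and is essentially the paper's proof: Harper once at radius $i$, the initial segment $\cF_0$ lies below level $k\approx m/2-c\sqrt{mL}$, and $h\le\sqrt{mL}/32$ keeps levels up to $k+i$ below the middle, while iterating $|N_\ell(\cF_0)|\ge\frac{m-k-\ell+1}{k+\ell}\,|N_{\ell-1}(\cF_0)|$ over the nested initial segments gives exactly the paper's factor $\bigl(\frac{m-k-i+1}{k+i}\bigr)^i$, which the paper obtains in one step from the $i$-th upper shadow of each level of $\cF_0$. One correction: locating $k$ needs a \emph{lower} bound on binomial tails, not Chernoff (which only bounds tails from above and so cannot force $k$ to be small); the paper uses $2^{-m}\binom mk\ge\frac1{m+1}\bigl(1-\frac{4(r_0+1)}m\bigr)^{r_0}>e^{-L}$ with $r_0=\lfloor\sqrt{mL}/4\rfloor$, where the prefactor $1/(m+1)$ is what $L\ge32\ln(m+1)$ absorbs.
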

\begin{proof}
Set $r_0=\lfloor\sqrt{mL}/4\rfloor$ and $k=\lfloor m/2\rfloor-r_0$. Since $\sqrt{mL}\ge32h\ge32$ and $L\le m/64$, we have $r_0\ge7\sqrt{mL}/32\ge4h$ and
$r_0+1\le9\sqrt{mL}/32\le m/8$.

We first give a lower bound for $\binom mk$. The coefficient $\binom m{\lfloor m/2\rfloor}$ is the largest of the $m+1$ binomial coefficients whose sum is $2^m$, so $2^{-m}\binom m{\lfloor m/2\rfloor}\ge1/(m+1)$.
For every integer $\ell$ with $k+1\le\ell\le\lfloor m/2\rfloor$,
we have $0\le m-2\ell+1\le2(r_0+1)$ and $m-\ell+1\ge m/2$. Therefore
\begin{equation*}
\frac{\binom m{\ell-1}}{\binom m\ell}
=\frac{\ell}{m-\ell+1}
=1-\frac{m-2\ell+1}{m-\ell+1}
\ge1-\frac{4(r_0+1)}m
\ge\frac12.
\end{equation*}
Using $\ln(1-v)\ge-2v$ for $0\le v\le1/2$, together with
$\ln(m+1)\le L/32$ and $8(r_0+1)^2/m\le81L/128$, gives
\begin{equation}\label{eq:rare-layer-mass}
\begin{aligned}
2^{-m}\binom mk &=2^{-m}\binom m{\lfloor m/2\rfloor}\prod_{\ell=k+1}^{\lfloor m/2\rfloor}\frac{\binom m{\ell-1}}{\binom m\ell}\ge\frac1{m+1}\left(1-\frac{4(r_0+1)}m\right)^{r_0}\\
&\ge\frac1{m+1}\exp\left(-\frac{8(r_0+1)^2}{m}\right)\ge \mathrm e^{-85L/128}\ge \mathrm e^{-2L/3}>\mathrm e^{-L}.
\end{aligned}
\end{equation}

Let $\cF_0$ be the initial segment of the simplicial order
with $|\cF_0|=|\cF|$. By \eqref{eq:rare-layer-mass}, $|\cF_0|\le2^m\mathrm e^{-L}<\binom mk\le\sum_{j=0}^k\binom mj$. Since every set of size at most $k$ precedes every set of larger size in the simplicial order, $\cF_0=\bigcup_{j=0}^k\cF_0^{(j)}$.

Fix an integer $i$ with $1\le i\le h$. Since $i\le r_0/4$, we have
$k+i\le m/2-3r_0/4<m$. All complements below are taken in $[m]$. For each $0\le j\le k$, count the pairs $(A,B)$ with $A\in(\cF_0^{(j)})^c$, $B\subseteq A$, and $|B|=m-j-i$. Each $A\in(\cF_0^{(j)})^c$ contains exactly $\binom{m-j}{i}$ such sets $B$. The sets $B$ that occur form the lower shadow $\partial^i((\cF_0^{(j)})^c)$, and each is contained in at most
$\binom{j+i}{i}$ sets of size $m-j$. Consequently,
\begin{equation*}
|\cF_0^{(j)}|\binom{m-j}{i}\le\bigl|\partial^i((\cF_0^{(j)})^c)\bigr|\binom{j+i}{i}.
\end{equation*}

The bound $k+i\le m/2-3r_0/4$ gives
\begin{equation*}
 \frac{m-k-i+1}{k+i}
 =1+\frac{m-2(k+i)+1}{k+i}
 \ge1+\frac{3r_0}{m}
 \ge1+\frac{\sqrt L}{2\sqrt m},
\end{equation*}
where the last inequality uses $r_0\ge7\sqrt{mL}/32$.
For every $0\le j\le k$, we therefore have
\begin{equation}\label{eq:upper-shadow-expansion}
\frac{\binom{m-j}{i}}{\binom{j+i}{i}}=\prod_{\ell=0}^{i-1}\frac{m-j-\ell}{j+i-\ell}\ge\left(\frac{m-k-i+1}{k+i}\right)^i\ge\left(1+\frac{\sqrt L}{2\sqrt m}\right)^i.
\end{equation}
Combining the pair count with~\eqref{eq:upper-shadow-expansion} yields
\begin{equation*}
\bigl|\partial^i((\cF_0^{(j)})^c)\bigr|\ge\left(1+\frac{\sqrt L}{2\sqrt m}\right)^i|\cF_0^{(j)}|.
\end{equation*}

For every $B\in\partial^i((\cF_0^{(j)})^c)$, there is
$A\in\cF_0^{(j)}$ such that $B\subseteq A^c$.
Thus $A\subseteq B^c$ and $|B^c\setminus A|=i$, which proves
\begin{equation*}
 \bigl(\partial^i((\cF_0^{(j)})^c)\bigr)^c
 \subseteq N_i(\cF_0)\cap\binom{[m]}{j+i}.
\end{equation*}
These families are pairwise disjoint as $j$ varies, because
their members have distinct sizes $j+i$. Since taking complements preserves cardinality, \eqref{eq:classical-harper} now gives
\begin{equation*}
|N_i(\cF)|\ge |N_i(\cF_0)|\ge \sum_{j=0}^k\bigl|\partial^i((\cF_0^{(j)})^c)\bigr|\ge
\left(1+\frac{\sqrt L}{2\sqrt m}\right)^i\sum_{j=0}^k|\cF_0^{(j)}|=\left(1+\frac{\sqrt L}{2\sqrt m}\right)^i|\cF|.
\end{equation*}
Dividing by $2^m$ proves~\eqref{eq:radius-expansion}.
\end{proof}

\begin{remark*}
Lemma~\ref{lem:rare} is a quantitative consequence of Harper's vertex-isoperimetric theorem. The proof gives the binomial estimates that give the factor in \eqref{eq:radius-expansion} under the numerical
hypotheses used here.
\end{remark*}

\subsection{A lower bound for the covariance}
\label{subsec:excess-covariance}

Only the smoothing estimate from Section~\ref{subsec:smoothing} and
edge isoperimetry are needed for the comparison below.

\begin{lemma}\label{lem:excess-covariance}
For the families and coordinate sets in Proposition~\ref{prop:localization},
the functions $\phi_\cA,\phi_\cB$ defined in \eqref{eq:excess-functions} satisfy
\begin{equation}\label{eq:rare-correlation}
\Cov(f,g^*)\ge\frac1{12}\sqrt{\frac{L-\ln2}{n_2}}\,\E\phi_\cA,
\qquad
\Cov(f,g^*)\ge\frac1{12}\sqrt{\frac{L-\ln2}{n_1}}\,\E\phi_\cB.
\end{equation}
\end{lemma}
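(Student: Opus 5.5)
We prove the first inequality; the second follows by exchanging the roles of $(\cA,X,p)$ and $(\cB,Y,q)$. The plan is to split the covariance along the coordinates $Y\cup\{z\}$ and to extract $\E\phi_\cA$ from the influences of $f$ on those coordinates.

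\emph{Step 1: localize the covariance to $Y$.} Start from \eqref{eq:covariance-integral} applied to $f,g^*$. Every integrand $\langle D_i f,T_\theta D_i g^*\rangle$ is nonnegative, because both derivatives are nonnegative and $T_\theta$ preserves nonnegativity. So we may keep only the terms $i\in Y$ and drop the rest.

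By Proposition~\ref{prop:localization}, $\E|g^*-h^*|=\delta_g\le1/(64n)$, where $h=\mathbbm{1}_{\cK(n,Y,q)}$. Also $n_2\ge256q^2+2$, and $n\ge2^{18}$. Hence Lemma~\ref{lem:smoothed-threshold} applies with constant $24$ and $m=n_2$, giving
\[
\Cov(f,g^*)\ge\frac1{24\sqrt{n_2}}\sum_{i\in Y}\frac{I_i[f]}{\sqrt{1+\ln(1/I_i[f])}}.
\]

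\emph{Step 2: pass from influences of $f$ to influences of $\phi_\cA$.} Fix $i\in Y$. Since $f$ is increasing,
\[
I_i[f]=\E_{y}\bigl[\mu^{n_1}(\cA_{y\cup\{i\}})-\mu^{n_1}(\cA_{y\setminus\{i\}})\bigr].
\]
The map $s\mapsto(s-c)_+$ is $1$-Lipschitz and nondecreasing, so $I_i[f]\ge I_i[\phi_\cA]$ in the increasing sense, where $I_i[\phi]=\E[D_i\phi]$ on $2^{Y\cup\{z\}}$. By the layer-cake formula,
\[
I_i[\phi_\cA]=\int_0^\infty I_i[\mathbbm{1}_{\{\phi_\cA>s\}}]\,ds.
\]
Each level set $\{\phi_\cA>s\}$ is increasing and has measure $\alpha_s\le e^{-L}$ by \eqref{eq:excess-support}.

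To absorb the logarithmic denominator, use that $u\mapsto u/\sqrt{1+\ln(1/u)}$ is convex and increasing. Apply Jensen over $i\in Y$, or, more robustly, work directly with each level set before summing. The intended route is as follows.

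\emph{Step 3: edge isoperimetry on level sets.} By Theorem~\ref{thm:edge-isoperimetry} on the cube $2^{Y\cup\{z\}}$,
\[
\sum_{i\in Y\cup\{z\}}I_i[\mathbbm{1}_{\{\phi_\cA>s\}}]\ge\frac{2}{\ln2}\,\alpha_s\ln(1/\alpha_s)\ge\frac{2L}{\ln 2}\,\alpha_s.
\]
The single coordinate $z$ contributes at most $\alpha_s$, since the influence of one coordinate of a set of measure $\alpha$ is at most $2\alpha$. This loss is what produces $L-\ln2$ in place of $L$.

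The main obstacle is combining this with the $\sqrt{1+\ln(1/I_i)}$ denominators. Jensen over $i$ reduces it to one average influence $\bar I$ with $\bar I\cdot n_2\gtrsim L\,\E\phi_\cA$ and $\bar I\le$ (roughly) $e^{-L}$. The ratio $\bar I/\sqrt{1+\ln(1/\bar I)}$ must then be bounded below by about $\sqrt{L/n_2}\,\E\phi_\cA$-scale quantities.

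I would first try to handle this per $i$, using $I_i[f]\ge I_i[\phi_\cA]$ together with $\ln(1/I_i[f])\lesssim\ln(n_2/(L\alpha))$. Then I would use the rarity $\alpha\le e^{-L}$ to show $\ln(1/\bar I)\le 2L$ up to constants, since the influences are at least about $L\alpha/n_2$ and $\ln(n_2/L)\ll L$ by \eqref{eq:finalnumbers}. This gives $\bar I/\sqrt{1+\ln(1/\bar I)}\ge \bar I/\sqrt{2L}$.

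Assembling: multiplying out, $\sqrt{L}$ survives, and
\[
\Cov(f,g^*)\ge \frac{1}{24\sqrt{n_2}}\cdot\frac{(L-\ln2)\,\E\phi_\cA}{\sqrt{2(L-\ln 2)}}\cdot(\text{const}),
\]
with the constants tuned to give $1/12$. The most delicate point is the lower bound on $I_i$ needed to control the logarithm. If Jensen in the wrong direction blocks this, I would instead apply the Talagrand-type bound level by level in $s$ and integrate.
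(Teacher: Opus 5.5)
Your individual steps are sound: $D_i\phi_\cA(y)\le\E_x D_if(x,y)$ because $s\mapsto(s-c)_+$ is $1$-Lipschitz and nondecreasing, and the layer-cake identity holds. Edge isoperimetry on $2^{Y\cup\{z\}}$, minus $I_z[u_s]\le2\alpha_s$ (not $\alpha_s$), gives exactly $I_Y[u_s]\ge\frac{2\alpha_s}{\ln2}\ln\frac1{2\alpha_s}\ge\frac{2\alpha_s}{\ln2}(L-\ln2)$.

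The argument breaks at the point you flag, and the obstruction is structural. To compare $\sum_{i\in Y}I_i/\sqrt{1+\ln(1/I_i)}$ with $\sqrt{L/n_2}\,\E\phi_\cA$ you need $\ln(1/\bar I)=O(L)$, that is, a \emph{lower} bound $\bar I\ge e^{-O(L)}$. The rarity $\mu(\phi_\cA>0)\le e^{-L}$ is an \emph{upper} bound. It gives $\ln(1/\alpha)\ge L$, which enlarges the denominator rather than controlling it, and nothing bounds $\E\phi_\cA$ or the $\alpha_s$ from below.

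For instance, take $\phi_\cA=\epsilon\mathbbm{1}_U$ with $\mu(U)=e^{-L}$. Your chain then yields only a bound of order $\epsilon e^{-L}L/\sqrt{n_2(L+\ln(1/\epsilon))}$. This is far below $\sqrt{L/n_2}\,\epsilon e^{-L}$ once $\ln(1/\epsilon)\gg L$, and nothing in the hypotheses excludes positive values of $\phi_\cA$ as small as $2^{-n_1}$, with $n_1\ln2\gg L$.

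Keeping $\ln(1/\alpha_s)$ instead of weakening it to $L$ does not rescue this. The function $\psi(u)=u/\sqrt{1+\ln(1/u)}$ is convex with $\psi(0)=0$, so $\psi\bigl(\int_0^1 I_i[u_s]\,ds\bigr)\le\int_0^1\psi(I_i[u_s])\,ds$. Once Lemma~\ref{lem:smoothed-threshold} has been applied to $f$, the logarithmic loss is charged to the aggregate influence and cannot be redistributed over the levels.

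The missing idea is to apply Lemma~\ref{lem:smoothed-threshold} to each Boolean level set $u_s=\mathbbm{1}_{\{\phi_\cA>s\}}$ paired with $g^*$, not to $f$. For a Boolean $u$ of measure $\alpha\le e^{-L}$, the same $\alpha$ controls both the numerator and the logarithm. With $a=\ln\frac1{2\alpha}\ge L-\ln2\ge1+\ln n_2$ one gets $1+\ln(n_2/I_Y[u])\le2a$. Jensen over $i\in Y$ then gives $\Cov(u,g^*)\ge\frac{\sqrt2}{24\ln2}\sqrt{a/n_2}\,\alpha\ge\frac1{12}\sqrt{(L-\ln2)/n_2}\,\E u$, uniformly in $\alpha$.

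Your fallback of working ``level by level'' cannot be carried out without first proving $\Cov(f,g^*)\ge\Cov(\phi_\cA,g^*)=\int_0^1\Cov(u_s,g^*)\,ds$. The integrand in \eqref{eq:covariance-integral} for $(f,g^*)$ involves $D_if(x,y)$, and its comparison with $D_i\phi_\cA(y)$ holds only after averaging over $x\in2^X$, whereas $T_\theta D_ig^*$ depends on $x$.

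The paper obtains this reduction in two steps. First, it uses the law of total covariance conditioned on $Y\cup\{z\}$; the fibre covariances on $2^X$ are nonnegative by Harris--Kleitman. Second, it applies Harris--Kleitman again: $y\mapsto\mu^{n_1}(\cA_y)-\phi_\cA(y)=\min\{\mu^{n_1}(\cA_y),w(n_1,p)+\theta(n_1,p)/2\}$ and $y\mapsto1-\mu^{n_1}(\cB(y^c,Y\cup\{z\}))$ are both increasing. Hence clipping $\mu^{n_1}(\cA_y)$ to $\phi_\cA(y)$ does not increase the covariance.
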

\begin{proof}
We first prove a bound for an arbitrary increasing Boolean function
$u$ depending only on $Y\cup\{z\}$, with
$0<\alpha:=\E u\le e^{-L}$. Set $a=\ln(1/(2\alpha))$.
For $b\in\{0,1\}$, let $u_b(y)=u(y,b)$ for $y\in\{0,1\}^Y$, and set $\alpha_b=2^{-n_2}\sum_yu_b(y)$.
These are the two restrictions obtained by fixing coordinate $z$.
We have $\alpha=(\alpha_0+\alpha_1)/2$ and
$\alpha_b\le2\alpha$.
Also $I_Y[u]=(I[u_0]+I[u_1])/2$, since $Y$ contains all the
remaining coordinates on which $u$ depends.
Applying \eqref{eq:edge-isoperimetry} on $\{0,1\}^Y$ gives
\begin{equation}\label{eq:rare-section-influence}
I_Y[u]\ge\frac1{\ln2}\left(\alpha_0\ln\frac1{\alpha_0}+\alpha_1\ln\frac1{\alpha_1}\right)\ge\frac{\alpha_0+\alpha_1}{\ln2}\ln\frac1{2\alpha}=\frac{2\alpha a}{\ln2}.
\end{equation}
Terms with $\alpha_b=0$ are interpreted as zero.

To apply Lemma~\ref{lem:smoothed-threshold}, use the explicit function
$$
 h=(\mathbbm{1}_{\cK(n,Y,q)})^*,
 \qquad
 h(x)=\mathbbm{1}_{\{|x_Y|>\frac{n_2-q}{2}\}}.
$$
Here $|x_Y|=\sum_{i\in Y}x_i$.
The complement map preserves uniform measure, so
$\E|g^*-h|=\delta_g\le1/(64n)$ by
\eqref{eq:localization}.
By \eqref{eq:block-ratio} and the assumed bound on $n$,
$n_2\ge2^{14}(p+q-2)^{7/3}>256q^2+2$, and $n\ge2^{18}$.
Also, using \eqref{eq:centralw},
$$
 (2\delta_g)^{4/5}
 \le\frac1{16n^{4/5}}
 \le\frac1{8\sqrt {n_2}}\le\frac{\theta(n_2,q)}4.
$$
Thus all the dimension and error hypotheses of Lemma~\ref{lem:smoothed-threshold} hold,
and its constant $24$ is available. Combining it with \eqref{eq:covariance-integral} and
retaining only coordinates in $Y$ gives
\begin{equation}\label{eq:rare-indicator-correlation}
 \begin{aligned}
\Cov(u,g^*)&\ge\frac1{24\sqrt {n_2}}\sum_{i\in Y}\frac{I_i[u]}{\sqrt{1+\ln(1/I_i[u])}}\ge\frac{I_Y[u]}{24\sqrt {n_2}\sqrt{1+\ln(n_2/I_Y[u])}}\\
&\ge\frac{2\alpha a/\ln2}{24\sqrt {n_2}\sqrt{1+\ln(n_2\ln2/(2\alpha a))}}.
 \end{aligned}
\end{equation}
All omitted terms in the covariance integral are nonnegative since $u,g$ are both increasing.
For the second inequality in \eqref{eq:rare-indicator-correlation}, apply Jensen's inequality to the convex
function $s\mapsto \frac{s}{\sqrt{1+\ln(1/s)}}$ on $[0,1]$, with value
zero at $s=0$, and use $|Y|=n_2$.
The last inequality follows from \eqref{eq:rare-section-influence},
since $s\mapsto \frac{s}{\sqrt{1+\ln(n_2/s)}}$ is increasing on $(0,n_2]$.

By \eqref{eq:finalnumbers},
$a\ge L-\ln2\ge1+\ln n_2\ge1$.
As $2\alpha=e^{-a}$, we obtain
$$
 1+\ln\frac{n_2\ln2}{2\alpha a}
 =a+1+\ln n_2+\ln(\ln2)-\ln a\le2a.
$$
It follows from \eqref{eq:rare-indicator-correlation} and
$\ln2<1/\sqrt2$ that
\begin{equation}\label{eq:rare-indicator-simple}
 \Cov(u,g^*)\ge
 \frac{\sqrt2}{24\ln2}\sqrt{\frac a{n_2}}\,\alpha
 \ge\frac1{12}\sqrt{\frac{L-\ln2}{n_2}}\,\E u.
\end{equation}
This is also true when $u=0$.

For each fixed $y\subseteq Y\cup\{z\}$, the restrictions of
$f(\cdot,y)$ and $g^*(\cdot,y)$ to $2^X$ are increasing.
Their covariance is nonnegative by Harris--Kleitman.
The average of $f(\cdot,y)$ on this projection is $\mu^{n_1}(\cA_y)$; the average of $g^*(\cdot,y)$ is
$1-\mu^{n_1}(\cB(y^c,Y\cup\{z\}))$, where
$y^c=(Y\cup\{z\})\setminus y$.
Thus \eqref{eq:total-covariance} gives
\begin{equation}\label{ineq:lower-bound-cov}
\Cov(f,g^*)\ge\Cov_y\bigl(\mu^{n_1}(\cA_y),1-\mu^{n_1}(\cB(y^c,Y\cup\{z\}))\bigr).
\end{equation}
Here $\Cov_y$ is taken over the uniform $y\subseteq Y\cup\{z\}$.

We next show that replacing $\mu^{n_1}(\cA_y)$ by $\phi_\cA(y)$ cannot increase the covariance on the RHS of~\eqref{ineq:lower-bound-cov}. Since $\cA$ and $\cB$ are increasing, for $y\subseteq y'$ we have $\cA_y\subseteq\cA_{y'}$ and $\cB((y')^c,Y\cup\{z\}) \subseteq\cB(y^c,Y\cup\{z\})$. Consequently, both $\mu^{n_1}(\cA_y)$ and $1-\mu^{n_1}(\cB(y^c,Y\cup\{z\}))$ are increasing in $y$. Moreover, by the definition of $\phi_\cA$,
$$
\mu^{n_1}(\cA_y)-\phi_\cA(y)=\min\left\{\mu^{n_1}(\cA_y),
w(n_1,p)+\frac{\theta(n_1,p)}2\right\}.
$$
Taking the minimum with a fixed constant preserves increasing property, so this difference is also increasing in $y$. Another application of Harris--Kleitman yields
$$
\Cov_y\bigl(\mu^{n_1}(\cA_y)-\phi_\cA(y),1-\mu^{n_1}(\cB(y^c,Y\cup\{z\}))\bigr)\ge0.
$$
By linearity of covariance, we obtain
\begin{equation}\label{eq:conditional-clipping}
\Cov(f,g^*)\ge\Cov_y\bigl(\phi_\cA(y),1-\mu^{n_1}(\cB(y^c,Y\cup\{z\}))\bigr)=\Cov(\phi_\cA,g^*).
\end{equation}
The last equality follows by averaging over $X$, since $\phi_\cA$
depends only on $y$.

For $s\ge0$, the function
$u_s=\mathbbm{1}_{\{\phi_\cA>s\}}$ is increasing and depends only
on $Y\cup\{z\}$. By \eqref{eq:excess-support},
$\E u_s\le\mu^{n_2+1}(\phi_\cA>0)\le e^{-L}$.
Thus \eqref{eq:rare-indicator-simple} applies to every $u_s$.
Since $0\le\phi_\cA\le1$, the point-wise identity
$\phi_\cA=\int_0^1\mathbbm{1}_{\{\phi_\cA>s\}}\,ds$ gives
$$
\Cov(\phi_\cA,g^*)=\int_0^1\Cov(u_s,g^*)\,ds\ge\frac1{12}\sqrt{\frac{L-\ln2}{n_2}}\int_0^1\E u_s\,ds
 =\frac1{12}\sqrt{\frac{L-\ln2}{n_2}}\,\E\phi_\cA.
$$
Together with \eqref{eq:conditional-clipping}, this proves the first
of the two bounds
$$
\Cov(f,g^*)\ge\frac1{12}\sqrt{\frac{L-\ln2}{n_2}}\,\E\phi_\cA,
\qquad
\Cov(f,g^*)\ge\frac1{12}\sqrt{\frac{L-\ln2}{n_1}}\,\E\phi_\cB.
$$
For the second, exchange $(f,X,p)$ and $(g,Y,q)$ and use
$\Cov(g,f^*)=\Cov(f,g^*)$, which follows by the change of
variables $x\mapsto1-x$ in the uniform expectation.
\end{proof}

\subsection{Completion of the proof}

\begin{proof}[Proof of Theorem~\ref{thm:main}]
Note that the product construction implies $w(n,p,q)\geq B_n$.
By Fact \ref{fact-reduction}, it suffices to show $w(n,p,q)\leq B_n$ when $n-p-q$ is odd.
Suppose that there is a $(p,q)$-family $\cF\subset 2^{[n]}$ with $\mu(\cF)>B_n$.  Apply Lemma~\ref{lem:closure} and
Proposition~\ref{prop:localization}, and retain the resulting
$f,g,X,Y,z,n_1,n_2,L$ and the functions $\phi_\cA,\phi_\cB$ in \eqref{eq:excess-functions}.

Apply Lemma~\ref{lem:rare} to families in $2^{Y\cup\{z\}}$.
This cube has $n_2+1$ coordinates. By \eqref{eq:block-ratio} and
Lemma~\ref{lem:numerical},
$$
 n_2+1\ge8192,\qquad
 32\ln(n_2+2)\le L\le\frac{n_2+1}{64},\qquad
 (n_2+1)L\ge1024(p-1)^2.
$$
Thus the lemma applies with $m=n_2+1$ and $h=p-1$. Set $\zeta=\frac{\sqrt L}{2\sqrt{n_2+1}}$. We have $\zeta\le1/16$ because $L\le(n_2+1)/64$.
Also \eqref{eq:geometric} and \eqref{eq:finalnumbers} imply
$$
 \frac{48(p-1)\sqrt{n_2+1}}{n_1}
 \le\frac{54(p+q-2)}{\sqrt n}\le\sqrt L,
$$
so $\zeta\ge24(p-1)/n_1$.
Moreover, \eqref{eq:localization} gives
$$
 \frac{2\mu(\cA\triangle\cK(n,X,p))}{\theta(n_1,p)}
 =\frac{2\delta_f}{\theta(n_1,p)}\le e^{-L}.
$$
For every increasing $\cG\subseteq2^{Y\cup\{z\}}$ with
$\mu(\cG)\le e^{-L}$, Lemma~\ref{lem:rare} gives
$\mu(N_i(\cG))\ge(1+\zeta)^i\mu(\cG)$ for each $1\le i\le p-1$. Finally, \eqref{eq:block-ratio} gives $n_1\ge 4t$. Thus all hypotheses of Theorem~\ref{thm:exterior} hold with $t=p$ and $\rho=e^{-L}$. Hence
\begin{equation}\label{eq:final-density-gain}
 (\mu(\cA)-w(n_1,p))_+
 \le\frac{8(p-1)}{n_1\zeta}\,\E\phi_\cA
\overset{\eqref{eq:rare-correlation}}{\le }\frac{96(p-1)}{n_1\zeta}\, \sqrt{\frac{n_2}{L-\ln2}} \Cov(f,g^*).
\end{equation}
Note that
\begin{equation}\label{eq:final-comparison}
 \begin{aligned}
 \frac{96(p-1)}{n_1\zeta}\, \sqrt{\frac{n_2}{L-\ln2}}
 =\frac{192(p-1)\sqrt{n_2(n_2+1)}}{n_1\sqrt{L(L-\ln2)}}\le\frac{170(p+q-2)}L<\frac23,
 \end{aligned}
\end{equation}
where for the first inequality, we use \eqref{eq:geometric} and
$L-\ln2\ge63L/64$; then
$192(7/8)\sqrt{64/63}<170$.
For the second, use $L>256(p+q-2)$ and $170/256<2/3$.
Thus $(\mu(\cA)-w(n_1,p))_+ \leq \frac23\Cov(f,g^*)$.  Similarly, $(\mu(\cB)-w(n_2,q))_+ \leq \frac23\Cov(f,g^*)$.

As $\mu(\cA),\mu(\cB),w(n_1,p),w(n_2,q)\le1/2$, we conclude that
\begin{equation}\label{eq:final-product-gain}
 \begin{aligned}
 \mu(\cA)\,\mu(\cB)-w(n_1,p)w(n_2,q)
 &\le\frac12(\mu(\cA)-w(n_1,p))_+
       +\frac12(\mu(\cB)-w(n_2,q))_+\le\frac23\Cov(f,g^*).
 \end{aligned}
\end{equation}
Together with \eqref{eq:pair}, this implies
$\mu(\cF)\le \mu(\cA\cap\cB^c)\le w(n_1,p)w(n_2,q)\le B_n$, contradicting our assumption.
\end{proof}

\appendix

\section{Deferred proofs of claims}\label{app:claims}

\begin{proof}[Proof of Claim~\ref{cl:product-increment}]
We use induction on $n_1+n_2$.
The case $n_1+n_2=n-\eps$ follows from the definition of $B_n$.

Suppose that $n_1+n_2\ge n-\eps+2$.
By \eqref{eq:incrementa}, \eqref{eq:profile-upper-sharp},
and $w(n_1,p),w(n_2,q)<1/2$,
$$
 \begin{aligned}
 &\bigl(w(n_1,p)-w(n_1-2,p)\bigr)w(n_2,q)\le\frac{p-1}{5(n_1-1)\sqrt {n_1}},\\
 &w(n_1,p)\bigl(w(n_2,q)-w(n_2-2,q)\bigr)
 \le\frac{q-1}{5(n_2-1)\sqrt {n_2}}.
 \end{aligned}
$$
By the convexity of $(x-1)\sqrt x$ and $n_1+n_2\geq n+1$,
\begin{equation}\label{eq:greedy-step}
\begin{aligned}
\frac15\min\left\{\frac{p-1}{(n_1-1)\sqrt {n_1}},\frac{q-1}{(n_2-1)\sqrt {n_2}}\right\}
&\le\frac{p+q-2}{5\bigl((n_1-1)\sqrt {n_1}+(n_2-1)\sqrt {n_2}\bigr)}\\
&\le\frac{\sqrt2(p+q-2)}{5(n_1+n_2-2)\sqrt{n_1+n_2}}\le\frac{\sqrt2(p+q-2)}{5(n-1)\sqrt{n+1}}.
\end{aligned}
\end{equation}
Choose the smaller of the two upper bounds in \eqref{eq:greedy-step} and decrease the corresponding dimension by two. This choice preserves the lower dimension constraints. Indeed, if $n_1-2<256p^2$, then
$n_1\le256p^2+2\le1026(p-1)^2<n/2$ and $n_2>n/2\ge32768(p+q-2)^2$.
In particular, $n_2-2\ge256q^2$, and
$$
 \frac{(q-1)(n_1-1)\sqrt {n_1}}{(p-1)(n_2-1)\sqrt {n_2}}
 \le\frac{2(q-1)}{p-1}\left(\frac {n_1}{n_2}\right)^{3/2}
 \le\frac{(q-1)(p-1)^2}{32(p+q-2)^3}<1.
$$
Thus the smaller bound corresponds to decreasing $n_2$. The other
boundary case is symmetric. The induction hypothesis and
\eqref{eq:greedy-step} now give
$$
 w(n_1,p)w(n_2,q)-B_n
 \le\frac{(p+q-2)(n_1+n_2-n+\eps-2)}{5(n-1)\sqrt{2(n+1)}}
     +\frac{\sqrt2(p+q-2)}{5(n-1)\sqrt{n+1}},
$$
which is \eqref{ineq-3.1}.
\end{proof}

\begin{proof}[Proof of Lemma~\ref{cl:overlap-endpoint}]
If $w(n_1,p)w(n_2,q)\le B_n$, \eqref{eq:overlap-endpoint} is immediate.
Otherwise, by admissibility we may assume $n_1+n_2\ge n-\eps+2$.

For $n+1\le n_1+n_2\le2n$,  we have
\begin{align}\label{ineq-3.2}
 (n_1+n_2)(n_1+n_2-n+\eps)
 &\le(n_1+n_2)(n_1+n_2-n+1)\le2(n+1)(n_1+n_2-n).
\end{align}
By \eqref{eq:profile-lower-central}, \eqref{ineq-3.1} and
$\sqrt{n_1n_2}\le(n_1+n_2)/2$, we obtain
\begin{align*}
 \frac{w(n_1,p)w(n_2,q)-B_n}{\theta(n_1,p)\theta(n_2,q)}
 &\le
 \left(\frac{34}{27}\right)^2
 \frac{(p+q-2)(n_1+n_2)(n_1+n_2-n+\eps)}
      {10(n-1)\sqrt{2(n+1)}}\\
 &\overset{\eqref{ineq-3.2}}{\le}
 \left(\frac{34}{27}\right)^2
 \frac{(p+q-2)\sqrt{n+1}(n_1+n_2-n)}
      {5\sqrt2(n-1)}\\
 &\le
 \frac{9(p+q-2)\sqrt{n+1}(n_1+n_2-n)}{40(n-1)}\\
 &\le
 \frac{5(p+q-2)(n_1+n_2-n)}{22\sqrt n},
\end{align*}
where the third inequality follows from  $\frac1{5\sqrt2}\left(\frac{34}{27}\right)^2<\frac9{40}$ and the last inequality follows from $ \frac{9\sqrt{n+1}}{40(n-1)}
 \le\frac5{22\sqrt n}$ for $n\geq 256$.
Rearranging proves \eqref{eq:overlap-endpoint}.
\end{proof}

\begin{proof}[Proof of Claim~\ref{cl:slice-recurrence}]
For each $x\in \{0,1\}^{m-1}$ with $D_m h(x)=1$, we have
$$
\sum_{\ell=1}^{m-1}\chi_{\{\ell\}}(x)=\sum_{\ell=1}^{m-1} (-1)^{x_\ell} = \sum_{\ell=1}^{m-1}(1-2x_\ell)=-(t-1).
$$
It follows that
\begin{align}\label{ineq-3.3}
-(t-1)D_m h =D_m h\sum_{\ell=1}^{m-1}\chi_{\{\ell\}}.
\end{align}
Since
$$
 \chi_{\{\ell\}}\chi_{[j-1]}
 =
 \begin{cases}
  \chi_{[j-1]\setminus\{\ell\}},&1\le\ell\le j-1,\\
  \chi_{[j-1]\cup\{\ell\}},&j\le\ell\le m-1,
 \end{cases}
$$
taking inner product with $\chi_{[j-1]}$ on both sides of \eqref{ineq-3.3} we obtain
$$
 \begin{aligned}
 -(t-1)\widehat{D_m h}([j-1])
 &=
 \sum_{\ell=1}^{j-1}
 \widehat{D_m h}([j-1]\setminus\{\ell\})+
 \sum_{\ell=j}^{m-1}
 \widehat{D_m h}([j-1]\cup\{\ell\})\\
 &=
 (j-1)\widehat{D_m h}([j-2])
 +(m-j)\widehat{D_m h}([j]).
 \end{aligned}
$$
By rearranging we obtain \eqref{ineq-3.4}.
Moreover, for $j=1$ and $j=2$ we have
\begin{align*}
-(t-1)\widehat{D_m h}(\varnothing)&= (m-1)\widehat{D_m h}(\{1\}),\\[3pt]
-(t-1)\widehat{D_m h}(\{1\})&= \widehat{D_m h}(\varnothing) +(m-2)\widehat{D_m h}([2])=\theta(m,t) +(m-2)\widehat{D_m h}([2]).
\end{align*}
Thus \eqref{ineq-3.6} follows.
\end{proof}

\begin{proof}[Proof of Claim~\ref{cl:slice-symmetry}]
Whenever $D_m h(x)=1$, we have
$$
 \chi_{[m-1]}(x)
 =(-1)^{\sum_{\ell=1}^{m-1}x_\ell}
 =(-1)^{(m+t)/2-1}.
$$
Since
$\chi_{[m-1]\setminus[j]}=\chi_{[m-1]}\chi_{[j]}$,
it follows that
$$
 \begin{aligned}
 \widehat{D_m h}([m-1]\setminus[j])
 &=\E[D_m h\,\chi_{[m-1]}\chi_{[j]}]=(-1)^{(m+t)/2-1}\widehat{D_m h}([j]).
 \end{aligned}
$$
Recall that Fourier coefficients
depend only on the cardinality of the indexing set. Hence
$$
 \widehat{D_m h}([m-1-j])
 =\widehat{D_m h}([m-1]\setminus[j]).
$$
Taking absolute values, we obtain \eqref{ineq-3.7}.
\end{proof}

\section{Proofs of the numerical estimates}\label{app:auxiliary}

\begin{proof}[Proof of Lemma~\ref{lem:binomial}]
Set $c_j=4^{-j}\binom{2j}j$. Its recurrence shows that
$c_j\sqrt j$ increases and $c_j\sqrt{2j+1}$ decreases.
It also implies $1/(2\sqrt j)\le c_j\le1/\sqrt{j+1}$ for $j\ge1$.
The largest binomial probability in dimension $d-1$ is
$c_{\lfloor d/2\rfloor}$, between $1/\sqrt{2d}$ and $\sqrt{2/d}$.
The gap $1-2w(d,t)$ consists of $t-1$ central probabilities in
dimension $d$ and contains a largest one. These observations give
\eqref{eq:explicitbinomial}.

Set $k=\lfloor(d-1)/2\rfloor$ and $r=(d-t)/2$. The ratio
$$
 \frac{\binom{d-1}r}{\binom{d-1}k}
 =\prod_{j=0}^{k-r-1}\frac{k-j}{d-k+j}
$$
is at least $1-t^2/(2d)$. Indeed each denominator is at least
$d/2$, and the sum of the factor deficits is at most
$2(k-r)^2/d$ for odd $d$, or $2(k-r)(k-r+1)/d$ for even $d$.
This proves \eqref{eq:centralw}.
Direct squaring verifies
$c_{64}\sqrt{128}>39/49$ and $c_{64}\sqrt{129}<4/5$.
The two monotonicities imply \eqref{eq:profile-upper-sharp} and,
for $d\ge256t^2$, the bound
$\sqrt d\,\theta(d,t)\ge(39/49)(1-2^{-9})>27/34$,
which is \eqref{eq:profile-lower-central}.

Each of the $t-1$ central probabilities in $1-2w(d,t)$ is at
least $\theta(d,t)$: its endpoint probability is
$d\theta(d,t)/(2(r+1))\ge\theta(d,t)$.
Let $d_0$ be the least admissible integer at least $256t^2+2$.
Then $d_0\le256t^2+3\le18^2t^2$.
By monotonicity of $w(\cdot,t)$, a consequence of
\eqref{eq:incrementa}, we have, for admissible $d\le256t^2+2$,
$$
 \frac12-w(d,t)\ge\frac{t-1}{2}\theta(d_0,t)
 >\frac{3(t-1)}{8\sqrt{d_0}}\ge\frac{t-1}{48t}\ge\frac1{96}.
$$
This proves \eqref{eq:closer-half-dimension}.
\end{proof}

\begin{proof}[Proof of Lemma~\ref{lem:numerical}]
Set $x=p+q-2\ge2$. We use the elementary bounds
\begin{equation}\label{eq:appendix-log-bounds}
 \ln2<\frac{52}{75},\qquad
 \frac{12}{11}<\ln3<\frac{11}{10},\qquad
 \ln5<\frac{29}{18}.
\end{equation}
Use $\ln a=2\sum_{j\ge0}r^{2j+1}/(2j+1)$, $r=(a-1)/(a+1)$,
whose tail after $k$ terms is less than
$2r^{2k+1}/((2k+1)(1-r^2))$, with $(a,k)=(2,2),(3,3),(5,4)$.

At $n=2^{15}x^3$, the left sides of \eqref{eq:initiallog} and
\eqref{eq:weak-bootstrap-number} are $88\sqrt{2x}/5$ and greater
than $100x$, respectively, while
$\ln(n+1)\le15\ln2+3\ln x+2^{-18}$.
Both $88\sqrt{2x}/5-(15/2)\ln x$ and $25x/4-3\ln x$ increase
for $x\ge2$. At two, by \eqref{eq:appendix-log-bounds}, we have
$176/5>47\ln2+1+5\cdot2^{-19}$ and
$25/2>18\ln2+2^{-18}$, proving the two assertions.
Differentiation extends them to every larger $n$.

Since $18^2/96=27/8$, we have $L\ge54\ln(n+1)$.
Also $n\ge2^{16}x^2$, $L\ge(2^{15}/96)x>256x$, and
$$
 m\ge\frac{n}{2x^{2/3}}\ge2^{14}x^{7/3},\qquad
 \frac Lm\le\frac1{48x^{4/3}}<\frac1{64},\qquad
 mL\ge\frac{n^2}{192x^{8/3}}\ge2^{22}x^{10/3},
$$
Together with $nL\ge2^{23}x^4\ge192^2x^2$, these prove all size bounds.

For \eqref{eq:coarse-error-number}, use $h\ge1$, $u\le n$ and take
logarithms. At $n=2^{15}x^3$, for $x\ge3$ it suffices that
\begin{equation}\label{eq:appendix-coarse-log}
 \frac{88\sqrt{2x}}5
 \ge\frac{21}{8}\ln(2^{15}x^3)+\ln3+\frac54\ln(16/3).
\end{equation}
The difference in \eqref{eq:appendix-coarse-log} increases for $x\ge3$;
at three the left side exceeds $42$ and the right side is less than $40$.
For $x=2$, the bound $u\le5n/8$ adds $(13/8)\ln(5/8)$ to the
right side. Substitution of \eqref{eq:appendix-log-bounds} then makes
it less than $176/5$, the left side at $n=2^{18}$.
Finally $11\sqrt n/(80x)-(21/8)\ln n$ increases throughout the
stated range, proving the assertion for all permitted $n$.
\end{proof}

\section{An inequality for log-concave sequences for Section~\ref{sec:exterior}}
\label{app:shadow-tools}

A positive sequence $q_0,\ldots,q_M$ is \emph{log-concave} if
$q_j^2\ge q_{j-1}q_{j+1}$ for $1\le j<M$; equivalently,
$q_{j+1}/q_j$ is nonincreasing. Multiplication by $\gamma^{-j}$,
where $\gamma>0$, preserves this property.

\begin{lemma}\label{lem:interval-coupling}
Let $q_0,\ldots,q_M$ be positive and log-concave,
$1\le h\le M$, and $\gamma\ge1$.
For $z_0,\ldots,z_M\ge0$, extend both sequences by zero outside
$\{0,\ldots,M\}$. Then
\begin{equation}\label{eq:interval-coupling}
 \sum_{j=0}^M q_{j+h}\max_{0\le i\le h}\gamma^iz_{j+i}
 \ge\frac{\sum_{j=0}^M q_{j+h}\gamma^{-j}}
           {\sum_{j=0}^M q_j\gamma^{-j}}
        \sum_{j=0}^M q_jz_j.
\end{equation}
\end{lemma}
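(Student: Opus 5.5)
The plan is to bound each maximum from below by a convex combination and then choose the weights as a transport plan, reducing \eqref{eq:interval-coupling} to a stochastic-ordering statement that follows from log-concavity.

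\emph{Step 1: reduction to a transport problem.} Fix weights $w_{j,i}\ge0$ with $\sum_{i=0}^h w_{j,i}\le1$ for each $j$. Then
\[
\max_{0\le i\le h}\gamma^iz_{j+i}\ge\sum_{i=0}^h w_{j,i}\gamma^iz_{j+i}.
\]
Substituting this and regrouping by $k=j+i$, the left side of \eqref{eq:interval-coupling} is at least
\[
\sum_k z_k\sum_{j=k-h}^{k}q_{j+h}\gamma^{k-j}w_{j,k-j}.
\]
Since $z_k\ge0$, it suffices to construct weights such that, for every $k$,
\[
\sum_{j=k-h}^{k}q_{j+h}\gamma^{k-j}w_{j,k-j}\ge R\,q_k,
\qquad R=\frac{\sum_{j}q_{j+h}\gamma^{-j}}{\sum_j q_j\gamma^{-j}}.
\]
Put $Q_k=q_k\gamma^{-k}$; this sequence is still log-concave. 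Since $q_{j+h}\gamma^{k-j}=Q_{j+h}\gamma^{k+h}$ and $Rq_k=RQ_k\gamma^k$, the condition becomes
\[
\sum_{j=k-h}^{k}Q_{j+h}w_{j,k-j}\ge\rho\,Q_k,
\qquad \rho=R\gamma^{-h}=\frac{\sum_jQ_{j+h}}{\sum_jQ_j}.
\]
This is the $\gamma=1$ case, and it is exactly a transport problem. Source $j$ carries mass $a_j=Q_{j+h}$, target $k$ demands mass $b_k=\rho Q_k$, and the two totals agree by the choice of $\rho$. Every unit of mass must move from $j$ to some $k\in[j,j+h]$. Setting $w_{j,i}$ to be the fraction of source $j$ sent to $j+i$ recovers the weights, and $\sum_i w_{j,i}=1$.

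\emph{Step 2: existence of the plan via CDF inequalities.} Normalize $a$ and $b$ to probability distributions $\alpha$ and $\beta$ on the integers. By the quantile (monotone) coupling on $[0,1]$, a coupling with displacement $k-j\in[0,h]$ exists provided the CDFs satisfy
\[
F_\beta(x)\le F_\alpha(x)
\quad\text{and}\quad
F_\beta(x)\ge F_\alpha(x-h)
\qquad\text{for all }x.
\]
Discrete atoms cause no trouble, since the quantile map couples them fractionally. The two inequalities mean $\alpha\le_{\rm st}\beta\le_{\rm st}\alpha+h$.

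\emph{Step 3: verifying the two orderings.} The distribution $\alpha$ shifted right by $h$ is $Q$ conditioned on $\{h,\dots,M\}$. Conditioning on an upper set stochastically increases the distribution, so this shifted law dominates $\beta\propto Q$; this is the second inequality. For the first inequality, compare $\alpha_j\propto Q_{j+h}$ with $\beta_j\propto Q_j$. Log-concavity, together with the zero extension beyond $M$, makes the ratio $Q_{j+h}/Q_j$ nonincreasing in $j$ wherever $Q_j>0$. This is a monotone likelihood ratio, which implies $\alpha\le_{\rm st}\beta$.

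I expect Step 3 to be the main obstacle. The point needing care is the boundary: when $Q_{j+h}$ vanishes for $j>M-h$, the ratio $Q_{j+h}/Q_j$ must remain monotone (it simply drops to $0$ there), and for $j<0$ the sequence $\beta$ puts no mass while $\alpha$ also vanishes. With these checks the standard implication from likelihood-ratio order to stochastic order applies. Finally, the transport weights from Step 2 give the required per-$k$ inequality in Step 1, which proves \eqref{eq:interval-coupling}.
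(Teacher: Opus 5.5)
Your proof is correct. Its skeleton is the paper's: a coupling of $J$, with law proportional to $q_{j+h}\gamma^{-j}$, and $K$, with law proportional to $q_j\gamma^{-j}$, such that $J\le K\le J+h$. Your weights $w_{j,i}$ are the conditional probabilities $\PP(K=j+i\mid J=j)$. Your per-$k$ transport inequality is the paper's step $\E[\gamma^K z_K]\le\E[\max_{0\le i\le h}\gamma^{J+i}z_{J+i}]$, written out coordinatewise.

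The real difference is how the coupling is obtained. The paper imports it as a black box from Bartroff--Goldstein--I\c{s}lak (Corollary~2.1). You construct it directly, in three observations:
\begin{itemize}
\item $J+h$ has the law of $K$ conditioned on $\{K\ge h\}$, so $K\le_{\mathrm{st}}J+h$ holds for any positive sequence.
\item $J\le_{\mathrm{st}}K$ follows from the monotone likelihood ratio $Q_{j+h}/Q_j$.
\item The quantile coupling realizes both orderings simultaneously.
\end{itemize}
Your route makes the appendix self-contained. It also shows that log-concavity is needed only for the lower ordering $J\le_{\mathrm{st}}K$, and that $\gamma\ge1$ is never used; $\gamma>0$ suffices. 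The paper's route is shorter, at the cost of an external reference.

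One point deserves explicit mention. The sources are only $j\in\{0,\ldots,M-h\}$, since the left-hand sum starts at $j=0$. That is what makes the total supply and total demand both equal $\sum_{k=h}^M Q_k$. You handle this implicitly when you note that $\alpha$ vanishes for $j<0$. Your regrouped sum $\sum_{j=k-h}^{k}$ should likewise be read with $j\ge0$.
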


\begin{remark*}
The probability coupling needed here is a special case of
Bartroff, Goldstein and I\c{s}lak~\cite[Corollary~2.1]{BGI}.
Their result states that an integer-valued random variable $K$ with
log-concave probabilities and support $\{0,\ldots,M\}$ can be
coupled to a random variable $K_h$ with the conditional distribution
of $K$ given $K\ge h$, so that $K\le K_h\le K+h$ almost surely.
Here a coupling means defining the variables on one probability
space while keeping their specified individual distributions.
The weighted inequality \eqref{eq:interval-coupling} follows as below.
\end{remark*}

\begin{proof}
Choose $K$ with
$$
 \PP(K=j)=\frac{q_j\gamma^{-j}}
                   {\sum_{a=0}^M q_a\gamma^{-a}},
 \qquad 0\le j\le M.
$$
These probabilities are log-concave and positive on
$\{0,\ldots,M\}$. By \cite[Corollary~2.1]{BGI}, there is a
$K_h$ on the same probability space satisfying
$$
 \PP(K_h=j)=\PP(K=j\mid K\ge h),
 \qquad K\le K_h\le K+h\quad\text{almost surely}.
$$
Put $J=K_h-h$. Then $J\le K\le J+h$ and
$$
 \PP(J=j)=\frac{q_{j+h}\gamma^{-j}}
                   {\sum_{a=0}^M q_{a+h}\gamma^{-a}},
 \qquad 0\le j\le M.
$$
Since $K$ belongs to the integer interval $[J,J+h]$,
$$
 \gamma^Kz_K\le
       \max_{0\le i\le h}\gamma^{J+i}z_{J+i}
 \quad\text{almost surely}.
$$
Taking expectations and substituting the two probability formulas
gives
$$
 \frac{\sum_{j=0}^M q_jz_j}{\sum_{j=0}^M q_j\gamma^{-j}}
 \le
 \frac{\sum_{j=0}^M q_{j+h}\max_{0\le i\le h}\gamma^iz_{j+i}}
      {\sum_{j=0}^M q_{j+h}\gamma^{-j}}.
$$
The denominators are positive, so this is
\eqref{eq:interval-coupling}.
\end{proof}

\vspace{10pt}
{\noindent \bf Acknowledgement.} Some of the results in this paper were obtained during the first author's
visit to Sichuan University at the invitation of the third author. The first author thanks the third author for the warm hospitality. The first author was supported by the National Natural Science Foundation of China (Grant No. 124B2019) and the third author was supported by the
National Natural Science Foundation of China (Grant No. 12622121).

\vspace{8pt}
{\bf\noindent AI declaration}: The
 work was done with the aid of generative AI tools. The authors had the rough proof strategies  and  generative AI tools filled in most of the  computational details, which was then checked and rewritten by the authors.

\end{document}